\DeclareRobustCommand{\SkipTocEntry}[4]{}
\newtheorem{lemma}{Lemma}[section]
\newtheorem{prop}[lemma]{Proposition}
\newtheorem{thm}[lemma]{Theorem}
\newtheorem{thm'}[lemma]{``Theorem''}
\newtheorem*{thm*}{Theorem}
\newtheorem{cor}[lemma]{Corollary}
\newtheorem*{propwo}{Proposition 4.1}
\theoremstyle{definition}
\newtheorem{defn}[lemma]{Definition}
\newtheorem{example}[lemma]{Example}
\theoremstyle{remark}
\newtheorem{rmk}[lemma]{Remark}
\newtheorem{note}[lemma]{Notation}
\newtheorem{warning}[lemma]{Warning}
\newcommand{\cC}{\mathcal C}
\newcommand{\cD}{\mathcal D}
\newcommand{\cE}{\mathcal E}
\newcommand{\cK}{\mathcal K}
\newcommand{\cP}{\mathcal P}
\newcommand{\F}{\mathbb F}
\newcommand{\M}{\mathbb M}
\newcommand{\Q}{\mathbb Q}
\newcommand{\W}{\mathbb W}
\newcommand{\Z}{\mathbb Z}
\newcommand{\fm}{\mathfrak m}
\newcommand{\E}{E_{\infty}}
\newcommand{\nc}{\color{black}}
\newcommand{\Hom}{\mathrm{Hom}}
\newcommand{\Ch}{\mathrm{Ch}(\mathrm{Mod}_R)}
\newcommand{\Chhat}{\mathrm{Ch}(\widehat{\mathrm{Mod}}_R)}
\let\SK@label\label\fi
 \let\your@thm\@thm
 \def\@thm#1#2#3{\gdef\currthmtype{#3}\your@thm{#1}{#2}{#3}}
 \def\mylabel#1{{\let\your@currentlabel\@currentlabel\def\@currentlabel
  {\currthmtype~\your@currentlabel}
 \SK@label{#1@}}\label{#1}}
 \def\myref#1{\ref{#1@}}
\newcommand{\Mod}{\mathrm{Mod}}
\newcommand{\Modhat}{\widehat{\mathrm{Mod}}}
\newcommand{\Modff}{\mathrm{Mod}^{\mathrm{ff}}}
\newcommand{\Modflat}{\mathrm{Mod}^{\flat}}
\newcommand{\Comp}{\mathrm{Mod}^{\mathrm{perf}}}
\newcommand{\LComp}{\mathrm{LMod}^{\mathrm{perf}}}
\newcommand{\RComp}{\mathrm{RMod}^{\mathrm{perf}}}
\newcommand{\Top}{\mathrm{Top}}
\newcommand{\Sp}{\mathrm{Sp}}
\newcommand{\MM}{\hat{\mathbb{M}}}
\newcommand{\T}{\mathbb{T}}
\newcommand{\That}{\hat{\mathbb{T}}}
\newcommand{\D}{\mathbb{P}}
\newcommand{\LD}{L_K\mathbb{P}}
\newcommand{\rep}{\mathrm{Rep}}
\newcommand{\Monad}{\mathrm{Monads}}
\newcommand{\Lie}{\mathbb{L}}
\newcommand{\Hinf}{\mathbb{H}}
\newcommand{\HH}{\mathbb{H}_{\infty}}
\newcommand{\Torhat}{\widehat{\mathrm{Tor}}}
\newcommand*{\doublerightarrow}[2]{\mathrel{
  \settowidth{\@tempdima}{$\scriptstyle#1$}
  \settowidth{\@tempdimb}{$\scriptstyle#2$}
  \ifdim\@tempdimb>\@tempdima \@tempdima=\@tempdimb\fi
  \mathop{\vcenter{
    \offinterlineskip\ialign{\hbox to\dimexpr\@tempdima+1em{##}\cr
    \rightarrowfill\cr\noalign{\kern.5ex}
    \rightarrowfill\cr}}}\limits^{\!#1}_{\!#2}}}
\newcommand*{\triplerightarrow}[1]{\mathrel{
  \settowidth{\@tempdima}{$\scriptstyle#1$}
  \mathop{\vcenter{
    \offinterlineskip\ialign{\hbox to\dimexpr\@tempdima+1em{##}\cr
    \rightarrowfill\cr\noalign{\kern.5ex}
    \rightarrowfill\cr\noalign{\kern.5ex}
    \rightarrowfill\cr}}}\limits^{\!#1}}}
\newcommand{\lcolim}{\widehat{\mathrm{colim}}}
\newcommand{\colim}{\mathrm{colim}}
\newcommand{\Def}{\textbf}
\newcommand{\Alg}{\mathrm{Alg}}
\newcommand{\Alghat}{\widehat{\mathrm{Alg}}}
\newcommand{\Com}{\mathrm{Alg}}
\newcommand{\Comhat}{\widehat{\mathrm{Alg}}}
\newcommand{\Set}{\mathrm{Set}}
\newcommand{\ral}{\xrightarrow} 
\newcommand{\surj}{\twoheadrightarrow}
\newcommand{\PP}{\hat{\D}}
\newcommand{\al}{\alpha}
\newcommand{\io}{\iota}
\newcommand{\la}{\lambda}
\newcommand{\La}{\Lambda}
\newcommand{\phy}{\varphi}
\newcommand{\te}{\theta}
\newcommand{\Si}{\Sigma}
\newcommand{\abs}[1]{\lvert #1 \rvert}
\newcommand{\lan}{\left\langle}
\newcommand{\llb}{\llbracket}
\newcommand{\op}{\oplus}
\newcommand{\ot}{\otimes}
\newcommand{\ran}{\right\rangle}
\newcommand{\rrb}{\rrbracket}
\newcommand{\sm}{\wedge}
\newcommand{\id}{\mathrm{id}}
\DeclareMathOperator{\coker}{coker}
\DeclareMathOperator{\Ext}{Ext}
\DeclareMathOperator{\Fun}{Fun}
\DeclareMathOperator{\holim}{lim}
\DeclareMathOperator{\Lan}{Lan}
\DeclareMathOperator{\TAQ}{TAQ}
\newcommand{\End}{\mathrm{End}}
\newcommand{\Ab}{\mathrm{Ab}}
\title{Completed power operations for Morava $E$-theory}
\author{Tobias Barthel and Martin Frankland}
\date{\today}
\begin{document}

\begin{abstract}
We construct and study an algebraic theory which closely approximates the theory of power operations for Morava $E$-theory, extending previous work of Charles Rezk in a way that takes completions into account. These algebraic structures are made explicit in the case of $K$-theory. Methodologically, we emphasize the utility of flat modules in this context, and prove a general version of Lazard's flatness criterion for module spectra over associative ring spectra.
\end{abstract}

\maketitle

\tableofcontents

\section{Introduction}\label{intro}

If $E$ is a commutative ring spectrum, then the homotopy groups of any commutative $E$-algebra $A$ carry important extra structure known as power operations. Historically, one of the first instances of power operations are the Dyer-Lashof operations, acting on the mod $p$ homology of infinite loop spaces, and more generally on the homotopy of any commutative $H\F_p$-algebra $A$. The structure present on $\pi_*A$ is captured by a monad on graded $\F_p$-vector spaces, which can be described explicitly \cite[\S IX.2]{hinfty}. This provides an algebraic theory of power operations for $H\F_p$. 

A central object of study in the chromatic approach to stable homotopy theory is Morava $E$-theory $E=E_h$, which is a commutative ring spectrum associated with a universal deformation of a height $h$ formal group law over the field $\F_{p^h}$. Since Ando's thesis \cite{andothesis}, power operations for Morava $E$-theory have found various applications in chromatic homotopy theory and it is an important problem to better understand this structure. Generalizing the work of McClure and Bousfield for $p$-complete $K$-theory $E_1=K^{\wedge}_p$ , Rezk \cite{congruence} constructs for any height $h$ a monad $\T: \Mod_{E_*} \to \Mod_{E_*}$ which approximates the structure of power operations on the homotopy of commutative $E_h$-algebras. 

Using this functor, the ring of (additive) power operations for Morava $E$-theory is defined as 
\[ \Gamma = \End(U\colon \Alg_{\T} \to \Ab) \cong \bigoplus_{k \ge 0} \coker\left ( \bigoplus_{0<j<p^k} E^{\wedge}_0 B(\Sigma_j \times\Sigma_{p^k-j}) \to E^{\wedge}_0 B\Sigma_{p^k}\right )\]
where $U$ denotes the degree $0$ part of the forgetful functor, $E^{\wedge}$ is completed Morava $E$-homology, and the maps are induced by the inclusions $\Sigma_j \times\Sigma_{p^k-j} \to \Sigma_{p^k}$.  Rezk then proves that $\Gamma$ is Koszul in an appropriate sense, which allows for the construction of small algebraic resolutions of $\Gamma$-modules.  

However, the algebraic approximation functor $\T$ ignores an important piece of structure present on the homotopy groups of commutative $E$-algebras:  In order to get a well-behaved theory of algebras, one has to work with $K(h)$-local algebras throughout, as we shall explain in more detail in Section \ref{prelim1}. The homotopy groups of a $K(h)$-local commutative $E$-algebra then naturally take values not merely in $\Mod_{E_*}$, but rather in the subcategory $\Modhat_{E_*}$ of $L$-complete $E_*$-modules. 

Motivated by this observation, our first goal in this paper is to construct, in a universal way, an improved version of the algebraic approximation functor   
\[\That\colon \Modhat_{E_*} \to \Modhat_{E_*}\]
that takes completions into account. It comes with a comparison map 
\[\alpha: \That\pi_*L_{K} M \to \pi_*L_{K}\D M \]
for any $E$-module $M$, which is an isomorphism if $M$ is flat as an $E$-module. This should be contrasted with the fact that the uncompleted comparison map is an isomorphism when $M$ is finitely generated and free, but rarely otherwise.

As our main theorem (\myref{mainthm}), we prove that the completed algebraic approximation functor gives rise to an algebraic theory of completed power operations. 

\begin{thm*}
The completed algebraic approximation functor $\That \colon \Modhat_{E_*} \to \Modhat_{E_*}$ admits a natural monad structure, compatible with the one on $L_K\D$.
\end{thm*}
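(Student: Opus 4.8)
The plan is to pin down $\That$ by a universal property and then transport the monad structure of $L_K\D$ along it. First I would define $\That$ on the full subcategory $\Modff_{E_*}$ of finitely generated free $E_*$-modules exactly as Rezk defines $\T$ there \cite{congruence}: for $P \in \Modff_{E_*}$ choose a finite wedge $\mathbb{G}$ of sphere suspensions with $\pi_*\mathbb{G} \cong P$ and put $\That(P) := \pi_* L_K\D(\mathbb{G})$. The crucial point is that, by Strickland's calculation of $E^{\wedge}_* B\Sigma_n$, the value $\pi_* L_K\D(\mathbb{G})$ is a \emph{pro-free} module, so it already lies in $\Modhat_{E_*}$ and is flat there. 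Next I would extend $\That$ to all of $\Modhat_{E_*}$ by means of the generalized Lazard criterion proved earlier: every flat $L$-complete module is a filtered $\lcolim$ of objects of $\Modff_{E_*}$, and every $L$-complete module is a reflexive coequalizer of pro-free ones, so $\Modff_{E_*}$ generates $\Modhat_{E_*}$ under sifted colimits. Thus I would take $\That$ to be the essentially unique extension of $\That|_{\Modff_{E_*}}$ that preserves sifted colimits. Legitimacy of this step requires checking that $\pi_* L_K\D$, on wedges of spheres, is sufficiently continuous --- that it sends the filtered colimits and reflexive coequalizers presenting an $L$-complete module to the matching $\lcolim$'s and coequalizers in $\Modhat_{E_*}$, with the higher derived functors of $L$-completion not interfering. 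Granting this, the extension satisfies $\That(\pi_* L_K M) \cong \pi_* L_K\D M$ whenever $M$ is flat, which is the comparison map $\al$ and exhibits it as an isomorphism on flat modules.

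\textbf{The monad structure.} On $\Modff_{E_*}$, both $\That$ and $\That \circ \That$ are computed by applying $\pi_*$ to the monad $L_K\D$: we have $\That(P) = \pi_* L_K\D\mathbb{G}$, and $\That\That(P) = \That(\pi_*L_K\D\mathbb{G}) = \pi_*L_K\D(L_K\D\mathbb{G})$, the last equality being the comparison isomorphism $\al$ applied to the flat module $L_K\D\mathbb{G}$; the right-hand side is $\pi_*$ of $(L_K\D)^2$ at $\mathbb{G}$. Hence the unit $\id \to L_K\D$ and the multiplication $(L_K\D)^2 \to L_K\D$ of the monad $L_K\D$ induce, after $\pi_*$, natural transformations $\eta\colon \id \to \That$ and $\mu\colon \That\That \to \That$ over $\Modff_{E_*}$. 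As $\That$, and therefore $\That\That$, preserves sifted colimits and $\Modff_{E_*}$ generates $\Modhat_{E_*}$ under these, $\eta$ and $\mu$ extend uniquely to natural transformations over all of $\Modhat_{E_*}$; and the associativity and unit axioms, being identities between natural transformations of sifted-colimit-preserving functors which already hold on $\Modff_{E_*}$ (there reducing to the monad axioms for $L_K\D$), hold everywhere. The same density principle promotes $\al\colon \That\pi_* L_K M \to \pi_* L_K\D M$ to a morphism of monads, since the required compatibility with the structure maps of $L_K\D$ is tautological on wedges of spheres and propagates by sifted-cocontinuity.

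\textbf{Main obstacle.} The genuine difficulty lies not in the density bookkeeping but in (a) the continuity input behind the construction of $\That$ --- that the passage from $\Modff_{E_*}$ to $\Modhat_{E_*}$ along $\lcolim$'s and reflexive coequalizers is compatible with $\pi_* L_K\D$, which reduces to controlling the derived functors of $L$-completion on these colimits and ultimately to the freeness of the completed $E$-homology of extended powers of wedges of spheres --- and in (b) the coherence needed to transport the monad structure honestly. A purely $1$-categorical argument for (b) is delicate; the cleaner route is to realize $\That$ together with its monad structure as the effect on homotopy of a monad induced by $L_K\D$ on a suitable $\infty$-category built from $K(h)$-local $E$-modules with pro-free homotopy --- where the monad laws hold coherently --- and only then descend to $L$-complete modules. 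I expect (b), producing this coherent lift, to be the main technical hurdle, with (a) being substantial but of the anticipated ``vanishing $\lim^1$'' type.
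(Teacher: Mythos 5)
Your proposal reproduces, almost verbatim, the density argument that proves monadicity for the \emph{uncompleted} functor $\T$ (\myref{tmonad}): define the functor on finite free modules, observe that these generate under sifted colimits, and extend. The paper explicitly isolates why this fails in the completed setting: a finite free module is \emph{not} a compact object of $\Modhat_{E_*}$ (see \myref{small}), because colimits in $\Modhat_{E_*}$ are $L_0$ of ordinary colimits and $L_0$ does not commute with filtered colimits --- the higher derived functors $L_s$ obstruct this. Consequently $\Hom(\pi_* iF,-)$ does not preserve filtered colimits in $\Modhat_{E_*}$, the pointwise Kan extension does not automatically preserve sifted colimits, and ``the essentially unique sifted-colimit-preserving extension of $\That|_{\Modff_{E_*}}$'' is not available as a definition: that $\That$ commutes with sifted colimits is a \emph{consequence} of the theorem, not an input. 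You do flag a ``continuity input,'' but you misdiagnose it as a routine vanishing-$\lim^1$ check; it is in fact the entire content of the theorem and is not of that type.

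What actually closes the gap in the paper is the key algebraic property (\myref{key}, \myref{keygeneral}): for each $n$ there is a $k$ such that $E_*/\fm \otimes \T_m(M) \to E_*/\fm \otimes \T_m(E_*/\fm^k \otimes M)$ is an isomorphism for all $m \le n$. This is proved by showing that every $\nu \in \fm$ acts nilpotently mod $\fm$ on $\T_m$ of a finite free module (\myref{nilp}), which in turn rests on the one genuinely homotopy-theoretic input: $\nu$ acts invertibly on $\pi_* L_K \D_m(\nu^{-1}M)$ (\myref{inv}). From this one deduces that $\T$ preserves $L_0$-equivalences (\myref{mainthm}), and the monad structure on $\That = L_0 \T \iota$ then follows by a purely formal, $1$-categorical transport (\myref{CompletedMonad}): the assignment $F \mapsto L_0 F \iota$ is strong monoidal on endofunctors preserving $L_0$-equivalences, so it carries monads to monads. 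In particular your item (b) --- the coherence of transporting the monad structure, which you single out as the main hurdle and propose to resolve by an $\infty$-categorical lift --- is a non-issue here: everything lives in ordinary homotopy categories and the transport is formal. The hard part is (a), and your proposal supplies no argument for it.
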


This result is important for our understanding of the theory of power operations for Morava $E$-theory. Some of the potential applications are discussed in Section \ref{appli}.

Interestingly, the proof of monadicity of the uncompleted functor $\T$ does not carry over to this setting, due to some peculiar properties of $\Modhat_{E_*}$. Since the proof relies heavily on the algebraic and homological structure of this category, we survey the theory of $L$-complete modules in an appendix, see Appendix \ref{appendix1}. This section also contains a number of new results which are of independent interest, and our hope is to convince the reader that $\Modhat_{E_*}$ is a very natural category appearing in algebraic topology. One technical novelty of our approach is the emphasis on flat modules in the study of $L$-complete modules, for which we provide several equivalent characterizations. On the topological side, we prove a general version of Lazard's theorem for flat module spectra over associative ring spectra, generalizing previous work by Lurie and Rezk.

Our second main objective is to make the structure on the completed algebraic approximation functor explicit in the height $1$ case, i.e., for $p$-complete $K$-theory. At height $1$, the explicit structure of the power operations is understood, by earlier work of McClure \cite[\S IX]{hinfty} and Bousfield \cite{bousfield}. In contrast, few explicit formulas are known at higher heights. The height $2$ case has been calculated by Rezk at the prime $2$ \cite{ht2p2} and Zhu at the prime $3$ \cite{ht2p3}; this seems to be the state of the art as of this writing. In Section \ref{height1}, we present an alternate, formula-based proof of the height $1$ case, resting on the identification of the algebraic approximation functors in Section \ref{opsonK}.

\begin{thm*}
At height $h=1$, the monad $\T \colon \Mod_{E_*} \to \Mod_{E_*}$ is the free $\Z/2$-graded $\te$-ring over the ground $\te$-ring $\Z_p$.
\end{thm*}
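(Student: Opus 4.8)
The plan is to reduce the statement to an explicit comparison of generators and relations, combining the computations of Section~\ref{opsonK} with the classical description of power operations on $p$-complete $K$-theory due to McClure~\cite[\S IX]{hinfty} and Bousfield~\cite{bousfield}. Throughout, at height $1$ one has $E_* = E_{1,*} = \Z_p[u^{\pm 1}]$ with $\abs{u}=2$, so $\Mod_{E_*}$ is equivalent to the category of $\Z/2$-graded $\Z_p$-modules and $\T$ becomes a $\Z/2$-graded monad over the ground ring $\Z_p$. Since $\T$ preserves filtered colimits and reflexive coequalizers, it is determined by its values on the free modules $E_*$ and $\Sigma E_*$ (on one even and one odd generator, respectively) together with the unit and multiplication of the monad, so it suffices to pin these down.

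First I would record the additive structure. Using the identification of the algebraic approximation functor in Section~\ref{opsonK}, the additive operations are concentrated in weights $p^k$ and governed by the transfer quotients $\Gamma_k = \coker\bigl(\bigoplus_{0<j<p^k} E^{\wedge}_0 B(\Sigma_j\times\Sigma_{p^k-j}) \to E^{\wedge}_0 B\Sigma_{p^k}\bigr)$; from that section I would quote that $\Gamma_k$ is free of rank one over $\Z_p$, generated by the $k$-fold iterate of the weight-$p$ additive operation $\psi$, so that $\Gamma \cong \Z_p[\psi]$ as a ring.

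Next I would make the $\T$-algebra structure explicit and match it with $\theta$-rings. A $\T$-algebra is a $\Z/2$-graded commutative $\Z_p$-algebra $R$ carrying the additive operation $\psi$ and satisfying the Cartan-type and instability relations inherited from the topology. The heart of the argument is the identity, read off from the formulas of Section~\ref{opsonK},
\[ \psi(x) \equiv x^p \pmod{p}, \qquad x \in R_0, \]
which allows one to define $\theta(x) = \tfrac{1}{p}\bigl(\psi(x) - x^p\bigr)$; one then verifies that $\psi$ is a ring homomorphism compatible with the periodicity and that $\theta$ satisfies the usual additivity and multiplicativity identities of a $\theta$-operation — with the appropriate signs and the special behavior on odd-degree classes, where for $p$ odd the $p$-th power vanishes and the prime $2$ is treated separately. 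Conversely, any $\Z/2$-graded $\theta$-ring over $\Z_p$ produces such a structure via $\psi(x) = x^p + p\theta(x)$, and the Cartan and instability relations turn out to be exactly the $\theta$-ring axioms. Via these identifications the forgetful functor $\Alg_\T \to \Mod_{E_*}$ is the forgetful functor from $\Z/2$-graded $\theta$-rings over $\Z_p$, so by uniqueness of adjoints (or Beck monadicity) the two monads agree; evaluating on $E_*$ recovers the explicit free object, the polynomial ring $\Z_p[x,\theta x,\theta^2 x,\dots]$ on a single even generator, and likewise on $\Sigma E_*$.

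I expect the main obstacle to be the $p$-adic bookkeeping: extracting the precise congruence $\psi(x)\equiv x^p \pmod p$ and the exact coefficient $\tfrac{1}{p}$ from the topologically defined operations of Section~\ref{opsonK} amounts to reconciling Rezk's congruence-style normalization of the power operation with the classical Adams-operation normalization, after which one must carefully manage the sign and degree conventions for odd classes, with $p=2$ requiring a separate discussion.
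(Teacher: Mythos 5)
Your reduction of $\T$ to its values on $E_*$ and $\Sigma E_*$ via sifted-colimit preservation and the exponential property is exactly the paper's first step. The divergence, and the gap, lies in how you identify those values. The paper's proof does not pass through the ring $\Gamma$ of additive operations at all: it constructs McClure's operation $Q$ on completed $K$-homology (\myref{OpQ}), quotes his computation $K_*(\D^S(S^0);\Z/p^r)=\Z/p^r[x,Qx,Q^2x,\ldots]$ (\myref{claim}) together with a Bockstein and Milnor-sequence argument (\myref{kps}) to obtain $\T(E_*)=\Z_p[x,Qx,Q^2x,\ldots]$, and then reads the $\te$-ring identities directly off McClure's explicit formulas for $Q$. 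In your proposal the identification of $\T(E_*)$ as this polynomial ring appears only at the very end, as a consequence of the monad isomorphism, whereas it is in fact the essential input.

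Concretely, the gap is the step where you assert that a $\T$-algebra is ``a graded commutative $\Z_p$-algebra carrying $\psi$ and satisfying Cartan-type and instability relations'' and that these relations ``turn out to be exactly the $\theta$-ring axioms.'' Knowing $\Gamma\cong\Z_p[\psi]$ controls only the additive operations; it does not tell you that $\T_n(E_*)\cong E^0 B\Sigma_n$ is multiplicatively generated by additive operations applied to $x$, that there are no further relations, or even that $\T_p(E_*)$ is torsion-free --- the last point being needed before $\theta=\tfrac{1}{p}\bigl(\psi(x)-x^p\bigr)$ makes sense as an operation rather than merely a rational one. All of these facts are equivalent to, and in the paper are deduced from, McClure's computation in \myref{claim}; without invoking it, your claim about the structure of $\T$-algebras presupposes a presentation of the monad by generators and relations that is essentially the statement to be proven. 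If you do import \myref{claim} at that point, your route collapses into the paper's, with the $\Gamma$-and-Frobenius-congruence framing adding no independent leverage.
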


We then illustrate how one can take completions into account when doing computations with the functors $\T_n$, and in particular how the non-linearity of $\T_n$ comes into play. The point of view in Section \ref{height1} is complemented by a representation theoretic interpretation in Appendix \ref{appendix2}.

\subsection*{Organization}

We start in Section \ref{prelim} with a brief recollection of free algebras over a commutative ring spectrum in general, and then explain what is special in the case of $K(h)$-local commutative $E$-algebras, where $E$ denotes Morava $E$-theory. After that, we prove a general version of Lazard's characterization of flat module spectra over arbitrary $\mathbb{E}_1$-ring spectra. In Section \ref{aaf}, we introduce Rezk's algebraic approximation functors and construct our completed analogues, studying some of their properties along the way.  We formulate our main theorem, the proof of which is the subject of Section \ref{proofofthm}, and mention some of the future applications.

Section \ref{compstructures} serves as an intermediate discussion of the compatibility between $L$-completions and certain algebraic structures. In particular, we recall facts about $\la$- and $\te$-rings, which play an important role in the following two sections. Sections \ref{opsonK} and \ref{height1} contain the alternative proof of the main theorem at height $1$, based on explicit formulas.

There are two appendices. Appendix \ref{appendix1} describes the construction and essential properties of $L$-completion and the associated category of $L$-complete modules, summarizing and extending \cite[Appendix A]{moravak}. Finally, we return to the height $1$ case in Appendix \ref{appendix2}, where we give a representation theoretic interpretation of and argument for the proof of the main theorem.

\subsection*{Notation and conventions}
For a fixed prime $p$ and an integer $h > 0$, height $h$ Morava $E$-theory $E_h$ is a $K(h)$-local Landweber exact ring spectrum with coefficients $E_*=W\F_{p^h}\llbracket u_1,\ldots,u_{h-1}\rrbracket[u, u^{-1}]$, where $W\F_{p^h}$ is the ring of ($p$-typical) Witt vectors on $\F_{p^h}$, $u_i$ is in degree 0 for all $i$, and $u$ has degree $2$. Note that $E_0$ is a complete Noetherian regular local ring with maximal ideal $\fm = (p, u_1, \ldots, u_{h-1})$, where we write $u_0=p$ for notational convenience. By a theorem of Goerss, Hopkins, and Miller, Morava $E$-theory has the structure of an $\mathbb{E}_{\infty}$-ring spectrum. For simplicity, the letter $E$ will always denote this theory. Furthermore, for an $E$-module $M$ (or in fact any spectrum), denote the $K(h)$-localization map by $j \colon M \to L_K M$, writing $L_K$ for Bousfield localization with respect to Morava $K$-theory $K(h)$. Note that, for any $E$-module $M$, $L_KM$ is equivalent to the Bousfield localization of $M$ with respect to $K(h)$ internal to $\Mod_E$, see~\cite[2.2]{filteredcolim}.

Let $\Si \colon \Mod_E \to \Mod_E$ be the usual suspension functor $\Si X = S^1 \sm X$. Let $\Si \colon \Mod_{E_*} \to \Mod_{E_*}$ denote the corresponding algebraic suspension functor $\Si M := E_*(S^1) \ot_{E_*} M$, which satisfies $\pi_* (\Si X) \cong \Si (\pi_* X)$. Although $E$ is a $2$-periodic cohomology theory, periodicity will not play an essential role in this paper. Therefore, we will not use the $\Z/2$-graded formalism proposed in \cite[\S 1.5, \S 2.6]{congruence}, but rather view $E_*$ as a $\Z$-graded ring and all $E_*$-modules as $\Z$-graded as well. As in \cite[\S 3]{koszul}, we will keep track of suspensions whenever necessary.

Throughout this paper, we will be mostly working on the level of homotopy categories, and all our homotopical constructions are understood to be derived. In case we have to carry out actual point-set level constructions, we can either work with $S$-modules as given \cite{ekmm} or within the $\infty$-category of spectra as developed in \cite{ha}. 

Moreover, we often abbreviate notation by omitting the subscripts of smash products, tensor products, and so on when the base ring (spectrum) is understood.

\subsection*{Acknowledgments} 

We are very grateful to Charles Rezk for introducing us to the problem that motivated this paper and thank him for many insightful conversations. Furthermore, we would like to thank Omar Antol\'in Camarena, Andy Baker, Dan Christensen, Paul Goerss, Frank Gounelas, Mike Hopkins, Mark Hovey, Jim McClure, Nat Stapleton, and Donald Yau for helpful discussions on the subject matter of this paper. We also thank the referee for several useful comments.

The first author was partially supported by an ERP scholarship and Harvard University.  The second author was partially supported by an FQRNT Postdoctoral Research Scholarship, held at the University of Illinois at Urbana-Champaign under the supervision of Charles Rezk.

\section{Preliminaries and Lazard's theorem}\label{prelim}

We will need the analogues in topology of familiar notions from algebra. For $R$ an associative ring spectrum, we introduce various categories of $R$-modules and, if $R$ is also commutative, commutative $R$-algebras, recall the construction of free commutative $R$-algebras, and then specialize to Morava $E$-theory. In Section \ref{lazard}, we prove Lazard's characterization of flat modules in the setting of module spectra over an arbitrary $\mathbb{E}_1$-ring spectrum.

\subsection{Free commutative algebras}\label{prelim1}

Consider an $\mathbb{E}_{\infty}$-ring spectrum $R$.

\begin{note}
Let $\Mod_R$ denote the category of $R$-modules, which is symmetric monoidal with respect to the smash product $\sm_R$; if the base ring is clear from context, we simply write $\wedge$. Let $\Alg_R$ denote the category of commutative $R$-algebra, i.e., commutative monoid objects in $(\Mod_R,\sm_R)$.
\end{note}

The forgetful functor $\Alg_R \to \Mod_R$ has a left adjoint $\D \colon \Mod_R \to \Alg_R$, the free commutative $R$-algebra functor, given by $\D(M) = \bigvee_{n \geq 0} (M^{\sm_R n})/{\Si_n}$. We also denote the associated forget-of-free monad by $\D \colon \Mod_R \to \Mod_R$. We write $\D = \D^R$ to specify the base ring spectrum $R$ if needed. The adjunction is monadic; in fact, the category $\Alg_R$ is isomorphic to the category of $\D$-algebras in $\Mod_R$. 

To describe the homotopical behavior of $\D$, let us recall some facts about the EKMM model of $S$-modules. First, $\E$-ring spectra and their modules can be functorially replaced by weakly equivalent commutative $S$-algebras and their mo\-du\-les \cite[II.3.6]{ekmm}. A commutative $S$-algebra $R$ can be replaced by a weakly equivalent q-cofibrant commutative $S$-algebra, with an equivalent derived category $h\Mod_{R}$ \cite[III.4.2]{ekmm}. If $R$ is a q-cofibrant commutative $S$-algebra, and $M$ is a cell $R$-module, then extended powers agree with symmetric powers: the map $(M^{\wedge n})_{h \Si_n} \to (M^{\wedge n})/\Si_n$ is a homotopy equivalence \cite[III.5.1]{ekmm}, motivating the following definition.

\begin{defn}
Let $n \geq 0$ be a non-negative integer and $M$ an $R$-module. The \Def{$n^{\text{th}}$ extended power} of $M$ is the $R$-module
\[
\D_n M = ( \overbrace{M \sm_R \ldots \sm_R M}^{n \text{ times}}_{} )_{h \Si_n}
\]
where the symmetric group $\Si_n$ acts on the $n$-fold smash product by permuting the factors, and the subscript denotes the homotopy orbit. This homotopy colimit can be computed as $\D_n M = (E \Si_n)_+ \sm_{\Si_n} M^{\sm_R n}$, where $E \Si_n$ denotes the standard contractible space with a free right $\Si_n$-action \cite[I \S 2]{hinfty}.

Extended powers form a functor $\D_n \colon \Mod_R \to \Mod_R$, which moreover passes to the homotopy category. The symmetric algebra functor $\D \colon \Mod_R \to \Mod_R$ induces a total left derived functor, also denoted $\D \colon h\Mod_R \to h\Mod_R$, which is given by\[
\D M = \bigvee_{n \geq 0} \D_n M = \bigvee_{n \geq 0} (M^{\sm_R n})_{h \Si_n}.
\]
\end{defn}

This functor defines a monad on the homotopy category of $R$-modules. An algebra structure on a spectrum $Y$ for the monad $\D \colon h\Mod_R \to h\Mod_R$ is an $\HH$ $R$-algebra structure; the corresponding structure maps are denoted by $\xi_n\colon \D_nY \to Y$. Any commutative $R$-algebra naturally becomes (upon passing to the homotopy category $h\Mod_R$) an algebra for the monad $\D$. 

\begin{defn}
An $R$-module $M$ is called \Def{finitely generated} if $\pi_* M$ is a finitely generated $R_*$-module. The module $M$ is called \Def{finitely generated and free}, or \Def{finite free} for short, if $\pi_* M$ is a finitely generated free $R_*$-module. This holds if and only if $M$ is equivalent to a finite wedge $\bigvee_{i=1}^k \Si^{d_i} R$. Let $\Modff_R$ denote the full subcategory of $\Mod_R$ consisting of finite free $R$-modules, and $h\Modff_R$ its homotopy category, viewed as a full subcategory of $h\Mod_R$.
\end{defn}

In general, $\D_n$ does not preserve finite free modules, but we will see shortly a version of it which does, for the ring spectrum of interest here, Morava $E$-theory. Recall from the introduction that we are going to fix a height $h$ for the remainder of the paper and agree to write $E=E_h$. 

\begin{prop}
\cite[3.8]{congruence}\mylabel{dmonad} The functor $\D_n\colon h\Mod_E \to h\Mod_E$ preserves $K(h)$-homology isomorphisms. In particular, the natural transformation
\[
L_K \D_n(j) \colon L_K \D_n \to L_K \D_n L_K
\]
is an isomorphism. The functor $\PP = L_K \D \colon h\Mod_E \to h\Mod_E$ admits a unique monad structure with the property that $j \colon \D \to L_K \D$ is a map of monads.
\end{prop}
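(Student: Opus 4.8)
\emph{Strategy and Step 1: $\D_n$ preserves $K(h)$-homology isomorphisms.} The plan is to establish the three assertions in order; only the first uses genuine homotopy theory, and once it is available the monad structure is essentially formal. For the first, use $\D_n M = (E\Si_n)_+\sm_{\Si_n}M^{\sm_E n}$ and factor $\D_n$ as the $\Si_n$-equivariant $n$-fold smash power $M\mapsto M^{\sm_E n}$ followed by the homotopy orbit functor $(E\Si_n)_+\sm_{\Si_n}(-)$. For any spectrum $H$ there is a natural equivalence $H\sm_S(A\sm_E N)\simeq(H\sm_S A)\sm_E N$, since $H\sm_S(-)$ preserves the colimit computing $\sm_E$; taking $H=K(h)$ shows that if $g$ is a $K(h)$-homology isomorphism of $E$-modules then so is $g\sm_E N$ for every $N$, because $K(h)\sm_S g$ is then an equivalence and $(-)\sm_E N$ preserves equivalences. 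Iterating and using symmetry of $\sm_E$, $f^{\sm_E n}$ is a ($\Si_n$-equivariant) $K(h)$-homology isomorphism whenever $f$ is. Since $(E\Si_n)_+\sm_{\Si_n}(-)$ is a homotopy colimit, $K(h)\sm_S(-)$ commutes with it, so a $\Si_n$-equivariant $K(h)$-homology isomorphism induces one on homotopy orbits. Composing gives the claim for $\D_n$, and then for $\D=\bigvee_{n\ge 0}\D_n$, since a wedge of $K(h)$-homology isomorphisms is one.

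\emph{Step 2: $L_K\D_n(j)$ is an isomorphism.} The localisation map $j\colon M\to L_K M$ is a $K(h)$-homology isomorphism, so $\D_n(j)$ is one by Step 1, and $L_K$ carries it to an equivalence. The same argument shows that $L_K\D(j_X)\colon L_K\D X\to L_K\D L_K X$ is an equivalence for every $E$-module $X$.

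\emph{Step 3: the monad structure on $\PP$.} Write $(\D,\eta,\mu)$ for the monad $\D$ on $h\Mod_E$, and $\theta\colon\D\Rightarrow\PP=L_K\D$ for the natural transformation with components $j_{\D M}$ (the map named $j$ in the statement). Declare $\theta\eta$ to be the unit of $\PP$. For the multiplication, $(\theta\star\theta)_M\colon\D\D M\to\PP\PP M=L_K\D L_K\D M$ equals $L_K\D(j_{\D M})\circ j_{\D\D M}$, which is a $K(h)$-homology isomorphism ($j_{\D\D M}$ always is, and $L_K\D(j_{\D M})$ is an equivalence by Step 2); since $\PP M$ is $K(h)$-local, precomposition with $(\theta\star\theta)_M$ is a bijection $[\PP\PP M,\PP M]\xrightarrow{\sim}[\D\D M,\PP M]$, and we let $\mu^{\PP}_M$ be the unique map with $\mu^{\PP}_M\circ(\theta\star\theta)_M=j_{\D M}\circ\mu_M$. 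Naturality of $\mu^{\PP}$, the monad axioms for $(\PP,\theta\eta,\mu^{\PP})$, and the assertion that $\theta$ is a morphism of monads then all follow from the corresponding facts for $\D$ via the analogous bijections $[\PP^{\,k}M,\PP M]\xrightarrow{\sim}[\D^{\,k}M,\PP M]$ — valid because $\PP M$ is $K(h)$-local, $\PP$ preserves $K(h)$-homology isomorphisms, and the iterated transformation $\D^{\,k}M\to\PP^{\,k}M$ is a composite of maps of the two types above, hence a $K(h)$-homology isomorphism. Uniqueness follows from the same bijections: any monad structure on $\PP$ making $\theta$ a monad map has unit $\theta\eta$ and a multiplication satisfying $\mu^{\PP}\circ(\theta\star\theta)=\theta\circ\mu$, which pins it down. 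Conceptually, Step 1 says $\D$ descends to a monad on the localisation of $h\Mod_E$ at the $K(h)$-homology isomorphisms, which is equivalent to $h(L_K\Mod_E)$, and $\PP$ is that monad read back on $h\Mod_E$.

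\emph{Main obstacle.} There is no single hard step: Step 1 reduces to $K(h)\sm_S(-)$ commuting with both the relative smash product and homotopy orbits, and Step 3 is purely formal given the mapping property of $K(h)$-local objects. The most tedious part is the bookkeeping in Step 3 — checking that each monad coherence for $\PP$ reduces, after precomposition with the appropriate $K(h)$-homology isomorphism into a $K(h)$-local target, to a coherence already known for $\D$ — but each such check is a routine diagram chase.
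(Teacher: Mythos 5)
Your argument is correct; the paper itself gives no proof here but simply cites Rezk \cite[3.8]{congruence}, whose argument is the same as yours: $K(h)_*$-isomorphisms are preserved by smashing over $E$ and by homotopy colimits (hence by extended powers), and the monad structure on $L_K\D$ then descends formally via the bijections $[Y,Z]\cong[X,Z]$ for $K(h)$-homology isomorphisms $X\to Y$ into $K(h)$-local targets $Z$. No gaps.
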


In the context of $E$-modules, this \Def{completed free algebra functor} $\PP$ is better behaved than its uncompleted analogue, as the next proposition demonstrates. 

\begin{prop}\mylabel{powerff}
\cite[3.9]{congruence} If $M$ is a finite free $E$-module, then $L_K \D_n(M)$ is also finite free, for any $n \ge 0$.
\end{prop}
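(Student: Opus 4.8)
The plan is to reduce to the case where $M$ is a single finite free module of the form $\bigvee_{i=1}^k \Sigma^{d_i} E$, then to a single suspension $\Sigma^d E$, and finally to exploit the known structure of $E^\wedge_* B\Sigma_n$. First I would observe that $\D_n$ is not additive, so one cannot simply split off a wedge summand; instead, there is a decomposition of $(M \vee N)^{\wedge n}$ into summands indexed by ways of distributing $n$ tensor factors between $M$ and $N$, and after passing to homotopy $\Sigma_n$-orbits this yields $\D_n(M \vee N) \simeq \bigvee_{a+b=n} \D_a M \wedge \D_b N$ (the standard exponential formula for the free commutative algebra functor, valid here because we work with cell modules where extended powers compute symmetric powers). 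Applying $L_K$ and using that $L_K$ commutes with finite wedges, it suffices to treat $M = \Sigma^d E$ for a single $d$.

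For $M = \Sigma^d E$, we have $M^{\wedge_E n} \simeq \Sigma^{nd} E$ with $\Sigma_n$ permuting the smash factors; the induced action on $\pi_*$ is the permutation action twisted by the sign coming from commuting the degree-$d$ suspension coordinates past one another. Hence $\D_n(\Sigma^d E) = (E\Sigma_n)_+ \wedge_{\Sigma_n} \Sigma^{nd} E$, and its homotopy is $E_*$-homology of $B\Sigma_n$ with a (twisted) coefficient system — concretely $\pi_* L_K \D_n(\Sigma^d E) \cong E^\wedge_* B\Sigma_n$ up to suspension and a possible sign twist by the transfer character. Now I would invoke the foundational computation (Strickland, building on Hopkins–Kuhn–Ravenel) that $E^\wedge_0 B\Sigma_n$, and more generally $E^\wedge_* B\Sigma_n$, is a finitely generated free $E_*$-module; this is precisely the input that makes Morava $E$-theory special among Landweber exact theories. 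The sign twist does not affect freeness or finite generation, so $L_K \D_n(\Sigma^d E)$ is finite free, and reassembling via the exponential formula finishes the general finite free $M$.

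The main obstacle — or rather the main thing to be careful about — is justifying the exponential decomposition $\D_n(M\vee N) \simeq \bigvee_{a+b=n}\D_a M \wedge \D_b N$ at the level of $K(h)$-local homotopy, since one must know that $L_K$ interacts well with the $\Sigma_n$-homotopy-orbit construction and with finite wedges; but \myref{dmonad} already tells us $L_K \D_n$ preserves $K(h)$-equivalences and that $L_K\D_n(j)$ is an iso, which is exactly what lets the homotopy-orbit manipulations go through after localization. Beyond that, the real content is external: it is the Hopkins–Kuhn–Ravenel/Strickland finiteness theorem for $E^\wedge_* B\Sigma_{n}$, which here is simply cited as the known structural fact about Morava $E$-theory. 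With that in hand the argument is essentially formal.
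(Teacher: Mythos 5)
This paper does not prove \myref{powerff}; it is quoted directly from Rezk \cite[3.9]{congruence}, and your sketch is essentially a reconstruction of Rezk's argument there: reduce via the exponential formula $\D_n(M\vee N)\simeq\bigvee_{a+b=n}\D_aM\wedge\D_bN$ to $M=\Si^dE$, identify $L_K\D_n(\Si^dE)$ with completed $E$-homology of an extended power of a sphere, and invoke Strickland's computation for symmetric groups \cite{symmgps}. The one imprecision is your phrase ``$E^\wedge_*B\Si_n$ up to suspension and a sign twist'': for odd $d$ the relevant object is the Thom spectrum of $d$ copies of the permutation representation over $B\Si_n$, not $B\Si_n$ itself, and finite-freeness there rests on Strickland's evenness (in the appropriate parity) of the Morava $K$-theory of these Thom spectra combined with the fact that even $K(h)_*$ implies pro-free $E^\wedge_*$ \cite[8.4]{moravak} --- but this is part of the same circle of results you are already citing as external input, so the argument stands.
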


\begin{rmk}
\myref{powerff} would fail without the completion. That is, if $M$ is a finite free $E$-module, then $\D_n(M)$ need \textit{not} be finite free.
\end{rmk}

\subsection{Lazard's theorem for flat module spectra}\label{lazard}

The goal of this section is to generalize Lazard's characterization of flat modules to module spectra over an arbitrary $\mathbb{E}_1$-ring spectrum, inspired by conversations with Sam Raskin and Charles Rezk. All the modules and module spectra will be left modules, unless otherwise stated.

First, recall Lazard's theorem for ordinary rings. 

\begin{thm}[Lazard's theorem for flat modules]\mylabel{lazardalg}
Let $R$ be a unital and associative ring, then the following conditions are equivalent for an $R$-module $M$:
\begin{enumerate}
 \item $M$ is flat, i.e., the functor $- \otimes_R M$ from right $R$-modules to abelian groups is exact. 
 \item Every map $C \to M$ with $C$ finitely presented factors through some finitely generated free left $R$-module $F$.
 \item $M$ can be written as a filtered colimit of finitely generated free $R$-modules.  
\end{enumerate}
\end{thm}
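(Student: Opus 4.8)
The plan is to prove the cyclic chain of implications $(3) \Rightarrow (1) \Rightarrow (2) \Rightarrow (3)$, which is the classical argument (going back to Lazard, see also Bourbaki), adapted so that the left/right conventions are kept straight since $R$ need not be commutative. The implication $(3) \Rightarrow (1)$ is purely formal: a free left $R$-module is flat, and the class of flat left $R$-modules is closed under filtered colimits, because $- \ot_R M$ preserves all colimits (being a left adjoint) and a filtered colimit of short exact sequences of abelian groups is again short exact.

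For $(2) \Rightarrow (3)$ I would exhibit $M$ as the colimit of the comma category $\cC$ whose objects are pairs $(F,\varphi)$ with $F$ a finitely generated free left $R$-module and $\varphi \colon F \to M$ an $R$-module map, and whose morphisms are maps over $M$. Hypothesis $(2)$ is exactly what forces $\cC$ to be filtered: it is nonempty, binary upper bounds are given by $(F \oplus F', \varphi \oplus \varphi')$ together with the coproduct inclusions, and a parallel pair $u,v \colon (F,\varphi) \to (F',\varphi')$ is coequalized by noting that $\coker(u-v)$ is finitely presented, so the induced map $\coker(u-v) \to M$ factors through some finitely generated free $F''$ by $(2)$, and then $F' \surj \coker(u-v) \to F''$ is the required morphism of $\cC$. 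The tautological cocone yields a map $\colim_{\cC} F \to M$; it is surjective because each $m \in M$ lies in the image of the object $(R, 1 \mapsto m)$, and injective because a class dying in the colimit is represented by some $\xi \in F$ with $\varphi(\xi)=0$, whence $F/R\xi$ is finitely presented and $(2)$ again produces a morphism out of $(F,\varphi)$ sending $\xi$ to $0$.

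The heart of the argument is $(1) \Rightarrow (2)$, which is essentially the equational criterion for flatness. Given $C$ finitely presented, choose a presentation $C = R^n/K$ with $K$ generated by $k_1,\dots,k_m$, write $k_j = \sum_i a_{ji} e_i$, and lift the given map $C \to M$ to $f \colon R^n \to M$; then $x_i := f(e_i)$ satisfy $\sum_i a_{ji} x_i = 0$ for each $j$. Flatness should allow us to absorb these relations into $R$: there exist $y_1,\dots,y_p \in M$ and $b_{il} \in R$ with $x_i = \sum_l b_{il} y_l$ and $\sum_i a_{ji} b_{il} = 0$ for all $j,l$. Granting this, define $g \colon R^p \to M$ by $f_l \mapsto y_l$ and $\phi \colon R^n \to R^p$ by $e_i \mapsto \sum_l b_{il} f_l$; then $g\phi = f$ and $\phi(k_j) = 0$, so $\phi$ descends to $\bar\phi \colon C \to R^p$ with $g\bar\phi$ equal to the original map, exhibiting the desired factorization through the finitely generated free module $R^p$.

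I expect the only genuinely non-formal step to be the absorption claim in the last paragraph. It follows from the standard fact that flatness of $M$ is equivalent to $\mathrm{Tor}_1^R(R/I, M) = 0$ for all finitely generated right ideals $I$ — equivalently, to the vanishing of $\mathrm{Tor}_1^R(-, M)$ on all finitely presented right modules — applied to the right module presented by the matrix $(a_{ji})$, together with a short bookkeeping induction to handle all $m$ relations simultaneously rather than one at a time. Everything else reduces to diagram chasing, and the main thing to watch throughout is consistency of the left/right module conventions in the non-commutative setting.
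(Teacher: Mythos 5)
Your proof is correct, and two of the three implications match the paper's argument in substance: $(3) \Rightarrow (1)$ is the same observation that flatness passes to filtered colimits, and your $(2) \Rightarrow (3)$ is the classical hands-on version (verify the comma category of finite free modules over $M$ is filtered, then check surjectivity and injectivity of the canonical map out of its colimit) of what the paper phrases $\infty$-categorically via finality of the inclusion of $(\Modff_R)_{/M}$ into $(\Comp_R)_{/M}$; the content of your coequalizer step is exactly the hypothesis of the paper's \myref{filteredcrit}. The genuine divergence is in $(1) \Rightarrow (2)$. You use the equational criterion for flatness: absorb the finitely many relations defining $C$ into the ring by producing elements $y_l$ and a matrix $(b_{il})$ with $x_i = \sum_l b_{il} y_l$ and $\sum_i a_{ji} b_{il} = 0$, which you correctly reduce to the vanishing of $\mathrm{Tor}_1^R$ on finitely presented right modules plus an induction on the number of relations. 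The paper instead (adapting the proof of \myref{lazardtop}) argues by duality: for $C$ finitely presented and $M$ flat, the natural map $\Hom_R(C,R) \otimes_R M \to \Hom_R(C,M)$ is an isomorphism (proved by comparing the two sides along a finite presentation of $C$ and applying the five lemma, using flatness of $M$), and an element $\sum_{i=1}^n \phi_i \otimes m_i$ of the left-hand side visibly factors as $C \to R^n \to M$. The duality argument is the one that survives the passage to module spectra, where "relations" have no elementwise meaning; your equational-criterion argument is more elementary and self-contained but is specific to ordinary modules. Both are complete proofs of the stated theorem.
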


For the rest of this section, let $R$ be an $\mathbb{E}_1$-ring spectrum, and denote by $\Mod_R$ the category of left $R$-module spectra. For technical convenience, in this subsection only, we will freely use concepts from the theory of $\infty$-categories, modeled as quasi-categories as developed in \cite{joyal} and \cite{htt}. In particular, all constructions are considered to be taking place within the setting of $\infty$-categories, so that limits and colimits are automatically derived and diagrams commute only up to homotopies witnessed by higher simplices. 

Recall that an $\infty$-category $\cC$ is filtered if and only if every map $f\colon  K \to \cC$ from a finite simplicial set $K$ admits a cocone extension $\bar{f} \colon K^{\triangleright} \to \cC$. 

\begin{lemma}\mylabel{filteredcrit}
Suppose $p\colon  \cD \hookrightarrow \cC$ is a full subcategory of a filtered $\infty$-category $\cC$. For any $c \in \cC$, let $\cD_{c/}$ be the comma $\infty$-category constructed as the pullback
\[\xymatrix{\cD_{c/} \ar[r] \ar[d] & \cC_{c/} \ar[d]\\
\cD \ar[r]_p & \cC.} \]
If for every $c \in \cC$ there exists a map $f \colon c \to d$ with $d \in \cD$, then $\cD_{c/}$ is filtered. 
\end{lemma}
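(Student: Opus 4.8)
The goal is to show that $\cD_{c/}$ is filtered, using the criterion that every map from a finite simplicial set $K$ into $\cD_{c/}$ extends to a cocone. The plan is to take such a map, push it forward into the ambient category $\cC_{c/}$ (via the functor $\cD_{c/} \to \cC_{c/}$ from the pullback square), use filteredness of $\cC$ to get a cocone there, and then use the hypothesis that every object of $\cC$ receives a map from an object of $\cD$ to "correct" the cone point so that it lands in $\cD_{c/}$.

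\begin{proof}
We verify that $\cD_{c/}$ is filtered by checking the cocone extension property. First observe that $\cC_{c/}$ is filtered: the forgetful functor $\cC_{c/} \to \cC$ creates filtered colimits, and more concretely, given a finite simplicial set $K$ and a map $K \to \cC_{c/}$, the underlying diagram $K \to \cC$ extends to a cocone $K^{\triangleright} \to \cC$ since $\cC$ is filtered; one then checks that the relevant data assemble to a cocone in $\cC_{c/}$ (this is a standard fact: for any $c$, the undercategory of a filtered $\infty$-category is again filtered, and every undercategory of $\cC$ is cofinal in... — more precisely, $\cC_{c/} \to \cC$ is a left fibration with filtered base and the claim follows from \cite[5.3.1.13, 5.3.1.18]{htt}).

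Now let $K$ be a finite simplicial set and $g \colon K \to \cD_{c/}$ a map. Composing with $\cD_{c/} \to \cC_{c/}$ gives a diagram $\bar{g} \colon K \to \cC_{c/}$, which by the previous paragraph extends to a cocone $\widehat{g} \colon K^{\triangleright} \to \cC_{c/}$; let $x \in \cC_{c/}$ be its cone point, with underlying object $c \to x'$ in $\cC$. By hypothesis applied to $x' \in \cC$, there is a map $f \colon x' \to d$ with $d \in \cD$. The composite $c \to x' \to d$ is an object $y \in \cD_{c/}$, and the map $f$ supplies, for each vertex of $K$, a compatible map from $g$ at that vertex into $y$ in $\cC_{c/}$ — here we use fullness of $\cD \hookrightarrow \cC$, so that the requisite edges and higher simplices, which a priori live in $\cC_{c/}$, automatically lie in $\cD_{c/}$ once their endpoints do. Concretely, one forms the composite $K^{\triangleright} \xrightarrow{\widehat{g}} \cC_{c/} \xrightarrow{f \circ -} (\cC_{c/})_{/?}$... rather, one extends $\widehat{g}$ by the edge $x \to y$ to a map $K^{\triangleright \triangleright} \to \cC_{c/}$ whose restriction to the "outer cocone" $K^{\triangleright}$ (with cone point $y$) factors through the full subcategory $\cD_{c/}$, since all its vertices — the images of $K$ under $g$, and $y$ — lie in $\cD_{c/}$ and $\cD_{c/} \hookrightarrow \cC_{c/}$ is full. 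This is the desired cocone extension of $g$.
\end{proof}

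The main obstacle, and the point requiring the most care, is the bookkeeping in the last step: turning the map $f \colon x' \to d$ in $\cC$ into an honest cocone $K^{\triangleright} \to \cD_{c/}$ under $g$. The subtlety is that filteredness is about extending diagrams from $K$ itself, and one must show that composing the $\cC_{c/}$-cocone $\widehat{g}$ with the edge to $y$ produces a simplicial map out of $K^{\triangleright}$ (not just $K$) landing in the full subcategory $\cD_{c/}$; this uses that $\cD_{c/}$ is a full subcategory of $\cC_{c/}$ (which is immediate from the pullback description, since pullback of a fully faithful functor is fully faithful and $\cC_{c/} \to \cC$, $\cD \to \cC$ interact correctly), so that any simplex of $\cC_{c/}$ all of whose vertices lie in $\cD_{c/}$ is itself a simplex of $\cD_{c/}$. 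Everything else is formal: filteredness of undercategories of filtered categories is \cite[5.3.1.18]{htt}, and the hypothesis on $\cC$ and $\cD$ is exactly what lets us land the cone point back in $\cD$.
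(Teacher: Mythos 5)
Your proof is correct and follows essentially the same route as the paper's: both use the join/slice adjunction (equivalently, filteredness of $\cC_{c/}$) to extend the diagram to a cocone in the ambient category, then use the hypothesis to push the cone point into $\cD$, and finally use fullness of $\cD_{c/} \hookrightarrow \cC_{c/}$ to land the cocone in $\cD_{c/}$. The paper phrases this by passing directly to a diagram ${}^{\triangleleft}K \to \cC$ and back, but the content is identical, including the (standard, and in both cases implicit) lifting argument needed to recompose the cocone with the edge $x' \to d$.
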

\begin{proof}
Let $g \colon K \to \cD_{c/}$ be a finite diagram, which is the same as a diagram $g \colon K \to \cC_{c/}$ with all vertices in $\cD$. By adjunction, this is equivalent to a diagram $h \colon {}^{\triangleleft}K \to \cC$ with initial vertex $c$ and all but the initial vertices in $\cD$. Since $\cC$ is filtered, $h$ admits an extension $\bar{h}' \colon {}^{\triangleleft}K^{\triangleright} \to \cC$ with final vertex $c' \in \cC$. The assumption implies the existence of $c' \to d$ with $d \in \cD$, giving another extension $\bar{h} \colon {}^{\triangleleft}K^{\triangleright} \to \cC$ with final vertex $d$. Using adjunction again, we get the desired extension $\bar{g}\colon  K^{\triangleright} \to \cD_{c/}$ of the given diagram $g$. 
\end{proof}

The category $\Comp_R$ of \Def{perfect} modules is by definition the smallest full stable subcategory of $\Mod_R$ which contains $R$ and is closed under retracts. By \cite[8.2.5.2]{ha}, it can be identified with the full subcategory of compact objects, and 
\[\mathrm{Ind}\Comp_R = \Mod_R\] 
i.e., every $M \in Mod_R$ is equivalent to the colimit of the canonical filtered diagram $(\Comp_R)/M \to \Mod_R$. Moreover, \cite[8.2.5.7]{ha} exhibits a perfect pairing between perfect left $R$-modules and perfect right $R$-modules, 
\[ \LComp_R \times \RComp_R \xrightarrow{-\wedge_R-} \Sp \xrightarrow{\Omega^{\infty}} \Top;\]
in particular, if $C \in \Comp_R$, we denote its \Def{dual} by $\Hom(C,R) \in \Comp_R$, viewed as a right $R$-module.

\begin{thm}[Lazard's theorem for flat module spectra] \mylabel{lazardtop}
For $M \in \Mod_R$, the following conditions are equivalent:
\begin{enumerate}
 \item $\pi_*M$ is flat as a graded $\pi_*R$-module.
 \item Every map $C \to M$ with $C \in \Comp_R$ factors through some $F \in \Modff_R$.
 \item $M$ can be written as a filtered colimit of finite free $R$-modules.
\end{enumerate}
Note that conditions (2) and (3) are interpreted within the $\infty$-category $\Mod_R$.
\end{thm}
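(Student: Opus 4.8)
The plan is to establish the cycle $(1) \Rightarrow (2) \Rightarrow (3) \Rightarrow (1)$, with the first implication carrying all the homological content and the remaining two being formal consequences of the $\infty$-categorical facts recalled above.

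For $(1)\Rightarrow(2)$, the idea is to use that a perfect module is dualizable. Given $f\colon C\to M$ with $C\in\Comp_R$, write $C^\vee=\Hom(C,R)\in\RComp_R$ for its dual, so there is an equivalence $\mathrm{Map}_R(C,N)\simeq C^\vee\wedge_R N$, natural in the left $R$-module $N$; under it, $f$ becomes a class in $\pi_0(C^\vee\wedge_R M)$. The key point is that if $\pi_*N$ is flat over $\pi_*R$, then the canonical pairing
\[
\mu_N\colon \pi_*C^\vee\otimes_{\pi_*R}\pi_*N\longrightarrow \pi_*(C^\vee\wedge_R N)
\]
is an isomorphism: both sides carry cofiber sequences in the perfect right module $C^\vee$ to long exact sequences — the source because flatness of $\pi_*N$ makes $-\otimes_{\pi_*R}\pi_*N$ exact — so by the five lemma a d\'evissage in $C^\vee$ reduces to the tautological case $C^\vee=\Si^dR$. (This avoids any convergence question that would arise from invoking a K\"unneth spectral sequence directly.) Taking $N=M$, write $[f]=\sum_{i=1}^k x_i\otimes m_i$ as a finite sum of homogeneous elementary tensors with $m_i\in\pi_{d_i}M$, and realize the $m_i$ by a map $h\colon F\to M$ out of the finite free module $F=\bigvee_{i=1}^k \Si^{d_i}R$, so that $h_*\hat f_i=m_i$ for the evident basis $\hat f_i$ of $\pi_*F$. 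Since $\pi_*F$ is free, $\mu_F$ is also an isomorphism, so $\sum_i x_i\otimes \hat f_i$ corresponds to a map $g\colon C\to F$; naturality of $\mu$ then gives $[h\circ g]=\sum_i x_i\otimes m_i=[f]$, i.e.\ $h\circ g\simeq f$, which is condition (2).

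For $(2)\Rightarrow(3)$, I would apply \myref{filteredcrit} to the full inclusion $\Modff_R/M\hookrightarrow\Comp_R/M$: by \cite[8.2.5.2]{ha} the category $\Comp_R/M$ is filtered and the forgetful diagram to $\Mod_R$ has colimit $M$, while condition (2) says exactly that every object of $\Comp_R/M$ admits a map to one in $\Modff_R/M$. Hence \myref{filteredcrit} makes $(\Modff_R/M)_{C/}$ filtered — so weakly contractible — for every $C$, and Joyal's cofinality criterion \cite[4.1.3.1]{htt} shows the inclusion is cofinal; thus $\Modff_R/M$ is filtered and $M\simeq\colim_{F\in\Modff_R/M}F$ is a filtered colimit of finite free modules. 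For $(3)\Rightarrow(1)$, if $M\simeq\colim_i F_i$ is a filtered colimit of finite free modules, then $\pi_*M\cong\colim_i\pi_*F_i$ is a filtered colimit of finitely generated free graded $\pi_*R$-modules, hence flat by \myref{lazardalg}.

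The step I expect to be the main obstacle is $(1)\Rightarrow(2)$, i.e.\ converting the algebraic flatness of $\pi_*M$ into a factorization statement for maps of module spectra; the duality argument above is the cleanest route I know. An alternative, more pedestrian proof proceeds by induction on the cells of $C$, using the equational criterion for flatness to annihilate the obstruction class of each attaching map (which lies in $\ker(\pi_*F'\to\pi_*M)$ for the finite free module $F'$ produced so far) by enlarging $F'$; but that version also has to absorb a residual discrepancy living in $\pi_*M$ by wedging on one more free cell, which makes it less transparent.
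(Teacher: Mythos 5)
Your proposal is correct and follows essentially the same route as the paper: for $(1)\Rightarrow(2)$ the paper likewise reduces to showing that $\pi_*\Hom(C,R)\otimes_{\pi_*R}\pi_*M\to\pi_*(\Hom(C,R)\wedge M)\simeq\pi_*\Hom(C,M)$ is an isomorphism by a five-lemma d\'evissage over the perfect dual (your write-up merely makes explicit the "finite sum of elementary tensors" step that the paper leaves as "this immediately implies (2)"), and your $(2)\Rightarrow(3)$ via \myref{filteredcrit} and the cofinality criterion, as well as $(3)\Rightarrow(1)$, coincide with the paper's argument. The only pedantic omission is that the d\'evissage should also note closure under retracts, since a perfect module is only a retract of a finite cell module, but that is immediate.
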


The following argument provides, with the obvious modifications, also a proof of \myref{lazardalg}.

\begin{proof}
(1) $\Longrightarrow$ (2). Since every perfect module $C$ is a retract of a finite cell $R$-module, it follows by induction on the number of cells that the natural map of spectra
\[\Hom(C,R) \wedge M \to \Hom(C,M)\]
is an equivalence for any $M \in \Mod_R$. Assuming $M$ satisfies (1), we claim that, for every $C \in \Comp_R$, the natural map 
\[\pi_*\Hom(C,R) \otimes_{\pi_*R}\pi_*M \to \pi_*\Hom(C,M)\] 
is an isomorphism; this immediately implies (2). Because perfect modules are dualizable, taking $P = \Hom(C,R)$, this is equivalent to the claim that $f_*\colon \pi_*P \otimes_{\pi_*R} \pi_*M \to \pi_*(P \wedge M)$ is an isomorphism for any $P \in \Comp_R$. 

Denote by $\cP$ the full subcategory of $\Mod_R$ on those perfect right modules $P$ for which $f_*$ is an isomorphism. It is clear that $\cP$ contains $R$ and is closed under finite coproducts, shifts, and retracts, so it suffices to show that it is closed under extensions. To this end, let $P' \to P \to P''$ be a fiber sequence of $R$-modules with $P', P'' \in \cP$. Since $\pi_*M$ is flat over $\pi_*R$, we get a map between 5-term exact sequences of $\pi_*R$-modules
\[\resizebox{\textwidth}{!}{\xymatrix{\pi_{*+1}P'' \otimes_{\pi_*R} \pi_*M \ar[r] \ar[d]^\sim & \pi_*P' \otimes_{\pi_*R} \pi_*M \ar[r] \ar[d]^{\sim} & \pi_*P \otimes_{\pi_*R} \pi_*M \ar[r] \ar[d]^{\sim} \ar[d]_{\therefore} & \pi_*P'' \otimes_{\pi_*R} \pi_*M \ar[r] \ar[d]^{\sim} & \pi_{*-1}P' \otimes_{\pi_*R} \pi_*M \ar[d]^{\sim} \\
\pi_{*+1}(P'' \wedge M) \ar[r] & \pi_{*}(P' \wedge M) \ar[r] & \pi_{*}(P \wedge M) \ar[r] & \pi_{*}(P'' \wedge M) \ar[r]  & \pi_{*-1}(P' \wedge M)}}\]
and the five lemma gives the claim. 

(2) $\Longrightarrow$ (3). We will show that the canonical diagram $(\Modff_R)/M \to \Mod_R$ is filtered and has colimit $M$. Every finitely generated free $R$-module is compact, so there is an inclusion as a full subcategory 
\[p\colon  (\Modff_R){/M} \hookrightarrow (\Comp_R)/M.\]
Assumption (2) corresponds precisely to the condition in \myref{filteredcrit}, so the comma $\infty$-category $((\Modff_R){/M})_{N/}$ is filtered for every $N \in (\Comp_R)/M$, hence weakly contractible by \cite[5.3.1.18]{htt}. Therefore, \cite[4.1.3.1]{htt} shows that $p$ is final, thus the colimit of 
\[(\Modff_R)/M \xrightarrow{p} (\Comp_R)/M \to \Mod_R\]
is $M$. Moreover, applying \myref{filteredcrit} to the special case $N=0$ gives that this diagram is filtered. 

(3) $\Longrightarrow$ (1). It is obvious that (1) is true for $M=R$, so the claim follows since homotopy groups commute with filtered colimits. 
\end{proof}

\begin{defn}
An $E$-module $M$ is called \Def{flat} if it satisfies the equivalent conditions of \myref{lazardtop}. The full subcategory on the flat $E$-modules is denoted by $\Modflat_E$.
\end{defn}

\begin{rmk}
\myref{lazardtop} is a generalization of both Lurie's version of Lazard's theorem for connective modules over a \emph{connective} associative ring spectrum $R$ \cite[8.2.2.15]{ha}, and Rezk's result \cite[3.7]{congruence}, which is the special case of the above for Morava $E$-theory. Note, however, that Lurie's definition of flatness differs slightly from ours, in that he defines, for $R$ connective, a flat module to be an $R$-module $M$ with $\pi_0M$ flat over $\pi_0R$ and such that the extra condition
\[ \xymatrix{\pi_*R \otimes_{\pi_0R}\pi_0M \ar[r]^-{\simeq} & \pi_*M}\]
holds. The proof given above can be adapted to this case as well.
\end{rmk}

\section{Algebraic approximation functors}\label{aaf}

After a brief review of Rezk's algebraic approximation functors, we study their completed analogues and state our main theorem. At the end of this section, we discuss some future applications.

\subsection{Rezk's algebraic approximation functors}

We recall Rezk's construction of algebraic approximation functors in \cite[\S 4]{congruence}, along with their main properties. For this, we will make frequent use of:

\begin{prop} \mylabel{equivcatff}
\cite[3.6]{congruence} The functor $\pi_* \colon h\Modff_E \to \Modff_{E_*}$ is an equivalence of categories. 
\end{prop}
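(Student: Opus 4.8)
The plan is to verify that the functor $\pi_* \colon h\Modff_E \to \Modff_{E_*}$ is essentially surjective, full, and faithful. Essential surjectivity is immediate from the structure theory: by the definition of $\Modff_E$, every object is equivalent to a finite wedge $\bigvee_{i=1}^k \Si^{d_i} E$, and $\pi_*$ sends this to $\bigoplus_{i=1}^k \Si^{d_i} E_*$, which is exactly a general object of $\Modff_{E_*}$ (up to isomorphism, every finitely generated free graded $E_*$-module has this form, since $E_*$ is a graded local ring and projective graded modules over it are free). So the main work is in the fully faithfulness, which amounts to showing that for finite free $E$-modules $X = \bigvee_i \Si^{d_i} E$ and $Y = \bigvee_j \Si^{e_j} E$, the map
\[
[X, Y]_{h\Mod_E} \longrightarrow \Hom_{\Modff_{E_*}}(\pi_* X, \pi_* Y)
\]
is a bijection.

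First I would reduce to the case $X = \Si^{d} E$ and $Y = \Si^{e} E$ by additivity: both the source and target of the comparison map turn finite wedges in either variable into finite products, compatibly, so it suffices to treat a single suspended copy of $E$ in each slot. For the reduced statement, $[\Si^d E, \Si^e E]_{h\Mod_E} \cong [\Si^d E, \Si^e E] \cong \pi_{d-e}\Hom_E(E,E) \cong \pi_{d-e} E = E_{d-e}$, using that $E$ is the unit of $\Mod_E$ and that $\Hom_E(E, -) \simeq \id$. On the algebraic side, $\Hom_{\Modff_{E_*}}(\Si^d E_*, \Si^e E_*) \cong E_*$-module maps $\Si^d E_* \to \Si^e E_*$, which is the degree-$0$ part of $\Hom_{E_*}(\Si^d E_*, \Si^e E_*) \cong \Si^{e-d} E_*$, namely $E_{d-e}$. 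So both sides are canonically $E_{d-e}$, and one checks that the natural map $\pi_*$ induces on Hom-sets corresponds to the identity under these identifications — this is essentially the statement that a degree-$(d-e)$ element of $E_*$ acts on $\pi_* \Si^d E \cong \Si^d E_*$ exactly as the corresponding map of spectra does on homotopy, which is built into the module structure.

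The only subtlety worth flagging is the passage from the graded-module statement to the honest $E_*$-module statement and keeping track of suspensions consistently (as the paper notes, following \cite[\S 3]{koszul}), but there is no real obstacle here: everything is formal once one knows $E$ is the monoidal unit, $\Hom_E(E,-)\simeq\id$, and that projective f.g. graded modules over the graded local ring $E_*$ are free. Alternatively, and perhaps most cleanly, one can invoke the general principle that for any ring spectrum $R$ the homotopy-category functor $h\Mod^{\mathrm{ff}}_R \to \Mod^{\mathrm{ff}}_{R_*}$ is an equivalence whenever $R_*$ has the property that finitely generated projective graded modules are free — this is exactly \cite[3.6]{congruence}, so it suffices to cite it, but the above sketch indicates why it holds in our situation.
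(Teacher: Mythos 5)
Your argument is correct, and it is the standard one: the paper itself gives no proof of this proposition, simply citing \cite[3.6]{congruence}, and your sketch (essential surjectivity from the wedge decomposition plus freeness of finitely generated projective graded modules over the graded local ring $E_*$, and fully faithfulness by biproduct reduction to $[\Si^d E,\Si^e E]\cong E_{d-e}\cong \Hom_{E_*}(\Si^d E_*,\Si^e E_*)$) is exactly the reasoning underlying the cited result. No gaps.
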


The algebraic approximation functors $\T_n \colon \Mod_{E_*} \to \Mod_{E_*}$ are constructed so as to capture the algebraic structure present on $\pi_* L_K \D_n$. In particular, they will satisfy $\T_n(\pi_*E) \cong \pi_* L_K \D_n (E)$ and a similar formula for finitely generated free $E_*$-modules.

Consider the diagram of categories
\begin{equation} \label{diagtn}
\xymatrix{
h\Modff_E \ar[d]_{\pi_*}^{\sim} \ar[r]^i & h\Mod_E \ar[d]_{\pi_*} \ar[r]^-{L_K \D_n} & h\Mod_{E} \ar[d]^{\pi_*} \\
\Modff_{E_*} \ar[r]_j & \Mod_{E_*} \ar@{-->}[r]_{\T_n} & \Mod_{E_*} \\}
\end{equation}
where the left-hand square commutes (strictly), the functors $i$ and $j$ are inclusions of full subcategories, and the downward arrow $\pi_*$ on the left is an equivalence of categories, by \myref{equivcatff}. Note that the functor $\T_n$ constructed below does \textit{not} make the right-hand square commute.

In order to define the algebraic approximations functor $\T$ we have to briefly recall the concept of (left) Kan extension, see \cite[$\S$ X.3]{working} or \cite[1.1]{riehl}. Let $F\colon  \cC \to \cE$ and $K\colon  \cC \to \cD$ be functors. The left Kan extension of $K$ by $F$ consists, if it exists, of a functor $\Lan_KF\colon  \cD \to \cE$ together with a natural transformation $\eta\colon  F \to \Lan_KF \circ K$, which is the initial such pair: 
\[\xymatrix{\cC \ar[rr]^{F} \ar[rd]_{K} & \ar@{}[d]_{\Downarrow}^{\eta} & \cE \\
& \cD. \ar@{-->}[ur]_{\Lan_KF}  
}\]
Universality is in the following sense: If $(G\colon  \cD \to \cE, \theta\colon  F \to G \circ K)$ is another pair as above, then there exists a unique natural transformation $\omega\colon  \Lan_K F \to G$ such that $\theta = \omega K \circ \eta$. In other words, if ${\cE}^{\cD}=\mathrm{Fun}(\cD,\cE)$ and similarly for ${\cE}^{\cC}$, then universality gives a bijection
\[{\cE}^{\cD}( \Lan_K F,G) = {\cE}^{\cC}(F,GK)\]
for any $G \in {\cE}^{\cD}$.

Assuming that $\cC$ is essentially small and $\cE$ is cocomplete, the left Kan extension of $F$ along $K$ always exists and can be computed pointwise using the following formula:
\[ \Lan_KF(d) = \int^{c \in \cC} \Hom_{\cD}(Kc,d) \cdot Fc.\]
Here, the integral sign denotes the coend and \[( - \cdot - )\colon  \mathrm{Set} \times \cE \to \cE, \ (S,e) \mapsto \coprod_Se\] 
is the copower. Note that, if $\cE$ is cocomplete and $K$ is fully faithful, then $\eta$ is a natural isomorphism, so $\Lan_KF \circ K = F$.

\begin{defn} \mylabel{deftn}
\cite[\S 4.2]{congruence} For every $n \geq 0$, the \Def{algebraic approximation functor} $\T_n \colon \Mod_{E_*} \to \Mod_{E_*}$ is defined as
\[
\T_n = \Lan_{\pi_* i} \left( \pi_* L_K \D_n i \right)
\]
i.e., the left Kan extension of the functor $\pi_* L_K \D_n i \colon h \Modff_E \to \Mod_{E_*}$ along the functor $\pi_* i = j \pi_* \colon h\Modff_E \to \Mod_{E_*}$.

We define the functor $\T \colon \Mod_{E_*} \to \Mod_{E_*}$ as the direct sum $\T = \oplus_{n \geq 0} \T_n$.
\end{defn}

Note that this left Kan extension exists, so that $\T_n$ is well defined. Indeed, the category $h \Modff_E$ is essentially small, being equivalent to the essentially small category $\Modff_{E_*}$, and the target category $\Mod_{E_*}$ is cocomplete.

\begin{note}
Using the universal property of left Kan extensions and $\Hom(iM,N)$ $= \Hom(\pi_*iM,\pi_*N)$ for $M \in h\Modff_{E}$ and $N \in h\Mod_E$, one obtains a natural transformation 
\[ \al_n \colon \T_n (\pi_* M) \to \pi_* (L_K \D_n M),\]
called the \Def{approximation map} in \cite[\S 4.3]{congruence}, and likewise
\[ \al = \oplus_{n \geq 0}\al_n \colon \T (\pi_* M) \to \pi_* (L_K \D M).\]
\end{note}

\begin{warning}
In \cite[3.2]{koszul}, the approximation map is defined as a natural transformation $\T (\pi_* L_K M) \to \pi_* (L_K \D M)$ for all $E$-module $M$. We denote this version of the approximation map by $\tilde{\al}$ instead, to avoid ambiguity. However, $\tilde{\al}$ can be thought of as a special case of $\al$, namely for the $E$-module $L_K M$, using the natural equivalence $L_K \D \ral{\sim} L_K \D L_K$, as illustrated in the commutative diagram:
\[
\xymatrix{
\T \left( \pi_* L_K M \right) \ar[dr]_{\al_{L_K M}} \ar[r]^{\tilde{\al}_M} & \pi_* L_K \D M \ar[d]^{\cong} \\
& \pi_* L_K \D L_K M. \\
} 
\]
\end{warning}

\begin{rmk}
The comparison map $\tilde{\alpha}\colon  \T \pi_*L_K \to \pi_* L_K \D$ induces a lifting of $\pi_*L_K\colon $ $ h\Mod_E \to \Mod_{E_*}$ to a functor
\[\xymatrix{h\Alg_{\D} \ar@{-->}[r]^-{\pi_*L_K} \ar[d]_U &  \Alg_{\T} \ar[d]^U \\
h\Mod_{E} \ar[r]_-{\pi_*L_K} & \Mod_{E_*},}\]
where the arrows labelled $U$ are the natural forgetful functors. This allows the study of power operations through the category of $\T$-algebras.
\end{rmk}

\begin{prop}\mylabel{comparison}
\cite[4.4]{congruence} If $M$ is a finite free $E$-module, then the map $\al_n \colon \T_n (\pi_* M) \to \pi_* (L_K \D_n M)$ is an isomorphism.
\end{prop}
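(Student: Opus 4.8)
The plan is to show that on finite free modules the approximation map $\alpha_n$ is nothing other than the canonical isomorphism witnessing that the left Kan extension defining $\T_n$ (\myref{deftn}) reproduces $\pi_* L_K \D_n i$ on $h\Modff_E$; this is an isomorphism for the soft reason that $\pi_* i$ is fully faithful. Since $i\colon h\Modff_E \hookrightarrow h\Mod_E$ is a full subcategory inclusion and $\alpha_n$ is natural, it suffices to treat $M = iC$ for $C \in h\Modff_E$. The one non-formal ingredient is that $\pi_* i = j \circ (\pi_*|_{h\Modff_E})\colon h\Modff_E \to \Mod_{E_*}$ is fully faithful, which follows immediately from \myref{equivcatff} (making $\pi_*|_{h\Modff_E}$ an equivalence) together with $j$ being a full subcategory inclusion; equivalently, it is the bijection $\Hom_{h\Mod_E}(iC, iC') \xrightarrow{\sim} \Hom_{\Mod_{E_*}}(\pi_* iC, \pi_* iC')$ that enters the definition of $\alpha_n$.

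Granting this, I would invoke the general fact (recalled just before \myref{deftn}) that a left Kan extension $\Lan_K F$ along a fully faithful $K$ into a cocomplete target has invertible unit $\eta\colon F \Rightarrow (\Lan_K F)\circ K$. Applied with $K = \pi_* i$ and $F = \pi_* L_K \D_n i$, this gives a natural isomorphism $\eta\colon \pi_* L_K \D_n i \xrightarrow{\sim} \T_n \circ \pi_* i$, so $\T_n(\pi_* M) \cong \pi_* L_K \D_n M$ abstractly for $M$ finite free. It then remains to see that $\alpha_n$ realizes $\eta^{-1}$.

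For this I would use the pointwise coend formula $\T_n(N) = \int^{C \in h\Modff_E} \Hom_{\Mod_{E_*}}(\pi_* iC, N)\cdot \pi_* L_K \D_n iC$. Unwinding the construction, the component of $\alpha_n$ at an $E$-module $M$ is the map out of this coend, with $N = \pi_* M$, which on the summand of $\Hom_{\Mod_{E_*}}(\pi_* iC, \pi_* M)\cdot \pi_* L_K \D_n iC$ indexed by $g = \pi_*(f)$ for $f\colon iC \to M$ is $\pi_* L_K \D_n(f)$. Specializing to $M = iC_0$ and using the $\Hom$-bijection to identify $\Hom_{\Mod_{E_*}}(\pi_* iC, \pi_* iC_0) \cong \Hom_{h\Modff_E}(C, C_0)$ — here fullness of $i$ ensures each $f\colon iC \to iC_0$ is $ig$ for a unique $g\colon C \to C_0$ — the map $\alpha_n$ at $iC_0$ becomes exactly the co-Yoneda (density) isomorphism $\int^{C}\Hom_{h\Modff_E}(C, C_0)\cdot \pi_* L_K \D_n iC \xrightarrow{\sim} \pi_* L_K \D_n iC_0$, whose $(C,g)$-component is $\pi_* L_K \D_n(ig)$. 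Hence $\alpha_n$ is an isomorphism at $iC_0$, and thus at every finite free $E$-module.

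I expect the only real work to be this last bookkeeping step — matching the definition of $\alpha_n$ with the density isomorphism under the identifications, keeping track of variances and of the fullness of $i$. No topological input is required beyond what is already built into $\T_n$: the $K(h)$-locality and extended-power structure (\myref{dmonad}, \myref{powerff}) do not reappear. As a sanity check one may instead argue purely formally that $\alpha_n^{\,iC} = \eta_C^{-1}$, since $\T_n$ was constructed precisely as the Kan extension forced to agree with $\pi_* L_K \D_n i$ on $h\Modff_E$.
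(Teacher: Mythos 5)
Your argument is correct and is essentially the intended one: the paper does not reprove this statement (it cites \cite[4.4]{congruence}), but it records exactly the fact you use in its review of Kan extensions — that $\eta\colon F \to \Lan_K F\circ K$ is invertible when $K$ is fully faithful and the target is cocomplete — and full faithfulness of $\pi_* i$ is precisely \myref{equivcatff} combined with $j$ being a full inclusion. Your coend/co-Yoneda bookkeeping identifying $\al_n$ at $iC_0$ with $\eta^{-1}$ is the right way to make this precise, and no further topological input is needed.
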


Recall that a small category $I$ is called \Def{sifted} if colimits over $I$ commute in $\Set$ with finite products. A \Def{sifted colimit} is a colimit of a diagram over a sifted category. For example, filtered colimits and reflexive coequalizers are sifted colimits. 

\begin{thm}\mylabel{tmonad}
\cite[4.5]{congruence} The functor $\T \colon \Mod_{E_*} \to \Mod_{E_*}$ defined in \myref{deftn} admits the structure of a monad, compatible with that of $L_K \D$.
\end{thm}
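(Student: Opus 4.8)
The plan is to produce the monad structure on $\T$ by transporting the monad structure from $\PP = L_K\D$ along the defining left Kan extension, exploiting the fact that $\T$ is built out of functors defined on the essentially small category $h\Modff_E$ and that it preserves the relevant sifted colimits. First I would record the key formal property: because $\pi_*i\colon h\Modff_E \to \Mod_{E_*}$ is fully faithful and $\Mod_{E_*}$ is cocomplete, the unit $\eta$ of the Kan extension is an isomorphism, so $\T_n \circ (\pi_* i) \cong \pi_* L_K\D_n i$; moreover every object of $\Mod_{E_*}$ is a sifted (indeed filtered, or at worst a reflexive coequalizer of coproducts) colimit of objects in the image of $\pi_* i$, and by the pointwise coend formula $\T$ commutes with all colimits in its argument. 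Hence $\T$ is the \emph{left} Kan extension, and by the universal property a natural transformation out of $\T\circ F$ (for any colimit-preserving $F$) is determined by its restriction to finite free modules.

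Next I would construct the monad data. For the multiplication $\mu\colon \T\T \to \T$, note $\T\T$ also preserves sifted colimits, so it suffices to define a natural transformation $\T\T(\pi_* M) \to \T(\pi_* M)$ for $M \in h\Modff_E$ compatibly. By \myref{powerff} the module $L_K\D_m(M)$ is again finite free, hence $\T(\pi_* M)$ lies in the image of $\pi_* i$ by \myref{comparison}, so we may apply the unit isomorphism again: $\T(\T(\pi_* M)) \cong \T(\pi_* L_K\D M) \cong \pi_* L_K \D L_K \D M$, and then use the monad multiplication of $\PP = L_K\D$ from \myref{dmonad} together with the equivalence $L_K\D L_K \simeq L_K\D$ to land in $\pi_* L_K\D M \cong \T(\pi_* M)$. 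The unit $\iota\colon \id \to \T$ comes similarly from the unit of $\PP$, restricted to finite free modules where it agrees with the inclusion of $\D_0$. One then checks that these assemble, via the sifted-colimit extension, into genuine natural transformations $\mu\colon \T\T \Rightarrow \T$ and $\iota\colon \id \Rightarrow \T$ on all of $\Mod_{E_*}$, and that the approximation map $\tilde\alpha\colon \T\pi_* L_K \to \pi_* L_K\D$ becomes a map of monads — the latter is forced, since everything was defined to be compatible on finite free modules and both sides preserve sifted colimits.

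The remaining work is verifying the monad axioms (associativity of $\mu$ and the two unit triangles). Here the strategy is again reduction to the essentially small subcategory: all the functors involved ($\T$, $\T\T$, $\T\T\T$) preserve sifted colimits in their arguments, each object is a sifted colimit of finite frees, and on finite free modules all the diagrams in question are identified, via the unit isomorphisms and \myref{powerff}, with the corresponding diagrams for the monad $\PP$, which commute by \myref{dmonad}. Since a natural transformation between sifted-colimit-preserving functors is determined by its restriction to a generating subcategory under sifted colimits, commutativity on finite frees upgrades to commutativity everywhere.

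The main obstacle, and the reason the statement is not entirely formal, is bookkeeping the interaction between \emph{iterated} application of $\T$ and the Kan-extension formalism: $\T\T$ is a left Kan extension composed with another left Kan extension, and one must know that this composite still preserves enough colimits and is still determined by its values on finite frees. This is exactly where \myref{powerff} (that $L_K\D_n$ preserves finite freeness) is indispensable — it is what allows the second application of the unit isomorphism $\T\circ\pi_*i \cong \pi_* L_K\D i$ and keeps the whole argument inside the small category where the $\PP$-monad identities live. I would expect the authors to phrase this via a general lemma on extending monads along such Kan extensions, or to invoke a "sifted-colimit-preserving functor is left Kan extended from its restriction" principle, and then the axioms follow by transport of structure; the only genuine input beyond category theory is \myref{dmonad}, \myref{powerff}, and \myref{comparison}.
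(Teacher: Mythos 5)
Your proposal follows essentially the same route as the paper: reduce to the essentially small subcategory $h\Modff_E$ using that $\T$ preserves sifted colimits, identify $\T$ there with $\pi_* L_K \D$ via \myref{comparison}, and transport the monad structure of $\PP = L_K\D$ using \myref{dmonad} and \myref{powerff}. Two small corrections. First, your claim that ``by the pointwise coend formula $\T$ commutes with all colimits in its argument'' is false: $\T$ does not preserve coproducts (by the exponential property, $\T(M\oplus N)\cong \T M\otimes \T N$), and the coend formula alone does not yield any colimit preservation. The correct and load-bearing statement — which your verification actually uses but which you never justify — is that $\T_n$ preserves \emph{sifted} colimits, and the reason is that each $\pi_* i F$ for $F\in h\Modff_E$ is a small projective object of $\Mod_{E_*}$, so the corepresentable $\Hom(\pi_* i F,-)$ appearing in the coend preserves filtered colimits and reflexive coequalizers; this is exactly the point the paper's proof makes. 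Second, a minor bookkeeping caveat: $\T(\pi_*M)=\bigoplus_n \T_n(\pi_*M)$ is free but not \emph{finite} free, so the iteration $\T\T$ must be handled gradedly (each $(\T\T)_n$ involves only finitely many weight pieces, each finite free by \myref{powerff}); your appeal to \myref{powerff} is the right ingredient, but it applies to the individual $\T_m$, not to $\T$ itself.
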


\begin{proof}
By definition, for any $M \in \Mod_{E_*}$,
\[ \T_n(M) = \Lan_{\pi_*i} \pi_*L_K\D_n i (M)=  \int^{F \in h\Modff_E} \mathrm{Hom}(\pi_*i(F),M) \cdot \pi_*L_K\D_n i(F)\]
because the left Kan extension can be constructed pointwise. Since any $F \in h\Modff_E$ is small and projective in $\Mod_{E_*}$, $\mathrm{Hom}(\pi_*iF,-)$ commutes with sifted colimits in $\Mod_{E_*}$, or equivalently, it commutes with filtered colimits and reflexive coequalizers; see \cite[2.1]{siftedcolimits}. This implies that the left Kan extension $\T_n$ also commutes with sifted colimits. Therefore, the monad structure on $\T = \oplus_{n \geq 0} \T_n$ is determined by its restriction to $h\Modff_E$, where $\T_n$ coincides with $\pi_*L_K\D_n$ by \myref{comparison}. By virtue of \myref{dmonad} and \myref{powerff}, the claim follows.
\end{proof}

Moreover, $\T$ has the structure of a graded exponential monad. Roughly speaking, a monad $\mathbb{M}$ on a category $\cC$ is \Def{exponential}, if it is symmetric monoidal with respect to two symmetric monoidal structures on $\cC$. If, additionally, the monad admits a decomposition $\mathbb{M} = \bigoplus_{n \ge 0} \mathbb{M}_n$ into endofunctors $\mathbb{M}_n$ that is compatible with the symmetric monoidal structure, then $\mathbb{M}$ is called \Def{graded exponential}. The reader interested in the precise definition is referred to \cite[2.2]{koszul}; for our purposes, the following result will be sufficient.

\begin{thm}\mylabel{exp}
\cite[4.8]{congruence} The monad $\T \colon (\Mod_{E_*},\op,0) \to (\Mod_{E_*},\ot,E_*)$ is graded exponential. In particular, if $M, N \in \Mod_{E_*}$ and $n \ge 0$, then there is a natural isomorphism
\[
 \bigoplus_{i+j =n} \T_i(M) \otimes \T_j(N)  \xrightarrow{\cong} \T_n(M \oplus N)
\]
where the coproduct is taken over all pairs $(i,j)$ of non-negative integers which sum to $n$. 
\end{thm}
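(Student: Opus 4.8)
The plan is to establish the displayed natural isomorphism first for finitely generated free modules, where it is a restatement of the classical exponential decomposition of extended powers, and then to propagate it to all of $\Mod_{E_*}$ by a sifted-colimit density argument; the full graded exponential monad structure will then follow from the same principle. First I would record the exponential property on the topological side: for any $E$-modules $X$ and $Y$ there is a natural equivalence
\[
\D_n(X \vee Y) \;\simeq\; \bigvee_{i+j=n} \D_i(X) \wedge_E \D_j(Y),
\]
obtained by splitting the $\Si_n$-spectrum $(X \vee Y)^{\wedge_E n}$ as the wedge, over subsets $S \subseteq \{1,\dots,n\}$, of the summands $X^{\wedge_E \abs{S}} \wedge_E Y^{\wedge_E (n-\abs{S})}$, and then passing to homotopy $\Si_n$-orbits: the $\Si_n$-orbit of a subset of cardinality $i$ has isotropy $\Si_i \times \Si_j$ acting factorwise, and homotopy orbits distribute over $\wedge_E$, so the $i$-th orbit contributes $\D_i(X) \wedge_E \D_j(Y)$ (cf.~\cite[I.2]{hinfty}). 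Since $L_K$ commutes with finite wedges, this yields $L_K\D_n(X\vee Y)\simeq\bigvee_{i+j=n}L_K\bigl(\D_iX\wedge_E\D_jY\bigr)$.

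Next I would analyze each summand when $X$ and $Y$ are finite free. Smashing over $E$ with any fixed $E$-module preserves $K(h)$-equivalences — since $K(h)\wedge_S(-\wedge_E C)\simeq(K(h)\wedge_S-)\wedge_E C$ — so $L_K(\D_iX\wedge_E\D_jY)\simeq L_K(L_K\D_iX\wedge_E L_K\D_jY)$. By \myref{powerff} the $E$-modules $L_K\D_iX$ and $L_K\D_jY$ are finite free, hence so is their smash over $E$; and a finite free $E$-module is already $K(h)$-local, $E$ being so. Therefore $L_K(\D_iX\wedge_E\D_jY)\simeq L_K\D_iX\wedge_E L_K\D_jY$, and applying $\pi_*$ together with the evident Künneth isomorphism for finite free $E$-modules gives
\[
\pi_*L_K\D_n(X\vee Y)\;\cong\;\bigoplus_{i+j=n}\pi_*L_K\D_i(X)\otimes_{E_*}\pi_*L_K\D_j(Y).
\]
Feeding this through the approximation maps $\al_k\colon\T_k(\pi_*Z)\to\pi_*L_K\D_k(Z)$, which are isomorphisms for finite free $Z$ by \myref{comparison}, through the identification $\pi_*(X\vee Y)=\pi_*X\oplus\pi_*Y$, and through the equivalence $\pi_*\colon h\Modff_E\xrightarrow{\sim}\Modff_{E_*}$ of \myref{equivcatff}, I obtain a natural isomorphism $\bigoplus_{i+j=n}\T_i(M)\otimes\T_j(N)\xrightarrow{\cong}\T_n(M\oplus N)$ for all finitely generated free $E_*$-modules $M$ and $N$.

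To remove the finiteness hypothesis I would argue as in the proof of \myref{tmonad}. Both $(M,N)\mapsto\T_n(M\oplus N)$ and $(M,N)\mapsto\bigoplus_{i+j=n}\T_i(M)\otimes\T_j(N)$ preserve sifted colimits: $\T_n$ preserves them by the argument there; $\oplus$ preserves all colimits, being a coproduct; $-\otimes_{E_*}-$ preserves colimits in each variable; and for a sifted index category $I$ the diagonal $I\to I\times I$ is final, which is exactly what lets a sifted colimit pass through the tensor product. Since every $E_*$-module is a reflexive coequalizer of free modules and every free module is a filtered colimit of its finite free submodules, the finite free modules generate $\Mod_{E_*}$ under sifted colimits; hence the isomorphism obtained above on $\Modff_{E_*}$ extends uniquely to all of $\Mod_{E_*}$, which is the ``in particular'' clause. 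The remaining data bundled into ``graded exponential monad'' in the sense of \cite[2.2]{koszul} — compatibility of this isomorphism with the unit and multiplication of $\T$, with the associativity, unit, and symmetry constraints for $\oplus$ and $\otimes$, and with the grading — are in each case equalities of natural transformations between sifted-colimit-preserving functors which hold on finite free modules, where they are transported from the corresponding (known) coherences for the monad $L_K\D$ and its extended-power summands; so they hold in general by the same density argument.

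I expect the Künneth step to be the only genuine obstacle: what makes it go through is precisely that $L_K\D_iX$ is \emph{finite free} — the content of \myref{powerff} — rather than merely that $\pi_*\D_iX$ is free, since $\D_iX$ itself is generally not finite free. This is exactly where the passage to the $K(h)$-local, $L$-complete world is essential; the remaining ingredients — the equivariant splitting of $(X\vee Y)^{\wedge_E n}$, the equivalence \myref{equivcatff}, and sifted-colimit density — are formal.
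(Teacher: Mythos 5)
The paper states this result with only a citation to \cite[4.8]{congruence} and gives no proof of its own; your argument is a correct reconstruction of Rezk's, resting on the same three ingredients: the equivariant splitting of $(X \vee Y)^{\wedge_E n}$, \myref{powerff} to make the K\"unneth step valid after $K(h)$-localization, and extension from finite free modules by sifted-colimit density. I see nothing to flag.
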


The properties of the monad $\T$ imply that it preserves various subcategories of $\Mod_{E_*}$, which will become important in Section \ref{proofofthm}. 

\begin{prop}\mylabel{tpres}
\cite[3.9, 4.4, 4.6]{congruence} The algebraic approximation functor $\T_n$ preserves the categories $\Modff_{E_*}$ and $\Modflat_{E_*}$ of finite free modules and flat modules, respectively. 
\end{prop}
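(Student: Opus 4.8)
The plan is to prove both preservation statements by combining the pointwise coend formula for $\T_n$ with Lazard's theorem (\myref{lazardtop}) and \myref{powerff}. Throughout, I will use that $\T_n$ commutes with sifted colimits, as established in the proof of \myref{tmonad}, so in particular $\T_n$ commutes with filtered colimits.

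First I would dispose of the finite free case. Let $M \in \Modff_{E_*}$. By \myref{equivcatff}, the functor $\pi_* \colon h\Modff_E \to \Modff_{E_*}$ is an equivalence, so $M \cong \pi_* F$ for some finite free $E$-module $F$. Since $\pi_* i$ is fully faithful, the unit $\eta$ of the Kan extension is an isomorphism, hence $\T_n(\pi_* F) = \Lan_{\pi_* i}(\pi_* L_K \D_n i)(\pi_* F) \cong \pi_* L_K \D_n(F)$; alternatively, this is the approximation map $\al_n$, which is an isomorphism here by \myref{comparison}. By \myref{powerff}, $L_K \D_n(F)$ is finite free, so $\T_n(M) \cong \pi_* L_K \D_n(F) \in \Modff_{E_*}$. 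This also shows $\T_n$ restricts to an endofunctor of $h\Modff_E$ under the equivalence of \myref{equivcatff}.

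Next the flat case. Let $M \in \Modflat_{E_*}$. By the algebraic Lazard theorem (\myref{lazardalg}, applied to the ring $E_*$), $M \cong \colim_{\lambda} M_\lambda$ is a filtered colimit of finitely generated free $E_*$-modules $M_\lambda$. Since $\T_n$ commutes with filtered colimits, $\T_n(M) \cong \colim_\lambda \T_n(M_\lambda)$, and each $\T_n(M_\lambda)$ is finite free by the previous paragraph. Hence $\T_n(M)$ is a filtered colimit of finite free $E_*$-modules, so it is flat by \myref{lazardalg} again. Here one should be slightly careful that the indexing category for the colimit $\colim_\lambda \T_n(M_\lambda)$ really is filtered — but it is the same indexing category as for $M$, so there is nothing to check.

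The main obstacle, such as it is, is bookkeeping rather than a genuine difficulty: one must make sure that the identifications $\T_n(\pi_* F) \cong \pi_* L_K \D_n(F)$ for finite free $F$ — which is \myref{comparison} — are being invoked correctly, and that $\Mod_{E_*}$-level filtered colimits of finite free modules are computed the same way whether one regards them inside $\Mod_{E_*}$ or inside $\Modflat_{E_*}$ (they are, since $\Modflat_{E_*}$ is closed under filtered colimits in $\Mod_{E_*}$ by Lazard). I would also remark that, since the decomposition $\T = \bigoplus_{n\ge 0}\T_n$ is a coproduct and $\Modff_{E_*}$, $\Modflat_{E_*}$ are \emph{not} closed under infinite coproducts, the statement is genuinely about the individual graded pieces $\T_n$; no claim is made that $\T$ itself preserves these subcategories.
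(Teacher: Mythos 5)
Your argument is correct and is essentially the intended one: the paper itself gives no proof of this proposition, only the citation to \cite[3.9, 4.4, 4.6]{congruence}, and your two steps --- the comparison isomorphism $\al_n$ together with \myref{powerff} for the finite free case, then Lazard's theorem plus commutation of $\T_n$ with filtered colimits for the flat case --- reproduce exactly the argument of the cited source. Your closing remarks (that the statement concerns the individual $\T_n$ rather than $\T$, and that flat modules are closed under filtered colimits) are accurate and harmless bookkeeping.
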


\begin{cor}
If $F \in \Mod_{E_*}$ is free, then so is $\T(F)$. In particular, $\T_n(F)$ is projective for all $n \ge 0$. 
\end{cor}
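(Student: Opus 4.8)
The plan is to deduce this from the graded exponential structure of \myref{exp} together with the preservation property \myref{tpres}. The key observation is that a free $E_*$-module $F$ is a direct sum of (suspended) copies of $E_*$, and since $\T_n$ commutes with sifted colimits (shown in the proof of \myref{tmonad}), in particular with filtered colimits, we may reduce to the case where $F$ is a finite direct sum $\bigoplus_{\ell=1}^k \Si^{d_\ell} E_*$.

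First I would treat the finite case. Applying \myref{exp} repeatedly to the decomposition $F = \Si^{d_1}E_* \oplus \cdots \oplus \Si^{d_k}E_*$, one obtains a natural isomorphism
\[
\T_n(F) \cong \bigoplus_{i_1 + \cdots + i_k = n} \T_{i_1}(\Si^{d_1}E_*) \ot \cdots \ot \T_{i_k}(\Si^{d_k}E_*),
\]
the sum running over tuples of non-negative integers summing to $n$. By \myref{comparison} (or \myref{tpres}), each factor $\T_{i_\ell}(\Si^{d_\ell}E_*) \cong \pi_* L_K \D_{i_\ell}(\Si^{d_\ell}E)$ is finite free by \myref{powerff}, and a tensor product over $E_*$ of finitely generated free modules is again finitely generated free. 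Hence each summand is finite free, so $\T_n(F)$ is free, and therefore $\T(F) = \bigoplus_{n \ge 0} \T_n(F)$ is free as well.

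For a general free module $F = \bigoplus_{s \in S} \Si^{d_s} E_*$ over an arbitrary index set $S$, I would write $F$ as the filtered colimit of its finitely generated free submodules $F_T = \bigoplus_{s \in T} \Si^{d_s} E_*$ indexed by the finite subsets $T \subseteq S$, with the inclusion maps as structure maps. Since $\T_n$ preserves filtered colimits, $\T_n(F) \cong \colim_T \T_n(F_T)$, a filtered colimit of free modules along split injections (the transition maps $\T_n(F_T) \to \T_n(F_{T'})$ are split by \myref{exp}, being induced by split inclusions); such a colimit is again free. Thus $\T(F)$ is free, and in particular each $\T_n(F)$ is a summand of a free module, hence projective. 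The only mild subtlety to check is that the transition maps in the filtered system are well-behaved enough to conclude freeness of the colimit — but since the exponential isomorphism is natural, the maps $\T_n(F_T) \hookrightarrow \T_n(F_{T'})$ for $T \subseteq T'$ are split monomorphisms of free $E_*$-modules, and a filtered colimit of free modules along split monomorphisms is free (indeed one can choose compatible bases). I do not anticipate a serious obstacle; the main point is simply organizing the bookkeeping of the exponential decomposition and the passage to the infinite case.
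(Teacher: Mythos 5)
Your argument is correct and follows essentially the same route as the paper: reduce to the finite free case via the exponential isomorphism of \myref{exp} together with \myref{powerff}/\myref{comparison}, then pass to the filtered colimit over finite subsets of a basis (the paper packages this colimit as an explicit basis for the infinite pointed tensor product $\bigotimes_I \T(E_*)$). One caution: the blanket principle ``a filtered colimit of free modules along split monomorphisms is free'' is not valid in general, since the splittings need not be chosen compatibly across an arbitrary filtered system; in your situation, however, naturality of the exponential isomorphism identifies each transition map $\T_n(F_T) \to \T_n(F_{T'})$ with the inclusion of a direct summand in decompositions indexed by finitely supported tuples that are compatible over the whole system, so the colimit is literally a direct sum of finite free modules --- which is precisely the compatible basis the paper writes down.
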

\begin{proof} A pointed module is a pair $(M,f)$ consisting of a module $M$ together with a morphism $f\colon E_* \to M$. If $M$ is free and $f$ is a split monomorphism (which implies that $\coker f$ is free, since $E_*$ is local), then $(M,f)$ is called pointed free. Let $M_i = (M_i,f_i)$ be a set of pointed free modules indexed by $I$, and consider
\[ \bigotimes_{i \in I}M_i := \colim \bigotimes_{i \in I^{\mathrm{fin}}} M_i \]
where the colimit diagram is indexed by the finite subsets $I^{\mathrm{fin}} \subset I$, and the maps in the diagram are defined by inserting basepoints $f_i \colon E_* \to M_i$. If $\{b_{i,j} \mid j \in S_i\}$ is a basis of $\coker f_i$, then $\{ b_{i_1,j_1} \otimes \cdots \otimes b_{i_k,j_k} \mid k \geq 0, i_t \in I \text{ are distinct}, j_t \in S_{i_t} \}$ is a basis of $\bigotimes_{i \in I}M_i$, which is therefore also free. In other words, $\bigotimes_{i \in I}M_i$ is pointed free on the weak product of the pointed sets $S_i \cup \{\ast\}$.  Taking $F = \colim_I F_i$ a free $E_*$-module written as a filtered colimit of finite free modules, then we get 
\[\begin{array} {lcl} 
\T(F) & = & \T(\colim \bigoplus_{i \in I^{\mathrm{fin}}} E_*) \\ 
& = & \colim\ \T(\bigoplus_{i \in I^{\mathrm{fin}}} E_*) \\
& = & \colim \bigotimes_{i \in I^{\mathrm{fin}}} \T(E_*) \\
& = & \bigotimes_I \T(E_*),
\end{array}\]
using that $\T(M)$ is naturally pointed by $E_*= \T(0) \to \T(M)$. Since $\bigotimes_I \T(E_*)$ is free by the above argument, the claim follows. 
\end{proof}

\begin{rmk}
The construction and basic properties of $\T$ as well as the existence of the approximation map $\al$ are entirely formal. In fact, one can consider a functor on a category of monads with certain finiteness conditions
\[\Lan_{\pi_*i}\pi_*(-)i\colon \Monad(h\Mod_R) \to \Monad(\Mod_{R_*}), \]
which exists for any appropriate ring spectrum $R$. Assuming $R$ is $\mathbb{E}_{\infty}$ and satisfies an analogue of \myref{powerff}, applying this functor to the free commutative $R$-algebra monad $\D^R$ we obtain an algebraic approximation $\T^R$ for the theory of power operations on $R$, together with an approximation map
\[ \al^R \colon \T^R (\pi_* M) \to \pi_* (\D^R M) \]
satisfying the analogue of \myref{comparison}. In particular, this formalism can be applied to the (Koszul dual) free Lie-algebra monad $\Lie\colon \Mod_E \to \Mod_E$ to obtain algebraic approximation functors $\T_{\Lie}$ and a ring of additive Lie-type power operations $\Gamma_{\Lie}$.
\end{rmk}

\subsection{Completed algebraic approximation functors}

In this subsection, we construct an endofunctor $\That \colon \Modhat_{E_*} \to \Modhat_{E_*}$ which better approximates the algebraic structure found on $\pi_* L_K \D$. More specifically, $\That$ takes into account the fact that $\pi_* L_K \D M$ is $L$-complete.

To begin, note that $E_*$ is $L$-complete, since $E$ is $K(h)$-local, and thus all finite free $E_*$-modules are $L$-complete. In other words, the functor $\pi_* i = j \pi_* \colon h\Modff_E \to \Mod_{E_*}$ lands in the full subcategory $\Modhat_{E_*}$. Likewise, we will see in \myref{homotknlocal} that $\pi_* L_K \D_n M$ is $L$-complete for any $E$-module $M$, and so the functor $\pi_* L_K \D_n \colon h\Mod_E \to \Mod_{E_*}$ takes values in $\Modhat_{E_*}$ as well. 

\begin{prop}\mylabel{algtotopss}
\cite[2.3]{filteredcolim} For any $E$-module $M \in \Mod_E$, there exists a natural, conditional and strongly convergent spectral sequence of $E_*$-modules 
\[ E^2_{s,t}=(L_s\pi_*M)_t \Rightarrow \pi_{s+t}L_KM \]
with $E_2^{s,t}=0$ if $s>h$. 
\end{prop}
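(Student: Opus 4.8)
The plan is to realize the $K(h)$-localization of an $E$-module as a derived $\fm$-adic completion and then to extract the spectral sequence from an explicit finite-length model. By \cite[2.2]{filteredcolim}, quoted above, we may compute $L_K M$ as the Bousfield localization of $M$ at $K(h)$ internal to $\Mod_E$. The essential input is then the comparison, due to Hovey and Strickland \cite{moravak}, between $K(h)$-localization and completion at the maximal ideal: since $E_0$ is Noetherian local with $\fm = (u_0, \ldots, u_{h-1})$ (recall $u_0 = p$) generated by a regular sequence of length $h$, there is a natural equivalence $L_K M \simeq M^{\wedge}_{\fm}$, where $M^{\wedge}_{\fm} = \holim_n M/(u_0^n, \ldots, u_{h-1}^n)$ is the derived $\fm$-completion, i.e.\ the homotopy limit over the cofinal tower of iterated cofibers. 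By Greenlees--May duality one also has $M^{\wedge}_{\fm} \simeq F_E\bigl(K_\infty(\fm), M\bigr)$, where $K_\infty(\fm) = \bigwedge_{i=0}^{h-1}\mathrm{fib}\bigl(E \to E[u_i^{-1}]\bigr)$ is the stable Koszul complex on the regular sequence; this spectrum carries a cellular filtration with only $h+1$ stages.

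To produce the spectral sequence, I would filter the model $F_E(K_\infty(\fm), M)$ by the skeleta of $K_\infty(\fm)$. This yields a filtered spectrum with $h+1$ stages, whose associated spectral sequence has $E_1$-page assembled from the $u_i$-local pieces of $\pi_* M$. A purely algebraic computation identifies the $E_2$-page: the differential on the $E_1$-page is the completed Koszul differential, whose $s$-th cohomology on the graded $E_*$-module $\pi_* M$ is by definition the $s$-th left derived functor of $\fm$-adic completion, so $E^2_{s,t} \cong (L_s \pi_* M)_t$ in the notation of Appendix \ref{appendix1}. Since the filtration has only $h+1$ stages, $E^2_{s,t} = 0$ for $s > h$ comes for free; equivalently, this is the algebraic fact that $\fm$-adic completion has homological dimension at most $h$ because $\fm$ is generated by $h$ elements, cf.\ \cite[Appendix A]{moravak} and Appendix \ref{appendix1}. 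Naturality in $M$ is automatic, since every step is functorial.

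For convergence, the skeletal filtration presents $L_K M$ as a \emph{finite} homotopy limit of the filtration quotients, which already makes the resulting spectral sequence strongly convergent (a fortiori conditionally). The form of the statement suggests, however, that one prefers to work instead with the genuinely infinite $\fm$-adic tower $\{M/(u_0^n, \ldots, u_{h-1}^n)\}_n$: one runs its homotopy-limit spectral sequence and merges it with the Koszul-homology (Tor) spectral sequences of the finite stages. Conditional convergence is then Boardman's general criterion for homotopy limits of towers, and since the resulting spectral sequence is concentrated in the horizontal band $0 \le s \le h$, a horizontal vanishing line argument upgrades it to strong convergence. In this formulation the $E_2$-identification rests on the comparison $L_s\Lambda_\fm(N) \cong \lim_n \mathrm{Tor}^{E_*}_s\bigl(E_*/(u_0^n, \ldots, u_{h-1}^n), N\bigr)$, valid when $\fm$ is generated by a regular sequence, together with vanishing of the attendant $\lim^1$-terms.

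I expect the main obstacle to be exactly this identification of the $E_2$-page with $(L_s \pi_* M)_t$. One has to match a spectrum-level filtration spectral sequence --- where $\Mod_E$ is \emph{not} the derived category of $E_*$, so no formality or Künneth collapse is available --- with the purely algebraic derived functors of completion, and to control the infinite localizations $E[u_i^{-1}]$ entering the stable Koszul complex against the homotopy of $M$; in the tower formulation the same difficulty is absorbed into the vanishing of the $\lim^1$-corrections relating $L_s\Lambda_\fm$ to the inverse limit of Koszul homologies.
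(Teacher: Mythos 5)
The paper does not actually prove this statement; it is quoted verbatim from Hovey \cite[2.3]{filteredcolim}, so there is no internal proof to compare against. Your reconstruction via the equivalence $L_K M \simeq M^{\wedge}_{\fm} \simeq F_E(K_\infty(\fm),M)$ and the (finite) Koszul filtration is a legitimate route and is essentially the Greenlees--May construction of the local homology spectral sequence \cite{gmcomplete}; Hovey's own argument is the other standard one, realizing a free resolution of $\pi_*M$ by a tower of $E$-modules with wedges of suspensions of $E$ as cofibers and applying $L_K$ levelwise, which makes the $E_2$-identification tautological (the $E_1$-complex is literally $L_0$ of a free resolution, using $\pi_*L_K(\bigvee_\alpha \Sigma^{d_\alpha}E) \cong L_0(\bigoplus_\alpha \Sigma^{d_\alpha}E_*)$ from \cite[A.13]{moravak}) at the price of having to construct the resolution; your route makes convergence transparent at the price of the $E_2$-identification. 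You correctly isolate that identification as the crux.

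Two points in your sketch need repair. First, the $E_1$-page of the skeletal filtration on $F_E(K_\infty(\fm),M)$ is \emph{not} built from localizations of $\pi_*M$: the pieces are $\pi_*F_E(E[u_S^{-1}],M)$, which are homotopy \emph{limits} (with their own $\lim^1$ contributions), so the statement that the $E_2$-page equals $L_s\pi_*M$ ``by definition'' is not right --- it is precisely the content of Greenlees--May duality (\myref{localhom}) that the homology of this local-homology complex computes the left derived functors of completion. Second, and more seriously, the asserted ``vanishing of the attendant $\lim^1$-terms'' is false in general: by \myref{ses}, $L_sN$ sits in a short exact sequence with $\varprojlim_k^1\mathrm{Tor}_{s+1}(R/\fm^k,N)$ on the left and $\varprojlim_k\mathrm{Tor}_s(R/\fm^k,N)$ on the right, and the $\lim^1$-term is genuinely nonzero already for $N=\bigoplus_k \Z/p^k$ over $\Z_p$ (the standard example showing $L_0N\to N^{\wedge}_{\fm}$ is not injective, cf.\ \cite[1.3]{filteredcolim}). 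So in the tower formulation you must not discard the $\lim^1$-corrections; rather, they appear in the merged spectral sequence shifted by one homological degree and are exactly what assembles, via \myref{ses}, into $L_s\pi_*M$. With that bookkeeping done (or by switching to the free-resolution argument), the proof goes through.
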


\begin{rmk}
For any spectrum $X$, the \Def{completed $E$-homology} of $X$ is defined as
\[E^{\wedge}_* X \colon = \pi_* L_K (E \sm X).\]
The spectral sequence \myref{algtotopss} for the $E$-module $M = E \sm X$ can then be written as
\[E^2_{s,t}=(L_s E_*X)_t \Rightarrow E^{\wedge}_{s+t}X.\]
Note that $E^{\wedge}_*$ is not technically a homology theory, as it fails to preserve coproducts and filtered homotopy colimits. Hovey shows in \cite[1.1, 2.4]{filteredcolim} how this failure can be measured by the higher derived functors $L_s$ for $1 \leq s \leq h$. Informally speaking, \emph{completed Morava $E$-theory is $h$ derived functors away from being a homology theory}.
\end{rmk}

\begin{cor}\mylabel{homotknlocal}
An $E$-module $M$ is $K(h)$-local if and only if $\pi_* M$ is an $L$-complete $E_*$-module. In particular, the functor $\pi_*L_K\D_n$ factors through $\Modhat_{E_*}$,
\[\xymatrix{h\Mod_E \ar[r]^{L_K\D_n} \ar@{-->}[d] & h\Mod_E \ar[d]^{\pi_*} \\
\Modhat_{E_*} \ar[r]_{\iota} & \Mod_{E_*}. }\]
\end{cor}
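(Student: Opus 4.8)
The plan is to reduce everything to the conditionally and strongly convergent spectral sequence
\[ E_2^{s,t}=(L_s\pi_*M)_t \Rightarrow \pi_{s+t}L_KM, \qquad E_2^{s,t}=0 \text{ for } s>h, \]
of \myref{algtotopss}, invoking the following structural facts about $L$-complete modules from Appendix \ref{appendix1}: for every $E_*$-module $N$ and every $s$, the module $L_sN$ is $L$-complete; the full subcategory $\Modhat_{E_*}\subseteq\Mod_{E_*}$ is abelian, closed under extensions, with exact inclusion; and an $L$-complete module $N$ satisfies $L_0N\cong N$ and $L_sN=0$ for $s\ge 1$.

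For the implication that $K(h)$-locality of $M$ forces $\pi_*M$ to be $L$-complete, I would argue as follows. If $M$ is $K(h)$-local then $j\colon M\to L_KM$ is an equivalence, so the spectral sequence converges to $\pi_*M$. Each $E_2$-term is $L$-complete, and each differential is a morphism of $\Mod_{E_*}$ between $L$-complete modules; exactness of the inclusion $\Modhat_{E_*}\hookrightarrow\Mod_{E_*}$ then shows that the kernels and cokernels of these differentials, formed in $\Mod_{E_*}$, remain $L$-complete, so $E_r^{s,t}$ is $L$-complete for all $r$ and in particular so is each $E_\infty^{s,t}$. The vanishing line $E_2^{s,t}=0$ for $s>h$ together with strong convergence makes the filtration on $\pi_nL_KM$ finite, of length at most $h+1$, with subquotients $E_\infty^{s,n-s}$ for $0\le s\le h$; closure of $\Modhat_{E_*}$ under extensions then gives that $\pi_nL_KM\cong\pi_nM$ is $L$-complete.

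For the converse, suppose $\pi_*M$ is $L$-complete. Then $L_0\pi_*M\cong\pi_*M$ and $L_s\pi_*M=0$ for $s\ge 1$, so the $E_2$-page is concentrated on the line $s=0$ and the spectral sequence degenerates there. The resulting isomorphism $\pi_*M\xrightarrow{\sim}\pi_*L_KM$ is realized by the edge homomorphism, which is $\pi_*(j)$; hence $j$ is an equivalence and $M$ is $K(h)$-local. The ``in particular'' clause is then immediate: for any $E$-module $M$, the module $L_K\D_nM$ lies in the essential image of $L_K$, hence is $K(h)$-local, hence has $L$-complete homotopy; so $\pi_*L_K\D_n\colon h\Mod_E\to\Mod_{E_*}$ factors through $\iota\colon\Modhat_{E_*}\to\Mod_{E_*}$ as in the displayed diagram.

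I expect the first implication to be the only delicate point: one has to make sure that running the spectral sequence never leaves the subcategory of $L$-complete modules, which uses both exactness of the inclusion $\Modhat_{E_*}\hookrightarrow\Mod_{E_*}$ and its closure under extensions, and one needs the horizontal vanishing line so that the filtration on $\pi_nL_KM$ is finite and extension-closure actually applies. The converse, by contrast, is a formal collapse argument granted that $L_s$ annihilates $L$-complete modules for $s\ge 1$.
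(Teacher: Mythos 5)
Your proposal is correct and follows essentially the same route as the paper: both directions are run through the spectral sequence $(L_s\pi_*M)_t \Rightarrow \pi_{s+t}L_KM$ of \myref{algtotopss}, using that the $E_2$-page consists of $L$-complete modules and that $\Modhat_{E_*}$ is an abelian subcategory closed under extensions for the ``only if'' direction, and the collapse $L_s\pi_*M=0$ ($s>0$) plus the edge-morphism identification with $\pi_*(j)$ for the ``if'' direction. The paper's proof is terser (it compresses your page-by-page and finite-filtration argument into a single appeal to \myref{lcompletecat}), but the content is the same.
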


\begin{proof}
For the ``only if'' direction, the $E^2$-term of \myref{algtotopss} consists of $L$-complete modules, by \myref{lprop}. Therefore the abutment is also $L$-complete, since $L$-complete modules form an abelian subcategory of $\Mod_{E_*}$, by \myref{lcompletecat}.

Since $\pi_*(L_K M)$ is $L$-complete, the map $\pi_*M \to \pi_*(L_K M)$ uniquely factors as
\[
\pi_* M \ral{\eta} L_0 (\pi_* M) \to \pi_*(L_K M)
\]
where the second map is the (left) edge morphism in the spectral sequence. If $\pi_* M$ is $L$-complete, then by \myref{lprop} we have $L_s \pi_* M = 0$ for all $s > 0$ and thus the spectral sequence collapses to the isomorphism $L_0 (\pi_* M) \cong \pi_*(L_K M)$. This proves the ``if'' direction.
\end{proof}

\begin{rmk}
The special case where $M$ is of the form $E \sm X$ for some spectrum $X$ is proved in \cite[8.4(a)]{moravak} using a different argument.
\end{rmk}

Hence the construction of $\T_n$ as a left Kan extension in \myref{deftn} can be refined as follows.  Consider the following diagram of categories, in which only the diagram of solid arrows is commutative: 
\begin{equation}\label{diagtnhat} 
\xymatrix{
h\Modff_E \ar[dd]_{\pi_*}^{\sim} \ar[dr] \ar[rr]^i & & h\Mod_E \ar[dd]_{\pi_*} \ar[dr]^{\pi_* L_K \D_n} \ar[rr]^{L_K \D_n} & & h\Mod_{E} \ar[dd]^{\pi_*} \\
& \Modhat_{E_*} \ar@{-->}@/_1pc/[rr]_(0.7){\That_n} \ar[dr]_{\iota} & & \Modhat_{E_*} \ar[dr]_{\iota} & \\
\Modff_{E_*} \ar[ur]_{j'} \ar[rr]_j & & \Mod_{E_*} \ar@{-->}[rr]_{\T_n} & & \Mod_{E_*}. \\
}
\end{equation}

\begin{defn} For every $n \geq 0$, the \Def{completed algebraic approximation functor} $\That_n \colon \Modhat_{E_*} \to \Modhat_{E_*}$ is defined as
\[
\That_n = \Lan_{j' \pi_*} (\pi_* L_K \D_n i)
\]
i.e., the left Kan extension of the functor $\pi_* L_K \D_n i \colon h \Modff_E \to \Modhat_{E_*}$ along the functor $j' \pi_* \colon h\Modff_E \to \Modhat_{E_*}$, as illustrated in the diagram \eqref{diagtnhat}.

As in \myref{deftn}, this Kan extension exists since $h\Modff_E$ is essentially small and $\Modhat_{E_*}$ is cocomplete.
\end{defn}

Using the natural isomorphism $L_0 \iota = \id \colon \Modhat_{E_*} \to \Modhat_{E_*}$, the functor $(\pi_* L_K \D_n) \colon$  $h\Mod_E \to \Modhat_{E_*}$ can be written as the composite
\[
h\Mod_E \ral{L_K \D_n} h\Mod_E \ral{\pi_*} \Mod_{E_*} \ral{L_0} \Modhat_{E_*}
\]
and likewise for the functor $j' \pi_* \colon h\Modff_E \to \Modhat_{E_*}$
\begin{align*}
j' \pi_* &= L_0 \iota j' \pi_* \\
&= L_0 j \pi_*.
\end{align*}
Hence, the diagram defining $\That_n$ can be rewritten as 
\[
\xymatrix{
h\Modff_E \ar[d]^{\pi_*}_{\sim} \ar[r]^i & h\Mod_E \ar[d]^{\pi_*} \ar[r]^{L_K \D_n} & h\Mod_{E} \ar[d]^{\pi_*} \\
\Modff_{E_*} \ar[r]_j & \Mod_{E_*} \ar@{-->}[r]_{\T_n} \ar[d]_{L_0} & \Mod_{E_*} \ar[d]^{L_0} \\
& \Modhat_{E_*} \ar@{-->}[r]_{\That_n} & \Modhat_{E_*}}
\]
which yields the following alternate description of $\That_n$.

\begin{prop}
The completed algebraic approximation functor satisfies $\That_n = L_0 \T_n \iota \colon \Modhat_{E_*} \to \Modhat_{E_*}$.
\end{prop}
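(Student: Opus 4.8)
The plan is to unwind both sides against a test object and invoke the universal property of left Kan extensions. Write $\That_n$ for the functor defined by the Kan extension $\Lan_{j'\pi_*}(\pi_* L_K \D_n i)$, and write $\That_n' := L_0 \T_n \iota$ for the composite we wish to identify with it. We already know from \myref{deftn} that $\T_n = \Lan_{\pi_* i}(\pi_* L_K \D_n i)$ on $\Mod_{E_*}$, and the preceding paragraphs show that the relevant diagram of solid functors commutes, using the natural isomorphism $L_0 \iota = \id$ on $\Modhat_{E_*}$ and the identities $j'\pi_* = L_0 j \pi_*$ and $\pi_* L_K \D_n = L_0 \circ \pi_* \circ (L_K \D_n)$.

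First I would observe that $L_0 \colon \Mod_{E_*} \to \Modhat_{E_*}$ is a left adjoint (to the inclusion $\iota$), hence cocontinuous. Since left Kan extensions along a fixed functor are computed by colimits (coends), a cocontinuous functor composed with a left Kan extension is again a left Kan extension: concretely,
\[
L_0 \T_n \iota = L_0 \circ \Lan_{\pi_* i}(\pi_* L_K \D_n i) \circ \iota.
\]
Precomposition with the inclusion $\iota$ is harmless here because $\pi_* i = j\pi_*$ already lands in $\Modhat_{E_*}$ (noted just before \eqref{diagtnhat}), so $\pi_* i = \iota \circ (j'\pi_*)$, and $\Lan_{\iota \circ (j'\pi_*)}(-)\circ \iota$ agrees with $\Lan_{j'\pi_*}(-)$ by the standard iterated-Kan-extension formula whenever $\iota$ is fully faithful — which it is. Then postcomposing with the cocontinuous $L_0$ moves it inside the coend, giving $L_0 \T_n \iota = \Lan_{j'\pi_*}\bigl(L_0 \circ \pi_* L_K \D_n i\bigr) = \Lan_{j'\pi_*}(\pi_* L_K \D_n i) = \That_n$, the last equality because $\pi_* L_K \D_n i$ already takes values in $L$-complete modules (by \myref{homotknlocal}), so applying $L_0$ does nothing.

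Alternatively, and perhaps more transparently, I would verify the universal property directly: for $G \colon \Modhat_{E_*} \to \Modhat_{E_*}$, chain the natural bijections
\[
\Modhat^{\Modhat}(L_0 \T_n \iota, G)
\;\cong\; \Mod^{\Modhat}(\T_n \iota, G)
\;\cong\; \Mod^{\Modff_E}(\pi_* L_K \D_n i,\, G \circ j'\pi_*)
\]
using the $(L_0,\iota)$-adjunction for the first step (applied objectwise, with $\iota$ fully faithful absorbed) and the universal property of $\T_n = \Lan_{\pi_* i}(\pi_* L_K \D_n i)$ together with $\pi_* i = \iota j'\pi_*$ for the second; the right-hand side is exactly the universal property characterizing $\That_n = \Lan_{j'\pi_*}(\pi_* L_K \D_n i)$. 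One must check these bijections are natural in $G$ and compatible with the respective unit transformations, but this is formal.

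The only genuine subtlety — and the step I would flag as the main obstacle — is the bookkeeping around the two inclusions $j' \colon \Modff_{E_*} \hookrightarrow \Modhat_{E_*}$ and $\iota \colon \Modhat_{E_*} \hookrightarrow \Mod_{E_*}$ and the fact that the Kan extension defining $\T_n$ is taken in $\Mod_{E_*}$ while the one defining $\That_n$ is taken in $\Modhat_{E_*}$. One needs that $\iota$ being fully faithful makes $\Lan_{\iota K}(F) \circ \iota \cong \Lan_K(F)$ (for $K = j'\pi_*$, $F = \pi_* L_K \D_n i$), and that the pointwise coend formula is genuinely preserved by the cocontinuous functor $L_0$ — in particular that the copowers $\Hom(-, -)\cdot(-)$ computed in $\Mod_{E_*}$ map under $L_0$ to the copowers computed in $\Modhat_{E_*}$, which holds since $L_0$ preserves all colimits, coproducts included. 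Once these identifications are in place the proof is a short diagram chase, and the statement follows.
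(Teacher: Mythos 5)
Your argument is correct and is essentially the paper's own proof: both are formal Kan-extension calculus exploiting the adjunction $L_0 \dashv \iota$, the iterated-Kan-extension formula, the cocontinuity of $L_0$, and the fact that $\pi_* L_K \D_n i$ already lands in $L$-complete modules. The only cosmetic difference is that the paper factors the ``along'' functor as $j'\pi_* = L_0 \circ (j\pi_*)$ and uses $\Lan_{L_0}(-) \cong (-)\circ\iota$, whereas you factor the value-side composite $\pi_* i = \iota \circ (j'\pi_*)$ and cancel $\Lan_{\iota}(-)\circ\iota$ via full faithfulness of $\iota$ --- the same adjunction facts in a slightly different order.
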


\begin{proof}
Recalling that $L_0$ is left adjoint to $\iota$, we obtain natural isomorphisms:
\begin{align*}
\That_n &= \Lan_{j' \pi_*} \left(\pi_* L_K \D_n i \right) \\
&= \Lan_{L_0 j \pi_*} \left( L_0 \pi_* L_K \D_n i \right) \\
&= \Lan_{L_0} \Lan_{j \pi_*} \left( L_0 \pi_* L_K \D_n i \right) \\
&= \Lan_{j \pi_*} \left( L_0 \pi_* L_K \D_n i \right) \iota \\
&= L_0 \Lan_{j \pi_*} \left( \pi_* L_K \D_n i \right) \iota \\
&= L_0 \T_n \iota. \qedhere
\end{align*}
\end{proof}

The completed algebraic approximation functor $\That_n$ resembles the structure present on the homotopy groups of $K(h)$-local commutative $E$-algebras more closely than $\T_n$, as can be seen by comparing the next lemma with \myref{comparison}; see also \myref{flatchar}.

\begin{prop}\mylabel{flatcomparison} \cite[4.9]{congruence}
If $M$ is a flat $E$-module, then the natural map $\That_n \pi_*M \to \pi_*\LD_nM$ is an isomorphism for any $n$. 
\end{prop}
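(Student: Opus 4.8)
The plan is to reduce to the finite free case, where the statement is \myref{comparison}, using Lazard's characterization of flat modules (\myref{lazardtop}) to write $M$ as a filtered homotopy colimit of finite free $E$-modules and then comparing the two sides term by term. Recall from the preceding proposition that $\That_n = L_0 \T_n \iota$, so the map under consideration is obtained by $L$-completing the approximation map $\al_n\colon \T_n \pi_* M \to \pi_* L_K \D_n M$ and invoking \myref{homotknlocal} to see that its target is already $L$-complete. Fix, by \myref{lazardtop}, a filtered homotopy colimit presentation $M \simeq \colim_\alpha F_\alpha$ with each $F_\alpha \in \Modff_E$.

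First I would analyse the source. The functor $\T_n$ commutes with sifted, in particular filtered, colimits, as shown in the proof of \myref{tmonad}; and $L_0$ preserves all colimits, being left adjoint to $\iota$. Since $\pi_*$ commutes with filtered homotopy colimits of spectra, this gives
\[ \That_n \pi_* M \;\cong\; L_0 \colim_\alpha \T_n \pi_* F_\alpha \;\cong\; L_0 \colim_\alpha \pi_* L_K \D_n F_\alpha , \]
the last isomorphism by \myref{comparison}. By \myref{powerff} each $L_K \D_n F_\alpha$ is finite free, so the right-hand colimit runs over a filtered diagram of finite free $E_*$-modules.

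The analysis of the target is the crux, because $L_K$ does \emph{not} commute with filtered homotopy colimits; rather it produces exactly the completed colimit. Here I would proceed in three steps. (i) The functor $\D_n$ commutes with filtered homotopy colimits: $n$-fold smash powers do, using cofinality of the diagonal in a filtered index category, and homotopy orbits are themselves colimits; hence $\D_n M \simeq \colim_\alpha \D_n F_\alpha$. (ii) Since each $\D_n F_\alpha \to L_K \D_n F_\alpha$ is a $K(h)$-equivalence and $K(h)$-homology commutes with filtered colimits, the canonical map $\D_n M \to N := \colim_\alpha L_K \D_n F_\alpha$ is a $K(h)$-equivalence, whence $L_K \D_n M \simeq L_K N$. (iii) Now $\pi_* N \cong \colim_\alpha \pi_* L_K \D_n F_\alpha$ is a filtered colimit of finite free $E_*$-modules, hence flat over $E_*$; applying the spectral sequence of \myref{algtotopss} to $N$ and using that flat $E_*$-modules are $L$-acyclic (a property of $L$-completion established in Appendix \ref{appendix1}; see also \myref{flatchar}), it collapses onto its edge homomorphism and yields
\[ \pi_* \LD_n M \;\cong\; \pi_* L_K N \;\cong\; L_0 \pi_* N \;\cong\; L_0 \colim_\alpha \pi_* L_K \D_n F_\alpha . \]

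Finally I would match the two computations: both source and target have now been identified with $L_0 \colim_\alpha \pi_* L_K \D_n F_\alpha$, and a short diagram chase — using naturality of $\al_n$ along the structure maps $F_\alpha \to M$ together with the fact, from \myref{comparison}, that $\al_n$ is already an isomorphism on each $F_\alpha$ — shows that the map $\That_n \pi_* M \to \pi_* \LD_n M$ is precisely the isomorphism so obtained. The main obstacle is step (iii): recognising $\pi_* \LD_n M$ as a completed colimit, which hinges on the $L$-acyclicity of flat $E_*$-modules and hence on the degeneration of the spectral sequence of \myref{algtotopss}. By contrast, the source poses no difficulty, since $\T_n$ was constructed so as to commute with filtered colimits.
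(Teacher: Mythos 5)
Your proof is correct, and it is essentially the standard argument for this statement, which the paper itself does not reprove but simply cites from \cite[4.9]{congruence}: reduce to finite free modules via Lazard's theorem, and identify $\pi_*L_K$ of the filtered colimit with the $L_0$-completion of the colimit of homotopy groups using the spectral sequence of \myref{algtotopss} together with the vanishing of $L_s$ for $s>0$ on flat modules (\myref{flatcomp}). Your reading of the source as $L_0\T_n\pi_*M$ is also the right one, since it avoids any circular appeal to \myref{mainthm}, whose proof relies on this proposition.
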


We are now ready to state the main theorem of this paper, which will be proved in Section \ref{proofofthm}.

\begin{thm}\mylabel{mainthm}
The functor $\T \colon \Mod_{E_*} \to \Mod_{E_*}$ preserves $L_0$-equivalences, which means that the natural map
\[
L_0 \T (M) \ral{L_0 \T \eta} L_0 \T L_0 (M) 
\]
is an isomorphism for all $E_*$-module $M$.
\end{thm}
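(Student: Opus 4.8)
The plan is to prove the stated form of the assertion directly, namely that the natural map $L_0\T(\eta_M)\colon L_0\T(M)\to L_0\T L_0(M)$ is an isomorphism for every $E_*$-module $M$; the slogan that $\T$ preserves $L_0$-equivalences then follows formally, since the class of $L_0$-equivalences satisfies two-out-of-three and contains every unit $\eta_M$, so that applying $L_0\T$ to the naturality square of $\eta$ along an $L_0$-equivalence $f$ and using that $L_0\T(L_0 f)$ is an isomorphism does the job. As $\T=\bigoplus_{n\ge 0}\T_n$ and $L_0$ preserves coproducts, it is enough to show that $L_0\T_n(\eta_M)$ is an isomorphism for each $n$.

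The first step is a reduction to free $E_*$-modules via reflexive coequalizers. The ingredients are: each $\T_n$ preserves reflexive coequalizers, since these are sifted colimits (see the proof of \myref{tmonad}); $L_0$ preserves all colimits, being a left adjoint; and, by \myref{lcompletecat}, the inclusion $\iota\colon\Modhat_{E_*}\hookrightarrow\Mod_{E_*}$ is exact, hence preserves cokernels and therefore reflexive coequalizers. Combined with the identity $\That_n=L_0\T_n\iota$, this shows that both $M\mapsto L_0\T_n(M)$ and $M\mapsto L_0\T_n L_0(M)$ send reflexive coequalizers to reflexive coequalizers. Presenting an arbitrary $M$ as a reflexive coequalizer $F_1\rightrightarrows F_0\to M$ of free $E_*$-modules (for instance the beginning of the bar resolution of the free--forgetful adjunction between sets and $\Mod_{E_*}$) and comparing the two resulting coequalizer presentations, one reduces to proving that $L_0\T_n(\eta_F)$ is an isomorphism for every free module $F$.

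For the free case, let $F=\bigoplus_{s\in S}\Si^{d_s}E_*$, so that $L_0F$ is pro-free and in particular flat by \myref{flatchar}. I would introduce the free $E$-module $M'=\bigvee_{s\in S}\Si^{d_s}E$, which has $\pi_*M'\cong F$. Since the higher derived functors $L_s$ ($s\ge 1$) vanish on flat $E_*$-modules, the spectral sequence of \myref{algtotopss} degenerates and gives $\pi_*L_KM'\cong L_0F$; hence $L_KM'$ is a $K(h)$-local flat $E$-module by \myref{lazardtop}, and \myref{flatcomparison} yields $L_0\T_nL_0F=\That_n(L_0F)\cong\pi_*\LD_n(L_KM')\cong\pi_*\LD_nM'$, the last step by \myref{dmonad}. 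It then remains to identify $\pi_*\LD_nM'$ with $L_0\T_nF$. Writing $M'=\colim_\alpha V_\alpha$ as the filtered colimit of its finite sub-wedges, and using that $\D_n$ (being an $n$-fold smash power followed by a homotopy orbit) commutes with filtered homotopy colimits while filtered homotopy colimits of $K(h)$-equivalences are $K(h)$-equivalences, one gets $\LD_nM'\simeq L_K\bigl(\colim_\alpha L_K\D_nV_\alpha\bigr)$. By \myref{powerff} and \myref{comparison} each $\pi_*L_K\D_nV_\alpha\cong\T_n\pi_*V_\alpha$ is finite free, so $\pi_*\bigl(\colim_\alpha L_K\D_nV_\alpha\bigr)\cong\colim_\alpha\T_n\pi_*V_\alpha$ is flat; degenerating \myref{algtotopss} once more and using that $\T_n$ commutes with filtered colimits, this gives $\pi_*\LD_nM'\cong L_0\bigl(\colim_\alpha\T_n\pi_*V_\alpha\bigr)\cong L_0\T_nF$. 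A diagram chase with the approximation maps $\al_n$, the localization map $j$, and the unit $\eta$ should confirm that the resulting composite isomorphism $L_0\T_nF\cong L_0\T_nL_0F$ is $L_0\T_n(\eta_F)$.

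I expect the main obstacle to be precisely this passage to free modules. The naive approach --- writing $M$ and $L_0M$ as filtered colimits of finite free modules and applying \myref{comparison} termwise --- breaks down because $\iota\colon\Modhat_{E_*}\hookrightarrow\Mod_{E_*}$ does not preserve filtered colimits; this is exactly why the proof of \myref{tmonad} for the uncompleted functor $\T$ does not carry over. Replacing filtered colimits by reflexive coequalizers (which $\iota$, being exact, does preserve) handles the reduction step, but it forces the free case to be treated separately, and there the decisive non-formal input is \myref{dmonad}: that $\D_n$ commutes with filtered homotopy colimits and preserves $K(h)$-equivalences. The remaining delicate point will be to keep careful track of naturality, so that the final comparison is genuinely the map $L_0\T(\eta_F)$ and not merely an abstract isomorphism.
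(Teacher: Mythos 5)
Your proof is correct, but it reaches the crucial free-module case by a genuinely different route than the paper. The first half — splitting $\T=\bigoplus_n\T_n$ and reducing to free modules via reflexive coequalizers, using that $\T_n$, $L_0$ and $\iota$ all preserve them — is exactly the paper's reduction. For a free module $F$, however, the paper stays algebraic: it proves the ``key property'' (\myref{key}, \myref{keygeneral}) that $E_*/\fm\otimes\T_m(-)$ cannot distinguish $M$ from $M/\fm^kM$ for $k\gg0$, which rests on the nilpotence of $\T_m(\nu)$ mod $\fm$ for $\nu\in\fm$ (\myref{nilp}), proved by topologically inverting $\nu$ (\myref{inv}); it then concludes with the detection lemma \myref{detection}, using that $\T_nL_0M$ is flat. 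You instead realize $F$ as $\pi_*M'$ for $M'=\bigvee_s\Si^{d_s}E$ and apply the flat comparison \myref{flatcomparison} twice — once to $M'$ and once to $L_KM'$, which is flat because $\pi_*L_KM'\cong L_0F$ is pro-free — together with \myref{dmonad}; note that your filtered-colimit identification of $\pi_*\LD_nM'$ with $L_0\T_nF$ is in effect a re-derivation of \myref{flatcomparison} for $M'$, so that step can simply be cited. The naturality check you defer is routine: $\al_n$ is natural in $h\Mod_E$, and $\pi_*(j)\colon F\to\pi_*L_KM'$ factors as $\eta_F$ followed by the edge isomorphism of the collapsing spectral sequence \myref{algtotopss}, so three isomorphisms in the naturality square for $j$ force the fourth, $L_0\T_n(\eta_F)$, to be one. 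Both arguments ultimately rest on \myref{flatcomparison} (the paper invokes it for $\nu^{-1}M$ inside the proof of \myref{nilp}); yours uses it for large pro-free modules and is shorter and cleaner, whereas the paper's longer detour yields quantitative by-products — the explicit exponent $k(n)$ and the mod-$\fm$ nilpotence of $\T_m(\nu)$ — that are of independent interest (cf. \myref{nonlinear} and Appendix \ref{appendix2}).
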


By \myref{mainthm} and \myref{CompletedMonad}\eqref{InheritedMonad}, we obtain the following.

\begin{cor}
The completed algebraic approximation functor $\That=L_0 \T \iota \colon \Modhat_{E_*} \to \Modhat_{E_*}$ admits a natural monad structure, inherited from that of $\T$.
\end{cor}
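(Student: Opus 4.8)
The plan is to prove that $\T$ preserves $L_0$-equivalences, i.e., that $L_0\T\eta\colon L_0\T(M)\to L_0\T L_0(M)$ is an isomorphism for every $E_*$-module $M$, where $\eta\colon M\to L_0 M$ is the completion unit. The guiding principle is that $\T$ is built from the pieces $\T_n$, which by \myref{comparison} agree with $\pi_*L_K\D_n$ on finite free modules, and by \myref{tpres} preserve free and flat modules; moreover by the proof of \myref{tmonad} each $\T_n$ commutes with sifted (in particular filtered) colimits. So I would first reduce the statement from $\T$ to each $\T_n$, and then attack $\T_n$ by resolving $M$ in a way that $L_0$ and $\T_n$ interact well with.

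First I would record the kernel/cokernel of $\eta$: the completion unit $\eta\colon M\to L_0 M$ has kernel and cokernel given by $\fm$-divisible (``$L$-acyclic'') modules — modules $N$ with $L_0 N = 0 = L_1 N$ (using that $E_0$ is Noetherian local of Krull dimension $h$, so the relevant derived functors vanish above degree $h$, cf.\ \myref{algtotopss} and the appendix). Equivalently, $f\colon M\to N$ is an $L_0$-equivalence exactly when $L_s(f)$ is an isomorphism for all $s$, which for a map with $L$-acyclic kernel and cokernel amounts to $\mathrm{Ext}^*_{E_*}$-type vanishing. The cleanest route is: every $E_*$-module $M$ admits a presentation $F_1\to F_0\to M\to 0$ with $F_0,F_1$ free; applying $L_0$ gives a right exact sequence, and because $\T_n$ is (by \myref{exp}) a graded exponential functor sending direct sums to tensor products, the behaviour of $\T_n$ on such presentations is controlled. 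I would use \myref{exp} to write, for $M = \coker(F_1\to F_0)$, the value $\T_n(M)$ as a reflexive coequalizer of $\T(F_\bullet)$-type terms — concretely $\T_n$ commutes with the reflexive coequalizer computing $M$ from its free presentation, since $\T_n$ commutes with reflexive coequalizers. The same applies verbatim after $L_0$: $L_0\T_n L_0(M)$ is the $L_0$ of the corresponding coequalizer of free modules $L_0\T(F_\bullet)$.

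So the crux is the \emph{free} case: I must show $L_0\T_n(F)\to L_0\T_n(L_0 F)$ is an isomorphism for $F$ free. Here $\T_n(F)$ is free by the Corollary to \myref{tpres}, hence $L$-complete already when $F$ is finite free, and in general $F = \colim_I F_i$ over finite free $F_i$; since $\T_n$ commutes with filtered colimits, $\T_n(F) = \colim_I\T_n(F_i)$, a filtered colimit of finite free modules, i.e., a flat module. On the other side, $L_0 F = \widehat{F}$ is a flat $E_*$-module (free modules are $L$-complete after completion only in the finitely generated case, but completed free modules are pro-free, hence flat by \myref{flatchar}), so by \myref{flatcomparison} — interpreted algebraically — $\T_n$ on $L_0 F$ relates to $\pi_*L_K\D_n$ of the corresponding flat $E$-module, which is $L$-complete. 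The comparison $L_0\T_n(F)\to L_0\T_n(L_0 F)$ then becomes a map of flat modules which is an $L_0$-equivalence because it is an iso mod $\fm$: reducing modulo $\fm$ kills the difference between $F$ and $L_0 F$ (completion is the identity mod $\fm$), and $\T_n$ applied to an iso mod $\fm$ between free modules is an iso mod $\fm$ by the exponential/monomial-basis description in the proof of the Corollary to \myref{tpres}. A Nakayama-type argument for $L$-complete flat (= pro-free) modules — a map of pro-free modules inducing an iso mod $\fm$ is an isomorphism, cf.\ the appendix — then finishes the free case, and assembling via the coequalizer presentation and summing over $n$ gives the theorem.

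The main obstacle I anticipate is precisely the free case when $F$ is not finitely generated: $\T_n$ does \emph{not} commute with the infinite direct sum defining $F$ in a way compatible with $L_0$ naively, because $\T_n$ turns sums into tensor products (\myref{exp}) and infinite tensor products interact subtly with completion — this is exactly the ``peculiar property of $\Modhat_{E_*}$'' flagged in the introduction, namely that $L_0$ of an infinite direct sum of copies of $E_*$ is \emph{not} the naive completion and infinite tensor/completion do not commute. The resolution is to work with the \emph{pointed} tensor product $\bigotimes_I\T(E_*)$ from the Corollary to \myref{tpres} and check that its $L_0$-completion is computed correctly as a filtered colimit of finite sub-tensor-products followed by $L_0$, using that $L_0$ commutes with filtered colimits only up to $\lim^1$-corrections bounded by the derived functors $L_s$ vanishing for $s>h$; controlling these correction terms and showing they vanish on the nose for the pro-free modules in play is the technical heart. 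Everything else — the reduction to $\T_n$, the coequalizer bookkeeping, and the Nakayama argument — is routine given the appendix.
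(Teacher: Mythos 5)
Your overall architecture matches the paper's: the corollary reduces (via the formal transfer result \myref{CompletedMonad}) to showing that $\T$ preserves $L_0$-equivalences, and you correctly reduce that to each $\T_n$, then to free modules via reflexive coequalizers, and you correctly plan to finish with a Nakayama-type detection lemma for maps into pro-free modules (this is \myref{detection}). However, the crucial step of your argument is wrong. You claim that since $F \to L_0F$ is an isomorphism mod $\fm$, and ``$\T_n$ applied to an iso mod $\fm$ between free modules is an iso mod $\fm$,'' the map $E_*/\fm \otimes \T_n(F) \to E_*/\fm \otimes \T_n(L_0F)$ is an isomorphism. That implication is false: $\T_n$ does \emph{not} commute with $E_*/\fm \otimes -$, and more to the point, $\T_n$ of a mod-$\fm$ isomorphism need not be a mod-$\fm$ isomorphism. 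The paper's own \myref{nonlinear} gives a counterexample: at height $1$ and $p=2$, the map $E_* \to E_*/\fm$ is an isomorphism mod $\fm$, but $E_*/\fm \otimes \T_2(E_*) \to E_*/\fm \otimes \T_2(E_*/\fm)$ is not (one computes $T_2(\Z/2) \cong \Z/4$, so the target is one-dimensional while the source is two-dimensional). The exponential/monomial-basis description you invoke only controls $\T_n$ of free modules and maps between them; it says nothing about $\T_n$ of a map whose target is merely congruent to a free module mod $\fm$.

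The actual content of the proof is precisely the quantitative replacement for this false step: one must show there is a $k = k(n) > 1$ such that $E_*/\fm \otimes \T_m(-)$ only depends on the argument modulo $\fm^{k}$ (\myref{key} and \myref{keygeneral}), and then run your square with $M/\fm^{k}$ and $L_0M/\fm^{k}$ in the bottom row rather than reducing all the way mod $\fm$. Establishing \myref{key} is where the only non-algebraic input enters: one shows that any $\nu \in \fm$ acts nilpotently mod $\fm$ on $\T_m(E_*)$ (\myref{nilp}), which follows from the vanishing of $\pi_*L_K\D_m(\nu^{-1}E)$ mod $\fm$ (\myref{inv}), and then exploits the exponential structure on an explicit reflexive coequalizer presentation of $E_*/\fm^k$. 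Your proposal contains no substitute for this nilpotence argument, and the obstacle you flag instead (infinite tensor products versus completion, $\lim^1$ corrections) is not where the difficulty lies — the free case is handled in the paper entirely at the level of $E_*/\fm \otimes \T_n(-)$, where those issues do not arise.
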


\begin{rmk}
Note that the argument given in the proof of \myref{tmonad} does \emph{not} work to show that the completed algebraic approximation functors $\That$ form a monad. Indeed, $F \in h\Modff_{E}$ is not small when viewed as an object of $\Modhat_{E_*}$ via $L_0\pi_*i$, see \myref{small}. Thus
\[ \mathrm{Hom}(\pi_*iF,-)\colon  \Modhat_{E_*} \to \Set\]
does not preserve filtered colimits, so we cannot reduce to finite free modules as before. Instead, it is a consequence of our main theorem that $\That$ commutes with sifted colimits.
\end{rmk}

\begin{cor}
The functor $\That_n$ commutes with sifted colimits. 
\end{cor}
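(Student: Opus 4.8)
The plan is to deduce this from \myref{mainthm} by a formal manipulation with the reflective adjunction $L_0 \dashv \iota$. We will use three facts already at hand: (i) $\Modhat_{E_*}$ is a reflective subcategory of $\Mod_{E_*}$ with reflector $L_0$, so that $L_0$ preserves all colimits and, for any small diagram $Y$ in $\Modhat_{E_*}$, its colimit is computed as $L_0$ applied to the colimit of $\iota Y$ formed in $\Mod_{E_*}$; (ii) the identity $\That_n = L_0 \T_n \iota$; and (iii) the fact, established inside the proof of \myref{tmonad}, that $\T_n \colon \Mod_{E_*} \to \Mod_{E_*}$ commutes with sifted colimits.

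First I would observe that each $\T_n$ individually satisfies the conclusion of \myref{mainthm}. Since $\T = \bigoplus_{n \ge 0} \T_n$ and $L_0$, being a left adjoint, preserves direct sums, the natural map $L_0 \T(M) \to L_0 \T L_0(M)$ is the direct sum over $n$ of the natural maps $L_0 \T_n(M) \to L_0 \T_n L_0(M)$. A direct sum of morphisms in $\Modhat_{E_*}$ is an isomorphism if and only if each summand is, so \myref{mainthm} yields a natural isomorphism $L_0 \T_n \iota L_0 \cong L_0 \T_n$ of functors $\Mod_{E_*} \to \Modhat_{E_*}$, for every $n \ge 0$.

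Now let $X \colon I \to \Modhat_{E_*}$ be a diagram over a sifted category $I$, and write $\colim_I$ for the colimit taken in $\Mod_{E_*}$ and $\lcolim_I$ for the colimit taken in $\Modhat_{E_*}$. Combining the ingredients above gives a chain of natural isomorphisms
\[
\That_n\bigl(\lcolim_I X\bigr) = L_0 \T_n \iota L_0 \bigl(\colim_I \iota X\bigr) \cong L_0 \T_n \bigl(\colim_I \iota X\bigr) \cong L_0 \bigl(\colim_I \T_n \iota X\bigr) \cong \lcolim_I \bigl(L_0 \T_n \iota X\bigr) = \lcolim_I \That_n X,
\]
where the first equality uses (i) and (ii), the first isomorphism is the previous paragraph applied to $M = \colim_I \iota X$, the second uses that $\T_n$ commutes with the sifted colimit $\colim_I \iota X$ by (iii), and the third uses once more that $L_0$ preserves colimits. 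An unwinding shows that this composite is inverse to the canonical comparison morphism $\lcolim_I \That_n X \to \That_n(\lcolim_I X)$, which therefore is an isomorphism.

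The whole argument is formal once \myref{mainthm} is available, so there is no genuine obstacle; the only point that deserves a little care is the bookkeeping needed to keep $L_0 \colon \Mod_{E_*} \to \Modhat_{E_*}$ distinct from the completion endofunctor $\iota L_0$ of $\Mod_{E_*}$, and to check that the displayed isomorphisms are natural in $X$ and really do compose to the inverse of the canonical comparison map — a routine diagram chase using the universal property of the Kan extension defining $\That_n$.
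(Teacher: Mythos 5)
Your argument is correct, and the core of it --- the chain
$\That_n(\lcolim X) = L_0\T_n\iota L_0(\colim \iota X) \cong L_0\T_n(\colim\iota X) \cong L_0(\colim \T_n\iota X) \cong \lcolim \That_n X$,
justified by \myref{mainthm}, the colimit-preservation of the left adjoint $L_0$, and the colimit-preservation of $\T_n$ --- is exactly the computation the paper performs. The only structural difference is that the paper first splits sifted colimits into filtered colimits and reflexive coequalizers, runs this chain only in the filtered case, and disposes of reflexive coequalizers separately by observing that finite free $E_*$-modules are projective in $\Modhat_{E_*}$ (\myref{flatchar}); you instead invoke the fact, recorded in the proof of \myref{tmonad}, that $\T_n$ commutes with \emph{all} sifted colimits of $E_*$-modules, which lets the same chain handle both cases at once. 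That is a mild streamlining rather than a new idea, and your preliminary reduction from $\T$ to the individual $\T_n$ (via retracts of completed direct sums) is also fine, though redundant, since the paper's proof of \myref{MainThmRestated} already establishes the statement summand by summand.
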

\begin{proof}
We need to show that $\That_n$ commutes with reflexive coequalizers and filtered colimits. The first part of the statement follows from the construction, since finite free $E_*$-modules are complete and hence projective in $\Modhat_{E_*}$, by \myref{flatchar}.

For the second claim, consider a filtered diagram $D\colon  \cC \to \Modhat_{E_*}$ and denote by $\mathrm{colim} \, \iota D$ the colimit of the corresponding diagram (via $\iota$) in $\Mod_{E_*}$; then we have
\[\begin{array} {lcl} 
\That_n \lcolim \, D & = & L_0 \T_n\iota L_0 \colim \, \iota D \\ 
& = & L_0 \T_n \colim \, \iota D \\
& = & L_0 \colim \, \T_n\iota D \\
& = & L_0 \colim \, \iota L_0 \T_n \iota D \\
& = & \lcolim \, \That_n D
\end{array}\]
by \myref{mainthm}.
\end{proof}

In view of \myref{mainthm} and \myref{CompletedMonad}\eqref{CatAlgCompletedMonad}, the homotopy of $K(h)$-local commutative $E$-algebras takes values in $\That$-algebras, yielding the functor:
\[
\pi_* \colon \Alghat_E \to \Alghat_{\T} \cong \Alg_{\That}.
\]

In \cite[4.12]{congruence}, it is shown that the forgetful functor $U \colon \Alg_{\T} \to \Alg_{E_*}$ is \Def{plethyistic}, i.e., it reflects isomorphisms and has both a left adjoint and a right adjoint. The analogous statement in the completed setting also holds.

\begin{prop}
The forgetful functor $\hat{U} \colon \Alg_{\That} \to \Alghat_{E_*}$ is plethyistic.
\end{prop}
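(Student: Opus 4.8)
The plan is to realize $\hat U$ as the restriction of Rezk's plethyistic functor $U\colon\Alg_{\T}\to\Alg_{E_*}$ and to transport its two adjoints across the completion. By \myref{CompletedMonad}\eqref{CatAlgCompletedMonad} together with \myref{mainthm}, the category $\Alg_{\That}\cong\Alghat_{\T}$ is the full subcategory of $\Alg_{\T}$ spanned by those $\T$-algebras whose underlying $E_*$-module is $L$-complete, and likewise $\Alghat_{E_*}$ is the full subcategory of $\Alg_{E_*}$ on the $L$-complete commutative $E_*$-algebras; under these identifications $\hat U$ is simply $U$ restricted to these full subcategories. From this, reflection of isomorphisms is immediate: if $f$ is a morphism of $\That$-algebras and $\hat U f$ is an isomorphism, then $f$ is an isomorphism in $\Alg_{\T}$ because $U$ reflects isomorphisms, and the inverse again lies in the full subcategory $\Alg_{\That}$.

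For the left adjoint, observe first that the inclusion $\Alg_{\That}\hookrightarrow\Alg_{\T}$ is reflective. Indeed, since $\T$ preserves $L_0$-equivalences by \myref{mainthm}, the $L$-completion $L_0\lvert C\rvert$ of the underlying module of a $\T$-algebra $C=(\lvert C\rvert,\theta)$ inherits a natural $\T$-algebra structure: one transports $\theta$ along the isomorphism $L_0\T(\eta)\colon L_0\T\lvert C\rvert\xrightarrow{\ \sim\ }L_0\T L_0\lvert C\rvert$ supplied by \myref{mainthm}, and the completion unit exhibits the result as the reflection of $C$ in $\Alg_{\That}$ (this is the usual mechanism by which a localization compatible with a monad descends to algebras; it is also visible in the formal package of \myref{CompletedMonad}). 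Writing $F$ for Rezk's left adjoint to $U$, $(-)^{\wedge}\colon\Alg_{\T}\to\Alg_{\That}$ for this reflection, and $\iota''\colon\Alghat_{E_*}\hookrightarrow\Alg_{E_*}$ for the inclusion, I would set $\hat F:=(-)^{\wedge}\circ F\circ\iota''$; then for $A\in\Alghat_{E_*}$ and $B\in\Alg_{\That}$ there are natural bijections
\[\Hom_{\Alg_{\That}}(\hat F A,B)\cong\Hom_{\Alg_{\T}}(F\iota'' A,B)\cong\Hom_{\Alg_{E_*}}(\iota'' A,UB)\cong\Hom_{\Alghat_{E_*}}(A,\hat U B),\]
the first from reflectivity (as $B$ already lies in $\Alg_{\That}$), the second from Rezk's adjunction, and the third from fullness of $\iota''$ together with $UB=\iota''\hat U B$. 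Hence $\hat F$ is left adjoint to $\hat U$.

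For the right adjoint, let $G\colon\Alg_{E_*}\to\Alg_{\T}$ be Rezk's right adjoint to $U$, the cofree $\T$-algebra functor. The key observation is that $G$ preserves $L$-completeness: the cofree $\T$-algebra $G(A)$ is built as a limit of a diagram assembled out of $A$ — concretely, its underlying $E_*$-module is a product of copies of $A$ (up to a retract), reflecting that the plethory associated with $\T$, namely $\T(E_*)$, is free over $E_*$ by the Corollary to \myref{tpres} — and $\Modhat_{E_*}$ is closed under limits and products inside $\Mod_{E_*}$, since the inclusion $\iota\colon\Modhat_{E_*}\hookrightarrow\Mod_{E_*}$ is a right adjoint. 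Consequently, for $A\in\Alghat_{E_*}$ the module underlying $G(A)$ is $L$-complete, so $G(A)\in\Alg_{\That}$, and $G$ restricts to a functor $\hat G\colon\Alghat_{E_*}\to\Alg_{\That}$. The chain of natural bijections
\[\Hom_{\Alg_{\That}}(B,\hat G A)\cong\Hom_{\Alg_{\T}}(B,G A)\cong\Hom_{\Alg_{E_*}}(UB,A)\cong\Hom_{\Alghat_{E_*}}(\hat U B,A)\]
for $B\in\Alg_{\That}$ and $A\in\Alghat_{E_*}$ — using fullness of the two inclusions and Rezk's adjunction — shows $\hat G$ is right adjoint to $\hat U$.

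The step I expect to be the main obstacle is the last one: verifying that the cofree $\T$-algebra functor $G$ of \cite[4.12]{congruence} sends $L$-complete algebras to $L$-complete ones. This requires inspecting the construction of $G$ (or the Borger--Wieland-style cofree-object construction it rests on) closely enough to see that the underlying module of $G(A)$ is manufactured from $A$ by limits, products and cotensors over $E_*$ — operations under which the subcategory $\Modhat_{E_*}$ is stable — and, if one wants the cleanest formulation, to invoke the freeness of $\T(E_*)$ over $E_*$. Everything else — the identification of $\Alg_{\That}$ with a full subcategory of $\Alg_{\T}$, the reflectivity of that inclusion, and the shuffling of adjunctions — is routine once \myref{mainthm} and Rezk's \cite[4.12]{congruence} are available.
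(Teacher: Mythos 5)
Your overall strategy is genuinely different from the paper's. The paper does not restrict Rezk's adjoints along the completion at all: its proof is a one-line reduction, rerunning the argument of \cite[4.12]{congruence} verbatim in the completed setting, with the single new input being \myref{ksmall} (the $\kappa$-presentability of $\Modhat_{E_*}$), which is needed precisely because finite free modules are no longer small in $\Modhat_{E_*}$ (\myref{small}) and so the applicability of the adjoint functor theorem has to be re-justified. That route never touches the explicit cofree construction. Your isomorphism-reflection and left-adjoint steps are fine: the reflectivity of $\Alg_{\That}\cong\Alghat_{\T}$ inside $\Alg_{\T}$ is exactly what \myref{CompletedMonad}(7)--(8) packages (the universal property in the proof of part (7) is the unit of the reflection), and composing Rezk's $F$ with that reflection gives the left adjoint by the chain of bijections you wrote.

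The right-adjoint step, however, has a real gap, and it is the one you flagged. First, the underlying $E_*$-\emph{module} of the cofree $\T$-algebra $G(A)\cong\Hom_{\Alg_{E_*}}(\T(E_*),A)$ is not a product of copies of $A$: only the underlying \emph{set} admits such a (Witt-coordinate) description, while the additive structure is the one induced by the coaddition on the plethory $\T(E_*)$ and is twisted by universal polynomials, exactly as for Witt vectors. Closure of $\Modhat_{E_*}$ under products and limits therefore does not apply directly to $G(A)$. Second, even the set-level product description presupposes that $\T(E_*)$ is a \emph{polynomial} $E_*$-algebra; the Corollary to \myref{tpres} only gives that $\T(E_*)$ is free as an $E_*$-\emph{module}, which is weaker and, at heights $h>1$, the polynomial statement is not available. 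To close the gap along your lines one would have to exhibit $G(A)$ as a limit of truncated cofree objects, each an iterated extension of $L$-complete modules, and then invoke closure of $\Modhat_{E_*}$ under limits and extensions (\myref{lcompletecat}); this is doable but is precisely the analysis the paper avoids by instead citing \myref{ksmall} and letting the adjoint functor theorem produce the right adjoint abstractly.
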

\begin{proof}
This is proven exactly as in \cite[4.12]{congruence}, using \myref{ksmall} to assure that the adjoint functor theorem is applicable. 
\end{proof}

\subsection{A word about applications}\label{appli}

The usefulness of \myref{mainthm} becomes apparent in recent work of Rezk \cite{calculations}, where calculations of power operations are carried out. Completion can now be dealt with systematically, as it is built into the structure of $\That$-algebras, unlike $\T$-algebras. Moreover, the fact that $\That$ is a monad allows the use of standard categorical and homological machinery for algebras over monads, such as simplicial resolutions of $\That$-algebras.

One application is the computation of $\pi_* \PP M$. The monad $\T \colon \Mod_{E_*} \to \Mod_{E_*}$ was constructed to encode the algebraic structure present in $\pi_* L_K \D M = \pi_* \PP M$ for any $E$-module $M$ (which is $K(h)$-local, without loss of generality). The completed monad $\That = L_0 \T \iota \colon \Modhat_{E_*} \to \Modhat_{E_*}$ provides a better approximation. As noted in \myref{flatcomparison}, the comparison map $\That(\pi_* M) \to \pi_* \PP M$ is an isomorphism whenever $M$ is a flat $E$-module. However, the problem of describing the algebraic structure of $\pi_* \PP M$ for \emph{any} $M$ remains difficult. 

There is a spectral sequence roughly of the form
\[
E^2_{s,t} = \left( (\mathbf{L}_s \That)(\pi_* M) \right)_t \Rightarrow \pi_{s+t} \PP M
\]
where $\mathbf{L}_s \That \colon \Modhat_{E_*} \to \Modhat_{E_*}$ denotes the $s^{\text{th}}$ left derived functor of the non-additive functor $\That$, and the comparison map $\That(\pi_* M) \to \pi_* \PP M$ is an edge morphism. We hope that working with the completed monad $\That$ instead of $\T$ will facilitate the construction of this spectral sequence and its calculation.

At height $h=1$, McClure provided an explicit description of $\pi_* \PP (K \sm_S X)$ for any spectrum $X$ in terms of $K$-homology operations for $X$ \cite[IX \S 3]{hinfty}. It would be interesting to relate those results to our proposed calculation.

Another application is in computing topological Andr\'e-Quillen (co)homology of (augmented) $K(h)$-local commutative $E$-algebras. This (co)homology appears notably in work of Behrens and Rezk on the Bousfield-Kuhn functors \cite{bktaq}. Topological Andr\'e-Quillen homology of a commutative $E$-algebra $A$ is the $E$-module $\TAQ(A)$ of ``indecomposables'' of $A$, in some suitable derived sense. When working $K(h)$-locally, the object of interest is $L_K\TAQ(A)$. At the algebraic level, there is a corresponding ``indecomposables'' functor $F \colon \Alg_{\T} \to \Mod_{E_*}$ and an $L$-completed version $\hat{F} = L_0 F \iota \colon \Alg_{\That} \to \Modhat_{E_*}$. There is a spectral sequence roughly of the form
\[
E^2_{s,t} = \left( (\mathbf{L}_s \hat{F})(\pi_* A) \right)_t \Rightarrow \pi_{s+t} L_K \TAQ(A)
\]
where the $E^2$ term can be thought of as Andr\'e-Quillen homology for $\That$-algebras. At height $h=1$, $\That$-algebras are sometimes referred to as $\te$-algebras, and their Andr\'e-Quillen (co)homology played an important role in work of Goerss and Hopkins \cite[\S 2]{ghproblems} \cite{ghspaces}.

\begin{rmk}
There are other approaches for computing topological Andr\'e-Quillen homology from (algebraic) Andr\'e-Quillen homology \cite{ahsstaq}.
\end{rmk}

\section{Proof of the main theorem}\label{proofofthm}

The goal of this section is to give a proof of our main \myref{mainthm}, for any prime $p$ and height $h$. An alternative, more explicit argument in the height $1$ case will be the subject of Section \ref{height1}.

\subsection{Reduction to the key property}

Let $n$ be a fixed non-negative integer. The following result, the proof of which is deferred to the next subsection, is the key technical property of the functors $\T_n$.

\begin{prop}\mylabel{key}
There exists a positive integer $k=k(n)$ such that for all $m \le n$ and all $q \in \Z$, the natural map
\[ E_*/\fm \otimes \T_m(\Si^q E_*) \xrightarrow{} E_*/\fm \otimes \T_m(\Si^q E_*/\fm^k) \]
is an isomorphism. Note that if this holds, then the same conclusion holds for any integer $k' \geq k(n)$ in place of $k$.
\end{prop}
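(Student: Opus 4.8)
\subsection*{Proof proposal}

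The plan is to prove the statement with $k(n)=1$; the case of arbitrary $k'\ge k(n)$ is then automatic since $\fm^{k'}\subseteq\fm$. Fix $m\le n$ and $q\in\Z$. The idea is to exploit the fact that $\T_m$ commutes with sifted colimits, in particular with reflexive coequalizers (established in the proof of \myref{tmonad}), together with the exponential structure \myref{exp}. Present the cyclic module $\Si^q E_*/\fm^k$ as a \emph{reflexive} coequalizer of finite free modules: let $F_0=\Si^q E_*$, let $F_1=\bigoplus_{|\nu|=k}\Si^q E_*$ (one summand per monomial $u^\nu$ of degree $k$ in $u_0,\dots,u_{h-1}$), let $\iota\colon F_1\to F_0$ send the $\nu$-th generator to $u^\nu$ times the generator, and form the pair $d_0=(0,\id),\ d_1=(\iota,\id)\colon F_1\oplus F_0\rightrightarrows F_0$ with common section $b\mapsto(0,b)$; then $\operatorname{coeq}(d_0,d_1)=F_0/\iota(F_1)=\Si^q E_*/\fm^k$. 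Applying $\T_m$ and \myref{exp} gives
\[
\T_m(\Si^q E_*/\fm^k)=\operatorname{coeq}\!\Big(\bigoplus_{i+j=m}\T_i(F_1)\otimes\T_j(F_0)\ \rightrightarrows\ \T_m(F_0)\Big),
\]
where, by naturality of the exponential isomorphism, $\T_m(d_1)$ restricted to the $(i,j)$-summand is $\mu_{i,j}\circ(\T_i(\iota)\otimes\id)$ and $\T_m(d_0)$ restricted to the $(i,j)$-summand is $\mu_{i,j}\circ(\T_i(0)\otimes\id)$, with $\mu_{i,j}$ the exponential multiplication $\T_i(F_0)\otimes\T_j(F_0)\to\T_m(F_0)$.

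The key input is the identity $\T_j(s\cdot\id)=s^j\cdot\id$ for a scalar $s\in E_0$, which follows from $\D_j(s\cdot\id_M)=(s\cdot\id_M)^{\wedge_E j}_{h\Si_j}=s^j\cdot\id_{\D_j M}$ together with \myref{comparison} on finite free modules; more generally $\T_j$ of a map $\Si^d E_*\to\Si^{d'}E_*$ given by $r\in\fm^k E_{d'-d}$ has image in $\fm^{kj}\cdot\T_j(\Si^{d'}E_*)$ (when $d'-d$ is odd such a map is $0$), and $\T_j(0)=0$ for $j\ge 1$ since $\D_j$ of the zero module vanishes. Decomposing $\iota$ through the diagonal and a fold map and using \myref{exp} once more, one checks that $\mathrm{im}\,\T_i(\iota)\subseteq\fm\cdot\T_i(F_0)$ for every $i\ge 1$, because in each summand of the exponential decomposition at least one tensor factor is $\T_{i'}(\text{mult.\ by }u^{\nu})$ with $i'\ge1$, hence multiplication by an element of $\fm$. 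On the $(0,m)$-summand both $\T_m(d_0)$ and $\T_m(d_1)$ are the unit $\mu_{0,m}=\id$, so they agree there; on every $(i,j)$-summand with $i\ge1$ the map $\T_m(d_1)-\T_m(d_0)=\mu_{i,j}\circ(\T_i(\iota)\otimes\id)$ has image in $\fm\cdot\T_m(F_0)$. Therefore, after applying the right exact functor $E_*/\fm\otimes-$, the two coequalizer maps coincide, and the coequalizer becomes $E_*/\fm\otimes\T_m(F_0)$ with the natural map from $E_*/\fm\otimes\T_m(\Si^q E_*)$ being the identity. This is the desired isomorphism.

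The main thing to get right is not a deep obstacle but the bookkeeping in the second paragraph: tracking how $\T_m$ applied to $d_0$ and $d_1$ interacts with the exponential isomorphism of \myref{exp} (which requires its naturality in both variables), and computing $\T_i(\iota)$ for the map $\iota$ between \emph{different} finite free modules by routing it through a diagonal and a fold map so that the scalar identity $\T_j(s\cdot\id)=s^j\cdot\id$ applies factor by factor. One should also double-check the degree-shift subtleties (the generators of $\fm^k$ sit in degree $0$, so all the scalars involved do lie in $\fm^k\subseteq\fm$, and odd suspensions only contribute trivial maps). Note that this argument produces $k(n)=1$; it is a separate matter—handled in the reduction to \myref{mainthm} and, at height $1$, illustrated in Section~\ref{height1}—that the \emph{monad} $\T=\bigoplus_m\T_m$, rather than the individual functors $\T_m$, interacts with $L$-completion in a more delicate way.
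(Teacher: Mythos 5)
Your overall strategy --- presenting $\Si^q E_*/\fm^k$ as a reflexive coequalizer of finite free modules, applying $\T_m$ together with the exponential decomposition of \myref{exp}, and checking that the two parallel maps agree after applying $E_*/\fm \otimes -$ --- is exactly the paper's. But the argument cannot yield $k(n)=1$: the statement with $k=1$ would say that $\T_m$ commutes with $E_*/\fm\otimes-$ on these modules, and the paper points out (see the discussion immediately after \myref{key}, and \myref{nonlinear}) that this is false, so that $k(2)>1$ is forced. The fatal step is your claimed identity $\T_j(s\cdot\id)=s^j\cdot\id$ (equivalently $\D_j(s\cdot\id_M)=s^j\cdot\id_{\D_j M}$) and its consequence that multiplication by $r\in\fm^k$ induces a map with image in $\fm^{kj}\cdot\T_j$. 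The non-equivariant homotopy $(s\cdot\id_M)^{\wedge_E j}\simeq s^j\cdot\id$ does not descend to homotopy orbits; the discrepancy is supported on the higher group homology of $\Si_j$ and is precisely the non-additivity of power operations. Concretely, at height $1$ one has $\T_p(E_*)=\Z_p\lan x^p,Qx\ran$ and, by McClure's formula (see \myref{claim} and \myref{Tht1identification}), $Q(px)=pQ(x)-\tfrac{1}{p}(p^p-p)x^p\equiv x^p\not\equiv 0 \pmod{\fm}$, so $\T_p(\text{mult.\ by }p)$ is neither multiplication by $p^p$ nor even zero mod $\fm$; the $\la$-ring analogue is \myref{lambda2NonLin}, $\la^2(px)=p\la^2(x)+\binom{p}{2}\la^1(x)\la^1(x)$.

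What the paper proves instead is the much weaker statement that $\T_m(\nu)$ is merely \emph{nilpotent} mod $\fm$ for $\nu\in\fm$ (\myref{nilp}), with an exponent $\la(\nu,m)$ that is not controlled a priori; this is where the genuinely topological input enters (\myref{inv}: inverting $\nu$ kills $\pi_*\LD_m$, via the $L$-completion spectral sequence of \myref{algtotopss}). The integer $k(n)$ is then chosen as a sum of these nilpotence exponents, so that every generator of $\fm^k$ is a product containing some $u_i^{l_i}$ with $l_i\ge\la(u_i)$, and is therefore sent to $0$ mod $\fm$ by $\T_i$ for $i\ge 1$ by functoriality; only then does the final coequalizer comparison, which you set up correctly, go through. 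So the skeleton of your argument matches the paper's, but the linearity claim it rests on is false, and repairing it requires both replacing ``$\T_j(\nu)$ has image in $\fm^{j}\T_j$'' by ``some power of $\T_j(\nu)$ vanishes mod $\fm$'' and taking $k$ large enough to exploit that nilpotence.
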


Note that, if the functors $\T_m$ and $E_*/\fm \otimes -$ commute, the proposition would be automatic, in which case we could take any $k$. This holds if $m < p$, since then the homotopy orbit spectral sequence collapses at the $E^2$-page. However, as we will see in Section \ref{height1}, it is wrong in general, so that \myref{key} has content and we do need $k > 1$; see \myref{nonlinear}.

\begin{cor}\mylabel{keygeneral}
There exists a positive integer $k=k(n)$ such that for all $m \le n$ and  $M \in \Mod_{E_*}$ the natural map 
\[  E_*/\fm \otimes \T_m(M) \xrightarrow{} E_*/\fm \otimes \T_m(E_*/\fm^k \otimes M) \]
is an isomorphism.
\end{cor}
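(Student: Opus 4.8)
The plan is to deduce \myref{keygeneral} from \myref{key} by a formal argument whose only inputs are the graded exponential structure of \myref{exp} (which computes $\T_m$ on finite direct sums) and the fact, established in the proof of \myref{tmonad}, that each $\T_m$ preserves sifted colimits. Fix $k = k(n)$ as supplied by \myref{key}. Consider the two functors $\Phi_m, \Psi_m \colon \Mod_{E_*} \to \Mod_{E_*}$ given by $\Phi_m(M) = E_*/\fm \otimes \T_m(M)$ and $\Psi_m(M) = E_*/\fm \otimes \T_m(E_*/\fm^k \otimes M)$, together with the natural transformation $\tau \colon \Phi_m \to \Psi_m$ induced by the quotient $M \to E_*/\fm^k \otimes M$. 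I would first check that $\tau$ is an isomorphism on the subcategory $\Modff_{E_*}$, and then bootstrap to all of $\Mod_{E_*}$ using that $\Phi_m$, $\Psi_m$, and hence $\tau$, preserve sifted colimits.

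For the finite free case, let $M = \bigoplus_{l=1}^{r} \Si^{q_l} E_*$. Iterating \myref{exp} yields, naturally in the summands, an isomorphism
\[
\T_m(M) \;\cong\; \bigoplus_{m_1 + \cdots + m_r = m} \; \bigotimes_{l=1}^{r} \T_{m_l}\!\bigl(\Si^{q_l} E_*\bigr),
\]
the sum ranging over $r$-tuples of non-negative integers summing to $m$, and likewise for $\T_m(E_*/\fm^k \otimes M) = \T_m\bigl(\bigoplus_l \Si^{q_l} E_*/\fm^k\bigr)$. Applying $E_*/\fm \otimes_{E_*} -$ and invoking the monoidality of base change, $E_*/\fm \otimes_{E_*} (A_1 \otimes_{E_*} \cdots \otimes_{E_*} A_r) \cong (E_*/\fm \otimes_{E_*} A_1) \otimes_{E_*/\fm} \cdots \otimes_{E_*/\fm} (E_*/\fm \otimes_{E_*} A_r)$, the map $\tau_M$ becomes the direct sum over all tuples $(m_1, \dots, m_r)$ of the $E_*/\fm$-linear tensor products of the maps $E_*/\fm \otimes \T_{m_l}(\Si^{q_l} E_*) \to E_*/\fm \otimes \T_{m_l}(\Si^{q_l} E_*/\fm^k)$. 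Since $m_l \le m \le n$ for every $l$, each of these is an isomorphism by \myref{key}, and a direct sum of tensor products of isomorphisms is an isomorphism; hence $\tau_M$ is an isomorphism for all $M \in \Modff_{E_*}$.

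To pass to an arbitrary $E_*$-module, note that $E_*/\fm \otimes_{E_*} -$ and $E_*/\fm^k \otimes_{E_*} -$ are left adjoints and so preserve all colimits, while each $\T_m$ preserves sifted colimits by the proof of \myref{tmonad}; therefore $\Phi_m$ and $\Psi_m$, and with them $\tau$, preserve sifted colimits. Any $E_*$-module $M$ can be built from finite free $E_*$-modules using filtered colimits and reflexive coequalizers — e.g. present $M$ as a reflexive coequalizer of free modules and each free module as the filtered colimit of its finite free submodules, both of which are sifted colimits. Since $\tau$ is an isomorphism on $\Modff_{E_*}$ and commutes with filtered colimits and reflexive coequalizers, it is an isomorphism on all of $\Mod_{E_*}$, which is exactly \myref{keygeneral}.

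The argument is entirely formal; all the genuine content sits in \myref{key}, which is why the nonlinearity of $\T_m$ — the obstruction to $E_*/\fm \otimes -$ simply commuting past $\T_m$, see \myref{nonlinear} — does not resurface here. The one point deserving care is the diagram chase underlying the finite free case: one must verify that the chain of natural isomorphisms coming from \myref{exp} carries the global comparison map $\tau_M$ to the direct sum of the summand-wise comparison maps, which follows from the naturality of \myref{exp} in both variables applied to the map $\bigoplus_l(\Si^{q_l}E_* \to \Si^{q_l}E_*/\fm^k)$. I expect this bookkeeping, rather than any conceptual difficulty, to be the main thing to get right.
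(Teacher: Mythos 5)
Your proposal is correct and follows essentially the same route as the paper: the finite free case via the graded exponential structure of \myref{exp} combined with \myref{key}, followed by a bootstrap to all modules using that $\T_m$ and the tensor functors preserve filtered colimits and reflexive coequalizers (the paper inserts flat modules as an intermediate step, but this is the same argument). The naturality bookkeeping you flag at the end is indeed the only point of care, and it goes through exactly as you describe.
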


\begin{proof}
First, let us assume $M= \Si^{q_1} E_* \op \cdots \op \Si^{q_r} E_*$ is a finite free module of rank $r$. Then, using that $\T$ is an exponential monad (\myref{exp}) and since $E_*/\fm \otimes E_*/{\fm} \cong E_*/{\fm}$, we get
\[ E_*/\fm \otimes \T_n(M) \cong \bigoplus_{i_1+ \cdots + i_r = n} \left( E_*/\fm \otimes \T_{i_1}(\Si^{q_1} E_*) \right) \otimes \cdots \otimes \left( E_*/\fm \otimes \T_{i_r}(\Si^{q_r} E_*) \right) \]
where the direct sum is indexed by all $r$-tuples of non-negative integers $(i_1,\ldots,i_r)$ summing to $n$. By \myref{key}, this is isomorphic to
\begin{align*}
&\bigoplus_{i_1+ \cdots + i_r = n} \left( E_*/\fm \otimes \T_{i_1}(\Si^{q_1} E_*/{\fm}^k) \right) \otimes \cdots \otimes (E_*/\fm \otimes \T_{i_r} \left( \Si^{q_r} E_*/{\fm}^k) \right) \\
\cong &E_*/\fm \otimes \T_n \left( \Si^{q_1} E_*/{\fm}^k \op \cdots \op \Si^{q_r} E_*/{\fm}^k \right) \\
\cong &E_*/\fm \otimes \T_n(E_*/\fm^k \otimes M)
\end{align*}
where the first isomorphism uses the exponential monad structure again. By Lazard's theorem, we can write any $M \in \Modflat_{E_*}$ as a filtered colimit of finite free modules $F_j$, $M = \mathrm{colim}_J F_j$, thus
\begin{align*}
E_*/\fm \otimes \T_n(M) &\cong \mathrm{colim}_J (E_*/\fm \otimes \T_n(F_j)) \\
&\cong \mathrm{colim}_J (E_*/\fm \otimes \T_n(E_*/\fm^k \otimes F_j)) \\
&\cong E_*/\fm \otimes \T_n(E_*/\fm^k \otimes M)
\end{align*}
as both tensor products and $\T_n$ commute with filtered colimits. Here, the second isomorphism uses the claim for finite free modules proven above. Finally, any module can be written as a reflexive coequalizer of free modules, so the result follows. 
\end{proof}

We are now ready to prove our main result (\myref{mainthm}). 

\begin{thm}\mylabel{MainThmRestated}
The algebraic approximation functor $\T \colon \Mod_{E_*} \to \Mod_{E_*}$ preserves $L_0$-equivalences. 
\end{thm}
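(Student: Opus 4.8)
The strategy is to reduce the statement that $\T$ preserves $L_0$-equivalences to the concrete control over $\fm$-adic behavior provided by \myref{key} and \myref{keygeneral}. Since $\T = \bigoplus_{n \ge 0} \T_n$ and colimits (in particular direct sums) commute with $L_0$ up to the completion correction, it suffices to prove that each $\T_n$ preserves $L_0$-equivalences, i.e.\ that $L_0 \T_n(M) \to L_0 \T_n L_0(M)$ is an isomorphism for every $E_*$-module $M$. Recall from Appendix \ref{appendix1} that a map $f$ is an $L_0$-equivalence precisely when $E_*/\fm \otimes f$ is an isomorphism (equivalently, $f$ induces an isomorphism after applying $\ot_{E_*} E_*/\fm$ and all the higher $\mathrm{Tor}$'s vanish appropriately — the key point being detection by $E_*/\fm$, since $E_0$ is Noetherian local with residue field $E_*/\fm$). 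So the goal becomes: $E_*/\fm \otimes \T_n(M) \to E_*/\fm \otimes \T_n L_0(M)$ is an isomorphism for all $M$.

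First I would handle the unit map $\eta \colon M \to L_0 M$ when $M$ is already nice, then bootstrap. The canonical tool is \myref{keygeneral}: there is an integer $k = k(n)$ such that $E_*/\fm \otimes \T_n(-) \to E_*/\fm \otimes \T_n(E_*/\fm^k \otimes -)$ is an isomorphism of functors. The plan is to factor the comparison through this "truncation at level $k$". Concretely, the quotient map $M \to E_*/\fm^k \otimes M$ and its $L_0$-completed analogue $L_0 M \to E_*/\fm^k \otimes L_0 M \cong E_*/\fm^k \otimes M$ (the last isomorphism because $E_*/\fm^k$ is already $\fm$-adically complete, so $L_0(E_*/\fm^k \otimes M) = E_*/\fm^k \otimes M$, using that $L_0$ is idempotent and agrees with $\fm$-adic completion on finitely generated-free-presented modules — see Appendix \ref{appendix1}). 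This gives a commutative square
\[
\xymatrix{
E_*/\fm \otimes \T_n(M) \ar[r] \ar[d] & E_*/\fm \otimes \T_n L_0(M) \ar[d] \\
E_*/\fm \otimes \T_n(E_*/\fm^k \otimes M) \ar[r]_-{\cong} & E_*/\fm \otimes \T_n(E_*/\fm^k \otimes L_0 M)
}
\]
in which the bottom map is an isomorphism because $E_*/\fm^k \otimes M \cong E_*/\fm^k \otimes L_0 M$ (the $\fm$-adic completion map induces an isomorphism mod $\fm^k$), and both vertical maps are isomorphisms by \myref{keygeneral} applied to $M$ and to $L_0 M$ respectively. Hence the top map is an isomorphism, which is exactly what we need.

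The main obstacle I anticipate is making the "$E_*/\fm^k \otimes M \cong E_*/\fm^k \otimes L_0 M$" step and the identification $L_0(E_*/\fm^k \otimes M) \cong E_*/\fm^k \otimes M$ fully rigorous for arbitrary (non-flat, non-finitely-generated) $M$, since $L_0$ is not exact and does not in general commute with tensor products — one must be careful that the relevant higher derived functors $L_s$ vanish on $E_*/\fm^k \otimes M$, which should follow from the fact that $E_*/\fm^k$ has finite projective dimension over $E_0$ together with the vanishing range $L_s = 0$ for $s > h$ from \myref{algtotopss} and the structural results on $L$-complete modules in Appendix \ref{appendix1}. A secondary subtlety is justifying the reduction from $\T$ to the individual $\T_n$: since $L_0$ commutes with the infinite direct sum only after completing, one checks that if each $L_0 \T_n$ is $L_0$-invariant then so is $L_0 \bigoplus_n \T_n = \lcolim_n \bigoplus_{m \le n} \T_m$, using that $L_0$ commutes with filtered colimits followed by $L_0$ (the $\lcolim$ construction from the notation in the excerpt). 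I would organize the write-up so that this bootstrapping from finite free modules (where \myref{key} and \myref{flatcomparison} pin everything down directly) to flat modules (via Lazard, \myref{lazardtop}) to arbitrary modules (via reflexive coequalizers, exactly as in the proof of \myref{keygeneral}) is done once and cited, rather than repeated.
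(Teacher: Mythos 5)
Your proposal follows essentially the same route as the paper: reduce to each $\T_n$, reduce modulo $\fm$ via the square comparing $M$ with $M/\fm^k$ using \myref{keygeneral} and the isomorphism $E_*/\fm^k \otimes M \cong E_*/\fm^k \otimes L_0M$, and then detect the $L_0$-equivalence. Two points to tighten. First, the "main obstacle" you anticipate is a non-issue: the identification $E_*/\fm^k \otimes M \cong E_*/\fm^k \otimes L_0 M$ is exactly \myref{lprop}(2), valid for arbitrary $M$ with no derived-functor bookkeeping. Second, and more substantively, your opening reduction "$f$ is an $L_0$-equivalence precisely when $E_*/\fm \otimes f$ is an isomorphism" is only valid in one direction for general targets; the converse (\myref{detection}) requires the target to be flat. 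So the reduction to free modules via reflexive coequalizers must come \emph{before} the detection step, not after: for free $M$ the module $\T_n(L_0 M)$ is flat by \myref{flatchar} and \myref{tpres}, so \myref{detection} applies, and the general case then follows since $\T_n$, $L_0$, and $\iota$ all preserve reflexive coequalizers. With that reordering your argument coincides with the paper's proof.
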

\begin{proof}
Since $\T = \bigoplus_{n \geq 0} \T_n$ is a coproduct of the functors $\T_n$, it suffices to show that each $\T_n$ preserves $L_0$-equivalences.
The statement then immediately translates into the following claim: The natural transformation $L_0 \T_n \xrightarrow{\simeq}L_0 \T_n\iota L_0$ is an isomorphism. Because $\T_n$, $L_0$, and $\iota$ all preserve reflexive coequalizers, we are reduced to showing this for free modules. 

Let $M$ be a free $E_*$-module and let $k$ be as in \myref{keygeneral}. We have a commutative diagram\footnote{Note that, by \myref{lprop}, $(L_0M)/\fm^k = M/\fm^k = L_0(M/\fm^k)$, so the notation $L_0M/\fm^k$ is unambiguous.}
\[ \xymatrix{M \ar[r] \ar[d] & L_0M \ar[d] \\
 M/\fm^k \ar@{.>}[r]^-{\simeq} & L_0M/\fm^k} \]
where the vertical arrows are the canonical quotient maps and the bottom one is the induced isomorphism.  Applying the functor $ E_*/{\fm} \otimes \T_n(-)$ to this square, we obtain
\[ \xymatrix{E_*/\fm \otimes \T_n (M) \ar[r] \ar[d] & E_*/\fm \otimes \T_n (L_0M) \ar[d] \\
E_*/\fm \otimes \T_n (M/{\fm}^k) \ar[r] & E_*/\fm \otimes \T_n (L_0M/{\fm}^k).} \]
The bottom horizontal map is still an isomorphism, while the two vertical arrows are so by \myref{keygeneral}. Therefore, the top horizontal map is an isomorphism as well. Since $\T_n L_0M$ is flat by \myref{flatchar} and \myref{tpres}, this furnishes the result by \myref{detection}.
\end{proof}

\subsection{Proof of the key property}\label{ProofKey} 

We are left with the verification of \myref{key}. To this end, we will use an observation from homotopy theory (\myref{inv}), which is the only non-algebraic input to the proof of our theorem. The notation is as before; in particular, all smash products are taken to be in $\Mod_E$ unless otherwise stated, and we will denote a morphism of spectra by the same symbol as the induced map on homotopy groups. We have fixed a non-negative integer $n$ and let $m \le n$.

Recall that any element $\nu \in \pi_i E$ gives rise to a multiplication by $\nu$ map, $\nu\colon  \Si^i M \to M$, for every $M \in \Mod_E$, defined as the composite
\[S^i \wedge M \xrightarrow{\nu \wedge 1} E \wedge M \xrightarrow{\mu_M} M, \]
which can be inverted by forming the sequential colimit
\[ \nu^{-1}M = \mathrm{colim}\ M \xrightarrow{\Si^{-i} \nu} \Si^{-i} M \xrightarrow{\Si^{-2i} \nu} \Si^{-2i} M \to \cdots. \]
To simplify notation, we will omit these suspensions from now on. 

\begin{lemma}\mylabel{inv}
Let $\nu \in \pi_*E$ and $m \ge 1$, and let $M$ be an $E$-module. If $\nu$ acts invertibly on $M$, then $\nu$ acts invertibly on $\pi_*\LD_m(M)$. This applies in particular to $M = \nu^{-1} \Si^q E$ for any $q \in \Z$.
\end{lemma}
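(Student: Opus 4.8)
The plan is to use the fact that inverting $\nu$ is a (homotopy) colimit and that $\LD_m = L_K \D_m$ commutes with the relevant colimits. Concretely, if $\nu \in \pi_i E$ acts invertibly on $M$, then the canonical map $M \to \nu^{-1} M$ is an equivalence, where $\nu^{-1}M = \colim( M \xrightarrow{\nu} \Si^{-i} M \xrightarrow{\nu} \Si^{-2i}M \to \cdots)$ is a sequential (hence filtered) homotopy colimit. First I would reduce to showing that $\nu$ acts invertibly on $\pi_* \LD_m(\nu^{-1}M)$ for an arbitrary $E$-module $M$; the lemma then follows by applying this to $M$ already $\nu$-local.

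The key step is then to analyze $\D_m$ of a $\nu$-telescope. Since $\D_m N = (E\Si_m)_+ \sm_{\Si_m} N^{\sm_E m}$ preserves filtered homotopy colimits in $N$ (the homotopy orbit construction and smash products commute with filtered colimits of $E$-modules), and $L_K$ also commutes with filtered homotopy colimits up to the subtlety that it need not in general — but here I would instead argue more directly. The cleanest route: write $\nu^{-1}M$ as the filtered colimit above, so that
\[
\D_m(\nu^{-1}M) \simeq \colim_r \D_m(\Si^{-ri} M),
\]
and observe that each transition map in this diagram is, up to a shift, multiplication by a power of $\nu$ on the $m$-fold smash power, which translates into multiplication by $\nu^m$ (times a unit coming from the $\Si_m$-action on suspensions) on $\pi_* \D_m$. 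More precisely, the self-map of $\D_m M$ induced by $\nu \colon M \to \Si^{-i}M$ is, after identifying source and target appropriately, given by multiplication by $\nu$ through the $E$-module structure on $\D_m M$, since $\D_m$ is a functor of $E$-modules and $\nu$-multiplication factors through the $E$-action. Hence $\pi_* \D_m(\nu^{-1} M) = \colim_r \pi_* \D_m(\Si^{-ri}M)$ is a colimit along $\nu$-multiplication maps, so $\nu$ acts invertibly on it; and applying $L_K$ and using \myref{dmonad} (which says $L_K\D_m$ inverts $K(h)$-homology isomorphisms, and in particular that $L_K \D_m(\nu^{-1}M) \simeq L_K\D_m L_K(\nu^{-1}M)$) one transports this to $\pi_* \LD_m(\nu^{-1}M) = \pi_*\LD_m(M)$.

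The main obstacle I anticipate is bookkeeping the $\Si_m$-action on suspension coordinates: multiplying one tensor factor of $M^{\sm_E m}$ by $\nu \in \pi_i E$ is not literally the same as multiplying $\D_m M = (M^{\sm_E m})_{h\Si_m}$ by $\nu$, because $\Si_m$ permutes the factors, so one must pass through the simultaneous multiplication by $\nu$ on all $m$ factors (which \emph{is} $\Si_m$-equivariant and descends to $\nu^m \cdot (-)$ on homotopy orbits) and check it agrees with the $E$-module $\nu$-action up to the relevant identifications. A clean way to sidestep this entirely is to note that $\D_m M$ is naturally an $E$-module (it is the degree-$m$ part of the commutative $E$-algebra $\D M$), the functor $\D_m$ is $E$-linear in the weak sense that the map $M \xrightarrow{\nu} M$ induces on $\D_m$ the $E$-module map given by $\nu$-multiplication, and therefore $\nu^{-1}\D_m M \simeq \D_m(\nu^{-1}M)$ as $E$-modules; the conclusion that $\nu$ acts invertibly on $\pi_*\LD_m(\nu^{-1}M)$ is then immediate from $L_K$ being an $E$-linear localization together with \myref{dmonad}. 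For the final sentence of the lemma one simply takes $M = \nu^{-1}\Si^q E$, which is manifestly $\nu$-local.
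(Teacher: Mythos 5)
Your proposal is correct in substance but diverges from the paper's proof at both stages. For the first stage (commuting $\D_m$ past $\nu$-inversion), the paper argues more slickly: $\nu$ acts invertibly on $N$ iff the natural map $N \to \nu^{-1}E \wedge_E N$ is an equivalence, and since $(\nu^{-1}E)^{\wedge_E m} \simeq \nu^{-1}E$ this property is preserved under finite smash powers and under homotopy colimits, hence under $\D_m N = (N^{\wedge m})_{h\Si_m}$; this sidesteps entirely the suspension/sign bookkeeping you worry about. Note that your ``clean way'' is not quite right as stated: the map induced by $\nu \colon \Si^i M \to M$ on $\D_m$ is multiplication by $\nu^m$ (up to signs coming from permuting suspension coordinates), not by $\nu$ --- though this does not damage the argument, since inverting $\nu^m$ is the same as inverting $\nu$. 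For the second stage (passing through $L_K$), the routes genuinely differ: you observe that multiplication by $\nu$ is an equivalence of $E$-modules on $\D_m(\nu^{-1}M)$ and that $L_K$, being an exact $E$-linear localization, preserves equivalences and commutes with $\nu$-multiplication --- which is correct and avoids any spectral sequence; the paper instead feeds $\pi_*\D_m(M)$ into the strongly convergent local homology spectral sequence $L_s(\pi_* N)_t \Rightarrow \pi_{s+t}L_K N$ of \myref{algtotopss} and uses that the property of $\nu$ acting invertibly is closed under kernels, cokernels, and extensions. Your argument buys a shorter proof of this step; the paper's buys the slightly finer algebraic statement that each derived completion $L_s\pi_*\D_m(M)$ is itself $\nu$-local. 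One small correction: \myref{dmonad} is not what identifies $\LD_m(\nu^{-1}M)$ with $\LD_m(M)$ --- that identification is just functoriality applied to the equivalence $M \simeq \nu^{-1}M$ given by the hypothesis.
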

\begin{proof}
To start with, we show that $\nu$ acts invertibly on $\pi_*\D_m(M)$. To see this, note that $\nu$ acts invertibly on $M \in \Mod_E$ if and only if the natural map $M = E \wedge M \to \nu^{-1}E \wedge M = \nu^{-1}M$ is an equivalence. Since smash products commute with homotopy colimits, the property of $\nu$ acting invertibly is therefore preserved under homotopy colimits. Moreover, the equivalence $(\nu^{-1}E)^{\wedge 2} = \nu^{-1}E$ implies $(\nu^{-1} X) \wedge (\nu^{-1} Y) = \nu^{-1} (X \wedge Y)$ for any $E$-modules $X$ and $Y$. Therefore, the property of $\nu$ acting invertibly is preserved under finite smash products. It follows that $\nu$ acts invertibly on the extended power $\D_m (M) = (M^{\wedge m})_{h\Si_m}$.

Now recall that \myref{algtotopss} and \myref{localhom} say that, for any $N \in \Mod_E$, there exists a strongly convergent spectral sequence of $E_*$-modules 
\[ E^2_{s,t} = L_s(\pi_*N)_t \Rightarrow \pi_{s+t}L_KN \]
with $E_2^{s,t}=0$ if $s>h$. Taking $N = \D_m(M)$, the previous step implies that the $E^2$-term of the spectral sequence consists of $E_*$-modules on which $\nu$ acts invertibly. Since this property  is closed under kernels, cokernels, and extensions, the claim follows.
\end{proof}

As a consequence, we can show that elements in the maximal ideal of $E_*$ have to act nilpotently on $\T_m(F)$ mod $\fm$ for any finite free $E_*$-module $F$.

\begin{cor}\mylabel{nilp}
Let $\nu \in \fm \subset E_*$, $m \ge 1$, and fix $F \in \Modff_{E_*}$, then there exists a positive integer $\lambda = \lambda(\nu,m)$ such that the map
\[\T_m(\nu) \colon \T_m(F) \to \T_m(F)\]
satisfies $\T_m(\nu)^{\lambda} \equiv 0 \mod \fm$.
\end{cor}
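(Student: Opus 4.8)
The plan is to deduce this corollary from \myref{inv} together with the key comparison result \myref{comparison} (identifying $\T_m$ on finite free modules with $\pi_* L_K \D_m$). By \myref{equivcatff}, pick a finite free $E$-module $N$ with $\pi_* N \cong F$; then $\T_m(F) \cong \pi_* L_K \D_m(N)$ naturally, and $\T_m(\nu)$ is identified with the map induced on $\pi_* L_K \D_m$ by the multiplication-by-$\nu$ map $\nu \colon \Si^{|\nu|} N \to N$ in $\Mod_E$. So it suffices to show that, after reducing mod $\fm$, this induced endomorphism of $\pi_* L_K \D_m(N)$ is nilpotent.

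First I would consider the telescope $\nu^{-1} N = \colim(N \to N \to \cdots)$ in $\Mod_E$, which is the Bousfield localization inverting $\nu$. By \myref{inv}, $\nu$ acts invertibly on $\pi_* L_K \D_m(\nu^{-1} N)$. On the other hand, since $\nu \in \fm$, the telescope $\nu^{-1} N$ has trivial mod-$\fm$ homology in the relevant sense — more precisely, $E_*/\fm \otimes \pi_*(\nu^{-1} N) = \nu^{-1}(E_*/\fm \otimes \pi_* N) = 0$ because $\nu$ is already nilpotent in $E_*/\fm$ (it lies in the maximal ideal of the finite-dimensional local ring $E_*/\fm$, which is in fact a field in degree $0$ up to the periodicity generator, so $\nu$ acts as zero there). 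Consequently $L_K \D_m(\nu^{-1} N)$ should itself vanish after smashing with the mod-$\fm$ Moore-type object, i.e. $E_*/\fm \otimes \T_m(\nu^{-1} N) = 0$; combined with the invertibility from \myref{inv} this is consistent, but the real point is the next step.

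The key step is to relate $\T_m(F) = \pi_* L_K \D_m(N)$ to $\pi_* L_K \D_m(\nu^{-1} N)$ via the canonical map $N \to \nu^{-1} N$, working mod $\fm$. Apply the functor $E_*/\fm \otimes -$ to $\T_m(-)$ applied to the telescope diagram $N \xrightarrow{\nu} \Si^{-|\nu|} N \xrightarrow{\nu} \cdots$. Since $\T_m$ commutes with filtered colimits (as it commutes with sifted colimits, by the discussion after \myref{tmonad}) and $E_*/\fm \otimes -$ does too, we get
\[
\colim\left( E_*/\fm \otimes \T_m(F) \xrightarrow{\T_m(\nu)} E_*/\fm \otimes \T_m(F) \xrightarrow{\T_m(\nu)} \cdots \right) \cong E_*/\fm \otimes \T_m(\nu^{-1} N) = 0.
\]
Here I use that $\T_m(\Si^{-|\nu|} N) \cong \T_m(F)$ compatibly, absorbing suspensions as in the excerpt's convention. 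Now $E_*/\fm \otimes \T_m(F)$ is a finitely generated module over the Noetherian local ring $E_*/\fm$ — indeed it is finite-dimensional, since $\T_m(F)$ is finite free by \myref{powerff} and \myref{comparison} — and a finitely generated module whose colimit under iterating a single endomorphism is zero must have that endomorphism nilpotent (the ascending chain of kernels stabilizes). Hence there is $\lambda = \lambda(\nu, m)$ with $(E_*/\fm \otimes \T_m(\nu))^{\lambda} = 0$, i.e. $\T_m(\nu)^\lambda \equiv 0 \bmod \fm$.

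The main obstacle I anticipate is making precise the passage "$E_*/\fm \otimes \T_m(\nu^{-1}N) = 0$": one must check that inverting $\nu$ at the module level is compatible with the extended-power construction after mod-$\fm$ reduction, which is exactly what \myref{inv} is set up to provide (the $E_2$-term of the local-homology spectral sequence for $\D_m(\nu^{-1}N)$ consists of $\nu$-invertible modules), but one still needs that a $\nu$-invertible $E_*/\fm$-module is zero — this is where $\nu \in \fm$ is used, since $\nu$ maps to $0$ in the graded field $E_*/\fm$. The rest is the standard Noetherian finiteness argument, and the identification $\T_m(F) \cong \pi_* L_K \D_m(N)$ from \myref{comparison}, both of which are routine given the cited results.
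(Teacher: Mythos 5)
Your argument follows the same route as the paper's: identify $\T_m(F)$ with $\pi_* L_K \D_m(N)$ for a finite free $N$, form the telescope $\nu^{-1}N$, use that $\T_m$ and $E_*/\fm\otimes-$ commute with filtered colimits to identify $\colim\bigl(E_*/\fm\otimes\T_m(\nu)\bigr)$ with $E_*/\fm\otimes\T_m(\pi_*\nu^{-1}N)$, show that this vanishes, and conclude nilpotence from the finite rank of $\T_m(F)$.

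The one step you flag as an obstacle --- justifying $E_*/\fm\otimes\T_m(\pi_*\nu^{-1}N)=0$ --- is indeed where the remaining content lies, but it is not supplied by \myref{inv} alone, as you suggest. \myref{inv} gives that $\nu$ acts invertibly on $\pi_*\LD_m(\nu^{-1}N)$, hence (since $\nu\in\fm$) that $E_*/\fm\otimes\pi_*\LD_m(\nu^{-1}N)=0$; what is still missing is the identification of this with $E_*/\fm\otimes\T_m(\pi_*\nu^{-1}N)$. Since $\pi_*\nu^{-1}N$ is flat but not finite free, the comparison map $\al_m$ is not an isomorphism on the nose, so \myref{comparison} does not apply. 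The paper instead invokes \myref{flatcomparison}, which gives $L_0\T_m\pi_*(\nu^{-1}N)\cong\pi_*\LD_m(\nu^{-1}N)$ (using that both $\pi_*(\nu^{-1}N)$ and $\T_m\pi_*(\nu^{-1}N)$ are flat, by Lazard's theorem and \myref{tpres}), and then \myref{lprop}(2) to see that $E_*/\fm\otimes-$ does not distinguish $\T_m\pi_*(\nu^{-1}N)$ from its $L_0$-completion. With these two citations inserted at the step you identified, your proof is the paper's proof.
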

\begin{proof}
Write $F = \pi_* M$ for some finite free $E$-module $M$. Since $\nu^{-1}M = \mathrm{colim}\ (M \xrightarrow{\nu} M \xrightarrow{\nu} \cdots) $ and the functor $\T_m \circ \pi_*$ preserves filtered colimits and finite free modules by \myref{tpres}, both $\nu^{-1}M$ and $\T_m \pi_*(\nu^{-1}M)$ are flat by Lazard's theorem. Therefore, \myref{flatcomparison} gives an isomorphism $L_0 \T_m\pi_*(\nu^{-1}M) \cong \pi_*\LD_m(\nu^{-1}M)$. Putting this together with \myref{lprop}(2), we get that 
\[E_*/\fm \otimes \mathrm{colim}\ ( \T_m \pi_* M \xrightarrow{\T_m\pi_* \nu} \T_m \pi_* M \xrightarrow{\T_m\pi_* \nu} \cdots )\] 
is equivalent to 
\begin{eqnarray*} E_*/\fm \otimes \T_m \pi_* (\mathrm{colim}\ M \xrightarrow{\nu} M \xrightarrow{\nu} \cdots ) & = & E_*/\fm \otimes L_0 \T_m \pi_* (\nu^{-1}M) \\ 
& = & E_*/\fm \otimes \pi_*\LD_m (\nu^{-1}M) \\
& = & 0,
\end{eqnarray*}
where the last equality follows from \myref{inv}. Since $\T_m (\pi_*M)$ has finite rank by \myref{tpres}, $\T_m(\nu)$ is nilpotent mod $\fm$.
\end{proof}

\begin{rmk}
Note that the integer $\lambda=\lambda(\nu,m)$ in the above corollary depends on the module $F$.
\end{rmk}

We are now ready to prove \myref{key}, which we restate for convenience. 

\begin{propwo}
There exists a positive integer $k=k(n)$ such that for all $m \le n$ and all $q \in \Z$, the natural map
\[ E_*/\fm \otimes \T_m(\Si^q E_*) \xrightarrow{} E_*/\fm \otimes \T_m(\Si^q E_*/\fm^k) \]
is an isomorphism for all.
\end{propwo}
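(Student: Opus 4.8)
The plan is to fix a single $k$ that works uniformly for all $m \le n$ and all suspension degrees $q$, and then reduce everything to a nilpotence statement supplied by \myref{nilp}. The first observation is that for fixed $m$, the functor $\T_m$ commutes with the $2$-periodicity coming from invertibility of $u \in \pi_2 E$: more precisely, $\Si^2 E_* \cong E_*$ as $E_*$-modules (non-canonically), so $\T_m(\Si^q E_*)$ and $\T_m(\Si^{q'} E_*)$ are (abstractly) isomorphic whenever $q \equiv q' \pmod 2$, and likewise after tensoring with $E_*/\fm$ and with $E_*/\fm^k$, compatibly with the comparison map. Hence it suffices to treat the two cases $q = 0$ and $q = 1$, and for each $m$ we need only finitely many instances; since $m$ ranges over the finite set $\{0,1,\dots,n\}$, we only have to verify the statement for finitely many pairs $(m,q)$, and can then take $k = k(n)$ to be the maximum of the finitely many exponents produced below.

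Next I would set up the comparison itself. The inclusion $\fm^k \hookrightarrow E_*$ gives, after applying $\Si^q(-)$ and then the short exact sequence $0 \to \Si^q\fm^k \to \Si^q E_* \to \Si^q E_*/\fm^k \to 0$, a natural map $\Si^q E_* \to \Si^q E_*/\fm^k$, and the map in question is obtained by applying $E_*/\fm \otimes \T_m(-)$. The source $\T_m(\Si^q E_*)$ is a finitely generated free $E_*$-module by \myref{tpres} (it is $\pi_* L_K \D_m$ of a finite free $E$-module, using \myref{comparison} and \myref{powerff}); call its rank $N = N(m,q)$. Tensoring with $E_*/\fm$ produces a finite-dimensional $E_*/\fm$-vector space of dimension $N$. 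The idea is that $\T_m(\Si^q E_*/\fm^k)$, while not free, becomes after $-\otimes E_*/\fm$ a vector space whose dimension stabilizes to $N$ once $k$ is large enough, and that the natural map realizes this identification. To make this precise I would use \myref{nilp}: choosing generators $u_0 = p, u_1, \dots, u_{h-1}$ of $\fm$, for each such $\nu = u_i$ the corresponding operator $\T_m(\nu)$ on $\T_m(\Si^q E_*)$ is nilpotent modulo $\fm$, say $\T_m(u_i)^{\lambda_i} \equiv 0 \bmod \fm$; set $\lambda = \lambda(m,q) := \max_i \lambda_i$, so that $\T_m(\fm)^{h\lambda} \cdot \T_m(\Si^q E_*) \subseteq \fm \cdot \T_m(\Si^q E_*)$, and take $k$ at least $h\lambda$ for this pair.

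With this choice, the argument runs as follows. The quotient map $\Si^q E_* \to \Si^q E_*/\fm^k$ induces $\T_m(\Si^q E_*) \to \T_m(\Si^q E_*/\fm^k)$; I want to show it is an isomorphism after $\otimes E_*/\fm$. Surjectivity is the easier direction: because $\T_m$ preserves reflexive coequalizers and filtered colimits, and $\Si^q E_*/\fm^k$ is a finite iterated extension of copies of $E_*/\fm$-type modules built from $\Si^q E_*$, one checks that the map $E_*/\fm \otimes \T_m(\Si^q E_*) \to E_*/\fm \otimes \T_m(\Si^q E_*/\fm^k)$ is epi by a right-exactness argument (right-tensoring $\Si^q E_* \surj \Si^q E_*/\fm^k$ is epi, and $\T_m$ sends epis between these particular modules to maps that are epi mod $\fm$ — this uses that $E_*/\fm \otimes \T_m(-)$ behaves well on the relevant diagram). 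For injectivity, the key point is that the kernel is controlled by how $\fm$ acts: an element of $E_*/\fm \otimes \T_m(\Si^q E_*)$ that dies in the quotient must be hit from the image of $E_*/\fm \otimes \T_m(\Si^q\fm^k)$, but since $\fm^k \subseteq \fm \cdot (\text{something})$ with exponent exceeding the nilpotence order $h\lambda$ of the $\fm$-action on $\T_m(\Si^q E_*) \bmod \fm$, that image is already zero in $E_*/\fm \otimes \T_m(\Si^q E_*)$; hence the map is mono. Finally, take $k = k(n)$ to be the maximum over the finitely many pairs $(m,q)$, $m \le n$, $q \in \{0,1\}$, of the exponents chosen above; the closing remark that any $k' \ge k(n)$ also works is immediate since enlarging $k$ only further kills the relevant $\fm$-power. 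The main obstacle I anticipate is making the injectivity step genuinely rigorous: $\T_m$ is \emph{not} additive and does not commute with $E_*/\fm \otimes -$, so one cannot simply chase a five-term exact sequence; the correct move is to combine the exponential monad structure (\myref{exp}) — which decomposes $\T_m(A \oplus B)$ and lets one filter $\T_m$ applied to an extension — with the nilpotence bound from \myref{nilp}, so that the failure of additivity is absorbed into the power of $\fm$ that we have arranged to be zero.
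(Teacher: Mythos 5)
Your reduction to $q\in\{0,1\}$ via periodicity, your use of \myref{nilp} to choose $k$ so that every monomial generator of $\fm^k$ contains some $u_i$ to a power at least the nilpotence exponent, and your closing observation that larger $k'$ also work, all match the paper. The surjectivity half is also fine: $\T_m$ preserves reflexive coequalizers and hence regular epimorphisms, and $E_*/\fm\otimes-$ preserves epimorphisms.

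The injectivity step, however, has a genuine gap, and it is exactly the one you flag at the end without closing. The assertion that an element of $E_*/\fm\otimes\T_m(\Si^q E_*)$ dying in $E_*/\fm\otimes\T_m(\Si^q E_*/\fm^k)$ "must be hit from the image of $E_*/\fm\otimes\T_m(\Si^q\fm^k)$" presupposes exactness of $\T_m(\fm^k)\to\T_m(E_*)\to\T_m(E_*/\fm^k)\to 0$, which fails because $\T_m$ is not additive; the kernel of $\T_m$ applied to a quotient is not generated by the image of $\T_m$ of the submodule but by cross terms. The missing idea is to present $E_*/\fm^k$ as an explicit \emph{reflexive coequalizer} of finite free modules,
\[ E_* \oplus E_*^{\oplus r(k)} \rightrightarrows E_* \to E_*/\fm^k \to 0, \]
with the two parallel maps $(1,0,\ldots,0)$ and $(1,g_1,\ldots,g_{r(k)})$, where the $g_i$ are the monomial generators of $\fm^k$. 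Since $\T_m$ preserves reflexive coequalizers and $E_*/\fm\otimes-$ is right exact, $E_*/\fm\otimes\T_m(E_*/\fm^k)$ is the coequalizer of the image diagram; the exponential decomposition (\myref{exp}) identifies $\T_m(E_*\oplus E_*^{\oplus r(k)})$ with $\bigoplus_{i_0+\cdots+i_{r(k)}=m}\T_{i_0}(E_*)\otimes\cdots\otimes\T_{i_{r(k)}}(E_*)$, and your choice of $k$ together with \myref{nilp} forces $\T_{i_j}(g_j)\equiv 0\bmod\fm$ whenever $i_j>0$. Hence the two parallel maps agree mod $\fm$ summand by summand (both the identity when $i_0=m$, both zero otherwise), so the coequalizer is $E_*/\fm\otimes\T_m(E_*)$ itself. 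That computation, not a kernel chase on a short exact sequence, is what delivers injectivity; without it the proof is an outline rather than an argument.
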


\begin{proof}
First we claim that it suffices to prove that, for fixed $q \in \Z$, there exists a positive integer $k = k(n,q)$ such that for all $m \le n$ the map $E_*/\fm \otimes \T_m(\Si^q E_*) \xrightarrow{} E_*/\fm \otimes \T_m(\Si^q E_*/\fm^k)$ is an isomorphism. Indeed, because $E$ is 2-periodic, taking $k(n) = \max{(k(n,0),k(n,1))}$ then works for all $q$. Since the argument below is the same for any $q$, we will assume $q=0$ from now on.

The statement is clear for $m=0$, so let us assume $m \ge 1$. With notation as in \myref{nilp}, taking the finite free $E_*$-module $F = E_*$, let $\lambda(\nu) = \mathrm{max}\{\lambda(\nu,1),\ldots,\lambda(\nu,n)\}$ and set $k = \sum_{i=0}^{h-1}\lambda(u_i)$, the $u_i$ being the standard generators of $\fm$. Let $r(k)= \binom{h+k-1}{k}$, the number of generators of $\fm^k$, and define $I_k=\{ g_1=u_0^k, g_2= u_0^{k-1}u_1,\ldots,g_{r(k)}=u_{h-1}^k \}$ to be the set of standard generators for this ideal, such that $g_i = \prod_{i=0}^{h-1} u_i^{l_i}$ where, by choice of $k$, $l_i \ge \lambda(u_i)$ for at least one $i$. 

We can write $E_*/\fm^k$ as a reflexive coequalizer
\[ E_* \oplus E_*^{\oplus r(k)} \doublerightarrow{(1,0,\ldots,0)}{(1,g_1,\ldots,g_{r(k)})}  E_* \to E_*/\fm^k \to 0 \]
where the upper arrow is projection onto the first factor. Note that the extra copy of $E_*$ is added only to make the coequalizer \emph{reflexive}. 
By naturality, we obtain a commutative diagram
\[ \xymatrix{\T_m(E_* \oplus E_*^{\oplus r(k)}) \ar[r]^-{\simeq} \ar[d]_{\T_m(1,g_1,\ldots,g_{r(k)})} & \underset{i_0+\ldots+i_{r(k)}=m}{\bigoplus} \T_{i_0}(E_*)  \otimes \T_{i_1}(E_*) \otimes \cdots \otimes \T_{i_{r(k)}}(E_*) \ar[d]^{\bigoplus \T_{i_0}(1) \otimes \T_{i_1}(g_1) \otimes \cdots \otimes \T_{i_{r(k)}}(g_{r(k)})}\\
\T_m(E_*) \ar[r]^= & \T_m(E_*) } \]
Observe that, by our choice of $k$ and \myref{nilp}, for each summand of the right vertical map we get
\[ \T_{i_0}(1) \otimes \T_{i_1}(g_1) \otimes \cdots \otimes \T_{i_{r(k)}}(g_{r(k)}) 
\underset{\fm}{\equiv}  
\begin{cases}  1 & \mbox{if } i_0=m,\ i_1=\ldots=i_{r(k)}=0 \\  0 & \mbox{otherwise.} 
\end{cases}
\]
On the one hand, the colimit of the diagram
\[ E_*/\fm \otimes \T_m(E_* \oplus E_*^{\oplus r(k)}) \doublerightarrow{E_*/\fm \otimes \T_m(1,0,\ldots,0)}{E_*/\fm \otimes \T_m(1,g_1,\ldots,g_{r(k)})}  E_*/\fm \otimes \T_m(E_*) \]
computes $E_*/\fm \otimes \T_m(E_*/\fm^k)$. On the other hand, by the argument just given, this diagram is equivalent to 
\[ E_*/\fm \otimes \underset{i_0 +\cdots+i_{r(k)}=m}{\bigoplus} \T_{i_0}(E_*) \otimes \T_{i_1}(E_*) \otimes \cdots \otimes \T_{i_{r(k)}}(E_*) \rightrightarrows E_*/\fm \otimes \T_m(E_*) \]
where on each summand both maps are either the identity, namely precisely if $i_0=m$, or both 0; hence its colimit is $E_*/\fm \otimes \T_m(E_*)$. Therefore, $E_*/\fm \otimes \T_m(E_*/\fm^k) \cong E_*/\fm \otimes \T_m(E_*)$.
The situation is summarized in the following diagram:
\[\resizebox{\textwidth}{!}{\xymatrix{E_*/\fm \otimes \T_m(E_* \oplus E_*^{\oplus r(k)}) \ar@<.75ex>[r] \ar@<-.75ex>[r] \ar[d]^{\simeq} & E_*/\fm \otimes \T_m(E_*) \ar[r] \ar[d]^{=} & E_*/\fm \otimes \T_m(E_*/\fm^k) \ar[r] \ar@{.>}[d]_{\therefore}^{\simeq} & 0\\
E_*/\fm \otimes \underset{i_0 +\cdots+i_{r(k)}=m}{\bigoplus} \T_{i_0}(E_*)  \otimes \cdots \otimes \T_{i_{r(k)}}(E_*) \ar@<.75ex>[r] \ar@<-.75ex>[r] & E_*/\fm \otimes \T_m(E_*) \ar[r] & E_*/\fm \otimes \T_m(E_*) \ar[r] & 0.}}
\]
For any integer $k' \geq k(n)$, the map $E_*/\fm \otimes \T_m(\Si^q E_*) \to E_*/\fm \otimes \T_m(\Si^q E_*/\fm^{k'})$ is also an isomorphism for all $m \leq n$ and $q \in \Z$. Indeed, it is an epimorphism since $\T_m$ and $E_*/\fm \otimes -$ preserve epimorphisms, and it is a monomorphism because of the factorization $M \to M/\fm^{k'} \to M/\fm^{k(n)}$ and the assumption on $k(n)$.
\end{proof}

\begin{rmk}\mylabel{ReflexCoeq}
The method used in the proof to turn a coequalizer into a reflexive coequalizer works in any category with finite coproducts. Thus, a functor between such categories which preserves reflexive coequalizers automatically preserves regular epimorphisms, i.e., maps which are the coequalizer of some parallel pair. In particular, the functor $\T_n \colon \Mod_{E_*} \to \Mod_{E_*}$ preserves regular epimorphisms, which in $\Mod_{E_*}$ are precisely the surjective maps. 
\end{rmk}

\section{Algebraic structures and completion}\label{compstructures}

In this section, we consider the compatibility of various algebraic structures with $L$-completion. After stating the general result in \S\ref{compmonad}, we specialize to the familiar context of rings and modules in \S\ref{comprings}. The example of interest for this paper are the categories of $\la$- and $\te$-rings introduced and discussed in \S\ref{larings} and $\S\ref{complarings}$, which are used to give an alternative proof of the height 1 case of \myref{mainthm} in Section \ref{height1}.

\subsection{Completed monads and their algebras}\label{compmonad}

Categories of algebras for a monad transform nicely under the application of the completion functor $L_0$, which will become relevant in our discussion of the completed algebraic approximation functors at height 1 in the next section. 
 
\begin{thm}\mylabel{CompletedMonad}
Let $\cC$ be a category and $\iota \colon \cC_0 \to \cC$ the inclusion of a reflective full subcategory, with reflector $L \colon \cC \to \cC_0$, and unit $\eta \colon 1 \to \iota L$.

\begin{enumerate}
\item \label{ColimL0Equiv} If a map of diagrams in $\cC$ is objectwise an $L$-equivalence, then the map induced on colimits (if they exist) is an $L$-equivalence.
\item \label{RetractL0Equiv} A retract of an $L$-equivalence is also an $L$-equivalence. 
\end{enumerate}
Let $\cD$ be another category and $\iota \colon \cD_0 \to \cD$ the inclusion of a reflective full subcategory, with reflector $L \colon \cD \to \cD_0$ (by abuse of notation).
\begin{enumerate}
\setcounter{enumi}{2}
\item \label{ColimPreserv} Let $F \colon \cC \to \cD$ be a functor obtained as a colimit of a diagram of functors $F_i \colon \cC \to \cD$. If each $F_i$ preserves $L$-equivalences, then so does $F$.
\item \label{RetractPreserv} If a functor $F \colon \cC \to \cD$ preserves $L$-equivalences, then any retract of $F$ also preserves $L$-equivalences. 
\end{enumerate}
Let $\M \colon \cC \to \cC$ be a monad which preserves $L$-equivalences. Then the following holds. 
\begin{enumerate}
\setcounter{enumi}{4}
\item \label{InheritedMonad} $\MM= L \M \iota \colon \cC_0 \to \cC_0$ naturally inherits a monad structure from that of $\M$.
\item Given an $\MM$-algebra $Y$ with structure map $\psi \colon \MM Y \to Y$, the composite $\M Y \ral{\eta} L \M Y \ral{\psi} Y$ naturally makes $Y$ into an $\M$-algebra. This defines a functor $\Alg_{\MM} \to \Alghat_{\M}$. Here $\Alg_{\MM}$ denotes the category of $\MM$-algebras, and $\Alghat_{\M}$ denotes the full subcategory of $\Alg_{\M}$ consisting of $\M$-algebras whose underlying object is in $\cC_0$.
\item \label{AlgCompletedMonad} Let $X$ be an $\M$-algebra. Then $LX$ is naturally an $\MM$-algebra, in a unique way that is compatible with the $\M$-algebra structure of $X$, in the sense that $\eta \colon X \to LX$ is a map of $\M$-algebras. This defines a functor $L \colon \Alg_{\M} \to \Alg_{\MM}$.
\item \label{CatAlgCompletedMonad} The functor $\Alg_{\MM} \ral{\cong} \Alghat_{\M}$ is an equivalence of categories, with inverse equivalence the composite
\[
\xymatrix{
\Alghat_{\M} \ar[r]^-{\io} & \Alg_{\M} \ar[r]^{L} & \Alg_{\MM}.
}
\]
\end{enumerate}
\end{thm}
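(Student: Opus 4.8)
The plan is to treat the eight parts in order, everything flowing from standard facts about the reflective subcategory $\iota\colon\cC_0\hookrightarrow\cC$ (and similarly $\cD_0\hookrightarrow\cD$): since $\iota$ is fully faithful the counit $\epsilon\colon L\iota\Rightarrow\id$ is an isomorphism, hence $\eta_{\iota c}$ is an isomorphism for every $c\in\cC_0$ and every component of $\eta$ is an $L$-equivalence (a morphism inverted by $L$), because $L\eta=\epsilon_L^{-1}$; and $L$, being a left adjoint, preserves colimits. Part (1) is then immediate: an objectwise $L$-equivalence of diagrams becomes an isomorphism of diagrams after applying $L$, hence induces an isomorphism on colimits, and since $L$ preserves colimits the original map on colimits is an $L$-equivalence. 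For (2), a retract $f$ of an $L$-equivalence $g$ yields $Lf$ as a retract of the isomorphism $Lg$, and a retract of an isomorphism is an isomorphism (with $ri=\id$, $sj=\id$, $gi=jf$, $fr=sg$, the morphism $rg^{-1}j$ is a two-sided inverse of $f$). Parts (3) and (4) follow by evaluating at objects of $\cC$: $Ff=\colim_i F_i f$ is a colimit of $L$-equivalences hence an $L$-equivalence by (1), and if $G$ is a retract of $F$ in the functor category then $Gf$ is a retract of $Ff$, so (2) gives the claim.

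For (5) I would set $\MM=L\M\iota$ and build its monad structure explicitly from $\M=(\M,\mu,u)$. The unit is the composite $\id_{\cC_0}\cong L\iota\xrightarrow{L(u_\iota)}L\M\iota=\MM$. For the multiplication, $\M\eta_{\M\iota}$ is an objectwise $L$-equivalence (every component of $\eta$ is one and $\M$ preserves them), so $L\M\eta_{\M\iota}\colon L\M\M\iota\Rightarrow L\M\iota L\M\iota=\MM\MM$ is a natural isomorphism, and I would define
\[
\mu^{\MM}\colon\MM\MM\xrightarrow{(L\M\eta_{\M\iota})^{-1}}L\M\M\iota\xrightarrow{L\mu_\iota}L\M\iota=\MM.
\]
The associativity and unit axioms for $(\MM,\mu^{\MM},u^{\MM})$ are diagram chases from the monad axioms for $\M$, naturality of $\eta$, and the triangle identities; the same manipulations show that $\eta_{\M\iota}\colon\M\iota\Rightarrow\iota\MM$ (equivalently $L\M\eta\colon L\M\Rightarrow\MM L$) intertwines the two monad structures, which is the sense in which $\MM$ inherits its structure from $\M$.

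Parts (6) and (7) are short arguments of the same flavour. Given an $\MM$-algebra $(Y,\psi)$, I would make $\iota Y$ an $\M$-algebra via $\M\iota Y\xrightarrow{\eta}\iota L\M\iota Y=\iota\MM Y\xrightarrow{\iota\psi}\iota Y$; the unit and associativity laws reduce to those for $\psi$ using naturality of $\eta$ and invertibility of $\eta_{\iota Y}$, the construction is functorial in $Y$, and its underlying object lies in $\cC_0$, so it lands in $\Alghat_{\M}$. Given an $\M$-algebra $(X,\xi)$, the map $L\M\eta_X\colon L\M X\to L\M\iota LX=\MM LX$ is invertible, so I would equip $LX$ with the $\MM$-structure $\MM LX\xrightarrow{(L\M\eta_X)^{-1}}L\M X\xrightarrow{L\xi}LX$; naturality of $\eta$ against $\xi$ shows $\eta_X\colon X\to\iota LX$ is then a map of $\M$-algebras (for the structure of (6) on $\iota LX$), and since $\eta_X$ is an $L$-equivalence this compatibility forces $\psi\circ L\M\eta_X=L\xi$, yielding both that the formula does define an $\MM$-algebra structure and the claimed uniqueness; functoriality is again clear.

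Finally, for (8) write $P\colon\Alg_{\MM}\to\Alghat_{\M}$ for the functor of (6) and $Q$ for the composite $\Alghat_{\M}\xrightarrow{\io}\Alg_{\M}\xrightarrow{L}\Alg_{\MM}$ built from (7). For $X\in\Alghat_{\M}$ with underlying object $c\in\cC_0$, the isomorphism $\eta_{\iota c}\colon\iota c\to\iota L\iota c$ is, by (7), a map of $\M$-algebras $X\to PQ(X)$, hence an isomorphism of $\M$-algebras, and these assemble into a natural isomorphism $\id_{\Alghat_{\M}}\cong PQ$. Conversely, for $(Y,\psi)\in\Alg_{\MM}$ with $Y\in\cC_0$, transporting $\psi$ along the isomorphism $\epsilon_Y\colon L\iota Y\to Y$ gives an $\MM$-structure on $L\iota Y$ for which $\eta_{\iota Y}$ is a map of $\M$-algebras out of $P(Y)$ (since $\iota\epsilon_Y\circ\eta_{\iota Y}=\id$); by the uniqueness clause of (7) this transported structure is exactly the one on $QP(Y)$, so $QP(Y)\cong Y$ naturally, and $P,Q$ are mutually inverse equivalences. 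I expect the main obstacle to be bookkeeping rather than conceptual content: in (5) one must genuinely verify associativity of the $\mu^{\MM}$ defined by inverting $L\M\eta_{\M\iota}$, and throughout (6)--(8) one must upgrade ``same underlying object'' to coherent natural isomorphisms while carefully tracking the counit isomorphism $L\iota\cong\id$ and the units $\eta_{\iota c}$. (Conceptually, $\cC_0$ is the localization of $\cC$ at the $L$-equivalences, and a monad preserving these descends to the localization together with its category of algebras; but the explicit route above is self-contained.)
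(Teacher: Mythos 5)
Your proposal is correct and follows essentially the same route as the paper: the same explicit constructions in (6)--(8), and in (5) your unit and multiplication $L\mu\iota\circ(L\M\eta_{\M\iota})^{-1}$ are exactly the structure maps the paper obtains by observing that $F\mapsto LF\iota$ is strong monoidal on $L$-equivalence-preserving endofunctors. The only difference is presentational: the paper packages (5) via the ``strong monoidal functors send monoids to monoids'' principle, while you verify the monad axioms directly, which amounts to the same diagram chases.
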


\begin{proof}
Parts (1)-(4) are straightforward.

(5) Consider the category $\Fun(\cC,\cC)$ of endofunctors of $\cC$, which is monoidal with respect to composition of functors.  Let $\Fun^{we}(\cC,\cC)$ denote the full subcategory of $\Fun(\cC,\cC)$ consisting of functors that preserve $L$-equivalences; this is a monoidal subcategory.

One readily checks that the functor $L_{\sharp} \io^{\sharp} =\io^{\sharp} L_{\sharp} \colon \Fun^{we}(\cC,\cC) \to \Fun(\cC_0,\cC_0)$, which associates to an endofunctor $F \colon \cC \to \cC$ the endofunctor $L F \io \colon \cC_0 \to \cC_0$, is strong monoidal, with structure map
\[
(L F \eta)^{-1} G \io \colon (L F \io)(L G \io) \ral{\cong} L (FG) \io.
\]
Hence, a monad $\M$ on $\cC$, i.e., a monoid in $\Fun(\cC,\cC)$, is naturally sent to a monoid $L_{\sharp} \io^{\sharp} (\M) = L \M \iota$ in $\Fun(\cC_0,\cC_0)$.

(6) Straightforward.

(7) Let $\phy \colon \M X \to X$ be the structure map of $X$ as $\M$-algebra, then the composite
\[
\xymatrix{
L \M LX & L \M X \ar[l]_-{L \M \eta}^-{\cong} \ar[r]^-{L \phy} & L X \\
}
\]
naturally makes $LX$ into an $\MM$-algebra.

Now we argue the compatibility and uniqueness claims. We are looking for a map $\phy_1 \colon \M(LX) \to LX$ satisfying $\phy_1 \circ (\M \eta) = \eta \circ \phy$, i.e., making the square in
\[
\xymatrix{
\M X \ar[dd]_{\phy} \ar[dr]^{\eta} \ar[rr]^{\M \eta} & & \M (LX) \ar@{-->}[dd]^(0.7){\phy_1} \ar[dr]^{\eta} & \\
& L(\M X) \ar[rr]^(0.35){L \M \eta}_(0.35){\cong} \ar@{-->}[dr]_{\phy_3} & & L(\M LX) \ar@{-->}[dl]^{\phy_2} \\
X \ar[rr]_{\eta} & & LX & \\
}
\]
commute. Since $LX$ is $L$-complete, a map $\phy_1$ making the square commute corresponds to a map $\phy_2 \colon L (\M LX) \to LX$ making the outer pentagram commute.

By naturality of $\eta$, the upper parallelogram commutes, i.e., $\eta \circ (\M \eta) = (L \M \eta) \circ \eta$ holds. Moreover, $L \M \eta$ is an isomorphism by assumption. Therefore $\phy_2$ makes the outer pentagram commute if and only if the corresponding map $\phy_3 = \phy_2 \circ L \M \eta$ makes the lower left triangle commute.

Since $LX$ is $L$-complete, such maps $\phy_3$ correspond to maps $\M X \to LX$ that are equal to $\eta \circ \phy$. In other words, there exists a unique such $\phy_3$, and it is $\phy_3 = L \phy$.

(8) This is a straightforward consequence of parts (6) and (7). 
\end{proof}

\subsection{Completed rings and their modules}\label{comprings}

Now we specialize to the case of $\Mod_R$ with its reflective subcategory $\Modhat_R$ of $L$-complete modules. Here $R$ is a commutative Noetherian ring (not assumed to be local) with a chosen ideal $\fm \subset R$. All tensor products will be over $R$ unless otherwise noted.

First, note that $R$ itself might not be $L$-complete as an $R$-module. However, we will see that replacing $R$ by $L_0 R$ is harmless.

\begin{prop}\mylabel{L0Mod}
\begin{enumerate}
\item $L_0 R$ admits a unique $R$-algebra structure compatible with that of $R$, in the sense that $\eta \colon R \to L_0 R$ is a map of $R$-algebras. Moreover, said $R$-algebra structure on $L_0 R$ is natural in $R$, and is commutative.
\item Let $M$ be an $R$-module. Then $L_0 M$ admits a unique $L_0 R$-module structure such that its restriction along $\eta \colon R \to L_0 R$ is the original $R$-module structure on $L_0 M$. Moreover, said $L_0 R$-module structure on $L_0 M$ is natural in $R$ and $M$.
\item \label{L0Alg} Let $A$ be an $R$-algebra. Then $L_0 A$ admits a unique $L_0 R$-algebra structure compatible with the $R$-algebra structure of $A$. Moreover, said algebra structure is natural in $A$ and $R$. The algebra $L_0 A$ is commutative if $A$ is.
\end{enumerate}
\end{prop}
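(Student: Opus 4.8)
The plan is to deduce all three parts from Theorem~\myref{CompletedMonad}, applied to the reflective subcategory $\iota\colon\Modhat_R\hookrightarrow\Mod_R$ with reflector $L_0$, by choosing a suitable monad on $\Mod_R$ (or on $\Mod_{L_0R}$) in each case. The one non-formal input I would invoke is that, for any $R$-module $N$, the functor $N\ot_R(-)\colon\Mod_R\to\Mod_R$ preserves $L_0$-equivalences; this is immediate from the natural isomorphism $L_0(N\ot_R P)\cong L_0(N\ot_R L_0 P)$ recalled in Appendix~\ref{appendix1} (valid for any Noetherian base), since if $L_0 f$ is an isomorphism then so is $N\ot_R L_0 f$, hence so is $L_0(N\ot_R L_0 f)$. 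Iterating shows $(-)^{\ot_R n}$ preserves $L_0$-equivalences, and since $\Si_n$-coinvariants and the coproduct $\bigoplus_{n\ge0}$ are colimits, Theorem~\myref{CompletedMonad}\eqref{ColimL0Equiv},\eqref{ColimPreserv} yields that the free commutative $R$-algebra monad $\mathrm{Sym}_R=\bigoplus_{n\ge0}(-)^{\ot_R n}/\Si_n$ preserves $L_0$-equivalences, and likewise the extension-of-scalars monad $N\mapsto S\ot_R N$ for any commutative $R$-algebra $S$.

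For part (1) I would note that algebras over $\mathrm{Sym}_R$ are precisely commutative $R$-algebras, with $R$ the initial one. Then Theorem~\myref{CompletedMonad}\eqref{AlgCompletedMonad} equips $L_0 R$ with a canonical $\widehat{\mathrm{Sym}}_R$-algebra structure, unique subject to $\eta\colon R\to L_0 R$ being a map of $\mathrm{Sym}_R$-algebras, and transporting it along the equivalence $\Alg_{\widehat{\mathrm{Sym}}_R}\xrightarrow{\cong}\Alghat_{\mathrm{Sym}_R}$ of Theorem~\myref{CompletedMonad}\eqref{CatAlgCompletedMonad} presents $L_0 R$ as a commutative $R$-algebra (with underlying module $L_0 R$) for which $\eta$ is a ring homomorphism; commutativity is inherited since the construction stays inside commutative algebras.

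For part (2), having part (1), I would form the monad $\M=L_0 R\ot_R(-)$ on $\Mod_R$, whose algebras are $L_0 R$-modules; it preserves $L_0$-equivalences by the above, so $\MM=L_0\M\iota$ is a monad on $\Modhat_R$, and for $P\in\Modhat_R$ one computes $\MM P=L_0(L_0 R\ot_R P)\cong L_0(R\ot_R P)=L_0 P=P$, i.e. $\MM\cong\id$. Hence Theorem~\myref{CompletedMonad}\eqref{CatAlgCompletedMonad} identifies $\Modhat_R=\Alg_{\MM}$ with $\Alghat_{\M}$, the $L_0 R$-modules whose underlying $R$-module is $L$-complete; in particular $L_0 M$ acquires a unique $L_0 R$-module structure restricting along $\eta$ to its $R$-module structure, and one recovers the equivalence $\Modhat_R\simeq\widehat{\Mod}_{L_0R}$ promised before the proposition (using that $L_0 R$ is again Noetherian, so the appendix applies to it verbatim). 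Part (3) I would then obtain by repeating the argument of part (1) with base ring $L_0 R$: $\mathrm{Sym}_{L_0 R}$ preserves $L_0$-equivalences, $L_0 R\ot_R A$ is a commutative $L_0 R$-algebra with $L_0(L_0 R\ot_R A)\cong L_0 A$, and Theorem~\myref{CompletedMonad}\eqref{AlgCompletedMonad},\eqref{CatAlgCompletedMonad} equip $L_0 A$ with a commutative $L_0 R$-algebra structure, uniquely compatible with that of $A$ via $A\to L_0 A$, and commutative when $A$ is.

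Naturality in $R$, $M$, $A$ should come essentially for free, since every ingredient ($L_0$, $\iota$, $\ot_R$, $\mathrm{Sym}$, $\eta$, the structure maps of Theorem~\myref{CompletedMonad}) is natural; the one point needing separate care is that a ring map $R\to R'$ changes the ambient category, so I would check directly that the induced map $L_0 R\to L_0 R'$ intertwines the multiplications, which falls out of the naturality of $\eta$ and of $L_0(N\ot_R P)\cong L_0(N\ot_R L_0 P)$. The main obstacle, I expect, is concentrated in the key input at the start — that tensoring, and hence $\mathrm{Sym}_R$ and $L_0 R\ot_R(-)$, preserve $L_0$-equivalences — after which all the coherences (associativity, unitality, commutativity, uniqueness) are handed to us formally by Theorem~\myref{CompletedMonad}. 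For part (1) alone there is also the shortcut that, $R$ being Noetherian with $\fm$ finitely generated, $L_0 R$ is just the $\fm$-adic completion $\varprojlim R/\fm^k$, evidently a commutative $R$-algebra; but this does not help with parts (2) and (3), where $L_0 M$ and $L_0 A$ are not naive completions in general.
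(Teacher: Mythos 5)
Your argument is correct, and it reaches the same conclusion by a genuinely different packaging. The paper's proof is a one-step categorical observation: the natural isomorphism $L_0(\eta \ot \eta) \colon L_0(M \ot N) \ral{\cong} L_0(L_0 M \ot L_0 N)$ makes $L_0 \colon (\Mod_R, \ot) \to (\Modhat_R, \hat{\ot})$ a strong monoidal functor, and strong monoidal functors carry (commutative) monoid objects to (commutative) monoid objects and modules over them to modules over their images; uniqueness is then handled exactly as in \myref{CompletedMonad}\eqref{AlgCompletedMonad}. You instead route everything through the monad formalism of \myref{CompletedMonad}, applied to $\mathrm{Sym}_R$, to $L_0 R \ot_R (-)$, and to $\mathrm{Sym}_{L_0 R}$. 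The key input is the same in both cases — the displayed isomorphism above, equivalently that tensoring preserves $L_0$-equivalences, which the paper (like you) asserts rather than proves in detail, cf.\ \myref{lcompletecat} — but your route needs the extra (easy) facts \myref{CompletedMonad}\eqref{ColimL0Equiv},\eqref{ColimPreserv} to get from binary tensors to $\mathrm{Sym}_R$ via coinvariants and coproducts. What your version buys is uniformity with the rest of the paper and some free corollaries: your part (2) argument, with $\MM \cong \id$ on $\Modhat_R$, simultaneously establishes the equivalence $\Modhat_{L_0 R} \cong \Modhat_R$ of \myref{EquivCatMod}(1). What the paper's version buys is economy: it never needs to know that $\mathrm{Sym}_R$ preserves $L_0$-equivalences, only that the binary tensor does. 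Two small points to tidy if you write this up: for part (3) you should record that $L_0 R \cong R^{\wedge}_{\fm}$ (by \myref{lprop} and \myref{ses}) is again Noetherian local so the appendix applies over the new base, and that $L$-completion over $L_0 R$ agrees with $L$-completion over $R$ on underlying modules — this is exactly \myref{CompleteGroundRing}, which in the paper is stated only after \myref{L0Mod}, so you are using it slightly out of order (harmlessly, since its proof does not depend on \myref{L0Mod}).
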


\begin{proof}
In all three cases, the induced structure maps arise from applying $L_0$ to the original structure maps, and using the fact that $L_0 \colon \Mod_{R} \to \Modhat_{R}$ is strong monoidal, i.e., the natural isomorphism
\[
L_0(\eta \ot \eta) \colon L_0(M \ot N) \ral{\cong} L_0(L_0 M \ot L_0 N) = L_0M \hat{\ot} L_0N.
\]
The compatibility claim follows from naturality of $\eta \colon M \to L_0 M$. The uniqueness claim is proved as in \myref{CompletedMonad} \eqref{AlgCompletedMonad}.

Let us add some details about each point.

(1) Since $R$ is a commutative monoid in $(\Mod_R,\ot)$, $L_0 R$ is naturally a commutative monoid in $(\Modhat_R,\hat{\ot})$. Since $L_0 R$ is $L$-complete, this structure is the same as that of a commutative $R$-algebra.

(2) Since $M$ is a module object over the monoid object $R$ in $(\Mod_R,\ot)$, $L_0 M$ is naturally a module object over the monoid object $L_0 R$ in $(\Modhat_R,\hat{\ot})$.

(3) Statement (2) defines a functor $L_0 \colon \Mod_R \to \Modhat_{L_0 R}$, which is still strong monoidal, since $\eta \colon R \to L_0 R$ induces a natural isomorphism
\[
L_0 (L_0 N \ot_R L_0 N') \ral{\cong} L_0 (L_0 N \ot_{L_0 R} L_0 N').
\]
Hence this functor sends monoids (resp. commutative monoids) in $(\Mod_{R},\ot)$ to monoids (resp. commutative monoids) in $(\Modhat_{L_0 R},\hat{\ot})$.
\end{proof}

In the proof  of \myref{L0Mod}, $\Modhat_{L_0 R}$ denotes the full subcategory of $\Mod_{L_0 R}$ consisting of $L_0 R$-modules which are $L$-complete when viewed as $R$-modules. We now check that the other possible interpretation of the notation is in fact the same.

\begin{lemma}\mylabel{CompleteGroundRing}
The diagram
\[\xymatrix{
\Mod_{L_0 R} \ar[d]_{\eta^*} \ar[r]^{\hat{L}_0} & \Mod_{L_0 R} \ar[d]^{\eta^*} \\
\Mod_{R} \ar[r]_{L_0} & \Mod_{R} \\
}\]
commutes (up to natural isomorphism), where the top horizontal functor $\hat{L}_0$ is $L$-completion with respect to the ideal $\hat{\fm} := \left( \eta(\fm) \right) \subseteq L_0 R$.

In particular, an $L_0 R$-module $M$ is $L$-complete with respect to the ideal $\hat{\fm} \subseteq L_0 R$ if and only if the underlying $R$-module $\eta^* M$ is $L$-complete with respect to the ideal $\fm \subseteq R$.
\end{lemma}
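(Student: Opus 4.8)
The plan is to reduce the claim to a statement about the derived functors $L_s$, comparing the $\fm$-adic $L$-completion on $\Mod_R$ with the $\hat\fm$-adic $L$-completion on $\Mod_{L_0R}$. Recall that $L$-completion with respect to an ideal $I$ in a Noetherian ring $S$ is the zeroth left derived functor of $I$-adic completion, and can be computed (see \cite[Appendix A]{moravak}) via $L_sM = \holim_t \mathrm{Tor}^S_s(S/I^t, M)$ together with a $\lim^1$ correction; more relevantly, the $L$-complete objects are precisely those $M$ for which the natural map $M \to \holim_t (S/I^t \otimes^{\mathbf L}_S M)$ is an equivalence on $\pi_0$ and $\pi_1$, or equivalently those for which $L_0M\cong M$ and $L_1M = 0$. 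So the heart of the matter is to show that for an $L_0R$-module $M$, the $R$-module derived tensor product $R/\fm^t \otimes^{\mathbf L}_R M$ agrees (pro-isomorphically in $t$) with the $L_0R$-module derived tensor product $(L_0R)/\hat\fm^t \otimes^{\mathbf L}_{L_0R} M$.

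The key steps, in order. First I would observe that $\hat\fm^t = \left(\eta(\fm^t)\right)$ and that the natural map $R/\fm^t \to (L_0R)/\hat\fm^t$ is an isomorphism: indeed $L_0R \to \varprojlim_t R/\fm^t$ is a surjection with $L$-acyclic (in particular $\fm$-divisible-in-the-derived-sense) kernel, so reducing mod $\fm^t$ kills the difference — concretely, $R/\fm^t \otimes_R L_0R \cong R/\fm^t$ because $L_0R/\fm^t L_0R \cong R/\fm^t$ for Noetherian $R$ (this is the standard fact that $L_0$ does not change the quotients by powers of $\fm$, already invoked in the footnote after \ref{MainThmRestated}). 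Second, I would use this to identify $R/\fm^t \otimes^{\mathbf L}_R M \simeq (R/\fm^t \otimes^{\mathbf L}_R L_0R)\otimes^{\mathbf L}_{L_0R} M$ for an $L_0R$-module $M$ (base-change for derived tensor along $R\to L_0R$), and then argue that $R/\fm^t\otimes^{\mathbf L}_R L_0R \simeq (L_0R)/\hat\fm^t$, i.e. that $\mathrm{Tor}^R_s(R/\fm^t, L_0R) = 0$ for $s>0$ — at least pro-isomorphically in $t$, which is all that is needed to compute the $L_s$. This should follow because $L_0R$ is a flat-up-to-higher-$L$ object; more carefully, $L_s^R(L_0R) = 0$ for $s>0$ and $L_0^R(L_0R) = L_0R$, so the pro-system $\{\mathrm{Tor}^R_s(R/\fm^t,L_0R)\}_t$ is pro-zero for $s>0$. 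Third, combining these, $L_s^R(\eta^*M) = L_s^{L_0R}(M)$ for all $s$ and all $L_0R$-modules $M$, where on the right I use the $\hat\fm$-adic theory; in particular $\eta^*M$ is $L$-complete over $R$ iff $M$ is $L$-complete over $L_0R$, and $\eta^* \hat L_0 M \cong L_0 \eta^* M$, which is exactly the asserted commuting square and the ``in particular'' clause. Finally, naturality of all the identifications in $M$ gives the natural isomorphism of functors.

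The main obstacle I anticipate is the bookkeeping around pro-systems: one wants $\mathrm{Tor}^R_s(R/\fm^t, L_0R)$ to vanish, but it need not vanish on the nose for each fixed $t$ — only the pro-system in $t$ is pro-zero, which suffices since $L_s$ only sees the pro-system (via $\holim_t$ and its $\lim^1$). Making this precise requires the Milnor-type exact sequence relating $L_s$ to $\varprojlim_t \mathrm{Tor}_s$ and $\varprojlim^1_t \mathrm{Tor}_{s+1}$, and checking that a pro-zero system contributes nothing to either term. The cleanest route is probably to cite the relevant structural result from \cite[Appendix A]{moravak} (or \cite{filteredcolim}) that $L_s$ is insensitive to pro-isomorphism of the defining pro-system, and to note that $L_0 R$ being $L$-complete over itself (with respect to $\hat\fm$) combined with $R/\fm^t \cong (L_0R)/\hat\fm^t$ forces $L_0 R$ to be $L$-acyclic-relative-to-$R$ in the sense that $\{\mathrm{Tor}^R_{>0}(R/\fm^t, L_0R)\}_t$ is pro-zero. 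Everything else is formal manipulation of derived base change, which I would not spell out in full.
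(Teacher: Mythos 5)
Your argument is correct in outline but takes a genuinely different, and considerably heavier, route than the paper's. The paper simply observes that $\eta^*$ is exact, so it commutes with forming the $0^{\text{th}}$ left derived functor, and then checks that the \emph{underived} completions already agree on the nose: since $\hat{\fm}^k M = \fm^k(\eta^* M)$ as $R$-submodules, the towers $\{M/\hat{\fm}^k M\}_k$ and $\{(\eta^* M)/\fm^k(\eta^* M)\}_k$ are literally equal, whence $\eta^*(-)^{\wedge}_{\hat\fm} \cong (-)^{\wedge}_{\fm}\eta^*$ and the derived statement follows. Your route through derived base change along $R \to L_0 R$ also works, but the pro-system bookkeeping you single out as the main obstacle is a non-issue: $L_0 R$ is pro-free, hence honestly flat as an $R$-module (\myref{flatchar}; for Noetherian $R$ this is just flatness of $R^{\wedge}_{\fm}$, noted at the start of Appendix \ref{appendix1}), so $\mathrm{Tor}^R_s(R/\fm^t, L_0 R) = 0$ on the nose for every $s>0$ and every $t$. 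Be warned that the justification you actually offer for this vanishing --- that $L_s^R(L_0 R) = 0$ for $s>0$ --- is not sufficient as stated: by \myref{ses} it only gives vanishing of $\lim_t$ and ${\lim}^1_t$ of the Tor towers, which does not imply those towers are pro-zero; you should invoke flatness of $L_0R$ instead. Also, to pass from an isomorphism of towers of derived tensor products to an isomorphism of the $L_s$, you need a natural map of Milnor sequences (equivalently, the identification of $L_s$ with the $s^{\text{th}}$ homotopy group of the homotopy limit of that tower, as in \cite{gmcomplete}), not merely isomorphic outer terms; this is routine but must be said. In the end both arguments secretly rest on the same flatness input (the paper needs it to know that restricting a projective resolution from $\Mod_{L_0R}$ to $\Mod_R$ still computes the $L_s$ over $R$); what yours buys is the stronger conclusion $L_s^R\eta^* \cong \eta^*\hat{L}_s$ for all $s$, while the paper's buys brevity.
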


\begin{proof}
Since the restriction functor $\eta^* \colon \Mod_{L_0 R} \to \Mod_R$ is exact, the composite $\eta^* \hat{L}_0$ is the $0^{\text{th}}$ left derived functor of $\eta^* (-)^{\wedge}_{\hat{\fm}}$ and likewise, the composite $L_0 \eta^*$ is the $0^{\text{th}}$ left derived functor of $(-)^{\wedge}_{\fm} \eta^*$. It therefore suffices to show that $\eta^* (-)^{\wedge}_{\hat{\fm}} \cong (-)^{\wedge}_{\fm} \eta^*$ is a natural isomorphism.

Let $M$ be an $L_0 R$-module. By exactness of $\eta^*$, we have
\begin{align*}
\eta^* (M^{\wedge}_{\hat{\fm}}) &= \eta^* \left( \lim_k M / \hat{\fm}^k M \right) \\
&= \lim_k \eta^* \left( M / \hat{\fm}^k M \right) \\
&= \lim_k \, (\eta^* M) / (\eta^* (\hat{\fm}^k M) ) \\
(\eta^* M)^{\wedge}_{\fm} &= \lim_k \, (\eta^* M) / \fm^k (\eta^* M)
\end{align*}
and those two towers of $R$-modules are in fact equal, given the equality of $R$-submodules $\fm^k (\eta^* M) = \eta^* (\hat{\fm}^k M)$.
\end{proof}

\begin{prop}\mylabel{EquivCatMod}
\begin{enumerate}
\item The restriction functor $\eta^* \colon \Mod_{L_0 R} \to \Mod_{R}$ induces an equivalence of categories
\[
\eta^* \colon \Modhat_{L_0 R} \ral{\cong} \Modhat_{R}.
\]
\item \label{EquivCatAlg} The restriction functor $\eta^* \colon \Com_{L_0 R} \to \Com_{R}$ induces an equivalence of categories
\[
\eta^* \colon \Comhat_{L_0 R} \ral{\cong} \Comhat_{R}.
\]
\end{enumerate}
\end{prop}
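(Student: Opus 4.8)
The plan is to produce an explicit quasi-inverse to $\eta^*$ and then let the uniqueness clauses already in hand do all the work; there is essentially no homological content here, only the bookkeeping of compatibilities.

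First I would define the candidate inverse $\Psi \colon \Modhat_R \to \Modhat_{L_0 R}$. Given an $L$-complete $R$-module $M$, part~(2) of \myref{L0Mod} equips $L_0 M$ with a natural $L_0 R$-module structure whose restriction along $\eta \colon R \to L_0 R$ is the original $R$-module structure on $L_0 M$. Since the underlying $R$-module $L_0 M$ is $\fm$-complete, \myref{CompleteGroundRing} shows that the $L_0 R$-module $L_0 M$ is $\hat{\fm}$-complete; set $\Psi(M) := L_0 M$ with this structure. Naturality in $M$ (again part~(2) of \myref{L0Mod}) makes $\Psi$ a functor $\Modhat_R \to \Modhat_{L_0 R}$.

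Next I would identify the two composites with the respective identities. Since $\eta^* \Psi(M) = L_0 M$ as an $R$-module, and $\Modhat_R$ is a reflective subcategory of $\Mod_R$ with reflector $L_0$ (as recalled at the start of \S\ref{comprings}), the unit $\eta_M \colon M \to L_0 M$ is an isomorphism for every $M \in \Modhat_R$; this gives a natural isomorphism $\id_{\Modhat_R} \cong \eta^* \Psi$. For the other composite, let $N \in \Modhat_{L_0 R}$. By \myref{CompleteGroundRing} the restriction $\eta^* N$ is $\fm$-complete, so $\eta_{\eta^* N} \colon \eta^* N \to L_0(\eta^* N)$ is an isomorphism of $R$-modules, and $L_0(\eta^* N)$ is the underlying $R$-module of $\Psi(\eta^* N)$. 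I claim $\eta_{\eta^* N}$ is even $L_0 R$-linear: transporting the $L_0 R$-action of $N$ along this $R$-linear isomorphism (note $\eta^* N$ shares its underlying set with $N$) produces an $L_0 R$-module structure on $L_0(\eta^* N)$ whose restriction along $\eta$ is the underlying $R$-module structure, so by the uniqueness clause in part~(2) of \myref{L0Mod} it must coincide with the structure defining $\Psi(\eta^* N)$. Hence $\eta_{\eta^* N}$ upgrades to a natural isomorphism $\id_{\Modhat_{L_0 R}} \cong \Psi \eta^*$, which proves~(1).

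For~(2) I would run the identical argument with part~(3) of \myref{L0Mod} replacing part~(2): for $A \in \Comhat_R$ the module $L_0 A$ carries a unique $L_0 R$-algebra structure compatible with that of $A$, which means $\eta_A \colon A \to L_0 A$ is a map of $R$-algebras, hence an isomorphism of $R$-algebras since it is already an isomorphism of the underlying modules; its target lies in $\Comhat_{L_0 R}$ by \myref{CompleteGroundRing}, and the same uniqueness bookkeeping shows that an $R$-algebra map between objects of $\Comhat_{L_0 R}$ is automatically $L_0 R$-linear, so the round trips are identified with the identities exactly as in part~(1). I expect the only point that needs care in either part to be precisely this interlocking of uniqueness assertions — verifying that the $L_0 R$-structure transported along the unit $\eta$ agrees with any structure already present, and that $\Psi$ never leaves the $L$-complete world — but both of these are furnished directly by \myref{L0Mod} and \myref{CompleteGroundRing}, so I anticipate no genuine obstacle.
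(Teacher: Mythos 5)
Your proposal is correct and takes essentially the same route as the paper: both construct the quasi-inverse as $M \mapsto L_0 M$ equipped with the $L_0R$-module (resp.\ algebra) structure furnished by \myref{L0Mod}, i.e.\ the composite $L_0 \circ \iota$. The paper phrases this via a commuting diagram of adjoint pairs and leaves the verification that the two composites are isomorphic to the identities as "one readily checks"; you carry out exactly that check, using \myref{CompleteGroundRing} and the uniqueness clauses of \myref{L0Mod}.
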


\begin{proof}
(1) Consider the commutative diagram of adjoint pairs
\[
\xymatrix @R=3pc @C=3pc {
\Mod_{R} \ar@<-0.6ex>[d]_-{L_0} \ar@<0.6ex>[r]^{L_0 R \ot_R -} & \Mod_{L_0 R} \ar@<0.6ex>[l]^{\eta^*} \ar@<-0.6ex>[d]_{L_0} \\
\Modhat_{R} \ar@<-0.6ex>[u]_-{\io} \ar@<0.6ex>[r] & \Modhat_{L_0 R} \ar@<0.6ex>[l]^{\eta^*} \ar@<-0.6ex>[u]_{\io} 
}
\]
where the right adjoints commute (on the nose) and therefore the left adjoints commute (up to natural isomorphism). In other words, the functor $L_0 \colon \Mod_{R} \to \Modhat_{L_0 R}$ coming from \myref{L0Mod} is naturally isomorphic to the composite
\[
\xymatrix @C=3pc {
\Mod_{R} \ar[r]^-{L_0 R \ot_R -} & \Mod_{L_0 R} \ar[r]^-{L_0} & \Modhat_{L_0 R}. \\
}
\]
One readily checks that the composite
\[
\xymatrix{
\Modhat_{R} \ar[r]^{\io} & \Mod_{R} \ar[r]^{L_0} & \Modhat_{L_0 R} \\
}
\]
is an inverse equivalence to $\eta^*$.

(2) Similar proof, using \myref{L0Mod} \eqref{L0Alg}.
\end{proof}

\subsection{$\lambda$- and $\theta$-rings}\label{larings}

\begin{defn}\mylabel{defLambda}
\cite[\S 1.2]{lambda} A \Def{$\la$-ring} $R$ is a commutative ring equipped with operations $\la^i \colon R \to R$ for all $i \geq 0$, satisfying the following equations:
\begin{enumerate}
\item $\la^0(x) = 1$ for all $x \in R$.
\item $\la^1(x) = x$ for all $x \in R$.
\item $\la^i(1) = 0$ for all $i \geq 2$.
\item $\la^n(x+y) = \sum_{i+j=n} \la^i(x) \la^j(y)$ for all $x,y \in R$.
\item \label{LambdaProd} $\la^n(xy) = P_n(\la^1(x), \ldots, \la^n(x); \la^1(y), \ldots, \la^n(y))$ for all $x,y \in R$.
\item $\la^m(\la^n(x)) = P_{m,n}(\la^1(x), \ldots, \la^{mn}(x))$ for all $x \in R$.
\end{enumerate}
Here $P_n$ and $P_{m,n}$ are certain polynomials with integer coefficients in $2n$ and $mn$ variables respectively, defined using elementary symmetric polynomials as in \cite[\S 1.1, 1.2]{lambda}; the precise formulas will play no role in this paper. From the structure of $\la$-ring, one can construct for all $i \geq 1$ \Def{Adams operations} $\psi^i \colon R \to R$, which are $\la$-ring homomorphisms, natural in $R$; see \cite[\S 3.1]{lambda} for details.

Let $R$ be a $\la$-ring. An \Def{$R$-$\la$-algebra} $S$ is a commutative $R$-algebra which is also a $\la$-ring, such that the two structures are compatible. The compatibility condition can be equivalently defined as the $R$-algebra structure map $R \to S$ being a map of $\la$-rings, or by allowing $x$ and $y$ in the product formula \eqref{LambdaProd} to be either in $R$ or $S$.
\end{defn}

\begin{example}\mylabel{ZLambda}
The integers $\Z$, endowed with operations $\la^i(n) = \binom{n}{i}$, form a $\la$-ring. The Adams operations $\psi^i \colon \Z \to \Z$ are all trivial, that is, $\psi^i(n) = n$ holds for all $n \in \Z$ and $i \geq 1$ \cite[3.4]{lambda}.
\end{example}

\begin{example}
The complex $K$-theory $K^0(X)$ of a paracompact Hausdorff space $X$ is naturally a $\la$-ring, where the operation $\la^i$ is induced by the $i^{\text{th}}$ exterior power of vector bundles. In this case, the Adams operations $\psi^i$ are the classical operations introduced by Adams.

In particular, the $K$-theory of a point $K^0(\text{pt}) \cong \Z$ is the $\la$-ring described in \myref{ZLambda}.
\end{example}

\begin{note}
For a ground $\la$-ring $R$, let $\la \Alg_R$ denote the category of $R$-$\la$-algebras. In particular, $\la \Alg_{\Z}$ is the category of $\la$-rings.
\end{note}

\begin{defn}\mylabel{defTheta}
\cite[2.1]{bousfield} Let $p$ be a prime. A \Def{$\te^p$-ring} (or just $\te$-ring if the prime $p$ is understood) $R$ is a commutative ring together with a function $\te^p \colon R \to R$ (also denoted $\te$) satisfying:
\begin{enumerate}
\item \label{AddCond} $\te(x+y) = \te x + \te y - \sum_{i=1}^{p-1} \frac{1}{p} \binom{p}{i} x^i y^{p-i}$
\item \label{MultCond} $\te(xy) = (\te x) y^p + x^p (\te y) + p (\te x)(\te y)$
\item \label{CstCond} $\te(1) = 0.$
\end{enumerate}

Let $R$ be a $\te$-ring. An \Def{$R$-$\te$-algebra} $S$ is a commutative $R$-algebra which is also a $\te$-ring, such that the two structures are compatible.
\end{defn}

\begin{note}
Let $p$ be a prime. For a ground $\te$-ring $R$, let $\te \Alg_R$ denote the category of $R$-$\te$-algebras. In particular, $\te \Alg_{\Z}$ is the category of $\te$-rings.
\end{note}

For any prime $p$, any $\la$-ring has an underlying $\te^p$-ring structure \cite[2.5]{bousfield}, in which the equation $\psi^p(x) = x^p + p \te^p(x)$ holds for all elements $x$. In light of this, given a $\te^p$-ring $R$, one defines the associated Adams operation $\psi^p \colon R \to R$ by $\psi^p(x) := x^p + p \te^p(x)$.

The underlying $\te^p$-ring structure defines a forgetful functor $\la \Alg_{\Z} \to \te \Alg_{\Z}$, and more generally:

\begin{lemma}\mylabel{laforget}
For any ground $\la$-ring $R$, there is a forgetful functor $\la \Alg_R \to \te \Alg_R$.
\end{lemma}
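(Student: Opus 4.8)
The plan is to unwind the definitions on both sides. An object of $\la\Alg_R$ is a commutative $R$-algebra $S$ equipped with operations $\la^i\colon S\to S$ making $S$ a $\la$-ring, compatible with the $\la$-ring structure on $R$; an object of $\te\Alg_R$ is a commutative $R$-algebra $S$ equipped with a single operation $\te\colon S\to S$ satisfying the three axioms of \myref{defTheta}, compatible with the $\te$-ring structure on $R$. So the forgetful functor must (i) send an $R$-$\la$-algebra $S$ to the same underlying $R$-algebra equipped with its underlying $\te^p$-structure, and (ii) do the analogous thing on morphisms, and (iii) the output must genuinely be an object of $\te\Alg_R$, i.e.\ the compatibility with the ground structure must be preserved.

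First I would recall, citing \cite[2.5]{bousfield}, the underlying $\te^p$-ring functor $\la\Alg_{\Z}\to\te\Alg_{\Z}$: given a $\la$-ring $(S,\la^i)$, one sets $\te^p(x) := \frac{1}{p}\bigl(\psi^p(x) - x^p\bigr)$, where $\psi^p$ is the $p$-th Adams operation; the content of \cite[2.5]{bousfield} is precisely that this is a well-defined ring map to $S$ (the division by $p$ is harmless because $\psi^p(x)\equiv x^p$ modulo $p$ in any $\la$-ring) and that it satisfies the $\te$-ring axioms. Moreover this construction is manifestly natural: a map of $\la$-rings commutes with $\la^i$, hence with $\psi^p$, hence with $\te^p$. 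So on underlying objects the functor is just ``forget the $\la^i$ for $i\neq 1$ and remember $\te^p$,'' and on morphisms it is the identity on underlying ring maps.

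Next I would check the compatibility with the ground ring $R$. By definition, for $S\in\la\Alg_R$ the structure map $R\to S$ is a map of $\la$-rings; applying the underlying $\te^p$-functor to this map shows $R\to S$ is a map of $\te^p$-rings, which (together with the commutative $R$-algebra structure, which is untouched) is exactly the condition for $S$, with its underlying $\te$-structure, to be an object of $\te\Alg_R$. Here one uses that the ground $\la$-ring $R$ is being regarded, on the $\te$-side, with its own underlying $\te^p$-structure, so ``compatibility over $R$'' on the $\la$-side implies ``compatibility over $R$'' on the $\te$-side. Functoriality (identities and composites preserved) is immediate since the assignment is the identity on underlying ring maps. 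I do not anticipate a real obstacle here; the only point requiring a little care is bookkeeping the two possible readings of ``$R$-$\te$-algebra'' (the ground ring carries its induced $\te^p$-structure), and invoking \cite[2.5]{bousfield} correctly for the well-definedness and the axioms — everything else is a routine diagram chase.
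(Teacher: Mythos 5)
Your proposal is correct and matches the paper's (essentially proofless) treatment of this lemma: the paper justifies it by the same appeal to \cite[2.5]{bousfield} for the underlying $\te^p$-ring structure on any $\la$-ring, with the naturality and the compatibility over the ground ring $R$ left implicit, exactly the points you spell out. One very minor caveat: in a ring with $p$-torsion the formula $\te^p(x)=\tfrac{1}{p}\left(\psi^p(x)-x^p\right)$ is not literally a definition (divisibility by $p$ does not give a well-defined quotient); $\te^p$ is instead given by a universal integral polynomial in the $\la^i$, but since you defer well-definedness to Bousfield this does not affect your argument.
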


\begin{example}\mylabel{thetaZ}
The underlying $\te^p$-ring of the $\la$-ring $\Z$ described in \myref{ZLambda} is equipped with the operation $\te^p(x) = \frac{x-x^p}{p}$.
\end{example}

For any $\la$-ring $R$, the forgetful functor $U^{\la}_R \colon \la \Alg_R \to \Mod_R$ from $R$-$\la$-algebras to $R$-modules has a left adjoint $F^{\la}_R \colon \Mod_R \to \la \Alg_R$. In fact, both forgetful steps, first $\la \Alg_R \to \Com_R$ to commutative $R$-algebras and then $\Com_R \to \Mod_R$ to $R$-modules, have left adjoints. Denote by $T^{\la}_R := U^{\la}_R F^{\la}_R \colon \Mod_R \to \Mod_R$ the associated forget-of-free $R$-$\la$-algebra monad, which naturally comes with a decomposition $T^{\la}_R = \bigoplus_{n = 0}^{\infty} T^{\la}_{R,n}$.

Likewise, for a $\te$-ring $R$, let $T^{\te}_R \colon \Mod_R \to \Mod_R$ denote the associated forget-of-free $R$-$\te$-algebra monad. 

By \cite[Chapters 2, 3]{algtheories}, filtered colimits and reflexive coequalizers in $\la \Alg_R$ are computed in $\Set$, or equivalently in the intermediate categories $\Mod_R$ or $\Alg_R$. This implies the following result. 

\begin{prop}\mylabel{forgetsifted}
The forgetful functor $U^{\la}_R \colon \la \Alg_R \to \Mod_R$ preserves sifted colimits, and so does the composite $T^{\la}_R = U^{\la}_R F^{\la}_R \colon \Mod_R \to \Mod_R$.
\end{prop}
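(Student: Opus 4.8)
The plan is to reduce the assertion about sifted colimits to the two special cases of filtered colimits and reflexive coequalizers, and then feed in the description of those colimits in $\la \Alg_R$ recalled immediately above the statement.

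First I would invoke the standard fact that a functor preserves sifted colimits if and only if it preserves filtered colimits and reflexive coequalizers; see \cite[2.1]{siftedcolimits}. Thus it suffices to verify that $U^{\la}_R \colon \la \Alg_R \to \Mod_R$ preserves filtered colimits and reflexive coequalizers. By \cite[Chapters 2, 3]{algtheories}, both of these colimits in $\la \Alg_R$ are created by the forgetful functor to $\Set$, and hence equally by the forgetful functor to the intermediate category $\Mod_R$, since $\Mod_R \to \Set$ itself creates filtered colimits and reflexive coequalizers. In particular $U^{\la}_R$ preserves filtered colimits and reflexive coequalizers, and therefore preserves all sifted colimits.

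For the composite $T^{\la}_R = U^{\la}_R F^{\la}_R$, I would simply observe that $F^{\la}_R \colon \Mod_R \to \la \Alg_R$ is a left adjoint, hence preserves all colimits, in particular sifted ones; composing with $U^{\la}_R$, which preserves sifted colimits by the previous step, yields that $T^{\la}_R$ preserves sifted colimits as well. The same reasoning applies verbatim to the $\te$-ring monad $T^{\te}_R$, since filtered colimits and reflexive coequalizers in $\te \Alg_R$ are likewise computed on underlying sets.

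I do not expect any genuine obstacle here: the content is entirely the reduction of ``sifted'' to ``filtered plus reflexive coequalizers'' together with the cited description of algebraic colimits. The only point that deserves a word of care is making sure the cited result of \cite{algtheories} is applied with the correct monadic tower (first $\la \Alg_R \to \Alg_R$, then $\Alg_R \to \Mod_R$, then $\Mod_R \to \Set$), so that the claim ``computed in $\Set$'' transfers to ``computed in $\Mod_R$'' for the composite forgetful functor $U^{\la}_R$.
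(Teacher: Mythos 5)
Your proposal is correct and follows essentially the same route as the paper, which derives the proposition directly from the observation (credited to \cite{algtheories}) that filtered colimits and reflexive coequalizers in $\la \Alg_R$ are computed in $\Set$, equivalently in $\Mod_R$ or $\Alg_R$, combined with the reduction of sifted colimits to these two cases as in \cite[2.1]{siftedcolimits}. The only thing you add is the explicit (and correct) remark that $F^{\la}_R$ preserves all colimits as a left adjoint, which the paper leaves implicit.
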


\subsection{Completions and base change of $\la$-rings}\label{complarings}

\begin{prop}\mylabel{L0Lambda}
Let $R$ be a $\la$-ring.
\begin{enumerate}
\item $L_0 R$ is naturally a $\la$-ring, in a unique way that is compatible with the $\la$-ring structure of $R$, in the sense that $\eta \colon R \to L_0 R$ is a map of $\la$-rings.
\item Let $A$ be an $R$-$\la$-algebra. Then $L_0 A$ is naturally an $L_0 R$-$\la$-algebra, in a unique way that is compatible with the $R$-$\la$-algebra structure of $A$.
\end{enumerate}
\end{prop}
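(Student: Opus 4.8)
The plan is to deduce \myref{L0Lambda} formally from \myref{CompletedMonad}, once the relevant free‑algebra monad has been shown to preserve $L_0$‑equivalences. Work over the fixed pair $(R_0,\fm)$ with respect to which $L_0$ is taken (cf.\ \S\ref{comprings}; concretely $R_0=\Z$ and $\fm=(p)$ for part (1)), and recall that $R_0$-$\la$-algebras are exactly the algebras for the free‑$\la$‑ring monad $T^{\la}:=T^{\la}_{R_0}\colon\Mod_{R_0}\to\Mod_{R_0}$, graded as $T^{\la}=\bigoplus_{n\ge0}T^{\la}_n$. The heart of the matter is the claim that \emph{$T^{\la}$ preserves $L_0$-equivalences}. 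Granting it, \myref{CompletedMonad}\eqref{InheritedMonad} endows $\widehat{T^{\la}}:=L_0T^{\la}\iota$ with a monad structure; \myref{CompletedMonad}\eqref{AlgCompletedMonad} turns $L_0R$, for a $\la$-ring $R$, into a $\widehat{T^{\la}}$-algebra, uniquely and compatibly with the unit $\eta\colon R\to L_0R$; and \myref{CompletedMonad}\eqref{CatAlgCompletedMonad} together with \myref{L0Mod} and \myref{EquivCatMod} identifies $\widehat{T^{\la}}$-algebras with $L$-complete $L_0R_0$-$\la$-algebras. This yields part (1). Part (2) is then the same formal package applied to the monad $T^{\la}_R$ on $\Mod_R$, which preserves $L_0$-equivalences by the identical argument (or by base change along $R_0\to R$): \myref{CompletedMonad}\eqref{AlgCompletedMonad} makes $L_0A$ an algebra for $L_0T^{\la}_R\iota$, and via \myref{EquivCatMod} this is the same as an $L_0R$-$\la$-algebra, with the asserted compatibility and uniqueness. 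So everything reduces to the displayed claim.

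To prove that claim I would transcribe the argument of Section \ref{proofofthm} with $T^{\la}$ in place of $\T$. Each $T^{\la}_n$ preserves finite free $R_0$-modules, preserves flat modules (being, on a flat module, a filtered colimit of finite free modules), and commutes with sifted colimits since it is grading‑preserving (using \myref{forgetsifted}); moreover $T^{\la}$ is a graded exponential monad in the sense of \myref{exp}, as the free $\la$-ring on $M\op N$ is the tensor product of those on $M$ and $N$. Because $T^{\la}_n$, $L_0$ and $\iota$ all preserve reflexive coequalizers, it suffices — exactly as in the proof of \myref{MainThmRestated}, using \myref{detection}, \myref{flatchar} and the flatness of $T^{\la}_n$ applied to a complete free module — to establish the analogue of the key property \myref{key}: there is an integer $k=k(n)$ such that
\[
R_0/\fm\ot T^{\la}_m(R_0)\xrightarrow{}R_0/\fm\ot T^{\la}_m(R_0/\fm^k)
\]
is an isomorphism for all $m\le n$ (together with its shifted variants, should $R_0$ carry a grading as in the height $1$ situation). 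The passage from this to arbitrary finite free, then flat, then all $R_0$-modules is the d\'evissage of \myref{keygeneral}, using the exponential structure, Lazard's theorem, and that $T^{\la}_n$ commutes with filtered colimits.

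Finally, the key property is deduced, as in \myref{nilp}, from the statement that for every $\nu\in\fm$ and $m\ge1$ the operator $T^{\la}_m(\nu)$ on $T^{\la}_m(F)$ is nilpotent modulo $\fm$ for $F$ finite free. The single place where the original proof uses the homotopy‑theoretic input \myref{inv} is to see that $R_0/\fm\ot T^{\la}_m(M)=0$ whenever $\nu$ acts invertibly on $M$; here that must instead be proved purely algebraically, from the $\la$-ring identities of \myref{defLambda}: $T^{\la}_m(M)$ is generated as an $R_0$-module by products of elements of $M$ and by expressions $\la^{i}(e)$ with $e\in M$; products lie in $\fm\,T^{\la}_m(M)$ because $M=\nu M$; and, writing $e=\nu^{j}e'$ and expanding $\la^{i}(\nu^{j}e')$ by the product formula $\la^{i}(xy)=P_i(\la^{\bullet}(x);\la^{\bullet}(y))$ of \myref{defLambda}\eqref{LambdaProd}, one finds that $\la^{i}(e)$ is divisible by an arbitrarily high power of $\nu$, hence again lies in $\fm\,T^{\la}_m(M)$. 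I expect this divisibility estimate to be the main obstacle: one must bound the $\nu$-adic valuations occurring in the universal polynomials $P_i$ (and, for the composition axiom, $P_{i,j}$) uniformly in $i$, and organise the induction so that it handles composite $m$ and all finitely many generators of $\fm$ simultaneously, mirroring the reflexive‑coequalizer bookkeeping in the proof of \myref{key}; everything else is routine transcription. When $R_0$ is torsion‑free — in particular in the height $1$ case — this step can be shortcut by phrasing the estimate in terms of the Adams operations $\psi^{i}$ of the $\la$-structure, which, being ring homomorphisms, make the $\nu$-adic behaviour of $\la$-expressions transparent.
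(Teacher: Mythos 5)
Your formal reduction is exactly the paper's: Proposition \myref{L0Lambda}(1) is obtained by combining \myref{CompletedMonad} with the statement that the free $\la$-ring monad $T^{\la}_{\Z}$ preserves $L_0$-equivalences, and that statement is precisely \myref{TLambda}, which the paper simply cites here (part (2) is then handled by transporting structure maps as in \myref{L0Mod}, rather than by re-running the monadic argument over $R$). Where you genuinely diverge is in how you establish \myref{TLambda}. The paper reduces, as you do, to the analogue of \myref{key} for $T^{\la}_{\Z,n}$, but then verifies it by an explicit generators-and-relations computation of $T^{\la}_{\Z,2}(\Z/p^k)$ (\myref{T2Cyclic}) for $n=2$ and, for general $n$, by the representation-theoretic nilpotence statement \myref{mainreptheory} (eigenvalues of $\mathrm{Ind}\circ\mathrm{Res}$ on $\rep(\Sigma_m)$). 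You instead transcribe the Section \ref{proofofthm} argument wholesale, replacing the single topological input \myref{inv} by an algebraic divisibility estimate. That estimate does go through: the coefficient of each monomial $\la^{\mu}(e')$ in the expansion of $\la^{i}(n e')$ is an integer-valued polynomial in $n$ of degree at most $i$ vanishing at $n=0$, hence is a $\Z$-combination of $\binom{n}{1},\dots,\binom{n}{i}$, so at $n=\nu^{j}$ its $p$-adic valuation is at least $j-v_p(i!)$; since $i\le m$ is bounded this is eventually positive, which is all you need (your Adams-operation shortcut is essentially Rezk's alternative proof of \myref{mainreptheory}). The trade-off is that the paper's route yields sharp quantitative information (eigenvalues $p^{l}$, hence $k(n)=p(n)$ as in \myref{nonlinear}), while yours is self-contained, avoids symmetric-group character theory, and runs parallel to the general-height proof. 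Two soft spots to tighten: your claim that ``products lie in $\fm\,T^{\la}_m(M)$ because $M=\nu M$'' is only immediate when some factor is a $\la^1$; when every factor is a higher $\la^{k_i}$ you need the same divisibility estimate for one of the factors, so the product case should be folded into the $\la^{i}$ case rather than dispatched separately. And for part (2) over a general ground $\la$-ring $R$, ``base change along $\Z\to R$'' is not automatic: \myref{ChangeLambda} requires the unit $M\to R\ot_{\Z}M$ to be an $L_0$-equivalence, which holds for $\Z\to\Z_p=L_0\Z$ but not for arbitrary $R$; the paper avoids this by deducing (2) from (1) together with \myref{L0Mod}, which is the cleaner route.
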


\begin{proof}
(1) Since the structure of $\la$-ring is monadic over $\Mod_{\Z}$, the statement follows from \myref{CompletedMonad} \eqref{AlgCompletedMonad} and \myref{TLambda}.

(2) Similar to \myref{L0Mod}.
\end{proof}

\begin{rmk}
\myref{L0Lambda} is closely related to \cite[3.62, 3.63]{lambda}. The latter considers metric completions of $\la$-rings. When restricted to the $p$-adic topology, the former statement is more general.
\end{rmk}

\begin{prop}\mylabel{EquivCatLambdaAlg}
Let $R$ be $\la$-ring. Then the restriction functor $\eta^* \colon \la \Alg_{L_0 R} \to \la \Alg_{R}$ induces an equivalence of categories
\[
\eta^* \colon \la \Alghat_{L_0 R} \to \la \Alghat_{R}.
\]
\end{prop}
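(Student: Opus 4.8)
The plan is to mirror the proof of \myref{EquivCatMod}\eqref{EquivCatAlg}, using \myref{L0Lambda}(2) in place of \myref{L0Mod}\eqref{L0Alg}. First I would check that $\eta^*$ does restrict to a functor $\la\Alghat_{L_0 R} \to \la\Alghat_R$: by \myref{CompleteGroundRing}, an $L_0 R$-module is $L$-complete with respect to $\hat{\fm}$ exactly when its underlying $R$-module is $L$-complete with respect to $\fm$, so restriction along $\eta\colon R \to L_0 R$ leaves the underlying module $L$-complete. For the candidate inverse I would send $A \in \la\Alghat_R$ to $L_0(\io A)$ equipped with the $L_0 R$-$\la$-algebra structure supplied by \myref{L0Lambda}(2); since the underlying $R$-module of $A$ is already $L$-complete, the unit $\eta_A\colon \io A \to L_0(\io A)$ is an isomorphism, so $L_0(\io A)$ again has $L$-complete underlying module. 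This gives a functor $\la\Alghat_R \ral{\io} \la\Alg_R \ral{L_0} \la\Alghat_{L_0 R}$.

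The remaining work is to produce natural isomorphisms exhibiting these as mutually inverse. For $A \in \la\Alghat_R$, the compatibility clause of \myref{L0Lambda}(2) says $\eta_A$ is a morphism of $R$-$\la$-algebras, and it is an isomorphism because $A$ is $L$-complete, so $\eta^* L_0(\io A) \cong A$ naturally in $A$. Conversely, for $B \in \la\Alghat_{L_0 R}$ the underlying $R$-module of $B$ is $L$-complete, so $L_0(\io \eta^* B)$ and $B$ have canonically the same underlying $L_0 R$-module (by \myref{L0Mod}(2) for the module structure); both the $L_0 R$-$\la$-algebra structure on $L_0(\io\eta^* B)$ produced by \myref{L0Lambda}(2) and the original structure of $B$ restrict along $\eta$ to the $R$-$\la$-algebra structure $\eta^* B$, so they coincide by the uniqueness clause of \myref{L0Lambda}(2), giving a natural isomorphism $L_0(\io\eta^* B) \cong B$.

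A more structural alternative would run through \myref{CompletedMonad}\eqref{CatAlgCompletedMonad}: since $\la\Alg_{L_0 R}$ is monadic over $\Mod_{L_0 R}$ via the free $L_0 R$-$\la$-algebra monad $T^{\la}_{L_0 R}$, which preserves $L_0$-equivalences (the $L_0 R$-linear form of the input to \myref{L0Lambda}), algebras for the completed monad $L_0 T^{\la}_{L_0 R} \io$ are identified with $\la\Alghat_{L_0 R}$, and \myref{EquivCatMod}(1) transports the picture along $\eta^*\colon \Modhat_{L_0 R} \ral{\cong} \Modhat_R$. Either way, essentially no new manipulation of the $\la$-operations is required beyond what \myref{L0Lambda} already provides. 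I expect the only delicate point to be bookkeeping with the two ground rings $R$ and $L_0 R$ and the coincidence of their notions of $L$-completeness — precisely the content of \myref{CompleteGroundRing} — so the ``hard part'' is really just the care needed to keep these straight.
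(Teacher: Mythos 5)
Your proof is correct and follows essentially the same route as the paper, which simply says ``Similar to \myref{EquivCatMod}'': you build the inverse as $L_0 \circ \io$ with the $\la$-structure from \myref{L0Lambda}, use \myref{CompleteGroundRing} to reconcile the two notions of completeness, and check the two natural isomorphisms via the compatibility and uniqueness clauses. You have merely spelled out the details the paper leaves implicit.
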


\begin{proof}
Similar to \myref{EquivCatMod}.
\end{proof}

We want to obtain a result over the ground $\la$-ring $\Z_p$ starting from an analogous result over the ground $\la$-ring $\Z$. Let us discuss change of $\la$-rings more generally.

As for ordinary commutative algebras, if $R$ is any ground $\la$-ring, and $A$ and $B$ are $R$-$\la$-algebras, then their tensor product $A \ot_{R} B$ is naturally an $R$-$\la$-algebra, and this construction makes the tensor product $\ot_R$ into the coproduct of $R$-$\la$-algebras. Indeed, the forgetful functor from $R$-$\la$-algebras to $R$-algebras is comonadic \cite[2.26]{lambda} and thus creates colimits; this holds more generally for any plethory \cite[1.10]{pleth}. As a consequence, one can restrict or extend scalars just as for commutative rings. Let $f \colon R \to S$ be a map of $\la$-rings. Then extension of scalars $f_! = S \ot_R -$ is left adjoint to restriction of scalars $f^*$ at the level of modules, commutative algebras, and $\la$-algebras. More precisely, there is a diagram of adjoint pairs
\begin{equation} \label{Forget}
\xymatrix @R=3pc @C=3pc {
\la \Alg_R \ar@<0.6ex>[d] \ar@<0.6ex>[r]^-{S \ot_R -} & \la \Alg_S \ar@<0.6ex>[l]^-{f^*} \ar@<0.6ex>[d] \\
\Com_R \ar@<0.6ex>[u] \ar@<0.6ex>[d] \ar@<0.6ex>[r]^{S \ot_R -} & \Com_S \ar@<0.6ex>[l]^{f^*} \ar@<0.6ex>[u] \ar@<0.6ex>[d] \\
\Mod_R \ar@<0.6ex>[u] \ar@<0.6ex>[r]^{S \ot_R -} & \Mod_S \ar@<0.6ex>[l]^{f^*} \ar@<0.6ex>[u] \\
}
\end{equation}
where the downward maps are forgetful functors, and the upward maps are their left adjoint (``free'' functors). In each square, the right adjoints commute (on the nose), and thus the left adjoints commute (up to natural isomorphism).

Consider the associated free $\la$-algebra monads $T_R := T^{\la}_R \colon \Mod_R \to \Mod_R$ and $T_S := T^{\la}_S \colon \Mod_S \to \Mod_S$ respectively. The map $f \colon R \to S$ induces a natural comparison map
\[
T_f \colon T_R f^* \to f^* T_S
\]
of functors $\Mod_S \to \Mod_R$.

\begin{prop} \mylabel{ChangeLambda}
Let $R$ be a $\la$-ring and assume $T_R$ preserves $L_0$-equivalences. Let $f \colon R \to S$ be a map of $\la$-rings.
\begin{enumerate}
 \item If the natural transformation $T_f$ is an $L_0$-equivalence, then $T_S$ preserves $L_0$-equivalences.
 \item If the unit of the adjunction $f_! \dashv f^*$ is an $L_0$-equivalence, i.e., if for all $R$-module $M$ the natural map
\[
M = R \ot_R M \to S \ot_R M = f^*(S \ot_R M)
\]
is an $L_0$-equivalence, then $T_f$ is an $L_0$-equivalence, and therefore $T_S$ preserves $L_0$-equivalences.
\end{enumerate}
\end{prop}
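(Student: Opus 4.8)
The plan is to prove (2) first, and then derive (1) as the genuinely substantive half, since (1) $\Rightarrow$ the conclusion of (2) once $T_f$ is known to be an $L_0$-equivalence. So the logical skeleton is: assume the unit of $f_! \dashv f^*$ is an $L_0$-equivalence, deduce $T_f$ is an $L_0$-equivalence, then feed this into (1) to conclude that $T_S$ preserves $L_0$-equivalences. For part (1) standing alone, one assumes $T_f$ is an $L_0$-equivalence and that $T_R$ preserves $L_0$-equivalences, and shows $T_S$ does too.

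For part (1), fix an $L_0$-equivalence $g \colon M \to N$ of $S$-modules. I want to show $T_S(g)$ is an $L_0$-equivalence. Consider the naturality square of $T_f \colon T_R f^* \to f^* T_S$ applied to $g$: the two vertical maps are $T_f(M)$ and $T_f(N)$, which are $L_0$-equivalences by hypothesis; the top horizontal map is $T_R f^*(g) = T_R(f^* g)$. Now $f^* g$ is an $L_0$-equivalence of $R$-modules (restriction of scalars is exact, so it does not affect the condition of being an $L_0$-equivalence, which can be checked on the underlying $R$-module by \myref{CompleteGroundRing}/\myref{EquivCatMod} — or more directly, $L_0$ of an $R$-module depends only on its underlying $R$-module structure via the ideal $\fm$). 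Since $T_R$ preserves $L_0$-equivalences by assumption, $T_R(f^* g)$ is an $L_0$-equivalence. By the two-out-of-three property for $L_0$-equivalences (which holds because $L_0$-equivalences are exactly the morphisms inverted by the additive functor $L_0$ composed with... more carefully: a map is an $L_0$-equivalence iff $L_0$ of it is an iso, and one checks two-out-of-three directly, or invokes \myref{CompletedMonad}\eqref{ColimL0Equiv},\eqref{RetractL0Equiv} together with the long exact sequence of derived functors $L_s$), the bottom map $f^* T_S(g) = f^*(T_S g)$ is an $L_0$-equivalence, hence so is $T_S(g)$.

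For part (2), I must show $T_f(M) \colon T_R f^* M \to f^* T_S M$ is an $L_0$-equivalence for every $S$-module $M$. Using the graded decomposition $T_R = \bigoplus_n T_{R,n}$ and the fact that $f_! = S \ot_R -$ commutes with the free functors (the left adjoints commute up to isomorphism in diagram \eqref{Forget}), one identifies $f^* T_S M \cong f^*(S \ot_R T_R M)$... more precisely: $T_S M = U^\la_S F^\la_S M$, and since $F^\la_S (S\ot_R -) \cong (S \ot_R -) F^\la_R$ (left adjoints commute) and $U^\la_S$ commutes with the forgetful-then-restrict passage, one gets a natural isomorphism $f^* T_S M \cong f^* (S \ot_R T_R N)$ where $N = f^* M$, i.e. $T_f(M)$ is identified with the base-change unit $T_R N \to f^*(S \ot_R T_R N)$ applied to the module $T_R N$. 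By hypothesis, the unit $P \to f^*(S \ot_R P)$ is an $L_0$-equivalence for every $R$-module $P$; taking $P = T_R N$ gives the claim. Then part (1) applies to conclude $T_S$ preserves $L_0$-equivalences. The main obstacle I anticipate is making the identification in part (2) fully precise — tracking that the comparison map $T_f$ really is the base-change unit and not merely abstractly isomorphic to something built from it — which requires carefully unwinding the commuting squares of adjoints in \eqref{Forget} and checking the graded pieces match; this is where one must be careful rather than hand-wavy, though it is ultimately formal once the adjunctions are laid out.
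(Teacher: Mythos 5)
Your part (1) is essentially the paper's argument: the paper runs the same naturality square for $T_f$ on the specific map $\eta \colon M \to L_0 M$ rather than on a general $L_0$-equivalence $g$, but given the two-out-of-three observation you make (a map is an $L_0$-equivalence iff $L_0$ of it is an isomorphism), the two formulations coincide. Both you and the paper use the compatibility $L_0 f^* \cong f^* L_0$ and the fact that $f^*$ reflects isomorphisms at the same level of detail, so I have no complaint there.

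Part (2), however, has a genuine gap. The identification you rely on, $f^* T_S M \cong f^*(S \ot_R T_R N)$ with $N = f^* M$, is false for a general $S$-module $M$. The commutation of left adjoints in diagram \eqref{Forget} gives $F^{\la}_S(S \ot_R P) \cong S \ot_R F^{\la}_R(P)$ for an \emph{$R$-module} $P$; to apply it to $M$ you would need $M \cong S \ot_R f^* M$, i.e.\ that the counit of $f_! \dashv f^*$ is an isomorphism, which fails except for extended modules. Concretely, already in weight $2$: $T_{S,2}(S)$ is free of rank $2$ over $S$, whereas $S \ot_R T_{R,2}(f^* S)$ has rank $d(d+3)/2$ when $f^* S$ is $R$-free of rank $d$. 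So your argument, made precise, only shows that $T_f$ is an $L_0$-equivalence on extended modules $N = S \ot_R P$ — and even there one must compare $T_R(f^* N)$ with $T_R P$ by applying $L_0 T_R$ to the unit $P \to f^* N$ and invoking the hypothesis on $T_R$ once more, since the domain of $T_f(N)$ is $T_R(f^* N)$, not $T_R P$. The missing step, which is the actual content of the paper's proof of (2), is the bootstrap from extended modules to all modules: free $S$-modules are extended, every $S$-module is a reflexive coequalizer of free $S$-modules, and $T_R$, $T_S$, $f^*$, and $L_0$ all preserve reflexive coequalizers (and filtered colimits), so the class of $S$-modules on which $T_f$ is an $L_0$-equivalence, once it contains the free ones, contains everything. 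You flagged the identification as the place needing care, but the issue is not bookkeeping of the comparison map; the isomorphism you want does not exist outside the extended case, and a colimit argument is required to close the gap. Trying to repair it instead via the counit $S \ot_R f^* M \to M$ would be circular, since controlling $T_S$ of that map is exactly what you are trying to prove.
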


\begin{proof}
(1) Let $M$ be an $S$-module. Applying the natural transformation $T_f$ to the map $\eta \colon M \to L_0 M$ and then applying $L_0$ yields a commutative diagram
\[
\xymatrix{
L_0 T_R (f^* M) \ar[d]_{L_0 T_f}^{\cong} \ar[r]^-{L_0 T_R f^* \eta} \ar@/^2.5pc/[rr]^{L_0 T_R \eta}_{\cong} & L_0 T_R (f^* L_0 M) \ar[d]^{L_0 T_f}_{\cong} \ar@{=}[r] & L_0 T_R (L_0 f^* M) \\
L_0 f^* T_S M \ar@{=}[d] \ar[r]_-{L_0 f^* T_S \eta} & L_0 f^* T_S (L_0 M) \ar@{=}[d] & \\
f^* L_0 T_S M \ar[r]_-{f^* L_0 T_S \eta}^-{\therefore \cong} & f^* L_0 T_S (L_0 M) & \\
}
\]
in $\Modhat_R$. The top map is an isomorphism, since $T_R$ preserves $L_0$-equivalences, and the downward maps are isomorphisms, since $T_f$ is an $L_0$-equivalence. Hence the bottom map $f^* L_0 T_S \eta$ is an isomorphism, and so is $L_0 T_S \eta$ (since the restriction functor $f^*$ reflects isomorphisms).

(2) First, let us show that $T_f$ is an $L_0$-equivalence when applied to extended $S$-modules, i.e., those of the form $N = S \ot_R M$ for some $R$-module $M$.

By the commutativity of left adjoints in the diagram \eqref{Forget}, for any $R$-module $M$, we have
\begin{align*}
T_S (S \ot_R M) &= U_S F_S (S \ot_R M) \\
&= U_S (S \ot_R F_R M) \\
&= S \ot_R U_R F_R M \\
&= S \ot_R T_R M.
\end{align*}
Let $N = S \ot_R M$ be an extended $S$-module. Applying $L_0$ to $T_f$ yields the natural map 
\[
L_0 T_R (f^* N) \to L_0 f^* T_S N
\]
whose right-hand side is
\begin{align*}
L_0 f^* T_S N &= L_0 f^* T_S (S \ot_R M) \\
&\cong L_0 f^* (S \ot_R T_R M) \\
&\cong L_0 T_R M
\end{align*}
since the unit map $T_R M \to f^* (S \ot_R T_R M)$ is an $L_0$-equivalence by assumption.

Consider the unit map  $M \to f^* (S \ot_R M) = f^* N$ and apply $L_0 T_R$ to obtain the isomorphism
\[
L_0 T_R M \ral{\cong} L_0 T_R (f^* N).
\]
One readily checks that this is the desired inverse of the map
\[
L_0 T_R (f^* N) \to L_0 f^* T_S N \cong L_0 T_R M.
\]
Now, note that free $S$-modules are extended from $R$-modules. Moreover, the functors $f^* \colon \Mod_S \to \Mod_R$ and $L_0 \colon \Mod_R \to \Modhat_R$ preserve all colimits, while the functors $T_R$ and $T_S$ preserve reflexive coequalizers and filtered colimits. Hence, the collection of $S$-modules for which $T_f$ is an $L_0$-equivalence is closed under reflexive coequalizers and filtered colimits. Since every $S$-module is a reflexive coequalizer of free $S$-modules, it follows that $T_f$ is an $L_0$-equivalence for all $S$-modules.
\end{proof}

\begin{cor}\mylabel{CompletionLambda}
Let $R$ be a $\la$-ring. If $T^{\la}_R$ preserves $L_0$-equivalences, then $T^{\la}_{L_0 R}$ also preserves $L_0$-equivalences.
\end{cor}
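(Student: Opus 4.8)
The plan is to deduce \myref{CompletionLambda} directly from the change-of-rings result \myref{ChangeLambda}(2), applied to the completion map $f = \eta \colon R \to L_0 R$. By \myref{L0Lambda}(1) this is a homomorphism of $\la$-rings, so the adjunction $\eta_! = L_0 R \ot_R - \,\dashv\, \eta^*$ is available, together with the comparison map $T_\eta \colon T^{\la}_{R}\,\eta^* \to \eta^*\,T^{\la}_{L_0 R}$. Since $T^{\la}_R$ preserves $L_0$-equivalences by hypothesis, \myref{ChangeLambda}(2) will yield the conclusion once we check that the unit of $\eta_! \dashv \eta^*$ is an $L_0$-equivalence; that is, that for every $R$-module $M$ the canonical map
\[
M = R \ot_R M \longrightarrow L_0 R \ot_R M
\]
becomes an isomorphism after applying $L_0$.

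To verify this, I would proceed in two steps. First, $\eta \colon R \to L_0 R$ is itself an $L_0$-equivalence: $L_0 R$ lies in $\Modhat_R$, so its completion unit is an isomorphism, and the triangle identity for the reflection $L_0 \dashv \iota$ forces $L_0(\eta) \colon L_0 R \to L_0 L_0 R$ to be an isomorphism. Second, tensoring an $L_0$-equivalence $A \to B$ with an arbitrary $R$-module $M$ is again an $L_0$-equivalence. This does \emph{not} follow from exactness of $-\ot_R M$ (which fails); rather it follows from strong monoidality of $L_0$, recorded in the proof of \myref{L0Mod}: the natural isomorphism $L_0(A \ot_R M) \xrightarrow{\cong} L_0(L_0 A \ot_R L_0 M)$ identifies the map $L_0(A \ot_R M) \to L_0(B \ot_R M)$ with the map $L_0(L_0 A \ot_R L_0 M) \to L_0(L_0 B \ot_R L_0 M)$ induced by the isomorphism $L_0 A \xrightarrow{\cong} L_0 B$, hence it is an isomorphism. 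Taking $A = R$ and $B = L_0 R$ gives precisely the statement about the unit above. (Alternatively, one can invoke the identification, from the proof of \myref{EquivCatMod}, of $L_0 \colon \Mod_R \to \Modhat_{L_0 R}$ with the composite $L_0 \circ (L_0 R \ot_R -)$, which encodes the same fact.)

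I do not expect a genuine obstacle: once \myref{ChangeLambda} is in hand, the corollary is a formal bookkeeping argument. The one point meriting a line of care is that $-\ot_R M$ is not exact, so the assertion that it carries $L_0$-equivalences to $L_0$-equivalences must be read off from the strong monoidality of $L_0$ rather than from exactness; with that isolated, the rest is immediate.
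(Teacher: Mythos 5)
Your proof is correct and follows essentially the same route as the paper: both reduce the corollary to \myref{ChangeLambda}(2) applied to the $\la$-ring map $\eta \colon R \to L_0 R$ furnished by \myref{L0Lambda}. The only (minor) difference is that the paper cites \myref{EquivCatMod} for the unit of $\eta_! \dashv \eta^*$ being an $L_0$-equivalence, whereas you verify it directly from the strong monoidality of $L_0$ — a valid substitute, and you note the \myref{EquivCatMod} route as an alternative anyway.
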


\begin{proof}
The coaugmentation $\eta \colon R \to L_0 R$ is a map of $\la$-rings, by \myref{L0Lambda}. By \myref{EquivCatMod}, the unit and counit of the adjunction $\eta_! \dashv \eta^*$
\begin{align*}
&M \to \eta^*(L_0 R \ot_R M) \\
&L_0 R \ot_R (\eta^* N) \to N
\end{align*}
are $L_0$-equivalences and thus \myref{ChangeLambda} applies.
\end{proof}

\section{Operations on $p$-complete $K$-theory}\label{opsonK}

The main goal of this section is to show that at height $1$, the monad $\T \colon \Mod_{E_*} \to \Mod_{E_*}$ is the free $\Z/2$-graded $\Z_p$-$\te$-algebra monad (\myref{Tht1identification}). This result is certainly known to experts, see e.g., \cite{hopkinslocal}, and follows from work of McClure \cite[IX]{hinfty}, as mentioned for instance in \cite[1.1]{congruence}. We claim no novelty in that respect. What we do provide is a detailed proof, based on a careful study of the power operation $Q$ introduced by McClure, and how this relates to work of Bousfield \cite{bousfield}. 

\subsection{The Milnor sequence for completed $E$-homology}

Recall that the periodicity theorem \cite{nilp2} allows the construction of generalized Moore spectra $M_I$ of type $h$ for certain sequences of positive integers $I = (i_0,\ldots,i_{h-1})$, which are finite spectra of type $h$ with 
\[BP_*(M_I) = BP_*/(p^{i_0},\ldots,v_{h-1}^{i_{h-1}}).\]
These generalize the ordinary Moore spectra $M(p^r)=\mathrm{cofib}(S^0 \xrightarrow{p^r} S^0)$. Moreover, by choosing appropriate cofinal sequences $I_j$ of length $n$, one obtains a tower of type $n$ generalized Moore spectra
\[\cdots \to M_{I_{j+1}} \to M_{I_j} \to  \cdots \]
with the property that 
\[L_KX = \lim_{I_j} M_{I_j} \wedge L_hX\]
for any spectrum $X$. In the following, we will fix such a tower and simplify notation by omitting the index $j$.

Let $X$ be spectrum. By smashing the tower $(M_I)_I$ with $E\wedge X$, the resulting tower gives rise to a Milnor sequence
\[\xymatrix{0 \ar[r] & \lim^1_IE_{*+1}(X \wedge M_I) \ar[r] & E^{\wedge}_*(X) \ar[r] & \lim_I E_*(X \wedge M_I) \ar[r] & 0,}\]
where $\cK_{h,*}(X) = \lim_I E_*(X \wedge M_I)$ is also known as the \Def{Morava module} of $X$, as introduced in \cite{picard}.
Some authors, e.g., \cite{ghmr}, use completed $E$-homology in the outer terms of the Milnor sequence; that this makes no difference is the content of the next result.  

\begin{lemma}
If $M_I$ is a type $h$ generalized Moore spectrum, then
\[L_{K}(X \wedge M_I) = L_n(X \wedge M_I)\]
holds for any spectrum $X$. In particular: $E^{\wedge}_*(X \wedge M_I) = E_*(X \wedge M_I)$.
\end{lemma}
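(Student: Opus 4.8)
The plan is to reduce the statement to the finiteness of the Moore spectrum $M_I$ together with the general principle that $K(h)$-localization and $L_h$-localization agree on type $h$ finite spectra — or more precisely, on the smash product of a type $h$ finite spectrum with an arbitrary spectrum. First I would recall that since $M_I$ has type $h$, the Bousfield classes satisfy $\langle M_I \rangle = \langle L_h S^0 \wedge M_I\rangle$ in the sense that smashing with $M_I$ already kills the relevant lower periodicities; concretely, $M_I$ being of type $h$ means $K(i)_* M_I = 0$ for $i < h$, so that for any spectrum $Y$ one has $K(i)_*(Y \wedge M_I) = 0$ for $i < h$ as well. Hence $X \wedge M_I$ is already "$E(h-1)$-acyclic up to the $h$-th layer," and the telescopic/finite-localization subtleties that normally distinguish $L_{K(h)}$ from $L_n$ do not arise.

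The key step is then the standard fact (going back to the smashing-type arguments around the Hopkins–Ravenel chromatic fracture square, and available in the form cited in the excerpt, e.g. \cite{filteredcolim}) that for a finite type $h$ complex $F$ one has $L_{K(h)} F \simeq L_h F$, and more generally $L_{K(h)}(Y \wedge F) \simeq L_h(Y \wedge F)$ for any spectrum $Y$, because smashing with the finite complex $F$ commutes with both localizations and, after smashing with $F$, the natural map $L_h \to L_{K(h)}$ becomes an equivalence on the relevant category. I would spell this out by noting $L_{K(h)}(Y\wedge F) \simeq L_{K(h)}(Y) \wedge F$ and $L_h(Y \wedge F)\simeq L_h(Y)\wedge F$ when $F$ is finite, and then that the map $L_h Z \to L_{K(h)} Z$ is an equivalence after smashing with $F$ — the fibre of $L_h \to L_{K(h)}$ is built out of $K(0),\dots,K(h-1)$-acyclic pieces, which vanish after $\wedge F$ since $F$ has type $h$. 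Applying this with $Y = X$ and $F = M_I$ gives the first displayed equivalence $L_{K}(X\wedge M_I) = L_n(X \wedge M_I)$ (here $n = h$ in the paper's running notation).

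For the "in particular" clause, I would take $Y = E \wedge X$ and use the definition $E^\wedge_*(-) = \pi_* L_K(E \wedge -)$ from the remark following \myref{algtotopss}, so that
\[
E^\wedge_*(X \wedge M_I) = \pi_* L_K(E \wedge X \wedge M_I) = \pi_* L_h(E \wedge X \wedge M_I).
\]
Since $E$ is already $E(h)$-local (indeed $K(h)$-local), and $L_h$ is smashing, $L_h(E \wedge X \wedge M_I) \simeq E \wedge X \wedge M_I$ — equivalently, $E$ being Landweber exact of height $h$, smashing with $E$ lands in $L_h$-local (even $L_{K(h)}$-local after the above) spectra, so no further localization is needed and $\pi_*$ of it is just $E_*(X \wedge M_I)$.

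The main obstacle I expect is pinning down precisely the assertion "$L_h \to L_{K(h)}$ is an equivalence after smashing with a type $h$ finite complex" with a clean citation and the correct handling of the case where the first factor $Y$ (resp. $E \wedge X$) is an arbitrary, possibly non-finite, spectrum; one must be careful that the fibre of $L_h Z \to L_{K(h)} Z$ really is annihilated by $\wedge M_I$ for \emph{all} $Z$, which follows because that fibre is $L_{h-1}$-local (it is the $L_{h-1}$-part in the chromatic fracture) and $M_I \wedge L_{h-1}(-) \simeq L_{h-1}(M_I \wedge -) = 0$ as $M_I$ has type $h$. Once that input is in hand, the rest is formal manipulation of smashing localizations.
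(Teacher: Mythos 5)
Your argument is correct and is essentially the paper's: the paper applies the chromatic fracture square to $X \wedge M_I$ directly, noting that both lower corners vanish because $M_I$ is $E_{h-1}$-acyclic and because $L_K(X\wedge M_I)\simeq L_K(X)\wedge M_I$ is again $E_{h-1}$-acyclic, while you package the same inputs as the statement that the fibre of $L_n \to L_K$ is $E(h-1)$-local and hence annihilated by smashing with the type~$h$ finite complex $M_I$. Your treatment of the ``in particular'' clause (using that $E$ is $E(h)$-local and that localization commutes with smashing by a finite complex) matches what the paper leaves implicit.
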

\begin{proof}
Consider the chromatic pullback square for $X \wedge M_I$:
\[\xymatrix{L_n(X \wedge M_I) \ar[r] \ar[d] & L_{K}(X \wedge M_I) \ar[d] \\
L_{h-1}(X \wedge M_I) \ar[r] & L_{h-1}L_{K}(X \wedge M_I).}\]
Since $M_I$ is $E_{h-1}$-acyclic, the lower left corner is contractible and it suffices to show the same for $L_{K}(X \wedge M_I)$. To this end, note that by exactness every localization functor preserves smashing with a finite spectrum, thus $L_{K}(X \wedge M_I) = L_{K}(X) \wedge M_I$. This is clearly $E_{h-1}$-acyclic, so the claim follows.
\end{proof}

In fact, the $\lim^1$ term in the Milnor sequence vanishes in many cases, as has been observed for example in \cite{stricklandduality, ghmr}.

\begin{prop}\mylabel{lim1van}
The $\lim^1$ term in the Milnor sequence for the completed $E$-homology of a spectrum $X$ vanishes, i.e., $\lim^1_IE_{*+1}(X \wedge M_I) = 0$, if $X$ satisfies one of the following conditions.
\begin{enumerate}
 \item $E^{\wedge}_*(X)$ is pro-free. In particular, this holds if $K_*(X)$ is evenly concentrated.   
 \item $X$ is dualizable in the $K$-local category. 
\end{enumerate}
\end{prop}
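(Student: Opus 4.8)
The plan is to reduce the vanishing of $\lim^1$ to a Mittag--Leffler condition on the tower of graded $E_*$-modules $\{E_*(X\wedge M_I)\}_I$, and to control these modules through Landweber exactness. First I would record two preliminary facts. Since each $M_I$ is a finite spectrum, smashing with $M_I$ commutes with $L_K$, so $L_K(E\wedge X)\wedge M_I\simeq L_K(E\wedge X\wedge M_I)$; together with the preceding lemma this identifies the tower $\{E_*(X\wedge M_I)\}_I$ with $\{E^\wedge_*(X\wedge M_I)\}_I$ and lets us work inside $\Mod_E$ with the $E$-module $N:=L_K(E\wedge X)$, whose homotopy is $E^\wedge_*X$. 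Second, each $M_I$ is built, via the periodicity theorem \cite{nilp2}, as an iterated cofibre of self maps realising (up to units) multiplication by the images in $E_*$ of the regular sequence $p^{i_0},v_1^{i_1},\dots,v_{h-1}^{i_{h-1}}\subseteq BP_*$; these images again form a regular sequence on $E_*$ (this is exactly the Landweber exactness of $E$) and generate an $\fm$-primary ideal $J_I$, with $(p,\tilde v_1,\dots,\tilde v_{h-1})=\fm E_*$. Smashing $N$ with this cofibre tower over $E$ therefore exhibits $E_*(X\wedge M_I)$ through a finite sequence of cofibre sequences built from $\pi_*N=E^\wedge_*X$ by the operations $(-)\otimes E_*/(\tilde v_k^{i_k})$ and $(-)[\tilde v_k^{i_k}]$ (multiplication, respectively $\tilde v_k^{i_k}$-torsion).

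For part (1), pro-freeness of $E^\wedge_*X$ means, by \myref{flatchar}, that it is $L$-complete and flat, and any regular sequence for $E_*$, in particular $p^{i_0},\tilde v_1^{i_1},\dots,\tilde v_{h-1}^{i_{h-1}}$, then acts regularly on it; hence all the torsion terms in the cofibre tower vanish and $E_*(X\wedge M_I)\cong E^\wedge_*X/J_I E^\wedge_*X$. Because the chosen tower of generalized Moore spectra is cofinal we have $J_{I_{j+1}}\subseteq J_{I_j}$, so every transition map in the tower is the canonical surjection between quotients of the fixed module $E^\wedge_*X$; a tower with surjective transition maps has vanishing $\lim^1$, which is the assertion. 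The parenthetical reduction---that $K(h)_*X$ concentrated in even degrees forces $E^\wedge_*X$ to be pro-free---is standard, and I would simply cite \cite{moravak}; it follows by descending induction along the regular sequence $p,u_1,\dots,u_{h-1}$ together with the flatness criterion for $L$-complete modules.

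For part (2), if $X$ is dualizable in the $K(h)$-local category then $E^\wedge_*X$ is a finitely generated $E_*$-module (\cite{moravak}; see also \cite{stricklandduality}). Each group appearing in the finite cofibre-tower filtration of $E_*(X\wedge M_I)$ is then a subquotient of a finite direct sum of shifts of $E^\wedge_*X$, hence finitely generated, and it is annihilated by $J_I$, hence by some power $\fm^N$. Thus $E_*(X\wedge M_I)$ is finitely generated over $E_*/\fm^N$, and in particular $E_t(X\wedge M_I)$ has finite length over the complete Noetherian local ring $E_0$ for every internal degree $t$. Computing $\lim^1$ degreewise, for a fixed index $j$ the images of the maps $E_t(X\wedge M_{I_{j'}})\to E_t(X\wedge M_{I_j})$ for $j'\ge j$ form a descending chain of $E_0$-submodules of the finite-length module $E_t(X\wedge M_{I_j})$, which must stabilise; so the tower is Mittag--Leffler in every degree and $\lim^1_IE_{*+1}(X\wedge M_I)=0$.

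The genuinely delicate points are the two structural inputs about $E^\wedge_*X$---pro-freeness from even Morava $K$-homology, and finite generation from $K(h)$-local dualizability---and the clean identification of $E\wedge M_I$ with the iterated cofibre of multiplications by a regular sequence in $E_*$: one must be a little careful that the images of the $v_i$ genuinely form a regular sequence on $E_*$ and that the ideals $J_{I_j}$ are nested and cofinal, though all of this is classical and contained in the references above. Once these are in hand the $\lim^1$ vanishing is purely formal: a surjective tower in case (1), a degreewise finite-length tower in case (2). In case (1) one could equally well phrase the collapse of the cofibre tower as the degeneration of the Künneth spectral sequence $\mathrm{Tor}^{E_*}_s(E^\wedge_*X,E_*/J_I)_t\Rightarrow E_{s+t}(X\wedge M_I)$ onto its $s=0$ line, but I expect the elementary cofibre-sequence formulation to be the cleaner write-up.
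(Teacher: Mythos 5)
Your argument is correct and follows essentially the same route as the paper's proof: in case (1) pro-freeness gives regularity of the sequence $(p^{i_0},\ldots,u_{h-1}^{i_{h-1}})$, hence $E_*(X\wedge M_I)\cong E^\wedge_*X/J_I$ and a surjective (Mittag--Leffler) tower, and in case (2) dualizability gives finite generation of $E^\wedge_*X$ via \cite[8.6]{moravak}, so that each $E_*(X\wedge M_I)$ is degreewise of finite length (the paper phrases this as finiteness as an abelian group) and the tower is again Mittag--Leffler. Your write-up merely spells out the cofibre-tower and ideal-nesting details that the paper leaves implicit.
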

\begin{proof}
Since the references quoted above do not provide a proof, we sketch the argument, based on \cite{moravak}. For (1), if $E^{\wedge}_*(X)$ is pro-free, the sequence $(p,\ldots,u_{h-1})$ acts regularly on it, and so does $(p^{i_0},\ldots,u_{h-1}^{i_{h-1}})$ for any $I=(i_0,\ldots,i_{h-1})$. Consequently, we get
\[E^{\wedge}_*(X \wedge M_I) = E^{\wedge}_*(X)/(p^{i_0},\ldots,u_{h-1}^{i_{h-1}}).\]
This implies that the tower $E_{*}(X \wedge M_I)$ satisfies the Mittag-Leffler condition, so $\lim^1_IE_{*+1}(X \wedge M_I) = 0$. Furthermore, if $K_*(X)$ is concentrated in even degrees, \cite[8.4(f)]{moravak} shows that $E^{\wedge}_*(X)$ is pro-free.

In case (2), note that $E^{\wedge}_*(X \wedge M_I)$ is finite as an abelian group for all indices $I$, by \cite[8.6]{moravak} and an inductive argument. Hence the tower is Mittag-Leffler and the claim follows. 
\end{proof} 

\begin{rmk}
As Strickland remarks at the end of \cite{stricklandduality}, for $X = \bigvee_I M_I$ we get $\lim^1_I E_{*+1}(X \wedge M_I) \ne 0$. Taking $\D X$ and considering the degree 1 part gives an $\Hinf_{\infty}$-ring spectrum counterexample. 
\end{rmk}

\subsection{The operation $Q$}

We now specialize to height $1$ for the rest of the section. Recall that at height $h=1$, Morava $E$-theory is $p$-completed complex $K$-theory: $E_1 = K^{\wedge}_p$. Its coefficient ring is $E_* = \Z_p[u^{\pm}]$ with $\abs{u} = 2$, where $\Z_p$ denotes the $p$-adic integers. The maximal ideal is $\fm = (p) \subset E_*$. Completed $E$-homology at height $1$ is given by
\begin{align*}
E^{\wedge}_*(X) &= \pi_* L_{K(1)} (K^{\wedge}_p \wedge X) \\
&= \pi_* \left( K^{\wedge}_p \wedge X \right)^{\wedge}_p \\
&= \pi_* \left( K \wedge X \right)^{\wedge}_p.
\end{align*}
We will compare Rezk's construction of $\T$ to the work of McClure, a seminal reference for explicit computations at height $1$. To use his results, we start with his operation $Q$ on the $K$-homology (with coefficients) of $\HH$-ring spectra, and turn it into an operation on the homotopy of $p$-complete $\HH$ $K$-algebras (\myref{OpQ}).

Let $Y$ be an $\Hinf_{\infty}$-ring spectrum. We use the notation $K_r := K \wedge M(p^r)$ and $K_*(Y;\Z/p^{r}) = \pi_*(K_r \wedge Y)$. 
The operations $Q \colon K_*(Y; \Z/p^r) \to K_*(Y; \Z/p^{r-1})$ described in \cite[IX 3.3]{hinfty} are compatible with the natural projection maps, i.e., the following diagram commutes
\[\xymatrix{K_*(Y;\Z/p^{r+1}) \ar[r]^-Q \ar[d]_{\pi} & K(Y;\Z/p^{r}) \ar[d]^{\pi} \\
K_*(Y;\Z/p^{r}) \ar[r]_-Q & K_*(Y;\Z/p^{r-1});}\]
hence they induce natural operations on inverse limits and $\lim^1$ terms as illustrated in the diagram
\begin{equation} \label{MilnorSeq}
\xymatrix{
0 \ar[r] & \lim^{1}_r K_{*+1} (Y;\Z/p^r) \ar[d]_{Q} \ar[r] & K_*^{\wedge}(Y) \ar@{-->}[d]_{?} \ar[r] & \lim_r K_* (Y; \Z/p^r) \ar[d]_{Q} \ar[r] & 0 \\
0 \ar[r] & \lim^{1}_r K_{*+1} (Y;\Z/p^r) \ar[r] & K_*^{\wedge}(Y) \ar[r] & \lim_r K_* (Y; \Z/p^r) \ar[r] & 0 \\
}
\end{equation}
where each row is the Milnor sequence for $Y$. We will show below that $Q$ induces a natural operation $Q \colon K_*^{\wedge}(Y) \to K_*^{\wedge}(Y)$ making the right-hand square of the diagram \eqref{MilnorSeq} commute. Said operation will be constructed using universal examples, namely, extended powers of spheres.

As a warm-up, assume that $Y$ is an $\Hinf_{\infty}$-ring spectrum satisfying one of the conditions of \myref{lim1van}. The Milnor sequence for the $p$-complete $K$-theory of $Y$ then simplifies to yield a commutative square 
\[\xymatrix{K^{\wedge}_*(Y) \ar[r]^-{\simeq} \ar@{-->}[d]_Q & \lim_rK_*(Y;\Z/p^{r}) \ar[d]^Q \\
K^{\wedge}_*(Y) \ar[r]^-{\simeq} & \lim_rK_*(Y;\Z/p^{r}),}\] 
where the left vertical arrow is the induced morphism. 

\begin{cor}\mylabel{qeasy}
For $Y$ as above, there exists a unique operation $Q\colon K^{\wedge}_*(Y) \to K^{\wedge}_*(Y)$ compatible with McClure's operation $Q$.
\end{cor}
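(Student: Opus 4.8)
The plan is to exploit the fact that, under the hypothesis on $Y$, the Milnor sequence for the $p$-complete $K$-theory of $Y$ degenerates. First I would invoke \myref{lim1van}: since $Y$ satisfies one of its two conditions, the term $\lim^1_r K_{*+1}(Y;\Z/p^r)$ vanishes, so the Milnor sequence collapses to the natural isomorphism $K^{\wedge}_*(Y) \xrightarrow{\simeq} \lim_r K_*(Y;\Z/p^r)$ already exhibited in the square preceding the statement.

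Next I would use that McClure's operations $Q \colon K_*(Y;\Z/p^r) \to K_*(Y;\Z/p^{r-1})$ are compatible with the tower projections, as recalled from \cite[IX 3.3]{hinfty}. They thus form a morphism of towers which lowers the index by one; since reindexing a tower by a shift does not change its inverse limit, they induce a well-defined endomorphism of $\lim_r K_*(Y;\Z/p^r)$. Transporting this endomorphism across the isomorphism above yields the desired operation $Q \colon K^{\wedge}_*(Y) \to K^{\wedge}_*(Y)$; by construction it is the dashed arrow in the displayed square, hence makes the right-hand square of \eqref{MilnorSeq} commute, which is exactly the required compatibility with McClure's $Q$. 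Naturality in $Y$ is inherited from the naturality of every ingredient.

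For uniqueness I would observe that the compatibility condition is precisely commutativity of that square, and that since both horizontal maps are isomorphisms there is exactly one vertical arrow making it commute.

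I do not expect a genuine obstacle here, as the statement is a formal consequence of the vanishing of $\lim^1$: the only point needing care is the index shift in $Q$, i.e.\ checking that applying $Q$ levelwise really descends to the inverse limit, which is immediate from the compatibility of $Q$ with the projections displayed just above. The more substantive --- though still routine --- work, postponed to later in the section as the text signals, is to drop the hypothesis on $Y$ and construct $Q$ on $K^{\wedge}_*(Y)$ for an arbitrary $\Hinf_{\infty}$-ring spectrum by means of universal examples (extended powers of spheres); the present corollary handles only the easy case in which the Milnor sequence splits the $\lim^1$ term off.
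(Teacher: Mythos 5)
Your argument is correct and is exactly the one the paper intends: the vanishing of $\lim^1$ from \myref{lim1van} collapses the Milnor sequence to the isomorphism $K^{\wedge}_*(Y)\cong\lim_r K_*(Y;\Z/p^r)$, McClure's $Q$ descends to the inverse limit because it is a (degree-shifting) map of towers, and uniqueness is forced since the horizontal arrows in the square are isomorphisms. Your remarks on the index shift and on the postponed general case (arbitrary $Y$ via universal examples) match the paper's presentation.
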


To treat the case of an arbitrary $\mathbb{H}_{\infty}$-ring spectrum $Y$, we require the following result, whose proof was provided by Jim McClure. By abuse of notation, we use the same name for a class and its image under the natural map $K_*^{\wedge}(Y) \to K_*(Y; \Z/p^r)$.

\begin{prop}\mylabel{claim}
Let $j = 0$ or $1$, and let $x \in K_j^{\wedge} \left( \D^S(S^j) \right)$ be the fundamental class, i.e., the image of the generator $x \in K_j^{\wedge} \left( S^j \right) = \Z_p$ under the unit map $S^j = \D_1^S(S^j) \to \D^S(S^j)$. For any integer $r \geq 1$, $K_*(\D^S(S^0); \Z/p^r)$ is the free strictly\footnote{A graded ring $R$ is called \Def{strictly commutative} if $x^2 = 0$ holds for all $x$ of odd degree, which is not automatic if $R$ has $2$-torsion.} commutative $\Z/2$-graded algebra over $\Z/p^r$ on generators $\{ Q^i x \mid i \geq 0 \}$. 

Explicitly: For an even degree generator, we obtain the $\Z/2$-graded polynomial algebra
\[
K_* \left( \D^S(S^0); \Z/p^r \right) = \Z/p^r [x, Q x, Q^2 x, \ldots]
\]
while for an odd degree generator, we obtain the $\Z/2$-graded exterior algebra
\[
K_* \left( \D^S(S^1); \Z/p^r \right) = \La_{\Z/p^r} [x, Q x, Q^2 x, \ldots].
\]
Note that the elements $Q^i x$ are well defined here, since $x \in K_j \left( \D^S(S^j); \Z/p^r \right)$ is defined as the image of $x \in \lim_r K_j \left( \D^S(S^j); \Z/p^r \right)$.
\end{prop}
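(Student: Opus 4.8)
The plan is to build the evident comparison map from the free strictly commutative graded algebra and then check that it is an isomorphism one weight at a time, where the \emph{weight} grading on $K_*(\D^S(S^j);\Z/p^r)$ is the one coming from the wedge decomposition $\D^S(S^j)=\bigvee_{n\ge 0}\D_n^S(S^j)$. Since $\D^S(S^j)$ is a free commutative $S$-algebra, $K_*(\D^S(S^j);\Z/p^r)$ is a graded-commutative $\Z/p^r$-algebra; the fundamental class $x$ lies in weight $1$ and degree $j$, and McClure's construction of $Q$ out of the $p$-th extended power \cite[IX.3.3]{hinfty} shows that $Q$ multiplies weight by $p$ and preserves the mod-$2$ degree, so $Q^ix$ has weight $p^i$ and degree $j$. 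In the odd case $j=1$ one also needs the strict-commutativity relation $x^2=0$ in $K_*(\D_2^S(S^1);\Z/p^r)$: this is automatic from graded commutativity when $p$ is odd, and at $p=2$ it must be checked directly --- this is exactly what the word ``strictly'' is recording. Sending the polynomial (resp. exterior) generators to the $Q^ix$ then gives a well-defined algebra map $\Phi$ from the free strictly commutative $\Z/2$-graded $\Z/p^r$-algebra on one generator of degree $j$ onto $K_*(\D^S(S^j);\Z/p^r)$.

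Next I would restrict $\Phi$ to weight $n$ and count. Its source in weight $n$ is the free $\Z/p^r$-module on the monomials $\prod_i(Q^ix)^{a_i}$ with $\sum_i a_ip^i=n$ (with each $a_i\le 1$ in the exterior case), concentrated in a single parity. Its target in weight $n$ is $K_*\big((E\Sigma_n)_+\wedge_{\Sigma_n}S^{nj};\Z/p^r\big)$, which for $j=0$ is $K_*(B\Sigma_n;\Z/p^r)$ and for $j=1$ is a Thom-shifted (and, at $p=2$, genuinely twisted, since the permutation representation is not $K$-orientable) version of the same group. So the additive heart of the argument is the computation of $K_*(B\Sigma_n;\Z/p^r)$: use the Atiyah--Segal completion theorem to pass to $R(\Sigma_n)$, a transfer argument to reduce mod $p$ to the $p$-Sylow subgroup, and an induction over iterated wreath products $\Z/p\wr\cdots\wr\Z/p$ built out of the computation of $K_*(B\Z/p;\Z/p^r)$. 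The conclusion one wants is that $K_*(B\Sigma_n;\Z/p^r)$ is a free $\Z/p^r$-module concentrated in even degrees whose rank equals the number of expressions $n=\sum_i a_ip^i$ with $a_i\ge 0$, which matches the rank of the source of $\Phi$ in weight $n$.

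Finally, it remains to see that $\Phi$ is surjective in each weight, whence --- by finiteness and the rank match --- an isomorphism. Here I would identify the algebra generators produced by the wreath-product induction --- the classes supported on the iterated extended power $\D_p^{(i)}(S^j)$ mapping into $\D_{p^i}^S(S^j)$ --- with the operations $Q^ix$, using McClure's formula for $Q$ on $K_*(\D_p Y;\Z/p^r)$ when $Y$ has free mod-$p^r$ $K$-homology: $Q$ produces exactly the new indecomposable in $p$ times the weight of $Y$, while products of classes of smaller weight account for the rest; a Poincar\'e-series comparison then upgrades ``generates'' to ``freely generates''. I expect the main obstacle to be precisely this identification together with the additive computation of $K_*(B\Sigma_n;\Z/p^r)$ --- running the wreath-product induction, verifying that the $Q^i$ land on the indecomposables rather than merely in the correct bidegree, and dealing with the twist and the relation $x^2=0$ in the $j=1$, $p=2$ case. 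Everything else is bookkeeping with the Cartan formula for $Q$ and with the multiplicativity of $\Phi$.
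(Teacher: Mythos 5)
Your proposal takes a genuinely different route from the paper, and the difference is worth spelling out. The paper's proof is short: for $r=1$ it simply \emph{cites} McClure's computation \cite[IX 3.10]{hinfty} of $K_*(\D^S(S^j);\Z/p)$, and then lifts to $\Z/p^r$ coefficients via the $K$-homology Bockstein spectral sequence, which collapses at $E^1$ (for $j=0$ because everything is concentrated in even degrees, for $j=1$ by part (v) of \cite[IX 3.3]{hinfty}); collapse gives freeness over $\Z/p^r$ and lets one lift the mod-$p$ basis. You instead propose to recompute $K_*(\D^S(S^j);\Z/p^r)$ from scratch, weight by weight, via Atiyah--Segal, transfer to a Sylow subgroup, and induction over iterated wreath products, matching ranks against partitions of $n$ into $p$-power parts. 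That strategy is sound in outline --- it is essentially the classical route underlying McClure's own chapter --- and your rank count for the polynomial case is correct. What the paper's approach buys is that the entire hard computation is outsourced to a single citation and the only new work is the (easy) Bockstein collapse; what your approach would buy, if completed, is a self-contained proof, but at the cost of reproving a substantial portion of \cite[Chapter IX]{hinfty}.

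The caveat is that, as written, your argument defers exactly the steps that carry all the mathematical content: the additive computation of $K_*(B\Sigma_n;\Z/p^r)$ via the wreath-product induction, the identification of the resulting generators with the classes $Q^i x$ (rather than merely classes in the right bidegree), and the twisted case $j=1$. On the last point, ``Thom-shifted version of the same group'' understates the issue even at odd primes: the sign of the permutation action on $H_{n}\bigl((S^1)^{\wedge n}\bigr)$ kills classes (e.g.\ $\D_2^S(S^1)$ is rationally and $3$-locally trivial, consistent with $x^2=0$ in the exterior algebra), so the weight-$n$ rank for $j=1$ is genuinely different from that of $K_*(B\Sigma_n;\Z/p^r)$ and must be computed separately. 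Since you flag these as ``the main obstacle'' without resolving them, the proposal is a plan to reprove the cited theorem rather than a proof; if you want a short argument, the missing idea is the Bockstein spectral sequence reduction to the $r=1$ case, for which McClure's theorem can be quoted.
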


\begin{proof}
First consider $j=0$. In the notation of \cite[IX \S 2]{hinfty}, our $\D^S(S^j)$ corresponds to $C(S^0 \vee S^j)$, and \cite[IX 3.10]{hinfty} provides the desired isomorphism
\[
K_* \left( \D^S(S^0); \Z/p \right) = \Z/p [x, Q x, Q^2 x, \ldots]
\]
for $r=1$. The $K$-homology Bockstein spectral sequence for $\D^S(S^0)$, with $E^1$ term
\[
E^1_s = K_s \left( \D^S(S^0); \Z/p \right)
\]
and differentials $d^r \colon E^r_s \to E^r_{s-1}$ collapses at $E^1$, since the $E^1$ term is concentrated in even degrees. Therefore each $K_* \left( \D^S(S^0); \Z/p^r \right)$ is a free $\Z/p^r$-module. Moreover, the generators of $\Z/p^r [x, Q x, Q^2 x, \ldots]$ map to a basis of the $E^r$ term, which implies the desired isomorphism
\[
K_* \left( \D^S(S^0); \Z/p^r \right) = \Z/p^r [x, Q x, Q^2 x, \ldots]
\]
of algebras.

The proof is similar for $\D^S(S^1)$, again using \cite[IX 3.3]{hinfty}. The collapsing of the Bockstein spectral sequence in that case follows from part (v). The fact that $K_* \left( \D^S(S^1); \Z/p^r \right)$ is \emph{strictly} graded commutative follows from part (x).
\end{proof}

\begin{cor}\mylabel{kps}
There is an isomorphism 
\[K_*^{\wedge} \left( \D^S(S^0) \right) = L_0 \left( \Z_p [x, Q x, Q^2x, \ldots] \right)\] and similarly for a generator in odd degree. 
\end{cor}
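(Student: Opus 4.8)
The plan is to deduce \myref{kps} from \myref{claim} by applying the $\lim$ functor to the tower $\{K_*(\D^S(S^0);\Z/p^r)\}_r$ and identifying the result with an $L_0$-completion. First I would invoke \myref{claim} to obtain, for each $r \geq 1$, a natural isomorphism of $\Z/2$-graded $\Z/p^r$-algebras
\[
K_*\left(\D^S(S^0);\Z/p^r\right) \;\cong\; \Z/p^r[x, Qx, Q^2x, \ldots],
\]
and observe that, because the operations $Q$ (and the projection maps) are compatible across the tower, these isomorphisms assemble into an isomorphism of towers: the tower $\{K_*(\D^S(S^0);\Z/p^r)\}_r$ is isomorphic to the tower $\{\Z/p^r[x,Qx,\ldots]\}_r = \{\Z_p[x,Qx,\ldots]/p^r\}_r$ with the evident quotient maps.

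Next I would apply the Milnor sequence for $\D^S(S^0)$. Since each $K_*(\D^S(S^0);\Z/p^r)$ is a free $\Z/p^r$-module (shown in the course of proving \myref{claim}), the tower $\{K_{*+1}(\D^S(S^0);\Z/p^r)\}_r$ is Mittag-Leffler --- in fact each transition map is surjective --- so $\lim^1_r K_{*+1}(\D^S(S^0);\Z/p^r) = 0$. Hence the Milnor sequence collapses to
\[
K_*^{\wedge}\left(\D^S(S^0)\right) \;\cong\; \lim_r K_*\left(\D^S(S^0);\Z/p^r\right) \;\cong\; \lim_r \Z_p[x,Qx,\ldots]/p^r.
\]
Finally, I would identify this last inverse limit. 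The polynomial ring $P := \Z_p[x,Qx,Q^2x,\ldots]$ carries the $p$-adic (i.e.\ $\fm$-adic, with $\fm = (p)$) topology, and $\lim_r P/p^r P = P^{\wedge}_p = L_0 P$ by the definition of $L$-completion, together with the observation that $L_0$ agrees with ordinary $p$-adic completion here since the relevant $\lim^1$ over the $p$-power tower vanishes for a free (hence $p$-torsion-free) module (see \myref{lprop}). This gives $K_*^{\wedge}(\D^S(S^0)) \cong L_0(\Z_p[x,Qx,Q^2x,\ldots])$, as claimed. The odd-degree case is identical, replacing the polynomial algebra by the $\Z/2$-graded exterior algebra $\La_{\Z_p}[x,Qx,Q^2x,\ldots]$ throughout and using the strict commutativity recorded in \myref{claim}.

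The only genuine subtlety --- and the step I expect to be the main obstacle --- is making sure that the isomorphisms of \myref{claim} are natural enough in $r$ to assemble into an isomorphism of towers, so that passing to $\lim$ (and recognizing it as $L_0$) is legitimate. This is where the Bockstein spectral sequence argument in the proof of \myref{claim} does the real work: it shows the generators $Q^i x$, which by construction are lifted from $\lim_r K_j(\D^S(S^j);\Z/p^r)$, map compatibly to bases of the successive quotients, so the comparison map from $P/p^r P$ is an isomorphism for every $r$ simultaneously and commutes with the transition maps on both sides. Once that compatibility is in hand, the remaining identifications are formal.
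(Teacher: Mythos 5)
Your proposal is correct and follows essentially the same route as the paper's proof: invoke the computation of $K_*(\D^S(S^0);\Z/p^r)$ for each $r$, note the tower is Mittag-Leffler so the $\lim^1$ term of the Milnor sequence vanishes, and identify $\lim_r \Z/p^r[x,Qx,\ldots]$ with the $p$-adic completion, which agrees with $L_0$ on this free module. The extra care you take about the compatibility of the identifications across the tower is a point the paper passes over silently, but it is the same argument.
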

\begin{proof}
We have
\[
\left( K \wedge \D^S(S^0) \right)_p^{\wedge} = \holim_r \left( K_r \wedge \D^S(S^0) \right)
\]
and the homotopy of each stage of this tower is $K$-homology with coefficients $\pi_* \left( K_r \wedge \D^S(S^0) \right) = K_* \left( \D^S(S^0); \Z/p^r \right)$. We now invoke \myref{claim} to prove the corollary. Indeed, the tower:
\[
\{ K_* \left( \D^S(S^0); \Z/p^r \right) \}_r
\]
satisfies the Mittag-Leffler condition, so that the $\lim^{1}$ term in the Milnor exact sequence
vanishes, yielding the isomorphism
\begin{align*}
K_*^{\wedge} \left( \D^S(S^0) \right) &= \lim_r K_* \left( \D^S(S^0); \Z/p^r \right) \\
&= \lim_r \left( \Z/p^r [x, Q x, Q^2 x, \ldots] \right) \\
&= \Z_p [x, Q x, Q^2 x,\ldots]_p^{\wedge} \\
&= L_0 \left( \Z_p [x, Q x, Q^2 x, \ldots] \right).
\end{align*}
which gives the claim. 
\end{proof}


We now prove the claims made at the beginning of this section.

\begin{lemma}
Let $\phy \colon \left( K \wedge \D^S(S^j) \right)_p^{\wedge} \to \left( K \wedge \D^S(S^k) \right)_p^{\wedge}$ be an $\HH$ $K$-algebra map. Then $\phy$ is compatible with the operation $Q$, in the sense that the diagram
\[
\xymatrix{
K_i^{\wedge} \left( \D^S(S^j) \right) \ar[d]_Q \ar[r]^{\pi_i \phy} & K_i^{\wedge} \left( \D^S(S^k) \right) \ar[d]_Q \\
K_i^{\wedge} \left( \D^S(S^j) \right) \ar[r]^{\pi_i \phy} & K_i^{\wedge} \left( \D^S(S^k) \right) \\
}
\]
commutes.
\end{lemma}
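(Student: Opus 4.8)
The plan is to prove the statement one level at a time, working modulo $p^r$, and then pass to the inverse limit over $r$. Write $A = \left(K \wedge \D^S(S^j)\right)_p^\wedge$ and $B = \left(K \wedge \D^S(S^k)\right)_p^\wedge$. The first step is the reduction modulo $p^r$: since the $p$-completion map $K \wedge \D^S(S^j) \to A$ is a $\Z/p$-equivalence, smashing with the Moore spectrum $M(p^r)$ gives an equivalence $A \wedge M(p^r) \simeq K \wedge \D^S(S^j) \wedge M(p^r)$, hence $\pi_*(A \wedge M(p^r)) = K_*(\D^S(S^j); \Z/p^r)$, and similarly for $B$. Thus $\phi \wedge M(p^r)$ induces a map $\bar\phi_r \colon K_*(\D^S(S^j); \Z/p^r) \to K_*(\D^S(S^k); \Z/p^r)$, and by naturality of the reduction maps $S^0 \to M(p^r)$ the natural maps $K_*^\wedge(-) \to K_*(-; \Z/p^r)$ carry $\pi_* \phi$ to $\bar\phi_r$.

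The second step is to show that each $\bar\phi_r$ commutes with McClure's operation $Q$. Here one uses that $K \wedge \D_p^S(-) \simeq \D_p^K(K \wedge -)$: smashing with $K$ turns the extended powers over $S$ appearing in McClure's construction of $Q \colon K_*(Y;\Z/p^r) \to K_*(Y;\Z/p^{r-1})$ into extended powers over $K$, so that the construction depends only on the $\HH$ $K$-algebra $K \wedge Y$ together with its structure map $\D_p^K(K\wedge Y) \to K\wedge Y$. Consequently $Q$ on $\pi_*(A \wedge M(p^r))$ is natural for maps of $\HH$ $K$-algebras; applying this to $\phi \colon A \to B$, and using compatibility with the projections $M(p^r) \to M(p^{r-1})$, one obtains commuting squares
\[
\xymatrix{
K_*(\D^S(S^j); \Z/p^r) \ar[r]^{\bar\phi_r} \ar[d]_Q & K_*(\D^S(S^k); \Z/p^r) \ar[d]^Q \\
K_*(\D^S(S^j); \Z/p^{r-1}) \ar[r]^{\bar\phi_{r-1}} & K_*(\D^S(S^k); \Z/p^{r-1}).
}
\]

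The final step is to pass to the limit. By \myref{claim} the towers $\{K_*(\D^S(S^j);\Z/p^r)\}_r$ and $\{K_*(\D^S(S^k);\Z/p^r)\}_r$ consist of free $\Z/p^r$-modules and hence satisfy the Mittag-Leffler condition, so the relevant $\lim^1$ terms vanish, $K_*^\wedge(\D^S(S^j)) = \lim_r K_*(\D^S(S^j);\Z/p^r)$, and $\pi_*\phi = \lim_r \bar\phi_r$, exactly as in the proof of \myref{kps}. Moreover, $Q$ on $K_*^\wedge(\D^S(S^j))$ is by construction (\myref{qeasy}) the inverse limit of the operations $Q$ at the finite levels. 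Taking $\lim_r$ of the squares above then yields the asserted commutative square in each degree $i$.

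I expect the second step to be the main obstacle. The subtlety is that $\phi$ is a map of $p$-complete $\HH$ $K$-algebras and is typically not induced by a map of $\HH$-ring spectra $\D^S(S^j) \to \D^S(S^k)$ (the map $\pi_j\D^S(S^k) \to K_j^\wedge(\D^S(S^k))$ is far from surjective), so McClure's naturality as usually stated — for $\HH$-ring spectra — does not apply directly. What has to be verified is precisely that his construction of $Q$ is internal to $K$-modules, that is, natural for $\HH$ $K$-algebra maps of the $K$-homology spectra; granting this, everything else is formal.
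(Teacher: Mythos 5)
Your proposal is correct and rests on the same crux as the paper's own proof: that McClure's internal operation $Q_e$ is natural with respect to $\HH$ $K$-algebra maps rather than only $\HH$-ring spectrum maps, which you correctly single out as the one non-formal point (the paper likewise just asserts this ``follows from the construction'' of the operation $Q_e$ in \cite[IX 1.1]{hinfty}). The only difference is packaging: the paper applies that construction directly to the completed theory $F = K_p^{\wedge}$ with $e = Qx$, whereas you work mod $p^r$ and pass to the inverse limit --- which is fine, though note that the Mittag--Leffler property you invoke comes from the surjectivity of the maps in the tower exhibited in \myref{claim}, not from freeness of the terms alone.
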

\begin{proof}
Let $F$ be an $\HH$-ring spectrum and $e \in F_m (\D^S_q(S^n))$. Consider the (internal) Dyer-Lashof operation $Q_e \colon F_n Y \to F_m Y$ on the $F$-homology of $\HH$ ring spectra $Y$ constructed in \cite[IX 1.1]{hinfty}. It follows from the construction that $Q_e$ is natural with respect to $\HH$ $F$-algebra maps.

Next the construction of these operations readily adapts to completed homology. This follows from the fact that smash products and extended powers preserve homology isomorphisms for any homology theory. To conclude, note that the operation $Q \colon K_i^{\wedge} \left( \D^S(S^k) \right) \to K_i^{\wedge} \left( \D^S(S^k) \right)$ is an instance of this construction, taking $F = K_p^{\wedge}$, $n=m=i$, $q=p$, and the class $e = Qx \in K_i^{\wedge} \left( \D^S_p(S^i) \right)$.
\end{proof}

\begin{prop}\mylabel{OpQ}
There is a unique operation $Q \colon \pi_* A \to \pi_* A$ acting on the homotopy of any $p$-complete $\HH$ $K$-algebra $A$ such that $Q$ is natural with respect to $\HH$ $K$-algebra maps and agrees with the operation
\[
Q \colon K_*^{\wedge} \left( \D^S(S^k) \right) \to K_*^{\wedge} \left( \D^S(S^k) \right) 
\]
from the proof of \myref{kps} in the case $A = \left( K \wedge \D^S(S^k) \right)_p^{\wedge}$.

In particular, $Q$ induces an operation $Q \colon K_*^{\wedge}(Y) \to K_*^{\wedge}(Y)$ on the completed $K$-homology of any $\HH$-ring spectrum $Y$, natural with respect to $\HH$ $K$-algebra maps $\left( K \wedge Y \right)_p^{\wedge} \to \left( K \wedge Y' \right)_p^{\wedge}$.
\end{prop}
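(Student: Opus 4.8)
The plan is to construct $Q$ from \emph{universal examples}, exploiting the fact that after smashing with $K$ and $p$-completing the extended powers $\D^S(S^j)$ become free objects. First I would record the relevant corepresentability statement. Since smashing with the fixed spectrum $K$ commutes with homotopy orbits, for $j = 0, 1$ we have
\[
K \wedge \D^S_n(S^j) = \bigl( K \wedge S^{jn} \bigr)_{h\Si_n} = \D^K_n(\Si^j K),
\]
so that $K \wedge \D^S(S^j) = \D^K(\Si^j K)$ is the free $\HH$ $K$-algebra on the free $K$-module $\Si^j K$. Composing the free--forgetful adjunctions (free $\HH$ $K$-algebra, free $K$-module) with the observation that a $p$-complete $\HH$ $K$-algebra $A$ absorbs $p$-completion, one obtains a bijection, natural in $A$, between $\HH$ $K$-algebra maps $(K \wedge \D^S(S^j))_p^{\wedge} \to A$ and homotopy classes $a \in \pi_j A$, the fundamental class $x$ corresponding to $\id$. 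I would focus on $j \in \{0,1\}$, where \myref{kps} identifies $K_*^{\wedge}(\D^S(S^j))$, the remaining degrees being handled by $2$-periodicity of $K$.

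Given this, I would \emph{define}
\[
Q \colon \pi_j A \to \pi_j A, \qquad Q(a) := (\pi_j \phy_a)(Qx),
\]
where $\phy_a \colon (K \wedge \D^S(S^j))_p^{\wedge} \to A$ is the $\HH$ $K$-algebra map classifying $a$, and $Qx \in K_j^{\wedge}(\D^S(S^j)) = \pi_j (K \wedge \D^S(S^j))_p^{\wedge}$ is the generator produced in the proof of \myref{kps}. Naturality is then immediate: for an $\HH$ $K$-algebra map $f \colon A \to B$ one has $\phy_{f_* a} = f \circ \phy_a$ by uniqueness of classifying maps, hence $f_*(Q(a)) = Q(f_* a)$.

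To see that this $Q$ agrees with McClure's operation on $K_*^{\wedge}(\D^S(S^k))$ from the proof of \myref{kps}, observe that both operations are natural with respect to $\HH$ $K$-algebra maps between spectra of the form $(K \wedge \D^S(S^{\ell}))_p^{\wedge}$ --- for the newly defined $Q$ this is the naturality just established, and for McClure's $Q$ it is the content of the lemma immediately preceding this proposition --- and both send the fundamental class $x$ to $Qx$ (for the new $Q$ because $\phy_x = \id$; for McClure's $Q$ by the very definition of the symbol ``$Qx$''). For an arbitrary class $b \in K_j^{\wedge}(\D^S(S^k))$ one has $b = \phy_b(x)$, whence both operations send $b$ to $\phy_b(Qx)$, so they coincide. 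The same computation gives uniqueness: any operation natural for $\HH$ $K$-algebra maps and restricting to McClure's $Q$ on the universal example must send $a \in \pi_j A$ to $\phy_a(Qx)$, hence equals the $Q$ just defined. Finally, the ``in particular'' clause follows by applying the construction to $A = (K \wedge Y)_p^{\wedge}$, which is a $p$-complete $\HH$ $K$-algebra whenever $Y$ is an $\HH$-ring spectrum and whose homotopy is $K_*^{\wedge}(Y)$; naturality in $Y$ is a special case of the naturality above.

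The step I expect to be the main obstacle is the corepresentability statement of the first paragraph: assembling the free $\HH$-ring-spectrum, free $K$-module, and $p$-completion functors into the clean assertion that $(K \wedge \D^S(S^j))_p^{\wedge}$ corepresents ``a homotopy class in degree $j$'' on the category of $p$-complete $\HH$ $K$-algebras, all while working on the level of homotopy categories where the monad $\D$ is defined. Everything downstream is then a diagram chase powered by the naturality of McClure's construction and the explicit identification of $K_*^{\wedge}(\D^S(S^j))$.
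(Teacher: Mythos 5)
Your proposal is correct and follows essentially the same route as the paper: both exploit that $\left( K \wedge \D^S(S^j) \right)_p^{\wedge}$ is the free $p$-complete $\HH$ $K$-algebra on one degree-$j$ generator, define $Q(a) = \pi_*(\phy_a)(Qx)$ via the classifying map, and deduce agreement with McClure's operation from the naturality established in the preceding lemma. The corepresentability step you flag as the main obstacle is exactly the adjunction chain (free $\HH$ $K$-algebra on $\Si^j K$, followed by $p$-completion) that the paper displays in its diagram, so nothing further is needed.
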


\begin{proof}
Note that a $p$-complete $K$-algebra is the same as a $p$-complete $K_p^{\wedge}$-algebra. The result follows from the fact that $\left( K \wedge \D^S(S^j) \right)_p^{\wedge}$ is the \emph{free} $p$-complete $\HH$ $K$-algebra on one generator in degree $j$.

Explicitly: Let $A$ be a $p$-complete $\HH$ $K$-algebra and $a \in \pi_j A$ be represented by a map $f \colon S^j \to A$. Then there is a unique (up to homotopy) $\HH$ $K$-algebra map $\tilde{f}$ making the diagram
\[
\xymatrix{
S^j \ar[d] \ar[r]^f & A \\
\D^S(S^j) \ar[d]_{\eta_K \wedge 1} \ar@{-->}[ur]& \\
**[l] \D^K(\Si^j K) = K \wedge \D^S(S^j) \ar[d] \ar@{-->}[uur] & \\
\left( K \wedge \D^S(S^j) \right)_p^{\wedge} \ar@{-->}[uuur]_{\tilde{f}} & \\
}
\]
commute (up to homotopy). Here $\eta_K \colon S^0 \to K$ denotes the unit map. By construction, the map induced on homotopy
\[
\pi_*(\tilde{f}) \colon \pi_* \left( K \wedge \D^S(S^j) \right)_p^{\wedge} \to \pi_* A
\]
sends the fundamental class $x$ (represented by the downward composite on the left) to $a \in \pi_j A$. Setting $Q(a) = Q \left( \pi_*(\tilde{f})(x) \right) := \pi_*(\tilde{f}) (Qx)$ constructs the desired operation $Q$, which moreover is determined by its effect on the universal examples $\pi_* \left( K \wedge \D^S(S^j) \right)_p^{\wedge}$.

This operation agrees with the original $Q$ when $A$ is of the form $\left( K \wedge \D^S(S^k) \right)_p^{\wedge}$, by naturality of the original $Q$ with respect to $\HH$ $K$-algebra maps.
\end{proof}

\begin{rmk}
Hopkins \cite{hopkinslocal} gives a direct construction of the operation $Q \colon \pi_* A \to \pi_* A$ for any $K(1)$-local $\Hinf_{\infty}$-ring spectrum $A$, using the $K(1)$-local splitting $\Sigma_+^{\infty}B\Sigma_p = S^0 \vee S^0$ and the total power operation. By \myref{OpQ}, his operation coincides with ours by uniqueness. 
\end{rmk}

\begin{prop}\mylabel{Qlim0}
The operation $Q \colon K_*^{\wedge}(Y) \to K_*^{\wedge}(Y)$ on the completed $K$-homology of $\HH$-ring spectra $Y$ constructed in \myref{OpQ} is compatible with reduction of coefficients, in the sense that the diagram on the left 
\[
\xymatrix{
K_*^{\wedge}(Y) \ar[d] \ar[r]^Q & K_*^{\wedge}(Y) \ar[d] & K_*^{\wedge}(Y) \ar[d] \ar[r]^Q & K_*^{\wedge}(Y) \ar[d]\\
K_*(Y; \Z/p^{r}) \ar[r]^Q & K_*(Y; \Z/p^{r-1}) & \lim_r K_*(Y; \Z/p^{r}) \ar[r]^Q & \lim_r K_*(Y; \Z/p^{r}) \\
}
\]
commutes for all $r \geq 2$. Consequently, the diagram on the right also commutes. 
\end{prop}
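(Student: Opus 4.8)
The plan is to reduce the statement to the universal examples $\D^S(S^j)$ for $j=0,1$, where both operations can be computed explicitly, and then transport the conclusion to an arbitrary $\HH$-ring spectrum $Y$ by naturality. First I would introduce the reduction-of-coefficients map $\rho_r \colon K^{\wedge}_*(Y) \to K_*(Y;\Z/p^r)$, defined as the edge homomorphism $K^{\wedge}_*(Y) \to \lim_r K_*(Y;\Z/p^r)$ of the Milnor sequence followed by the $r^{\text{th}}$ projection, so that the left-hand square is exactly the assertion $\rho_{r-1}\circ Q = Q\circ\rho_r$ for $r\ge 2$. I would then observe that all three ingredients --- the completed operation $Q$ of \myref{OpQ}, McClure's operation $Q$ on $\Z/p^r$-coefficient $K$-homology, and the maps $\rho_r$ --- are natural with respect to $\HH$ $K$-algebra maps $(K\wedge Y)^{\wedge}_p \to (K\wedge Y')^{\wedge}_p$: for the first this is built into its construction, for the second it follows from naturality of the internal Dyer--Lashof operations $Q_e$ passed to completed homology, as in the lemma preceding \myref{OpQ}, and for the third it is immediate. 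Since any class $y\in K^{\wedge}_i(Y)$ is classified by an $\HH$ $K$-algebra map $\tilde f\colon (K\wedge\D^S(S^i))^{\wedge}_p \to (K\wedge Y)^{\wedge}_p$ carrying the fundamental class $x$ to $y$, this reduces the problem to checking that the left-hand square commutes when evaluated on $x\in K^{\wedge}_i(\D^S(S^i))$.

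On the universal example the computation is direct. By \myref{claim} each $K_*(\D^S(S^i);\Z/p^r)$ is a free $\Z/p^r$-module, so the tower is Mittag--Leffler, its $\lim^1$-term vanishes, and --- as in the proof of \myref{kps} --- the edge map identifies $K^{\wedge}_*(\D^S(S^i))$ with $\lim_r K_*(\D^S(S^i);\Z/p^r)$, under which $\rho_r$ becomes the canonical projection. The key point is that the completed operation $Q$ appearing in \myref{OpQ} is, by its very construction in the proof of \myref{kps}, the operation on this limit induced from McClure's operations $Q\colon K_*(-;\Z/p^{r+1})\to K_*(-;\Z/p^r)$ through the compatibility square $\pi Q = Q\pi$ recalled at the start of the section, i.e. it sends a compatible system $(z_s)_s$ to $(Q(z_{s+1}))_s$. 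Chasing $(z_s)_s$ around the left-hand square at level $r\ge 2$ then yields the level-$(r-1)$ class $Q(z_r)$ along both routes, so the square commutes on the universal example and hence, by the naturality reduction above, for every $Y$.

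The right-hand square should follow formally once the left one is known: the operation $Q$ on $\lim_r K_*(Y;\Z/p^r)$ is characterised by $p_{r-1}\circ Q = Q\circ p_r$ for $r\ge 2$, where $p_r$ is the $r^{\text{th}}$ projection, and $\rho_{r-1} = p_{r-1}\circ e$ for $e$ the edge map; applying the left-hand square for each $r\ge 2$ shows that the two composites $e\circ Q$ and $Q\circ e$ from $K^{\wedge}_*(Y)$ to $\lim_r K_*(Y;\Z/p^r)$ agree after composing with every $p_{r-1}$, hence coincide. I expect the only genuine obstacle to be bookkeeping rather than conceptual: keeping track of the shift in coefficient level built into McClure's $Q$, and being careful that the naturality of the coefficient operations is required with respect to $\HH$ $K$-algebra maps of $p$-completions and not merely with respect to $\HH$-ring maps, which is exactly what the lemma preceding \myref{OpQ} supplies.
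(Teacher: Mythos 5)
Your proposal is correct and follows essentially the same route as the paper: the paper organizes your elementwise naturality reduction into a commuting cube connecting $Y$ to the universal example $\D^S(S^j)$ via the classifying map $\tilde f$, and likewise uses the vanishing of $\lim^1$ on the universal example to see that the completed $Q$ there is literally the operation induced on the inverse limit by McClure's tower operations. Your explicit treatment of the right-hand square (which the paper leaves as an immediate consequence) is also fine.
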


\begin{proof}
Let $y \in K_j^{\wedge}(Y)$ and let $\tilde{f} \colon \left( K \wedge \D^S(S^j) \right)_p^{\wedge} \to \left( K \wedge Y \right)_p^{\wedge}$ be an $\HH$ $K$-algebra map as in the proof of \myref{OpQ}, satisfying $\pi_j(\tilde{f})(x) = y$, where $x \in K_j^{\wedge} \left( \D^S(S^j) \right)$ is the fundamental class. Consider the cube:
\[\resizebox{\columnwidth}{!}{
\xymatrix{
& K_j^{\wedge}(Y) \ar[dd] \ar[rr]^(0.4){Q} & & K_j^{\wedge}(Y) \ar[dd] \\
K_j^{\wedge} \left( \D^S(S^j) \right) \ar[dd] \ar[rr]^(0.6){Q} \ar[ur]^{\tilde{f}_*} & & K_j^{\wedge} \left( \D^S(S^j) \right) \ar[dd] \ar[ur]^{\tilde{f}_*}& \\
& K_j(Y; \Z/p^{r}) \ar[rr]^(0.4){Q} & & K_j(Y; \Z/p^{r-1}) \\
K_j \left( \D^S(S^j); \Z/p^{r} \right) \ar[rr]^(0.6){Q} \ar[ur]^{\tilde{f}_*}& & K_j\left( \D^S(S^j); \Z/p^{r-1} \right) \ar[ur]^{\tilde{f}_*}. & \\
}}
\]
The left and right faces commute, by naturality of reduction of coefficients, induced by the natural map $X_p^{\wedge} \to X/p^r$ for any spectrum $X$. The top and bottom faces commute, by naturality of $Q$ with respect to $\HH$ $K$-algebra maps. The front face commutes, because of the natural isomorphism $K_j^{\wedge} \left( \D^S(S^j) \right) = \lim_r K_j \left( \D^S(S^j); \Z/p^{r} \right)$, in other words, the $\lim^1$ term vanishes in this case. Therefore the back face commutes when restricted to the image of $\pi_j(\tilde{f})$.
\end{proof}

\begin{rmk}
It seems likely, though not obvious, that the operation $Q \colon K_*^{\wedge}(Y) \to K_*^{\wedge}(Y)$ constructed in \myref{OpQ} would also be compatible with the $\lim^1$ term of the Milnor sequence, i.e., make the left-hand square of the diagram \eqref{MilnorSeq} commute. We leave this as a question to the reader.
\end{rmk}

\subsection{$K$-theory of $\Hinf_{\infty}$-ring spectra}\label{Hinf}

Applying the results of the previous section, we now turn to the description of the structure found on the completed $K$-theory of any $\Hinf_{\infty}$-ring spectrum $Y$.

\begin{defn}
\cite[2.3]{bousfield} Let $p$ be a prime. A \Def{$\Z/2$-graded $\te$-ring} is a strictly commutative $\Z/2$-graded ring $R$ equipped with an operation $\te \colon R_0 \to R_0$ making $R_0$ into a $\te$-ring, and a group homomorphism $\psi \colon R_1 \to R_1$, such that the following conditions hold:
\begin{enumerate}
\item $\psi(ax) = \psi(a) \psi(x)$ for all $a \in R_0$ and $x \in R_1$.
\item $\te(xy) = \psi(x) \psi(y)$ for all $x, y \in R_1$.
\end{enumerate}
Here, $\psi \colon R_0 \to R_0$ denotes the associated Adams operation $\psi(a) = a^p + p \te(a)$.
\end{defn}

\begin{lemma}
Let $Y$ be an $\Hinf_{\infty}$-ring spectrum. Then the operations
\begin{align*}
&\te := Q \colon K_0^{\wedge}(Y) \to K_0^{\wedge}(Y) \\
&\psi := Q \colon K_1^{\wedge}(Y) \to K_1^{\wedge}(Y)
\end{align*}
make $K_*^{\wedge}(Y)$ into a $\Z/2$-graded $\te$-ring, in fact a $\Z/2$-graded $\Z_p$-$\te$-algebra. \end{lemma}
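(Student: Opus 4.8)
We argue by reduction to universal examples, building on the properties of $Q$ established in \myref{OpQ}. Recall from the proof of \myref{OpQ} that $\bigl(K \wedge \D^S(S^j)\bigr)^{\wedge}_p$ is the free $p$-complete $\HH$ $K$-algebra on one generator in degree $j$; since $\D^S$ carries a wedge of spectra to the smash product (coproduct) of the corresponding free commutative $S$-algebras, the spectrum $\bigl(K \wedge \D^S(S^{j_1} \vee \cdots \vee S^{j_k})\bigr)^{\wedge}_p$ is likewise the free $p$-complete $\HH$ $K$-algebra on generators $x_1,\ldots,x_k$ of degrees $j_1,\ldots,j_k$. We first record the action of $Q$ on these objects. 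Extending the argument of \myref{claim}, each $K_*\bigl(\D^S(S^{j_1} \vee \cdots \vee S^{j_k}); \Z/p^r\bigr)$ is a free $\Z/p^r$-module (a tensor product of polynomial and exterior algebras on the classes $Q^i x_\ell$), so the defining tower is Mittag-Leffler, its $\lim^1$ vanishes (cf.\ \myref{lim1van}), and $K_*^{\wedge}\bigl(\D^S(S^{j_1} \vee \cdots \vee S^{j_k})\bigr) = \lim_r K_*\bigl(\D^S(S^{j_1} \vee \cdots \vee S^{j_k}); \Z/p^r\bigr)$; for $k=2$ this is moreover the completed tensor product over $\Z_p$ of the two one-variable groups, as in the proof of \myref{kps}. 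Using \myref{Qlim0}, McClure's formulas for $Q$ on $K_*(-;\Z/p^r)$ of $\HH$-ring spectra \cite[IX 3.3]{hinfty} --- the additivity formula, the Cartan (product) formula in its several degree-dependent forms, the normalisation $Q(1)=0$, and the identity for $Q$ applied to a product of two odd classes --- pass to the limit and hold in $K_*^{\wedge}$ of these universal examples.

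Now fix an arbitrary $\Hinf_{\infty}$-ring spectrum $Y$. The $\HH$-multiplication on $Y$ induces a natural graded-commutative $\Z_p$-algebra structure on $K_*^{\wedge}(Y) = \pi_*(K \wedge Y)^{\wedge}_p$, and pulling back the relation $x^2=0$ --- valid in the exterior algebra $K_*^{\wedge}(\D^S(S^1))$ by \myref{claim} --- along an $\HH$ $K$-algebra map classifying an odd class shows that $K_*^{\wedge}(Y)$ is \emph{strictly} commutative. To verify the remaining structure --- that $\te = Q$ on $K_0^{\wedge}(Y)$ satisfies the three $\te$-ring axioms of \myref{defTheta}, that $\psi = Q$ on $K_1^{\wedge}(Y)$ is additive, and that the two compatibility conditions of a $\Z/2$-graded $\te$-ring hold --- it suffices, given elements $y_1,\ldots,y_k \in K_*^{\wedge}(Y)$ of the relevant degrees, to choose by the universal property an $\HH$ $K$-algebra map $\tilde f \colon \bigl(K \wedge \D^S(S^{j_1} \vee \cdots \vee S^{j_k})\bigr)^{\wedge}_p \to (K \wedge Y)^{\wedge}_p$ with $\tilde f_*(x_\ell) = y_\ell$; since $\tilde f_*$ is a ring homomorphism commuting with $Q$ by naturality (\myref{OpQ}), the identity already verified in the universal example specialises to the desired identity for the $y_\ell$. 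In two cases one uses strict commutativity directly: for $x$ of odd degree one has $x^p = 0$ (as $p \geq 2$ and $x^2 = 0$), so the Cartan formula for $Q(ax)$ with $a \in K_0^{\wedge}(Y)$ collapses to $\bigl(a^p + p\,Q(a)\bigr)Q(x)$, which is precisely $\psi(a)\psi(x)$.

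Finally, $K_*^{\wedge}(Y)$ is a $\Z_p$-$\te$-algebra: the unit map $S^0 \to Y$ is an $\HH$-ring map, hence induces an $\HH$ $K$-algebra map $K^{\wedge}_p \to (K \wedge Y)^{\wedge}_p$, so by naturality of $Q$ the restriction of $\te = Q$ to the image of $\Z_p = K_0^{\wedge}(S^0)$ in $K_0^{\wedge}(Y)$ is pulled back from $Q$ on $K_0^{\wedge}(S^0)$. The latter is the operation $\te(a) = (a - a^p)/p$ of \myref{thetaZ}, by McClure's computation of $Q$ on the $K$-homology of the sphere together with passage to the limit; equivalently, the Adams operation of $\Z_p$ is the identity (as for $\Z$, \myref{ZLambda}), so $a = \psi(a) = a^p + p\,Q(a)$ forces $Q(a) = (a - a^p)/p$ for $a \in \Z_p$. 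Thus $\Z_p \to K_0^{\wedge}(Y)$ is a map of $\te$-rings, and in particular $Q(1) = 0$.

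The main obstacle is the bookkeeping in the first paragraph: extracting from McClure's computations the precise identities in their correct, degree-parity-dependent forms --- especially the normalisation of $Q$ and its behaviour on odd classes, where the prime $p = 2$ requires care, and where strict commutativity of the universal examples is itself an \emph{output} of McClure's explicit calculation rather than something formal. Once those identities are in hand, the passage through the $\lim^1$-vanishing completions of the universal examples and then to a general $Y$ via the free $p$-complete $\HH$ $K$-algebras is a routine naturality argument.
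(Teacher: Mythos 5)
Your proof is correct and rests on the same essential input as the paper's --- the formulas of \cite[IX 3.3]{hinfty} --- but it is organized differently. The paper's proof simply cites the relevant parts of McClure's theorem (parts (ii), (vi), (vii), (x) for the $\te$-ring axioms, strict commutativity, and the two compatibility conditions, and parts (i), (ii), (vi) for the $\Z_p$-algebra structure via the unit map), leaving implicit the step by which identities known for the mod-$p^r$ operations are promoted to identities for the completed operation $Q$ of \myref{OpQ}; this step is not automatic, since $K_*^{\wedge}(Y) \to \lim_r K_*(Y;\Z/p^r)$ can fail to be injective when the $\lim^1$ term is nonzero. You make that step explicit by verifying each identity in multi-generator universal examples $\bigl(K \wedge \D^S(S^{j_1} \vee \cdots \vee S^{j_k})\bigr)^{\wedge}_p$, where the Mittag-Leffler condition kills $\lim^1$ so that mod-$p^r$ identities do determine the completed ones, and then transporting to a general $Y$ by naturality of $Q$ under $\HH$ $K$-algebra maps --- the same mechanism the paper uses to \emph{construct} $Q$ in \myref{OpQ} but does not redeploy in this lemma. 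What your route buys is rigor on the completion issue; what it costs is the multi-variable extension of \myref{claim} (freeness of $K_*(\D^S(S^{j_1} \vee \cdots \vee S^{j_k});\Z/p^r)$ via a K\"unneth argument), which you assert but do not prove, and which deserves at least a sentence. Two minor points: your derivation of $\psi(ax)=\psi(a)\psi(x)$ from a uniform Cartan formula plus $x^p=0$ is fine, but the paper reads this case directly off McClure's part (vii), which is stated in degree-dependent form; and your identification of $Q$ on $K_0^{\wedge}(S^0)=\Z_p$ with $\te(a)=(a-a^p)/p$ should be argued as the paper does --- compute $Q(ku)$ for $k\in\Z$ from additivity and $Q(u)=0$ and then extend by continuity --- rather than by invoking the Adams operation being the identity, which is what is being established.
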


\begin{proof}
The statement follows from \cite[IX 3.3]{hinfty}; we will refer to its parts numbered in lowercase Roman numerals (i)-(x).

We first check that $\te$ makes $K_0^{\wedge}(Y)$ into a $\te$-ring. That $\te$ satisfies the three conditions in \myref{defTheta} follows from parts (vi), (vii), and (ii), respectively. Note that this holds also for $p=2$.

$K_*^{\wedge}(Y)$ is a commutative $\Z/2$-graded ring, and strict commutativity follows from part (x). By part (vi), $\psi \colon K_1^{\wedge}(Y) \to K_1^{\wedge}(Y)$ is a group homomorphism.

For $a \in K_0^{\wedge}(Y)$ and $x \in K_1^{\wedge}(Y)$, part (vii) yields
\begin{align*}
\psi(ax) &= a^p \psi(x) + p \te(a) \psi(x) \\
&= \left( a^p + p \te(a) \right) \psi(x) \\
&= \psi(a) \psi(x).
\end{align*}

For $x,y \in K_1^{\wedge}(Y)$, the condition $\te(xy) = \psi(x) \psi(y)$ also follows from part (vii).

The structure map of $K_0^{\wedge}(Y)$ as $\Z_p$-algebra is
\[
\xymatrix{
\Z_p = K_0^{\wedge}(S^0) \ar[r]^-{e_*} & K_0^{\wedge}(Y) \\
}
\]
where $e \colon S^0 \to Y$ denotes the unit map of $Y$, which is an $\Hinf_{\infty}$ map \cite[I 3.4(i)]{hinfty}. By part (i), $Q$ is natural with respect to $\Hinf_{\infty}$ maps and thus $e_*$ commutes with $Q$. It remains to check that $Q$ induces the usual $\te$-ring structure on $K_0^{\wedge}(S^0) = \Z_p$. Let $u \in K_0^{\wedge}(S^0)$ be the unit. For any $k \in \Z$,  parts (vi) and (ii) yield:
\begin{align*}
Q(ku) &= k Q(u) - \frac{1}{p} (k^p - k) u^p \\
&= - \frac{1}{p} (k^p - k) u \\
&= \frac{k - k^p}{p} u
\end{align*}
as in \myref{thetaZ}.
\end{proof}

Finally, we are ready to identify the monad $\T$ at height $1$, where Morava $E$-theory is $E = K^{\wedge}_p$.

\begin{thm}\mylabel{Tht1identification}
At height $h=1$, the monad $\T \colon \Mod_{E_*} \to \Mod_{E_*}$ is the free $\Z/2$-graded $\te$-ring over the ground $\te$-ring $\Z_p$, as defined in \cite[2.3]{bousfield}. In particular, the degree $0$ part of $\T$ is the free $\Z_p$-$\te$-algebra monad $T^{\te}_{\Z_p} \colon \Mod_{\Z_p} \to \Mod_{\Z_p}$. 
\end{thm}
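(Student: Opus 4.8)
The plan is to identify $\T$, at height $1$, with the free $\Z/2$-graded $\te$-algebra monad over $\Z_p$ --- call it $T^{\te,\pm}_{\Z_p}$ --- as monads on $\Mod_{E_*}$; here I use that, since $E_* = \Z_p[u^{\pm}]$ with $|u| = 2$, the category of $\Z$-graded $E_*$-modules is equivalent to that of $\Z/2$-graded $\Z_p$-modules, so the two functors really do live on the same category, and the ``in particular'' clause about the degree $0$ part follows by restriction. Three structural observations reduce the statement to a single computation. First, both monads preserve sifted colimits: for $\T$ this was shown in the proof of \myref{tmonad}, and for $T^{\te,\pm}_{\Z_p}$ it follows from the $\Z/2$-graded analogue of \myref{forgetsifted}. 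Second, both are graded exponential, i.e.\ they carry $\oplus$ to $\otimes$ compatibly with the weight grading: for $\T$ this is \myref{exp}, and for $T^{\te,\pm}_{\Z_p}$ it holds because $\otimes_{\Z_p}$ is the coproduct of $\Z/2$-graded $\te$-algebras over $\Z_p$ (the forgetful functor to commutative algebras being comonadic, cf.\ \cite{pleth}) and weights add under $\otimes$. Third, every $E_*$-module is built from finite free modules --- each of which, by $2$-periodicity, is a finite sum of copies of $\Si^0 E_*$ and $\Si^1 E_*$ --- via filtered colimits and reflexive coequalizers. Consequently it is enough to produce a map of monads $\Phi \colon T^{\te,\pm}_{\Z_p} \to \T$ and to check that it is an isomorphism on $\Si^0 E_*$ and on $\Si^1 E_*$.

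Next I would compute the target on generators. Combining \myref{comparison} (summed over $n$) with the base-change identity $\D^E_n(\Si^j E) \simeq E \wedge \D^S_n(S^j)$ gives
\[
\T(\Si^j E_*) \;=\; \bigoplus_{n \ge 0} \pi_* \LD_n(\Si^j E) \;=\; \bigoplus_{n \ge 0} E^{\wedge}_*\!\left( \D^S_n(S^j) \right).
\]
At height $1$ each summand is $\lim_r K_*\!\left( \D^S_n(S^j); \Z/p^r \right)$, a finitely generated free $\Z_p$-module sitting in a single weight, so reassembling over all $n$ and invoking \myref{claim} identifies the direct sum with the honest polynomial algebra $\Z_p[x, Qx, Q^2 x, \ldots]$ when $j$ is even and with the exterior algebra $\La_{\Z_p}[x, Qx, Q^2 x, \ldots]$ when $j$ is odd, where $x$ is the fundamental class. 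By the lemma immediately preceding the theorem --- which shows that $Q$ restricts to $\te$ on $K^{\wedge}_0$ and to the Adams operation $\psi$ on $K^{\wedge}_1$, making the completed $K$-homology of an $\HH$-ring spectrum a $\Z/2$-graded $\Z_p$-$\te$-algebra --- these are precisely the free $\Z/2$-graded $\te$-algebras over $\Z_p$ on a single generator in the given degree.

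For the map $\Phi$ I would appeal to the universal property of the free monad $T^{\te,\pm}_{\Z_p}$: a map of monads $T^{\te,\pm}_{\Z_p} \to \T$ is the same datum as a natural $\Z/2$-graded $\Z_p$-$\te$-algebra structure on each $\T$-algebra, compatible with $\T$-algebra maps. The underlying commutative $\Z/2$-graded ring structure is already present on any $\T$-algebra (from the multiplication on $\LD$), and the operations $\te$ in degree $0$ and $\psi$ in degree $1$ are McClure's operation $Q$ of \myref{OpQ}, represented by the generating classes $Qx \in \T(\Si^j E_*)$ found above, hence natural on all $\T$-algebras. The $\te$-algebra axioms hold because they hold on these universal examples by \cite[IX 3.3]{hinfty} (this is the content of the preceding lemma); compatibility of $\Phi$ with the unit $\eta_\T$ reduces to the computation $Q(k) = (k - k^p)/p$ on $\Z_p = K^{\wedge}_0(S^0)$ in \myref{thetaZ}; and compatibility with the monad multiplication $\mu_\T$ amounts to the naturality of $Q$ with respect to $\HH$ $K$-algebra maps, as in \myref{OpQ} and \myref{Qlim0}, together with the fact that $\mu_\T$ restricted to finite free modules is $\pi_* \LD$ applied to the monad multiplication of $\D$. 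Once $\Phi$ is in hand, it carries the free $\te$-algebra generator to $x$ by construction, so the computation above makes it an isomorphism on $\Si^0 E_*$ and $\Si^1 E_*$; the graded exponential structure then propagates this to all finite free modules, and sifted-colimit-preservation to all of $\Mod_{E_*}$.

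I expect the last point --- compatibility of $\Phi$ with the monad multiplications --- to be the real obstacle. It encodes the assertion that the single operation $Q$, subject only to the $\te$-algebra relations, accounts for the entire plethysm carried by $\LD$ at height $1$. The ``no further operations, no further relations'' half is exactly McClure's freeness computation \myref{claim} (together with \myref{kps}); the ``$Q$ is natural for the plethysm'' half reduces to the naturality and coefficient-reduction statements for $Q$ in \myref{OpQ} and \myref{Qlim0}. What requires genuine care is tracking $Q$ through the iterated extended-power constructions that assemble $\mu_\T$ on finite free modules, and this is precisely the point at which the homotopy-theoretic input of Section~\ref{opsonK} is actually used; everything else in the argument is formal bookkeeping about sifted colimits, Kan extensions, and graded exponential monads.
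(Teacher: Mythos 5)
Your proposal follows essentially the same route as the paper's proof: reduce via sifted colimits and the graded exponential structure to the one-generator free modules $E_*$ and $\Si E_*$, then identify $\T(\Si^j E_*)$ with the free $\Z/2$-graded $\Z_p$-$\te$-algebra on one generator using McClure's freeness computation (\myref{claim}, \myref{kps}), taking care that each weight piece $\T_n(\Si^j E_*)$ is finitely generated free so the direct sum is the uncompleted polynomial (resp.\ exterior) algebra. You are in fact somewhat more explicit than the paper about packaging the comparison as a map of monads and verifying compatibility with the monad multiplication, a point the paper's proof leaves largely implicit.
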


\begin{proof}
Let $gT^{\te}_{\Z_p} \colon \Mod_{E_*} \to \Mod_{E_*}$ denote the free $\Z/2$-graded $\Z_p$-$\te$-algebra monad. Since both $\T$ and $gT^{\te}_{\Z_p}$ preserve filtered colimits and reflexive coequalizers, it suffices to show that they agree on the subcategory $\Modff_{E_*}$ of finite free $E_*$-modules. Moreover, both functors send finite direct sums to tensor products, hence it suffices to show that they agree on $E_*$ and $\Si E_*$, the free $E_*$-modules on one generator, in a way that is compatible with the respective unit maps of the two monads $1 \to gT^{\te}_{\Z_p}$ and $1 \to \T$.

By \cite[2.6, 3.1]{bousfield}, the free $\Z/2$-graded $\Z_p$-$\te$-algebra on one generator $x$ in degree $0$ is the $\Z/2$-graded polynomial ring
\[
gT^{\te}_{\Z_p}(E_*) = \Z_p [x, \te x, \te^2 x, \ldots],
\]
whereas on one generator $y$ in degree $1$, we get the $\Z/2$-graded exterior algebra
\[
gT^{\te}_{\Z_p}(\Si E_*) = \Lambda_{\Z_p} [y, \psi^p y, (\psi^p)^2 y, \ldots].
\]
We omit the $\Z/2$-grading from the notation.

Let us focus on the case of a generator in degree $0$. Starting from the definition of the functor $\T$, one obtains:
\begin{align*}
\T(E_*) &= \bigoplus_{n \geq 0} \pi_* L_{K(1)} \D^E_n(E) \\
&= \bigoplus_{n \geq 0} \pi_* L_{K(1)} (E \wedge_S B \Si_{n+}) \\
&= \bigoplus_{n \geq 0} \pi_* (K \wedge_S B \Si_{n+})_p^{\wedge}
\end{align*}
and similarly for $T(\Si E_*)$.

Note that although $\T_n(E_*) \cong \pi_* L_K \D_n(E)$ holds for each $n$, $\T(E_*)$ becomes $\pi_* L_K \D(E)$ only after applying $L$-completion:
\[
\pi_* L_K \D(E) \cong L_0 \T(E_*) = L_0 \left( \bigoplus_{n \geq 0} \T_n(E_*) \right),
\]
in which $\fm$-adically decaying infinite sums are allowed.

Each $\T_n(E_*)$ picks up the elements of weight $n$, where $x$ has weight $1$ and $Q^i x$ has weight $p^i$. In particular, $\T_n(E_*)$ is a finitely generated free $\Z_p$-module. For example:
\begin{align*}
&\T_0(E_*) = \Z_p \lan 1 \ran \\
&\T_1(E_*) = \Z_p \lan x \ran \\
&\ldots \\
&\T_{p-1}(E_*) = \Z_p \lan x^{p-1} \ran \\
&\T_p(E_*) = \Z_p \lan x^p, Qx \ran.
\end{align*}
Therefore, $\T(E_*)$ is the uncompleted $\Z/2$-graded polynomial algebra
\[
\T(E_*) = \Z_p [x, Q x, Q^2 x, \ldots],
\]
using \myref{kps}. The case of $\T (\Si E_*)$ is proved similarly.
\end{proof}

\begin{rmk}
At the prime $p=2$, the statement is also found in \cite[2.1]{spin}, where the free $\HH$-ring spectrum on one generator $\D^S(S^0) = \bigvee_{n \geq 0} B \Si_{n+}$ is denoted $T S^0$.
\end{rmk}

\section{The height 1 case}\label{height1}

The main result of this section is that the monad $\T$, identified as the free $\Z/2$-graded $\Z_p$-$\te$-algebra monad in \myref{Tht1identification}, preserves $L_0$-equivalences. Since the $2$-periodic grading does not play a crucial role here, we will focus on the degree $0$ part, which coincides with the free $\Z_p$-$\te$-algebra monad $T^{\te}_{\Z_p} \colon \Mod_{\Z_p} \to \Mod_{\Z_p}$.

While this can be viewed as the height $1$ case of \myref{mainthm}, the methods in this section make the structure present on the completed algebraic approximation functors explicit for $p$-complete $K$-theory.

We first prove the analogous result for $\la$-rings over $\Z$ (\myref{TLambda}), using the combinatorics of $\la$-rings. Then we obtain the result for $\la$-rings over $\Z_p$ (\myref{TLambdaZp}) by a change of $\la$-rings argument (\myref{CompletionLambda}). From there we deduce the result for $\te$-rings over $\Z_p$.

In this section, the ground ring will usually be $\Z$ with maximal ideal $\fm = (p) \subset \Z$. In that case, $L_0$ can be described as ``Ext-$p$-completion'' of abelian groups, given by
\[
L_0 M = \Ext^1_{\Z}(\Z/p^{\infty},M)
\]
as explained in \myref{localhom} or \cite[VI, \S 2]{holim}, or as ``analytic $p$-completion'', given by
\[
L_0 M = M \llb x \rrb / (x-p) M \llb x \rrb
\]
as described in \myref{rezkmodel} or \cite{analyticcompletion}. Analogous descriptions also hold for the ground ring $\Z_p$ with its maximal ideal $\fm = (p) \subset \Z_p$. For more details on $L$-completion, see Appendix \ref{appendix1}.

\subsection{Computations in $\la$-rings}

Since the notion of $\te$-ring depends on the choice of a prime $p$, it will be more convenient to work with $\la$-rings, whose definition does not involve a chosen prime. Recall that a $\la$-ring has underlying $\te^p$-rings for all primes $p$ (\myref{laforget}).

\begin{prop}\mylabel{FreeLambda}
The free lambda-ring on generators $\{ e_i \}_{i \in I}$ indexed by a set $I$ is the polynomial ring
\[
F^{\la}_{\Z} \left( \Z \lan e_i \mid i \in I \ran \right) = \Z [\la^k(e_i) \mid i \in I, k \geq 1].
\]
In particular, its underlying abelian group $T^{\la}_{\Z} \left( \Z \lan e_i \mid i \in I \ran \right)$ is a free abelian group. $T^{\la}_{\Z,n}$ picks up the weight $n$ piece, where elements are weighted by $\abs{\la^k(x)} = k$.
\end{prop}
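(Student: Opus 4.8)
The plan is to reduce to the case of a single generator, invoke the classical structure theorem for the free $\la$-ring on one variable, and then assemble the general case by formal manipulation of coproducts in $\la \Alg_{\Z}$.

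First I would recall the classical fact (see e.g.\ \cite[\S 1]{lambda}) that the free $\la$-ring on one generator $x$ is the polynomial ring $\Z[\la^1 x, \la^2 x, \dots]$ on the elements $\la^k x$, $k \ge 1$: concretely it is the ring $\Lambda$ of symmetric functions, with $x = e_1$, $\la^k x = e_k$ the $k$-th elementary symmetric function, and the unique $\la$-structure in which $e_1$ is a line element. If one prefers not to quote this, one verifies that $\Lambda$ with this $\la$-structure is a $\la$-ring generated as a $\la$-ring by $e_1$, and that for every $\la$-ring $R$ and every $r \in R$ there is a unique $\la$-ring map sending $e_1 \mapsto r$; this last point is the splitting principle, and it is where the genuine work lies.

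Next I would assemble the many-variable case formally. The functor $F^{\la}_{\Z} \colon \Mod_{\Z} \to \la \Alg_{\Z}$ factors as $\Mod_{\Z} \ral{\mathrm{Sym}} \Com_{\Z} \ral{\widetilde F} \la \Alg_{\Z}$, where $\mathrm{Sym}$ is the free commutative $\Z$-algebra functor and $\widetilde F$ is left adjoint to the forgetful functor $\la \Alg_{\Z} \to \Com_{\Z}$. Applying this to $\Z \lan e_i \mid i \in I \ran = \bigoplus_{i \in I} \Z \lan e_i \ran$ gives $\mathrm{Sym}\left( \Z \lan e_i \mid i \in I \ran \right) = \Z[e_i \mid i \in I] = \bigotimes_{i \in I} \Z[e_i]$, the coproduct in $\Com_{\Z}$ of the one-variable polynomial rings; since $\widetilde F$ is a left adjoint, $F^{\la}_{\Z}\left( \Z \lan e_i \mid i \in I \ran \right)$ is the coproduct in $\la \Alg_{\Z}$ of the free $\la$-rings $\widetilde F(\Z[e_i]) = \Z[\la^k(e_i) \mid k \ge 1]$. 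By \cite[2.26]{lambda} the forgetful functor $\la \Alg_{\Z} \to \Com_{\Z}$ is comonadic, hence creates colimits, so the underlying commutative ring of this coproduct is $\bigotimes_{i \in I} \Z[\la^k(e_i) \mid k \ge 1] = \Z[\la^k(e_i) \mid i \in I,\ k \ge 1]$, as claimed; freeness of the underlying abelian group $T^{\la}_{\Z}\left( \Z \lan e_i \mid i \in I \ran \right)$ is then immediate, since the monomials in the $\la^k(e_i)$ form a $\Z$-basis.

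For the grading, I would observe that $\Lambda$ carries its standard $\mathbb{N}$-grading, with $e_k = \la^k x$ in weight $k$; this grading is multiplicative, is compatible with the $\la$-operations ($\la^k$ scaling weight by $k$), and passes to tensor products, and the monad decomposition $T^{\la}_{\Z} = \bigoplus_n T^{\la}_{\Z,n}$ is the one it induces after placing the copy of the generating module in weight $1$ (it restricts on $\mathrm{Sym}$ to the usual splitting by number of factors). The weight-$n$ summand of $\Z[\la^k(e_i) \mid i \in I,\ k \ge 1]$ is then spanned by the monomials $\prod_j \la^{k_j}(e_{i_j})$ with $\sum_j k_j = n$, which is the assertion. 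The only real obstacle in the argument is the one-variable freeness theorem, which I would cite; everything else is formal bookkeeping with adjoints and (co)monadicity.
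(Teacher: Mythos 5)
Your argument is essentially the paper's own proof: cite the one-variable case from \cite{lambda} and deduce the general case from the fact that the free functor preserves coproducts while coproducts of $\la$-rings are computed as tensor products of underlying commutative rings (via comonadicity of the forgetful functor, exactly as the paper invokes elsewhere). One small slip: in your aside, $e_1$ is \emph{not} a line element of $\Lambda$ (that would force $\la^k(e_1)=0$ for $k\ge 2$); rather, the $\la$-structure with $\la^k(e_1)=e_k$ is induced from writing $e_1=\sum x_i$ with each $x_i$ a line element — but since you defer to the citation, this does not affect the proof.
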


\begin{proof}
The case of one generator is explained in \cite[\S 1.3]{lambda}. The general case follows from the fact that the free $\la$-ring functor $\Set \to \la \Alg_{\Z}$ preserves coproducts, and coproducts of $\la$-rings are computed as in commutative rings.
\end{proof}

\begin{example} 
Applying $T^{\la}_{\Z,n}$ to a free abelian group on one generator $e$ yields the free abelian group with basis described as follows:
\[
T^{\la}_{\Z,n} (\Z e) = \Z \lan \la^{k_1}(e) \la^{k_2}(e) \ldots \la^{k_j}(e) \mid k_1 \geq k_2 \geq \ldots \geq k_j \text{ and } k_1 + \ldots  + k_j = n \ran.
\]
For example, we have
\begin{align*}
T^{\la}_{\Z,1}(\Z e) &= \Z \lan \la^1(e) \ran \\
T^{\la}_{\Z,2}(\Z e) &= \Z \lan \la^2(e), \la^1(e) \la^1(e) \ran \\
T^{\la}_{\Z,3}(\Z e) &= \Z \lan \la^3(e), \la^2(e) \la^1(e), \la^1(e) \la^1(e) \la^1(e) \ran
\end{align*}
and more generally, the rank of $T^{\la}_{\Z,n}(\Z)$ is the number of partitions of $n$.
\end{example}

\begin{rmk}
The formulas
\[
T^{\la}_{R,n} (M \op N) \cong \bigoplus_{i+j = n} T^{\la}_{R,i} M \ot_{R} T^{\la}_{R,j} N
\]
come from the fact that $F^{\la}_R \colon \Mod_R \to \la\Alg_R$ preserves colimits, thus (finite) coproducts, which in $\la\Alg_R$ are the usual tensor products. In fact, $T^{\la}_R$ is exponential and in particular satisfies:
\[
T^{\la}_R(M \op N) \cong T^{\la}_R M \ot_{R} T^{\la}_R N. 
\]
\end{rmk}

\begin{prop}\mylabel{TnGenRel} Let $M$ be an abelian group. Then $T^{\la}_{\Z,2} M$ can be expressed in terms of generators and relations as
\[
T^{\la}_{\Z,n} M = \Z \lan \la^{k_1}(x_1) \la^{k_2}(x_2) \ldots \la^{k_j}(x_j) \mid x_i \in M \text{ and } k_1 + \ldots + k_j = n \ran / \sim
\]
where the relations $\sim$ are generated by the following:
\begin{itemize}
\item Terms $\la^{k_1}(x_1) \la^{k_2}(x_2) \ldots \la^{k_j}(x_j)$ that differ by a permutation of the factors are the same, e.g., $\la^1(x) \la^1(y) = \la^1(y) \la^1(x)$.
\item Using $\la^k(x+y) = \sum_{k'+k'' = k} \la^{k'}(x) \la^{k''}(y)$ and formally multiplying by factors so that the total degree is $n$, e.g., $\la^1(x+x') \la^1(y) = \la^1(x) \la^1(y) + \la^1(x') \la^1(y)$.
\end{itemize}
\end{prop}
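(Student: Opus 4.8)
The plan is to realize $M$ as a reflexive coequalizer of free abelian groups, apply $T^{\la}_{\Z,n}$, and read off the presentation from \myref{FreeLambda}. I would first record that each $T^{\la}_{\Z,n}$ preserves reflexive coequalizers: it is a functorial retract of $T^{\la}_{\Z} = \bigoplus_{m \ge 0} T^{\la}_{\Z,m}$, which preserves sifted colimits — in particular reflexive coequalizers — by \myref{forgetsifted}, and retracts of functors preserving a given colimit also preserve it. Writing $\Z\langle S \rangle$ for the free abelian group on a set $S$ and $|M|$ for the underlying set of $M$, one presents $M$ as the coequalizer of the reflexive pair
\[
\Z\langle |M| \times |M| \rangle \op \Z\langle |M| \rangle \rightrightarrows \Z\langle |M| \rangle \longrightarrow M,
\]
in which both maps restrict to the identity on the summand $\Z\langle |M| \rangle$, the upper map $d_0$ sends $[(x,y)] \mapsto [x] + [y]$, the lower map $d_1$ sends $[(x,y)] \mapsto [x+y]$, the map to $M$ is $[x] \mapsto x$, and the common section is the inclusion of the second summand. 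This last map coequalizes the pair with kernel generated by $\{[x] + [y] - [x+y] \mid x,y \in M\}$, so it exhibits $M$ as needed; the extra summand $\Z\langle |M| \rangle$ is present only to make the pair reflexive.

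Applying $T^{\la}_{\Z,n}$ then produces a coequalizer $T^{\la}_{\Z,n}\bigl(\Z\langle |M| \times |M| \rangle \op \Z\langle|M|\rangle\bigr) \rightrightarrows T^{\la}_{\Z,n}\Z\langle|M|\rangle \to T^{\la}_{\Z,n}M$. By \myref{FreeLambda}, $T^{\la}_{\Z,n}\Z\langle|M|\rangle$ is the free abelian group on the unordered weight-$n$ monomials $\la^{k_1}(x_1)\cdots\la^{k_j}(x_j)$ with $x_i \in M$ and $k_1 + \cdots + k_j = n$; these are exactly the generators in the statement, and the symmetry ``relation'' merely records that these monomials live in the commutative polynomial ring $F^{\la}_{\Z}\Z\langle|M|\rangle = \Z[\la^k(x) \mid x\in M,\, k\ge 1]$. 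Similarly $T^{\la}_{\Z,n}\bigl(\Z\langle |M| \times |M| \rangle\op\Z\langle|M|\rangle\bigr)$ is free on the weight-$n$ monomials in the symbols $\la^k([(x,y)])$ and $\la^k([z])$.

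It remains to identify the relation subgroup as the image of $T^{\la}_{\Z,n}d_0 - T^{\la}_{\Z,n}d_1$. On a symbol $\la^k([z])$ both maps act by $\la^k(z)$, while on $\la^k([(x,y)])$ the first acts by $\sum_{k'+k''=k}\la^{k'}(x)\la^{k''}(y)$, obtained by expanding via the additivity axiom for the $\la$-operations (valid in the target $\la$-ring), and the second by $\la^k(x+y)$. For a general weight-$n$ monomial $\prod_i \la^{p_i}([(x_i,y_i)]) \cdot \mathfrak{n}$, with $\mathfrak{n}$ a monomial in the symbols $\la^k([z])$, a telescoping argument that expands one $(x_i,y_i)$-factor at a time shows that $(d_0 - d_1)$ of it is a $\Z$-linear combination of elements $\bigl(\la^{p}(x+y) - \sum_{p'+p''=p}\la^{p'}(x)\la^{p''}(y)\bigr)\cdot\mathfrak{m}$ with $\mathfrak{m}$ of weight $n-p$, and conversely every such element arises. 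These are precisely the relations in the statement — the additivity relation for $\la^p$, multiplied up by an arbitrary monomial to total weight $n$ — and they already subsume the ``$\la^k$ of an integer multiple'' identities, which occur among them as the additivity relations with repeated argument $x = y$. Hence $T^{\la}_{\Z,n}M$ carries the stated presentation.

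I expect the only non-formal point to be the telescoping bookkeeping in the last step: showing that the relators attached to multi-factor monomials are generated by those attached to single factors, so that the relation subgroup is generated exactly by the additivity relations displayed in the statement. Everything else follows directly from \myref{FreeLambda}, \myref{forgetsifted}, and the fact that $T^{\la}_{\Z}$ is a graded monad.
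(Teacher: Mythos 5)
Your proof is correct, and it uses precisely the method the paper itself relies on: present $M$ as a reflexive coequalizer of free abelian groups, use that $T^{\la}_{\Z,n}$ preserves reflexive coequalizers (via \myref{forgetsifted} and the retract/direct-summand observation), and read off generators from \myref{FreeLambda} and relations from the image of $T_n d_0 - T_n d_1$ — this is exactly the computation carried out for the special case $M = \Z/m$ in the proof of \myref{T2Cyclic}, and the telescoping step you flag does go through as you describe. The paper in fact states \myref{TnGenRel} without proof, so your argument supplies the omitted justification in the same spirit as the surrounding text.
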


\begin{example}\mylabel{T2GenRel}
\begin{align*}
T^{\la}_{\Z,2} M = \Z &\lan \la^2(x), \la^1(y) \la^1(z) \mid x, y, z \in M \ran / \sim \\
&\la^2(x+y) = \la^2(x) + \la^1(x) \la^1(y) + \la^2(y) \\
&\la^1(x) \la^1(y) \text{ is bilinear and symmetric.}
\end{align*}
\end{example}

\begin{lemma}\mylabel{lambda2NonLin}
In the notation of \myref{TnGenRel}, the following equalities hold.
\begin{enumerate}
\item $\la^k(0) = 0$ for all $k \geq 1$.
\item For any $n \in \Z$ and $x \in M$, we have
\[
\la^2(nx) = n \la^2(x) + \binom{n}{2} \la^1(x) \la^1(x)
\]
with the convention $\binom{n}{2} = \frac{n(n-1)}{2}$ in the case $n < 0$.
\end{enumerate}
\end{lemma}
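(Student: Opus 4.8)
The plan is to prove both identities by elementary induction inside the generators-and-relations presentation of \myref{TnGenRel}, using only the additivity relation $\la^k(x+y)=\sum_{i+j=k}\la^i(x)\la^j(y)$ (multiplied formally by extra factors when one works inside a weight-$n$ monomial) together with the normalisation $\la^0(x)=1$. I would first record two consequences: taking $k=1$ shows that $\la^1$ is additive, hence $\Z$-linear, so $\la^1(nx)=n\,\la^1(x)$ for every $n\in\Z$ and $\la^1(0)=0$; taking $k=2$ gives $\la^2(x+y)=\la^2(x)+\la^1(x)\la^1(y)+\la^2(y)$.

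For part~(1) I would induct on $k$, the case $k=1$ being the observation just made. Applying the additivity relation to $0=0+0$ yields
\[
\la^k(0)=\sum_{i+j=k}\la^i(0)\,\la^j(0)=2\,\la^k(0)+\sum_{0<i<k}\la^i(0)\,\la^{k-i}(0),
\]
where the two copies of $\la^k(0)$ come from the indices $i\in\{0,k\}$ together with $\la^0(0)=1$, and the remaining sum vanishes by the induction hypothesis; hence $\la^k(0)=0$. Running the identical computation with every term multiplied by a fixed monomial of complementary weight shows that any weight-$n$ monomial containing a factor $\la^k(0)$ is zero. (Equivalently, in $F^{\la}_{\Z}(M)$ the assignment $x\mapsto 1+\sum_{k\ge 1}\la^k(x)t^k$ is a homomorphism from $(M,+)$ into the multiplicative group of power series with constant term $1$, so it carries $0$ to $1$.)

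For part~(2) I would first treat $n\ge 0$ by induction, the cases $n=0$ and $n=1$ being immediate from part~(1) and the convention $\binom{0}{2}=\binom{1}{2}=0$. Assuming the formula for $n$, the $k=2$ relation applied to $(n+1)x=nx+x$ gives
\[
\la^2\left((n+1)x\right)=\la^2(nx)+\la^1(nx)\,\la^1(x)+\la^2(x)=\la^2(nx)+n\,\la^1(x)\la^1(x)+\la^2(x),
\]
and inserting the induction hypothesis together with Pascal's identity $\binom{n}{2}+n=\binom{n+1}{2}$ produces the formula for $n+1$. For $n<0$ I would expand $0=\la^2(0)=\la^2\left(nx+(-n)x\right)$ by the $k=2$ relation, substitute $\la^1(nx)\,\la^1(-nx)=-n^2\,\la^1(x)\la^1(x)$ and the already-proved value of $\la^2\left((-n)x\right)$, and solve for $\la^2(nx)$; the coefficient of $\la^1(x)\la^1(x)$ that emerges is $n^2-\binom{-n}{2}=\tfrac{n(n-1)}{2}=\binom{n}{2}$, matching the stated convention.

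The argument is entirely combinatorial and I anticipate no real obstacle; the only points requiring a little care are keeping track of weights when the additivity relation is applied inside a larger monomial, and verifying the small binomial identities, in particular $n^2-\binom{-n}{2}=\binom{n}{2}$ under the convention $\binom{m}{2}=\tfrac{m(m-1)}{2}$.
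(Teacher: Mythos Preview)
Your proof is correct and follows essentially the same approach as the paper: part~(1) is handled by the same induction via $\la^k(0+0)=\sum \la^{k'}(0)\la^{k''}(0)$, and part~(2) uses the same induction on $n\ge 0$ from the additivity of $\la^2$ followed by the expansion of $0=\la^2(nx+(-n)x)$ to treat negative $n$. Your write-up is simply more explicit about the induction in~(1) and the binomial identity $n^2-\binom{-n}{2}=\binom{n}{2}$ in~(2), which the paper leaves to the reader.
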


\begin{proof}
(1) The equality $\la^1(0) = 0$ holds since $\la^1(x)$ is linear in $x$. The general case follows from
\begin{align*}
\la^k(0+0) &= \sum_{k'+k'' = k} \la^{k'}(0) \la^{k''}(0) \\
&= \la^k(0) + \la^k(0).
\end{align*}

(2) The formula clearly holds for $n=1$. The equality
\[
\la^2((n+1)x) = \la^2(nx) + \la^1(nx) \la^1(x) + \la^2(x)
\]
and induction on $n$ yields the result for $n \geq 1$. The equality $\la^2(0) = 0$ is the case $n=0$ of the formula. For negative integers, the equality 
\begin{align*}
0 &= \la^2(0) \\
&= \la^2(nx - nx) \\
&= \la^2(nx) + \la^1(nx) \la^1(-nx) + \la^2(-nx)
\end{align*}
gives the result.
\end{proof}

\begin{prop}\mylabel{T2Cyclic}
Let $m$ be a positive integer. Then we have
\[
T^{\la}_{\Z,2} (\Z/m) \simeq \begin{cases}
\Z/m \op \Z/m &\text{if } m \text{ is odd} \\
\Z/2m \op \Z/(m/2) &\text{if } m \text{ is even.} \\
\end{cases}
\]
When $m$ is odd, the generators can be taken to be $\la^1(1) \la^1(1)$ and $\la^2(1)$. When $m$ is even, the generators can be taken to be $\la^2(1)$ and $\la^1(1) \la^1(1) + 2 \la^2(1)$, respectively.
\end{prop}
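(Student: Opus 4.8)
The plan is to use the explicit presentation of $T^{\la}_{\Z,2}(M)$ by generators and relations from \myref{TnGenRel} (equivalently \myref{T2GenRel}), specialized to $M=\Z/m$. Writing $a:=\la^2(1)$ and $b:=\la^1(1)\la^1(1)$ for the generators attached to the canonical generator $1\in\Z/m$, the first step is to check that $a$ and $b$ generate $T^{\la}_{\Z,2}(\Z/m)$ as an abelian group: by \myref{lambda2NonLin}(2) one has $\la^2(k\cdot 1)=ka+\binom{k}{2}b$, and by bilinearity and symmetry $\la^1(j\cdot 1)\la^1(l\cdot 1)=jl\,b$, so every generator appearing in \myref{T2GenRel} lies in $\Z a+\Z b$.

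The second step is to pin down the relations. The identity $\la^1(m\cdot 1)\la^1(1)=\la^1(0)\la^1(1)=0$ together with bilinearity gives $mb=0$, while $\la^2(m\cdot 1)=\la^2(0)=0$ together with \myref{lambda2NonLin}(2) gives $ma+\binom{m}{2}b=0$. I would then verify that these two relations already imply all the others: expanding $\la^2(\bar j+\bar k)$ via \myref{T2GenRel} and substituting the formulas above reduces, using Vandermonde $\binom{j+k}{2}=\binom{j}{2}+jk+\binom{k}{2}$, to a tautology when $j+k<m$ and to $ma+\binom{m}{2}b=0$ in the ``carrying'' case $j+k\ge m$ (here one uses $mb=0$ to reduce $\binom{j+k}{2}-\binom{j+k-m}{2}$ modulo $m$). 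This yields
\[
T^{\la}_{\Z,2}(\Z/m)\;\cong\;\big\langle\,a,b\ \big|\ mb=0,\ ma+\tbinom{m}{2}b=0\,\big\rangle .
\]
Alternatively, and more safely, one can obtain this presentation by writing $\Z/m$ as the reflexive coequalizer of $\Z\oplus\Z\rightrightarrows\Z$ as in \myref{ReflexCoeq}, applying $T^{\la}_{\Z,2}$ --- which preserves reflexive coequalizers, being a weight summand of the sifted-colimit-preserving functor $T^{\la}_{\Z}$ of \myref{forgetsifted} --- and computing the cokernel of the resulting map $\Z^5\to\Z^2$ using \myref{FreeLambda}, the exponential formula for $T^{\la}_{\Z,2}(\Z\oplus\Z)$, and \myref{lambda2NonLin}(2).

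The final step is integer linear algebra on this $2\times 2$ presentation, with a case split on the parity of $m$. If $m$ is odd, $\binom{m}{2}=m\cdot\frac{m-1}{2}$ is a multiple of $m$, so the second relation collapses to $ma=0$ and the group is $\Z/m\langle a\rangle\oplus\Z/m\langle b\rangle$, with generators $\la^2(1)$ and $\la^1(1)\la^1(1)$. If $m=2\ell$ is even, then $\binom{m}{2}=\ell(2\ell-1)\equiv-\ell\pmod m$, so using $mb=0$ the relations become $mb=0$ and $ma=\tfrac{m}{2}b$; the Smith normal form of $\bigl(\begin{smallmatrix}0&m\\ m&-m/2\end{smallmatrix}\bigr)$ has entry-gcd $m/2$ and determinant of absolute value $m^2$, giving elementary divisors $m/2$ and $2m$, hence $T^{\la}_{\Z,2}(\Z/m)\cong\Z/(m/2)\oplus\Z/(2m)$. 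To match the stated generators I would observe that $2m\cdot a=2(ma)=mb=0$ while $ma=\tfrac{m}{2}b\neq 0$ (the two candidate isomorphism types have distinct elementary divisors), so $a=\la^2(1)$ has order exactly $2m$; and setting $d:=b+2a=\la^1(1)\la^1(1)+2\la^2(1)$ one gets $\tfrac{m}{2}d=\tfrac{m}{2}b+ma=mb=0$, so since $a,d$ generate a group of order $m^2=2m\cdot\tfrac{m}{2}$ with $|\langle a\rangle|=2m$ one must have $\langle a\rangle\cap\langle d\rangle=0$ and $|\langle d\rangle|=m/2$, giving the asserted splitting.

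The step I expect to be the main obstacle is the second one: confirming that the interaction of the additivity relations for $\la^2$ with the bilinearity relations for $\la^1(-)\la^1(-)$ produces no relation beyond $mb=0$ and $ma+\binom{m}{2}b=0$, rather than something strictly larger. The reflexive-coequalizer computation is the most reliable way to settle this, since it computes the group on the nose instead of merely a surjection onto it; the remaining parity bookkeeping and the $2\times 2$ Smith normal form are routine.
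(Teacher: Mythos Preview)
Your proposal is correct and essentially matches the paper's proof. The paper takes exactly your ``alternative'' route---writing $\Z/m$ as a reflexive coequalizer of $\Z\oplus\Z\rightrightarrows\Z$, applying $T^{\la}_{\Z,2}$, and reading off the presentation $\Z\langle \la^1(1)\la^1(1),\la^2(1)\rangle/\{m\la^1(1)\la^1(1)=0,\ \la^2(m)=0\}$---and then performs the same parity case split; the only cosmetic difference is that the paper verifies the even case by directly exhibiting the isomorphism from $\Z/2m\oplus\Z/(m/2)$ rather than invoking Smith normal form.
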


\begin{proof}
Consider the exact sequence
\[
\xymatrix{
\Z \ar[r]^m & \Z \ar@{->>}[r] & \Z/m \ar[r] & 0 \\
}
\]
and turn it into the \emph{reflexive} coequalizer diagram
\[
\xymatrix{
\Z \op \Z \ar@<0.8ex>[r]^-{d_0 = (1,0)} \ar@<-0.8ex>[r]_-{d_1 = (1,m)} & \Z \ar[l] \ar@{->>}[r] & \Z/m. \\
}
\]
Applying $T_2 := T^{\la}_{\Z,2}$ yields the (reflexive) coequalizer diagram
\[
\xymatrix{
T_2(\Z \op \Z) \ar@<0.8ex>[r]^-{T_2 d_0} \ar@<-0.8ex>[r]_-{T_2 d_1} & \Z \ar@{->>}[r] & T_2 (\Z/m). \\
}
\]
Let $a = (1,0)$ and $b = (0,1)$ be the generators of $\Z \op \Z$. Then $T_2(\Z \op \Z)$ is the free abelian group
\[
T_2(\Z \op \Z) \cong \Z \lan \la^1(a) \la^1(a), \la^1(a) \la^1(b), \la^1(b) \la^1(b), \la^2(a), \la^2(b) \ran
\]
whereas $T_2 \Z \cong \Z \lan \la^1(1) \la^1(1), \la^2(1) \ran$ is free on two generators. The equality $d_0(a) = d_1(a) = 1$ implies that the maps $T_2 d_0$ and $T_2 d_1$ agree on generators involving only $a$ but not $b$, yielding no relations in $T_2(\Z/m)$. The remaining generators are sent to the following values:
\begin{align*}
&(T_2 d_0) \la^1(a) \la^1(b) = \la^1(d_0 a) \la^1(d_0 b) = \la^1(1) \la^1(0) = 0 \\
&(T_2 d_1) \la^1(a) \la^1(b) = \la^1(d_1 a) \la^1(d_1 b) = \la^1(1) \la^1(m) = m \la^1(1) \la^1(1) \\
&(T_2 d_0) \la^1(b) \la^1(b) = \la^1(d_0 b) \la^1(d_0 b) = \la^1(0) \la^1(0) = 0 \\
&(T_2 d_1) \la^1(b) \la^1(b) = \la^1(d_1 b) \la^1(d_1 b) = \la^1(m) \la^1(m) = m^2 \la^1(1) \la^1(1) \\
&(T_2 d_0) \la^2(b) = \la^2(d_0 b) = \la^2(0) = 0 \\
&(T_2 d_1) \la^2(b) = \la^2(d_1 b) = \la^2(m)
\end{align*}
which implies
\[
T_2 (\Z/m) = \Z \lan \la^1(1)\la^1(1), \la^2(1) \ran / \left\{ m \la^1(1) \la^1(1) = 0, \la^2(m) = 0 \right\}.
\]
Using $\la^2(m) = m \la^2(1) + \binom{m}{2} \la^1(1) \la^1(1)$, we can rewrite this as
\[
T_2 (\Z/m) = \Z \lan \la^1(1) \la^1(1), \la^2(1) \ran / \left\{ m \la^1(1) \la^1(1) = 0, m \la^2(1) = - \binom{m}{2} \la^1(1) \la^1(1) \right\}.
\]
We now distinguish two cases.

\textbf{Case $m$ odd.} Then $\binom{m}{2}= \frac{m(m-1)}{2}$ is a multiple of $m$, so that the relation on $\la^2(1)$ is $m \la^2(1) = 0$ and we conclude
\[
T_2 (\Z/m) \cong \Z/m \op \Z/m
\]
with generators $\la^1(1) \la^1(1)$ and $\la^2(1)$.

\textbf{Case $m$ even.} Then $\binom{m}{2}$ is a multiple of $m-1$ and the relation on $\la^2(1)$ is
\begin{align*}
m \la^2(1) &= - \binom{m}{2} \la^1(1) \la^1(1) \\
&= \frac{m}{2} (1-m) \la^1(1) \la^1(1) \\
&= \frac{m}{2} \la^1(1) \la^1(1)
\end{align*}
which implies that $\la^2(1)$ has order $2m$. Note that $\la^1(1) \la^1(1) + 2 \la^2(1)$ has order $\frac{m}{2}$. One readily checks that the map
\begin{align*}
\Z/2m \op \Z/(m/2) &\surj T_2 (\Z/m) \\
(1,0) &\mapsto \la^2(1) \\
(0,1) &\mapsto \la^1(1) \la^1(1) + 2 \la^2(1)
\end{align*}
is an isomorphism.
\end{proof}

\begin{thm}\mylabel{TLambda}
The functor $T^{\la}_{\Z} \colon \Mod_{\Z} \to \Mod_{\Z}$ preserves $L_0$-equivalences.
\end{thm}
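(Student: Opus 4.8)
The plan is to run the argument of Section~\ref{proofofthm} with the homotopical input \myref{inv} replaced by an explicit computation in $\la$-rings. Since $T^{\la}_{\Z} = \bigoplus_{n \ge 0} T^{\la}_{\Z,n}$ is a filtered colimit of the functors $\bigoplus_{n \le N} T^{\la}_{\Z,n}$ and $L_0$ commutes with finite direct sums, \myref{CompletedMonad}\eqref{ColimPreserv} reduces us to showing that each $T^{\la}_{\Z,n}$ preserves $L_0$-equivalences. As in the proof of \myref{MainThmRestated}, this translates to the assertion that $L_0 T^{\la}_{\Z,n} \to L_0 T^{\la}_{\Z,n} L_0$ is a natural isomorphism, and since $T^{\la}_{\Z,n}$ (by \myref{forgetsifted}), $L_0$, and the inclusion $\Modhat_{\Z} \hookrightarrow \Mod_{\Z}$ all preserve reflexive coequalizers, it is enough to verify this on free abelian groups.

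The heart of the matter is the analogue of \myref{key}: \emph{for every $j \ge 1$, the operator $T^{\la}_{\Z,j}(p)\colon T^{\la}_{\Z,j}(\Z) \to T^{\la}_{\Z,j}(\Z)$ induced by multiplication by $p$ satisfies $\bigl(T^{\la}_{\Z,j}(p)\bigr)^{j} \equiv 0 \pmod p$.} By \myref{FreeLambda}, $T^{\la}_{\Z,j}(\Z)$ is the free abelian group on the monomials $\la^{k_1}(e)\cdots\la^{k_\ell}(e)$ with $k_1 \ge \cdots \ge k_\ell \ge 1$ and $\sum k_i = j$; filter it by the \emph{length} $\ell$. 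From $pe = e + \cdots + e$ and the addition formula
\[
\la^k(pe) = \sum_{k_1 + \cdots + k_p = k} \la^{k_1}(e)\cdots\la^{k_p}(e) = p\,\la^k(e) + (\text{terms of length} \ge 2),
\]
one gets that $T^{\la}_{\Z,j}(p)$ sends a length-$\ell$ monomial to $p^{\ell}\cdot(\text{that monomial}) + (\text{terms of length} > \ell)$. As $p^{\ell} \equiv 0 \pmod p$ for $\ell \ge 1$, the reduction $T^{\la}_{\Z,j}(p) \bmod p$ strictly increases the length filtration; since every monomial in $T^{\la}_{\Z,j}(\Z)$ has length at most $j$, its $j$-th power is zero mod $p$. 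This is precisely the place where the non-linearity of $T^{\la}_{\Z}$ is essential — that $k = 1$ is insufficient is already clear from the computations \myref{lambda2NonLin} and \myref{T2Cyclic} at the prime $2$; compare \myref{nonlinear}.

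Granting this, the analogue of \myref{keygeneral} follows as in Section~\ref{proofofthm}: for all $k \ge n$, all $m \le n$, and every abelian group $M$, the natural map $\Z/p \ot T^{\la}_{\Z,m}(M) \to \Z/p \ot T^{\la}_{\Z,m}(\Z/p^{k} \ot M)$ is an isomorphism. By the exponential structure of $T^{\la}_{\Z}$ (the splitting $T^{\la}_{\Z,m}(M \op N) \cong \bigoplus_{i+j=m} T^{\la}_{\Z,i} M \ot T^{\la}_{\Z,j} N$, as in \myref{exp}), Lazard's theorem, and the compatibility of $T^{\la}_{\Z,m}$ and $\Z/p \ot -$ with filtered colimits and reflexive coequalizers, one reduces to $M = \Z$. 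For $M = \Z$, apply $\Z/p \ot T^{\la}_{\Z,m}(-)$ to the reflexive coequalizer $\Z \op \Z \rightrightarrows \Z \to \Z/p^{k}$ of \myref{T2Cyclic}, decompose $T^{\la}_{\Z,m}(\Z\op\Z)$ via the exponential structure, and note that the two structure maps become equal mod $p$: on the summand $i = m$, $j = 0$ both are the identity, and on each summand with $j \ge 1$ one of them carries the tensor factor $T^{\la}_{\Z,j}(p^{k}) = T^{\la}_{\Z,j}(p)^{k}$, which vanishes mod $p$ because $k \ge j$. Hence the coequalizer mod $p$ is $\Z/p \ot T^{\la}_{\Z,m}(\Z)$.

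Finally, on a free abelian group $M$ one concludes exactly as in the proof of \myref{MainThmRestated}: the square with vertices $M$, $L_0 M$, $M/p^{k}$, $L_0 M/p^{k}$ has bottom edge an isomorphism since $L_0 M/p^{k} = M/p^{k}$ by \myref{lprop}, so applying $\Z/p \ot T^{\la}_{\Z,n}(-)$ and using the analogue of \myref{keygeneral} for the vertical maps shows $\Z/p \ot T^{\la}_{\Z,n}(M) \to \Z/p \ot T^{\la}_{\Z,n}(L_0 M)$ is an isomorphism. Since $T^{\la}_{\Z,n}(M)$ is free while $T^{\la}_{\Z,n}(L_0 M)$ is flat — $L_0 M$ being flat by \myref{flatchar}, and $T^{\la}_{\Z,n}$ preserving flat modules since it preserves finite free modules and commutes with filtered colimits — \myref{detection} turns this into the desired isomorphism $L_0 T^{\la}_{\Z,n}(M) \to L_0 T^{\la}_{\Z,n}(L_0 M)$. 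The main obstacle is the nilpotence statement for $T^{\la}_{\Z,j}(p) \bmod p$ above; the remaining steps are formal or follow Section~\ref{proofofthm} verbatim.
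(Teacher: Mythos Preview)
Your proof is correct, and in fact cleaner than the paper's for general $n$. Both proofs share the reduction to the analogue of \myref{key}, but they diverge at that point. The paper verifies the isomorphism $\Z/p \ot T^{\la}_{\Z,n}(\Z) \cong \Z/p \ot T^{\la}_{\Z,n}(\Z/p^k)$ by computing both sides explicitly, spelling this out only for $n=2$ via \myref{T2Cyclic} and leaving the general case to ``proved similarly.'' You instead establish the analogue of \myref{nilp} directly---your length-filtration argument showing $T^{\la}_{\Z,j}(p)$ is upper triangular with diagonal $p^{\ell}$, hence nilpotent mod $p$---and then run the machinery of Section~\ref{ProofKey} verbatim. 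This gives a uniform argument for all $n$ with no case analysis.

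Your nilpotence argument is essentially an integral refinement of the alternative proof of \myref{mainreptheory} in Appendix~\ref{appendix2}: there one passes to the rational basis $x_k = \psi_k(x)$ to diagonalize $t(p)$ with eigenvalues $p^{\ell}$, whereas you stay in the integral $\la^k$-basis and observe that the same eigenvalues appear on the diagonal of an upper-triangular matrix. What you gain is a complete proof for all $n$ without explicit computation of $T^{\la}_{\Z,n}(\Z/p^k)$; what the paper's approach buys is a concrete picture of how the non-linearity manifests (e.g., the $\Z/2m \oplus \Z/(m/2)$ splitting at $p=2$), which is instructive but not logically necessary.
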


\begin{proof}
By \myref{CompletedMonad} \eqref{ColimPreserv}, it suffices to show that each functor $T^{\la}_{\Z,n} \colon \Mod_{\Z} \to \Mod_{\Z}$ preserves $L_0$-equivalences. 

Note that $T^{\la}_{\Z}$ satisfies the same formal properties as $\T$, namely it is a graded exponential monad which preserves filtered colimits and reflexive coequalizers, and sends finite free modules to finite free modules. Hence, the reduction step in the proof of \myref{MainThmRestated} holds, and it suffices to show the analogue of \myref{key} for $T^{\la}_{\Z}$. Explicitly: there exists a positive integer $k=k(n)$ such that the natural map
\begin{equation}\label{WantIso}
\Z/p \otimes T^{\la}_{\Z,n}(\Z) \xrightarrow{} \Z/p \otimes T^{\la}_{\Z,n}(\Z/p^k)
\end{equation}
is an isomorphism.

The cases $n=0$ and $n=1$ are trivial; one may take $k(0) = 1$ and $k(1) = 1$. Moreover, surjectivity is automatic, since both $T^{\la}_{\Z,n}$ and $\Z/p \otimes -$ preserve surjectivity, by \myref{ReflexCoeq}.

To illustrate the main ideas, we focus on the special case $n=2$; the general result is proved similarly.

\textbf{Case n=2.} Let us show that $k(2) = 2$ works. Writing $T_2 := T^{\la}_{\Z,2}$, the domain of \eqref{WantIso} is
\[
\Z/p \ot T_2 \Z = \Z/p \lan \la^2(1) \ran \op \Z/p \lan \la^1(1) \la^1(1) \ran
\]
whereas the target is computed using \myref{T2Cyclic}. If $p$ is odd, we have:
\begin{align*}
\Z/p \ot T_2 (\Z/p^2) &= \Z/p \ot \left( \Z/p^2 \lan \la^2(1) \ran \op \Z/p^2 \lan \la^1(1) \la^1(1) \ran  \right) \\
&= \Z/p \lan \la^2(1) \ran \op \Z/p \lan \la^1(1) \la^1(1) \ran
\end{align*}
and in the case $p=2$, we have:
\begin{align*}
\Z/2 \ot T_2 (\Z/4) &= \Z/2 \ot \left( \Z/8 \lan \la^2(1) \ran \op \Z/2 \lan \la^1(1) \la^1(1) + 2 \la^2(1) \ran \right) \\
&= \Z/2 \lan \la^2(1) \ran \op \Z/2 \lan \la^1(1) \la^1(1) \ran.
\end{align*}
The map \eqref{WantIso} sends the generators where their names suggest, and is in particular an isomorphism of $2$-dimensional $\Z/p$-vector spaces.
\end{proof}

\begin{rmk}\mylabel{nonlinear}
The previous proof shows that $k(2)$ must be greater than $1$, hence that $\T_n$  does not commute with $E_*/\fm \otimes -$ in general.  Furthermore, the results of Appendix \ref{appendix2} imply that $k(n)$ can be taken to be $p(n)$, the number of partitions of a set with $n$ elements, independent of the chosen prime. More generally, at height $h$, $k(n)$ is bounded above by the rank of $E^0(B\Sigma_n)$ as computed by Strickland \cite{symmgps}.
\end{rmk}

Taking $R = \Z$, \myref{TLambda} and \myref{CompletionLambda} yield the following.

\begin{cor}\mylabel{TLambdaZp}
The functor $T^{\la}_{\Z_p} \colon \Mod_{\Z_p} \to \Mod_{\Z_p}$ preserves $L_0$-equivalences.
\end{cor}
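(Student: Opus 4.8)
The plan is to read this off from \myref{TLambda} and \myref{CompletionLambda} once one records the identification $L_0 \Z = \Z_p$, where $L_0$ is taken with respect to the ideal $\fm = (p) \subset \Z$ (for instance via the ``Ext-$p$-completion'' or ``analytic $p$-completion'' descriptions recalled at the beginning of this section, both of which give the $p$-adic integers). Concretely, I would apply \myref{CompletionLambda} with $R = \Z$: its hypothesis, that $T^{\la}_{\Z}$ preserves $L_0$-equivalences, is exactly \myref{TLambda}, and its conclusion is that $T^{\la}_{L_0 \Z} = T^{\la}_{\Z_p}$ preserves $L_0$-equivalences, which is the assertion.

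It is worth spelling out that the hypotheses feeding \myref{CompletionLambda} are genuinely in place. The coaugmentation $\eta \colon \Z \to \Z_p = L_0 \Z$ is a map of $\la$-rings by \myref{L0Lambda}, with the induced $\la$-structure on $\Z_p$ the expected one (trivial Adams operations, $\te^p(x) = (x - x^p)/p$ as in \myref{thetaZ}), since compatibility with $\Z$ pins it down. One also uses that ``$L_0$-equivalence'' is unambiguous on $\Z_p$-modules: by \myref{CompleteGroundRing} (equivalently \myref{EquivCatMod}), a map of $\Z_p$-modules is an $L_0$-equivalence with respect to $\hat{\fm} = (p) \subset \Z_p$ if and only if it is one on underlying $\Z$-modules with respect to $(p) \subset \Z$. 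Granting this, the proof of \myref{CompletionLambda} runs as in the text: the unit $M \to \eta^*(\Z_p \ot_{\Z} M)$ and counit $\Z_p \ot_{\Z} \eta^* N \to N$ of the adjunction $\eta_! \dashv \eta^*$ are $L_0$-equivalences by \myref{EquivCatMod}, so part (2) of \myref{ChangeLambda} applies and yields the claim.

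There is no real obstacle at this stage: the corollary is formal given \myref{TLambda}. The substance has already been expended upstream --- in the combinatorics of $\la$-rings leading to \myref{TLambda} (the reduction to the analogue of \myref{key} and the explicit computation of $T^{\la}_{\Z,n}$ on cyclic groups, e.g.\ \myref{T2Cyclic}, which shows $k(n) > 1$), and in the change-of-$\la$-rings package \myref{ChangeLambda}, which itself relies on extension of scalars along $\Z \to \Z_p$ being an $L_0$-equivalence on all modules.
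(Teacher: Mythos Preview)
Your proposal is correct and matches the paper's own proof: the corollary is obtained by applying \myref{CompletionLambda} with $R = \Z$, using \myref{TLambda} as the hypothesis and the identification $L_0 \Z = \Z_p$. Your additional remarks unpacking the hypotheses (via \myref{L0Lambda}, \myref{CompleteGroundRing}, and \myref{EquivCatMod}) are accurate and simply make explicit what the paper leaves implicit.
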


\subsection{From $\lambda$-rings to $\theta$-rings}

\begin{prop}\mylabel{pLocalRetract}
Let $R$ be a $\la$-ring. Consider the monads $T^{\te}_R$ and $T^{\la}_R$ on $\Mod_R$, and the natural transformation $\phy \colon T^{\te}_R \to T^{\la}_R$. If $R$ is $p$-local, then $\phy$ is a split monomorphism, i.e., admits a retraction. 
\end{prop}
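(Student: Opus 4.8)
The statement to prove is that the comparison natural transformation $\phy\colon T^{\te}_R\to T^{\la}_R$ of endofunctors of $\Mod_R$ admits a retraction $\rho$ with $\rho\circ\phy=\id$. The plan is to construct $\rho$ by first exhibiting it on the rank-one free module over $\Z_{(p)}$ and then propagating it. Since $R$ is $p$-local it is in particular a $\Z_{(p)}$-algebra, and because $\Z_{(p)}$ carries a unique $\la$-ring structure, with trivial Adams operations (\myref{ZLambda}), the structure map $\Z_{(p)}\to R$ is a morphism of $\la$-rings, hence of $\te^p$-rings. Extension of scalars commutes with the free-algebra functors --- this is the commutativity of left adjoints in the base-change square \eqref{Forget} for $\la$-rings, together with its counterpart for $\te$-rings, which exists for the same reason (comonadicity over commutative algebras, \cite[1.10]{pleth}). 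Hence $T^{\te}_R(R)\cong R\ot_{\Z_{(p)}}T^{\te}_{\Z_{(p)}}(\Z_{(p)})$ and $T^{\la}_R(R)\cong R\ot_{\Z_{(p)}}T^{\la}_{\Z_{(p)}}(\Z_{(p)})$, compatibly with $\phy$. As $T^{\te}_R$ and $T^{\la}_R$ are exponential monads preserving filtered colimits and reflexive coequalizers --- the same formal properties invoked for $T^{\la}_{\Z}$ in the proof of \myref{TLambda}, and shared by $T^{\te}_R$ by the same arguments --- all compatibly with $\phy$, it is enough to produce a retraction of $\phy$ on the rank-one module $\Z_{(p)}$ which is a homomorphism of $\te^p$-rings: after extension of scalars such a map is natural with respect to multiplication by scalars and with respect to the exponential structure, hence propagates to a natural transformation on $\Modff_R$, then to free modules by filtered colimits, and finally to all of $\Mod_R$ by reflexive coequalizers.

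To build the rank-one retraction over $\Z_{(p)}$ I would use the classical descriptions of the free algebras. The free $\la$-ring on one generator is the ring of symmetric functions (\myref{FreeLambda}), so $T^{\la}_{\Z_{(p)}}(\Z_{(p)})=F^{\la}_{\Z_{(p)}}(\Z_{(p)})\cong\mathrm{Sym}_{\Z_{(p)}}$, with the generator $e$ equal to the first power sum and Adams operations satisfying $\psi^m\psi^n(e)=\psi^{mn}(e)$, so that $\mathrm{Sym}_{\Q}$ is the polynomial ring on the power sums $\{\psi^n(e)\}_{n\ge1}$; on the other hand $T^{\te}_{\Z_{(p)}}(\Z_{(p)})=\Z_{(p)}[x,\te x,\te^2x,\ldots]$ by \cite{bousfield} (recalled in the proof of \myref{Tht1identification}), on which $\psi^p$ sends each $\te^kx$ further up the tower. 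After inverting $p$, the triangular change of variables replacing $\te^kx$ by $(\psi^p)^kx$ identifies the latter with $\Q[x,\psi^px,(\psi^p)^2x,\ldots]$, and $\phy\ot\Q$ becomes the inclusion sending $(\psi^p)^kx\mapsto\psi^{p^k}(e)$, with image the polynomial subring $\Q[\psi^{p^k}(e)\mid k\ge0]\subset\mathrm{Sym}_{\Q}$. The candidate retraction $\bar\rho$ is then the $\Q$-algebra map determined by $\psi^{p^k}(e)\mapsto(\psi^p)^kx$ and $\psi^n(e)\mapsto0$ for $n$ not a power of $p$; one checks at once that $\bar\rho$ is a ring homomorphism, a retraction of $\phy\ot\Q$, and compatible with $\psi^p$, hence a homomorphism of $\te^p$-$\Q$-algebras.

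The crux --- and the single place where $p$-locality is genuinely used --- is the assertion that $\bar\rho$ is integral, i.e.\ carries $\mathrm{Sym}_{\Z_{(p)}}$ into $\Z_{(p)}[x,\te x,\ldots]$; equivalently, that the idempotent endomorphism of $\mathrm{Sym}_{\Q}$ killing all power sums of non-$p$-power index preserves the integral symmetric functions. This is Cartier's $p$-typical splitting of the big Witt ring: over a $\Z_{(p)}$-algebra this ``$p$-typicalization'' operator is the idempotent $\epsilon_1$ lying in the $\Z_{(p)}$-subalgebra of natural operations generated by the Verschiebungen $V_m$ and Frobenii $F_m$, all the denominators occurring in it being prime to $p$, so it respects the integral structure; equivalently, in plethystic language, the $p$-typical $\la$-operations are a retract of all $\la$-operations after localizing away from the other primes. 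Granting this classical fact, $\bar\rho$ restricts to the required $\te^p$-ring retraction $\rho^1$ of $\phy$ over $\Z_{(p)}$ on the rank-one module; base-changing $\rho^1$ up to $R$ and propagating it as in the first paragraph produces the natural retraction $\rho$ of $\phy$, so $\phy$ is a split monomorphism. I expect establishing the integrality of $\bar\rho$ --- that is, locating and invoking the appropriate Witt-vector statement --- to be the only substantive step; the rest is formal manipulation with exponential monads, extension of scalars, and sifted colimits.
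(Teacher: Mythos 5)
Your proposal is correct in substance and rests on exactly the same classical input as the paper --- Cartier's $p$-typical splitting over a $p$-local base --- but it reaches that input by a genuinely different route. The paper never touches the free objects explicitly: it observes that the forgetful functors from $\la$-rings and from $\te$-rings to commutative $R$-algebras are comonadic, passes by adjunction from the map of monads $T^{\te}_{\Alg} \to T^{\la}_{\Alg}$ to the reversed map of comonads $C^{\la} \to C^{\te}$, identifies these with the big and $p$-typical Witt vector functors $\W$ and $\W_p$, and quotes the natural ring isomorphism $\W(A) \cong \prod_{(n,p)=1} \W_p(A)$ for $p$-local $A$; a split mono of monads is equivalent to a split epi of comonads, so naturality comes for free. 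You instead work on the monad side, writing down the retraction concretely on the free rank-one objects (killing the power sums $\psi^n(e)$ with $n$ not a power of $p$ and matching $\psi^{p^k}(e)$ with $(\psi^p)^k x$) and then propagating via the exponential structure and sifted colimits. That buys an explicit formula, which the paper's argument hides, and your identification of the integrality of $\bar\rho$ as the one substantive point is exactly right --- it is the same Witt-vector fact the paper cites. The cost is the propagation step, which you state but do not verify: to extend a map defined on $T^{\la}_R(R)$ to a natural transformation on all of $\Modff_R$ you must check compatibility not only with scalar multiplication and direct sums but also with the diagonal $R \to R \oplus R$, i.e.\ with the coaddition $\la^k(e) \mapsto \sum_{i+j=k} \la^i(e_1)\la^j(e_2)$ on the representing biring --- in other words, that $\bar\rho$ is a morphism of birings (plethories), not merely of $\te^p$-rings. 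This does hold (rationally because power sums are primitive and Adams operations are multiplicative, hence integrally by your integrality step), but it is precisely the verification that the paper's comonadic formulation renders automatic, and you should make it explicit before declaring the retraction natural.
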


\begin{proof}
Let $U^{\la}_{\Alg} \colon \la \Alg_R \to \Alg_R$ denote the forgetful functor, $F^{\la}_{\Alg}$ its left adjoint, and $G^{\la}_{\Alg}$ its right adjoint.To show the claim, it suffices to show that the canonical map
\[
T^{\te}_{\Alg} := U^{\te}_{\Alg} F^{\te}_{\Alg} \to U^{\la}_{\Alg} F^{\la}_{\Alg} =: T^{\la}_{\Alg}
\]
of monads on $\Alg_R$ is a split monomorphism. By adjunction, such natural transformations $T^{\te}_{\text{Alg}} \to T^{\la}_{\text{Alg}}$ correspond to maps
\[
U^{\la}_{\Alg} G^{\la}_{\Alg} \to U^{\te}_{\Alg} G^{\te}_{\Alg}
\]
i.e., natural transformations $C^{\la} \to C^{\te}$ of the comonads, in the reverse direction. Under that correspondence, a map $T^{\te}_{\Alg} \to T^{\la}_{\Alg}$ is a split monomorphism if and only if the corresponding map $C^{\la} \to C^{\te}$ is a split epimorphism, i.e., admits a section.

The comonad $C^{\la} = U^{\la}_{\Alg} G^{\la}_{\Alg} \colon \Alg_R \to \Alg_R$ associated to $\la$-rings is naturally isomorphic to the Witt vectors comonad $\W$ \cite[4.16]{lambda}, while the comonad $C^{\te} = U^{\te}_{\Alg} G^{\te}_{\Alg}$ associated to $\te$-rings is naturally isomorphic to the $p$-typical Witt vectors comonad $\W_p$ \cite[2.13, 3.3]{pleth}.

Since $R$ is $p$-local, any $R$-algebra $A$ is $p$-local and in that case the canonical map $\W(A) \to \W_p(A)$ admits a section 
\[
\W_p(A) \ral{s^n} \W(A)
\]
for each natural number $n$ coprime to $p$, i.e., $(n,p) = 1$, as explained in \cite[4.4.9, 4.4.10]{formal} and also in \cite[Chapter III]{demazure}. Taken together, these sections exhibit a natural isomorphism of rings
\[
\W(A) \cong \prod_{(n,p)=1} \W_p(A),
\]
and hence a splitting of $C^{\la} \to C^{\te}$. 
\end{proof}

\begin{thm}\mylabel{TThetaZp}
The free $\Z_p$-$\te$-algebra monad $T^{\te}_{\Z_p} \colon \Mod_{\Z_p} \to \Mod_{\Z_p}$ preserves $L_0$-equivalences.
\end{thm}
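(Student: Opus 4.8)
The plan is to realize $T^{\te}_{\Z_p}$ as a \emph{retract} of the functor $T^{\la}_{\Z_p}$, for which we already know the desired property, and then invoke the stability of the class of $L_0$-preserving functors under retracts. So the argument has three ingredients, used in this order.

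\textbf{Step 1: a retraction.} The ring $\Z_p$ is $p$-local, since every integer coprime to $p$ is invertible in it. Hence \myref{pLocalRetract}, applied to the $\la$-ring $R = \Z_p$, shows that the canonical comparison natural transformation $\phy \colon T^{\te}_{\Z_p} \to T^{\la}_{\Z_p}$ is a split monomorphism of monads on $\Mod_{\Z_p}$. Forgetting the monad structure, this exhibits $T^{\te}_{\Z_p}$ as a retract of $T^{\la}_{\Z_p}$ in the category $\Fun(\Mod_{\Z_p}, \Mod_{\Z_p})$ of endofunctors.

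\textbf{Step 2: the $\la$-ring case.} By \myref{TLambdaZp} (which itself rests on \myref{TLambda} and the base-change argument \myref{CompletionLambda}), the functor $T^{\la}_{\Z_p} \colon \Mod_{\Z_p} \to \Mod_{\Z_p}$ preserves $L_0$-equivalences.

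\textbf{Step 3: conclude.} By \myref{CompletedMonad}\eqref{RetractPreserv}, any retract of a functor preserving $L_0$-equivalences again preserves $L_0$-equivalences. Combining this with Steps 1 and 2 gives that $T^{\te}_{\Z_p}$ preserves $L_0$-equivalences, as claimed.

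I do not expect any genuine obstacle in this final step: the real work has already been carried out in \myref{TLambda} (the combinatorics of $\la$-rings and the explicit computation of $T^{\la}_{\Z,2}$ on cyclic groups), in \myref{CompletionLambda} (change of ground $\la$-ring from $\Z$ to $\Z_p$), and in \myref{pLocalRetract} (the decomposition $\W(A) \cong \prod_{(n,p)=1} \W_p(A)$ of the big Witt comonad into $p$-typical pieces over a $p$-local base). The only point to be careful about is that the retraction in Step 1 is a retraction of \emph{natural transformations}, and hence in particular of endofunctors, which is exactly the hypothesis of \myref{CompletedMonad}\eqref{RetractPreserv}. As an alternative to citing \eqref{RetractPreserv} directly, one can argue pointwise: for an $L_0$-equivalence $f$ the map $T^{\la}_{\Z_p}(f)$ becomes an isomorphism after applying $L_0$, while $T^{\te}_{\Z_p}(f)$ is a retract of $T^{\la}_{\Z_p}(f)$ in the arrow category, so it too becomes an isomorphism after $L_0$ by \myref{CompletedMonad}\eqref{RetractL0Equiv}.
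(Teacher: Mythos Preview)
Your proof is correct and follows exactly the same three-step approach as the paper: use \myref{pLocalRetract} (via $p$-locality of $\Z_p$) to exhibit $T^{\te}_{\Z_p}$ as a retract of $T^{\la}_{\Z_p}$, invoke \myref{TLambdaZp} for the $\la$-case, and conclude by \myref{CompletedMonad}\eqref{RetractPreserv}. The additional commentary and the alternative pointwise argument via \eqref{RetractL0Equiv} are fine but not needed.
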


\begin{proof}
By \myref{TLambdaZp}, the monad $T^{\la}_{\Z_p} \colon \Mod_{\Z_p} \to \Mod_{\Z_p}$ preserves $L_0$-equiva\-lences. By \myref{pLocalRetract}, $T^{\te}_{\Z_p}$ is a retract of $T^{\la}_{\Z_p}$ since $\Z_p$ is $p$-local. By \myref{CompletedMonad} \eqref{RetractPreserv}, $T^{\te}_{\Z_p}$ preserves $L_0$-equivalences.
\end{proof}

\appendix

\section{$L$-completion}\label{appendix1}

For the convenience of the reader, we survey the essential structural features of the category of complete modules we are working with. Our exposition is inspired by and extends the treatments in \cite[Appendix A]{moravak}, \cite{filteredcolim}, and \cite{hoveynotes}, as well as the original source \cite{gmcomplete}. More recently, Rezk \cite{analyticcompletion} has offered a different perspective on completion, extending the theory to simplicial modules and algebras.

\subsection{$\fm$-adic completion and its derived functors}

Most statements in this section hold true for a noetherian commutative ring $R$ and an ideal $I \subset R$ which is generated by a regular sequence or, more generally, non-noetherian rings with some weaker finiteness conditions with respect to $I$, cf. \cite{gmcomplete}. For simplicity and with an eye towards our applications, however, we will restrict ourselves to the case of a regular local noetherian ring $R$ and take $I=\fm$, the maximal ideal of $R$. The number of generators of $I$ will be denoted by $h$. In the special case $R = E_*$, this is consistent with our earlier convention. 

For $M$ any $R$-module, completion with respect to $\fm$ is defined to be $M_{\fm}^{\wedge} = \varprojlim M/\fm^k$, and $M$ is called \Def{$\fm$-complete} if the canonical map $M \to M_{\fm}^{\wedge}$ is an isomorphism. By the Artin-Rees Lemma, $\fm$-adic completion $(-)_{\fm}^{\wedge}$ is an exact functor on the full subcategory of $\Mod_R$ of finitely generated modules, and the $\fm$-adic completion of a \emph{finitely generated} module $M$ can be constructed as $R_{\fm}^{\wedge} \otimes M$. In particular, $R_{\fm}^{\wedge}$ is flat over $R$. However, viewed as a functor on all of $\Mod_R$, $(-)_{\fm}^{\wedge}$ is badly behaved homologically: In general, it is neither left nor right exact. This can be used to show that the full subcategory of $\fm$-complete modules is not abelian; in fact, because the coimage is not necessarily complete, there exists an example \cite[74.6.1.]{stacksproject} of a map $f$ between complete modules with $\mathrm{im}(f) \ncong \mathrm{coim}(f)$.

Since the modules coming from topology are often not finitely generated, Greenlees and May \cite{gmcomplete} propose to work with the zeroth left derived functor of $\fm$-adic completion instead. 

\begin{defn}
Let $L_s$ be the $s$-th left\footnote{The right derived functors of $\fm$-adic completion can be shown to vanish, see \cite{gmcomplete}.} derived functor of the $\fm$-adic completion functor $(-)_{\fm}^{\wedge}$, where $s \ge 0$. For any $M \in \Mod_R$, the natural map $M \to M_{\fm}^{\wedge}$ factors canonically as $M \xrightarrow{\eta_M} L_0M \to M_{\fm}^{\wedge}$, and $M$ is called \Def{$L$-complete} if $\eta_M$ is an isomorphism. 
\end{defn}

In \cite{gmcomplete}, Greenlees and May initiated the study of these functors. In particular, they establish a duality between the $L_s$ and Grothendieck's local cohomology, and prove their basic properties. To start with, derived completion can be related to ordinary completion via higher $\mathrm{Tor}$-functors, as follows. 

\begin{prop}\mylabel{ses}
For any $s \ge 0$ and $M \in \Mod_{R}$ there is a short exact sequence
\[ 0\to {\varprojlim}_k^1 \mathrm{Tor}_{s+1}^R(R/\fm^k,M) \to L_sM \to  {\varprojlim}_k \mathrm{Tor}_s^R(R/\fm^k,M) \to 0.\]
\end{prop}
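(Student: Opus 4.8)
The plan is to compute $L_s M$ via a projective resolution $P_\bullet \to M$ and the defining fact that $L_s M = H_s\bigl((P_\bullet)^\wedge_\fm\bigr)$, comparing this with the terms $\varprojlim_k \mathrm{Tor}^R_s(R/\fm^k, M)$ and their $\varprojlim^1$. Fix such a resolution $P_\bullet$ by projective (indeed free) $R$-modules. For each $k$, the complex $P_\bullet / \fm^k P_\bullet = (R/\fm^k) \otimes_R P_\bullet$ computes $\mathrm{Tor}^R_\ast(R/\fm^k, M)$, and $(P_\bullet)^\wedge_\fm = \varprojlim_k P_\bullet / \fm^k P_\bullet$, the limit being taken degreewise. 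So the question is how homology interacts with this inverse limit of chain complexes.

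The key step is the Milnor $\varprojlim^1$ exact sequence for homology of an inverse limit of chain complexes: given a tower of chain complexes $(C^{(k)}_\bullet)_k$ in which the transition maps are degreewise surjective, one has a natural short exact sequence
\[
0 \to {\varprojlim_k}^1 H_{s+1}(C^{(k)}_\bullet) \to H_s\bigl(\varprojlim_k C^{(k)}_\bullet\bigr) \to \varprojlim_k H_s(C^{(k)}_\bullet) \to 0.
\]
I would apply this with $C^{(k)}_\bullet = P_\bullet / \fm^k P_\bullet$. The transition maps $P_\bullet/\fm^{k+1}P_\bullet \to P_\bullet/\fm^k P_\bullet$ are degreewise surjective since each $P_n$ is free, so the hypothesis is met. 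Then $H_s\bigl(\varprojlim_k C^{(k)}_\bullet\bigr) = H_s\bigl((P_\bullet)^\wedge_\fm\bigr) = L_s M$, while $H_s(C^{(k)}_\bullet) = \mathrm{Tor}^R_s(R/\fm^k, M)$, and substituting these identifications into the Milnor sequence yields exactly the claimed short exact sequence, with naturality inherited from naturality of the Milnor sequence and of projective resolutions. One should also note independence of the chosen resolution, which is automatic since $L_s M$ is a well-defined derived functor and the outer terms are manifestly resolution-independent.

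The main obstacle is establishing (or precisely citing) the Milnor $\varprojlim^1$ sequence for homology of a tower of chain complexes with degreewise-surjective transition maps, and in particular checking the surjectivity hypothesis carefully; this is where freeness of the $P_n$ enters essentially. (An alternative is to invoke that $\varprojlim_k$ over such a tower has left-derived functor $\varprojlim^1_k$ concentrated in degree $1$, and use the hyperhomology spectral sequence $\varprojlim^p_k H_q(C^{(k)}_\bullet) \Rightarrow H_{q-p}(\varprojlim_k C^{(k)}_\bullet)$, which degenerates to the two-column Milnor sequence.) Everything else — identifying the two computations of $\mathrm{Tor}$, commuting $\fm$-adic completion past the degreewise limit, naturality — is routine.
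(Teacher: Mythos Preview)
Your argument is correct and is essentially the standard proof, which is precisely what Greenlees and May give in \cite[Theorem~1.1]{gmcomplete}. The paper itself does not supply a proof of this proposition; it merely states the result and cites \cite{gmcomplete}, noting in the subsequent remark that the statement holds for any commutative ring and any ideal and does not require the identification of $L_s$ with local homology. So there is nothing to compare your approach against except the original reference, and your sketch matches it: take a free resolution $P_\bullet \to M$, observe that $(P_\bullet)^\wedge_\fm = \varprojlim_k P_\bullet/\fm^k P_\bullet$ is a tower of chain complexes with degreewise surjective transition maps (hence degreewise Mittag--Leffler), and apply the Milnor $\varprojlim^1$ sequence for homology of such a tower.
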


\begin{rmk}
In fact \cite[1.1]{gmcomplete}, this result is true for any commutative ring $R$ and the left derived functors of $I$-adic completion for any ideal $I \subset R$; it does not rely on the identification of $L_s$ with local homology. 
\end{rmk}

\begin{thm}[Greenlees-May duality]\mylabel{localhom}
The $s$-th left-derived functor of $\fm$-adic completion can be computed by local cohomology, i.e., for $M \in \Mod_{R}$,
\[ L_sM = \mathrm{Ext}_R^{h-s}(H_{\fm}^h(R), M) \]
where $H_{\fm}^i(R) \cong R/\fm^{\infty}$ non-canonically if $i=h$, and $0$ otherwise. In particular, $L_s=0$ if $s >h$.
\end{thm}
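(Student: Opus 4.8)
The plan is to establish the formula $L_s M \cong \Ext_R^{h-s}(H_{\fm}^h(R), M)$ by working through the Koszul/Čech model for local cohomology, and then to extract the vanishing statement and the identification of $H_{\fm}^i(R)$ from standard regular-local-ring theory. First I would fix a regular system of parameters $u_0, \ldots, u_{h-1}$ generating $\fm$, and replace the tower $\{R/\fm^k\}$ used to define completion by the cofinal tower of Koszul complexes $\{K_\bullet(u_0^k, \ldots, u_{h-1}^k)\}$; since $(u_0,\ldots,u_{h-1})$ is a regular sequence, each such Koszul complex is a free resolution of $R/(u_0^k,\ldots,u_{h-1}^k)$ in degree $0$, and the ideals $(u_0^k,\ldots,u_{h-1}^k)$ are cofinal among the powers of $\fm$ by the Artin–Rees lemma. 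Hence for any projective resolution $P_\bullet \to M$ one computes $L_s M = H_s\bigl(\varprojlim_k (P_\bullet \otimes_R K_\bullet(u_0^k,\ldots,u_{h-1}^k))\bigr)$, up to the usual $\varprojlim^1$ correction recorded in \myref{ses}.

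The key computation is then to identify the inverse limit of the Koszul complexes. Dualizing $K_\bullet(u_0^k,\ldots,u_{h-1}^k)$ and passing to the colimit over $k$ (the transition maps being multiplication by the $u_i$'s) yields the stable Koszul, i.e. Čech, complex $\check{C}^\bullet(u_0,\ldots,u_{h-1})$, whose cohomology is by definition $H_{\fm}^\bullet(R)$; in the regular local case this is concentrated in top degree $h$, where $H_{\fm}^h(R)$ is the (non-canonically) $R/\fm^\infty$-module appearing in the statement, while all lower local cohomology vanishes — this is the standard fact that the depth of $R$ equals its dimension $h$. Dualizing back, $\varprojlim_k K_\bullet(u_0^k,\ldots,u_{h-1}^k)$ is quasi-isomorphic to a shift of $\Hom_R(\check{C}^\bullet, R)$, and combining this with $P_\bullet \to M$ gives precisely $L_s M \cong \Ext_R^{h-s}(H_{\fm}^h(R), M)$ after re-indexing, with the degree shift by $h$ accounting for the Čech complex living in cohomological degrees $0$ through $h$. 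The vanishing $L_s = 0$ for $s > h$ is then immediate, since $\Ext_R^{h-s}$ vanishes for negative superscripts.

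I expect the main obstacle to be the careful bookkeeping that turns the derived limit of Koszul complexes into an $\Ext$ against local cohomology: one must check that the pro-system of Koszul complexes is (up to pro-isomorphism, hence after the $\varprojlim^1$ analysis of \myref{ses}) dual to the ind-system defining $\check{C}^\bullet$, that the relevant $\varprojlim^1$ terms either vanish or are absorbed correctly, and that the resulting spectral-sequence or direct chain-level argument genuinely collapses because $H_{\fm}^i(R)$ is concentrated in a single degree. The rest — cofinality of the tower, exactness of Koszul complexes on a regular sequence, the depth-sensitivity of local cohomology — is standard commutative algebra that I would cite rather than reprove, referring to \cite{gmcomplete} for the original treatment. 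For a paper of this type it would be reasonable simply to attribute the theorem to Greenlees–May with a sketch along these lines rather than to give the full chain-level argument.
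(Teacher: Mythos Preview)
The paper does not prove this theorem; it simply states it as a result due to Greenlees and May \cite{gmcomplete}, with no argument or sketch given. Your closing remark anticipates exactly this: for a survey appendix of this kind, attribution suffices.

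That said, your sketch is a faithful outline of the Greenlees--May argument. The cofinal replacement of $\{R/\fm^k\}$ by the Koszul tower $\{K_\bullet(u_0^k,\ldots,u_{h-1}^k)\}$, the identification of the direct limit of dual Koszul complexes with the stable \v{C}ech complex computing $H_{\fm}^\bullet(R)$, and the concentration of local cohomology in degree $h$ for a regular local ring are precisely the ingredients used in \cite{gmcomplete}. The one place to be careful is the passage from the derived inverse limit to $\mathrm{Ext}$: rather than dualizing back and forth, Greenlees and May work directly with the telescope/microscope duality between $\mathrm{colim}_k K_\bullet(u^k)$ and $\mathrm{R}\!\lim_k K_\bullet(u^k)$, which sidesteps some of the $\varprojlim^1$ bookkeeping you flag as the main obstacle. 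But your overall strategy is correct, and nothing more is expected here than what you have written.
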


\begin{rmk}\mylabel{rezkmodel}
In \cite{analyticcompletion}, Rezk gives a different model for $L$-completion with respect to a maximal ideal $\fm$ generated by elements $u_1,\ldots,u_h$. He shows that the $L$-completion of a module $M \in \Mod_R$ can be identified with \Def{analytic completion}, 
\[ L_0M = M \llbracket x_1,\ldots,x_h\rrbracket/(x_1-u_1,\ldots,x_h-u_h).\] 
The properties of $L$-complete modules can then be developed in this context as well, and this other point of view turns out to be useful in many situations. 
\end{rmk}

We can now list the salient features of the derived completion functors $L_s$ that are relevant for our applications, see \cite[A.4, A.6]{moravak}.

\begin{prop}\mylabel{lprop}
Let $M \in \Mod_{R}$. 
\begin{enumerate}
 \item If $M$ is of the form $N_{\fm}^{\wedge}$ or $L_sN$ for $s\ge 0$, then the natural map $M \to L_0M$ is an isomorphism and $L_tM=0$ for all $t > 0$. In particular, $L_0$ is idempotent.
 \item The natural map 
 \[ R/\fm^k \otimes M \to R/\fm^k \otimes L_0M\]
 is an isomorphism. 
 \item $L_0M = 0$ if and only if $R/\fm \otimes M = 0$. 
\end{enumerate}
\end{prop}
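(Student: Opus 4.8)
The statement to prove is \myref{lprop}, which lists three facts about the derived completion functors $L_s$. Let me plan a proof.

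\textbf{Proof plan for \myref{lprop}.}

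The plan is to deduce all three statements from the short exact sequence of \myref{ses} together with standard properties of $\fm$-adic completion for finitely generated modules over a Noetherian ring, exploiting the fact that $R$ is regular local with $\fm$ generated by a regular sequence $u_1, \dots, u_h$.

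For part (1), I would first treat the case $M = N^{\wedge}_{\fm}$. The key input is that $\fm$-adic completion is idempotent on the subcategory of separated complete modules in a strong sense: $N^{\wedge}_{\fm}/\fm^k \cong N/\fm^k$ for every $k$, so the tower $\{\mathrm{Tor}^R_s(R/\fm^k, N^{\wedge}_{\fm})\}_k$ agrees, for $s = 0$, with the tower $\{N/\fm^k\}$, whose inverse limit is again $N^{\wedge}_{\fm}$ and whose $\varprojlim^1$ vanishes by Mittag-Leffler (the transition maps are surjective). For $s > 0$ one uses that $R/\fm^k$ has a finite free resolution of length $\le h$ whose terms are finitely generated, so each $\mathrm{Tor}^R_s(R/\fm^k, N^{\wedge}_{\fm})$ can be computed and — using that $R^{\wedge}_{\fm}$ is flat over $R$ and the Artin–Rees/Koszul description of $\mathrm{Tor}$ against $R/\fm^k$ — one shows the relevant $\varprojlim$ and $\varprojlim^1$ terms vanish. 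Alternatively, and more cleanly, I would invoke \myref{localhom}: since $L_s M = \mathrm{Ext}^{h-s}_R(H^h_{\fm}(R), M)$, and a completion or an $L_s N$ is already ``derived-complete'', one can cite the known fact (Greenlees–May, \cite{gmcomplete} A.6 / \cite{moravak} A.6) that the derived completion of a derived-complete module is itself concentrated in degree $0$; this is really where the cited reference \cite[A.4, A.6]{moravak} does the work. The case $M = L_s N$ is handled the same way, since $L_s N$ is by construction a subquotient built from completions via \myref{ses} and hence derived-complete. Idempotence of $L_0$ is the special case $M = L_0 N$.

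For part (2), the natural map $R/\fm^k \otimes M \to R/\fm^k \otimes L_0 M$ is induced by $\eta_M \colon M \to L_0 M$. I would factor $\eta_M$ through $M^{\wedge}_{\fm}$ is not quite right — rather, I would argue directly: applying $R/\fm^k \otimes -$ to the short exact sequence from \myref{ses} (for $s = 0$, $M$) and comparing, the key point is that $\mathrm{cofib}(\eta_M)$, as well as $\ker(\eta_M)$, are built from $\varprojlim^1$ and $\varprojlim$ of $\mathrm{Tor}$-towers that become trivial after tensoring with $R/\fm^k$; concretely, for any module $A$ one has $R/\fm^k \otimes \varprojlim_j A/\fm^j A \cong R/\fm^k \otimes A$ because the tower $\{A/\fm^j A\}_j$ is pro-isomorphic to the constant tower $A/\fm^k A$ after applying $R/\fm^k \otimes -$ (the transition maps are eventually isomorphisms mod $\fm^k$), and similarly the $\varprojlim^1$ term dies. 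So the cleanest route is: show $R/\fm^k \otimes L_0 M \cong R/\fm^k \otimes M^{\wedge}_{\fm}$ using \myref{ses} and the Mittag–Leffler property of Tor-towers of finitely generated-resolution terms, then note $M^{\wedge}_{\fm}/\fm^k \cong M/\fm^k$ is the classical fact about $\fm$-adic completion.

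For part (3), this is an immediate corollary of part (2): taking $k = 1$ gives $R/\fm \otimes M \cong R/\fm \otimes L_0 M$, so $R/\fm \otimes M = 0$ iff $R/\fm \otimes L_0 M = 0$. For the remaining implication — that $R/\fm \otimes L_0 M = 0$ forces $L_0 M = 0$ — I would use that $L_0 M$ is $L$-complete by part (1), hence in particular $\fm$-adically separated enough that Nakayama-type reasoning applies: an $L$-complete module $P$ with $P/\fm P = 0$ must vanish, because $P \cong L_0 P$ injects (via \myref{ses}, up to the $\varprojlim^1$ term which also vanishes) into $\varprojlim_k P/\fm^k P$, and $P/\fm P = 0$ forces $P/\fm^k P = 0$ for all $k$ by induction on $k$ using the surjection $\fm^{k-1}P/\fm^k P \twoheadleftarrow (\fm/\fm^2)^{\otimes(k-1)} \otimes P/\fm P$... hence $\varprojlim_k P/\fm^k P = 0$ and $P = 0$.

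\textbf{Main obstacle.} The only genuinely delicate point is part (1), specifically verifying that higher derived functors vanish on completions and on the $L_s N$ themselves — i.e., that these modules are ``derived complete'' — since this requires controlling $\varprojlim^1$ of Tor-towers rather than just $\varprojlim$. In practice I expect to cite \cite[A.4, A.6]{moravak} and \cite[\S 1, \S 2]{gmcomplete} for this, where it is proved via the local-cohomology identification \myref{localhom}: $L_s$ applied to any object in the image of $L_0$ is $\mathrm{Ext}^{h-s}_R(H^h_{\fm}(R), -)$ evaluated on something that is already a module of the form $\mathrm{Ext}^{\bullet}_R(H^h_{\fm}(R), -)$, and one then uses that $H^h_{\fm}(R)$ has injective dimension issues controlled by regularity of $R$ to collapse the iterated Ext. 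Parts (2) and (3) are then formal consequences requiring only elementary commutative algebra.
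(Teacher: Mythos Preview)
The paper does not actually prove this proposition: it is stated with the attribution ``see \cite[A.4, A.6]{moravak}'' and no argument is given. Your proposal therefore goes well beyond what the paper does, and you correctly locate the same primary sources (\cite{moravak} and \cite{gmcomplete}) and the same structural inputs (\myref{ses} and \myref{localhom}) that underlie the cited proofs.

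Your sketches are largely sound. Part (2) via the identification $M^{\wedge}_{\fm}/\fm^k \cong M/\fm^k$ together with \myref{ses} is the standard route, and deducing one direction of (3) from (2) with $k=1$ is exactly right. The one soft spot is the Nakayama direction of (3): you assert that for an $L$-complete $P$ with $P/\fm P = 0$ the $\varprojlim^1 \mathrm{Tor}_1^R(R/\fm^k, P)$ term in \myref{ses} vanishes, but you do not justify this, and it is not immediate from Mittag--Leffler considerations alone. In \cite{moravak} this step (and indeed part (1)) is handled via the Koszul/local-homology description rather than by chasing the tower directly, which is also what you fall back on in your ``main obstacle'' paragraph. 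So your plan is correct in spirit and matches the literature, but the honest proof of (1) and the harder half of (3) really does route through \myref{localhom}, as you anticipate.
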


\begin{rmk}
Both $I$-adic and derived completion are idempotent for noetherian rings \cite[3.6]{noetherian} \cite{analyticcompletion}. However, $I$-adic completion need not be idempotent in general \cite{noetherian}. We do not know whether the zeroth derived functor $L_0$ is always idempotent.
\end{rmk}

We end this section with a useful criterion for when a morphism between $L$-complete modules is an isomorphism. Since we do not know of a published reference for this probably well-known fact, we include a proof.

\begin{lemma}\mylabel{detection}
Suppose $f\colon M \to N$ is a map of $R$-modules with $N$ flat. If $R/\fm \otimes f$ is an isomorphism, then so is $L_0f\colon  L_0M \to L_0N$. 
\end{lemma}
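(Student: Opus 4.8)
The plan is to reduce everything to the mod-$\fm$ statement already in hand, using the flatness of $N$ as the key leverage point. First I would recall that by \myref{lprop}(3), showing $L_0 f$ is an isomorphism amounts to showing that $R/\fm \otimes \ker f$ and $R/\fm \otimes \coker f$ vanish, after noting that $L_0$ is exact on nothing in general --- so instead I would argue more carefully via the long exact sequences. Precisely, form the fiber and cofiber: let $K = \ker f$, $Q = \coker f$, and also $I = \mathrm{im}(f)$, giving short exact sequences $0 \to K \to M \to I \to 0$ and $0 \to I \to N \to Q \to 0$.

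The main step is to extract mod-$\fm$ information. Since $N$ is flat, tensoring the second sequence with $R/\fm$ shows $\mathrm{Tor}_1^R(R/\fm, Q) \hookrightarrow R/\fm \otimes I \to R/\fm \otimes N \to R/\fm \otimes Q \to 0$, while tensoring the first with $R/\fm$ gives $R/\fm \otimes K \to R/\fm \otimes M \to R/\fm \otimes I \to 0$. The hypothesis that $R/\fm \otimes f$ is an isomorphism means the composite $R/\fm \otimes M \to R/\fm \otimes I \to R/\fm \otimes N$ is an isomorphism; in particular $R/\fm \otimes I \to R/\fm \otimes N$ is surjective (so $R/\fm \otimes Q = 0$) and the map $R/\fm\otimes M \to R/\fm \otimes I$ is injective (so $R/\fm \otimes K = 0$, since it is the image of... actually one needs $R/\fm \otimes K \to R/\fm\otimes M$ to have zero image). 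I would chase this diagram to conclude $R/\fm \otimes K = 0$ and $R/\fm \otimes Q = 0$. By \myref{lprop}(3) this gives $L_0 K = 0$ and $L_0 Q = 0$.

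Finally I would feed this back through the derived functors $L_s$. Since $L_0 K = 0$, \myref{lprop} (together with the fact, via \myref{ses} or \myref{lcompletecat}, that $L$-complete modules form an abelian subcategory closed under the relevant operations) forces $L_s K = 0$ for all $s$; similarly $L_s Q = 0$ for all $s$. Hmm --- more directly: I want to conclude $L_0 f$ is an isomorphism. Apply the (long exact sequence of) derived functors $L_*$ to $0 \to K \to M \to I \to 0$ and $0 \to I \to N \to Q \to 0$. From $L_* K = 0$ we get $L_0 M \xrightarrow{\cong} L_0 I$; from $L_* Q = 0$ we get $L_0 I \xrightarrow{\cong} L_0 N$ (using that $L_{-1}$ is zero, i.e. that $L_0 I \to L_0 N$ is injective because $L_0$ is right exact and $L_0 Q = 0$ kills the cokernel, while injectivity comes from $L_1 Q = 0$). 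Composing, $L_0 f \colon L_0 M \to L_0 N$ is an isomorphism.

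The step I expect to be the main obstacle is the diagram chase establishing $R/\fm \otimes K = 0$: one must be slightly careful because $R/\fm \otimes (-)$ is only right exact, so the factorization $M \twoheadrightarrow I \hookrightarrow N$ does not immediately split the tensored sequences, and the vanishing of $R/\fm \otimes K$ must be teased out of the injectivity of $R/\fm \otimes M \to R/\fm \otimes N$ rather than read off directly. The flatness of $N$ is exactly what rescues this, by making $\mathrm{Tor}_1^R(R/\fm, Q)$ the only obstruction term and letting one identify $R/\fm \otimes I$ cleanly.
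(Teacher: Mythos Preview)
Your approach has two genuine gaps that the paper's argument sidesteps.

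\textbf{Gap 1:} The diagram chase does \emph{not} yield $R/\fm \otimes K = 0$. From $0 \to K \to M \to I \to 0$ you only get that the \emph{image} of $R/\fm \otimes K$ in $R/\fm \otimes M$ vanishes; the map $R/\fm \otimes K \to R/\fm \otimes M$ need not be injective unless $\mathrm{Tor}_1^R(R/\fm, I) = 0$. Flatness of $N$ gives $\mathrm{Tor}_1^R(R/\fm, I) \cong \mathrm{Tor}_2^R(R/\fm, Q)$, and there is no reason for this to vanish just because $R/\fm \otimes Q = 0$. (At height $1$ this step does go through, since $I \subset N$ is then torsion-free, but not for $h \geq 2$.)

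\textbf{Gap 2:} The implication ``$L_0 K = 0 \Rightarrow L_s K = 0$ for all $s$'' is false; nothing in \myref{lprop} or \myref{lcompletecat} gives it. A standard counterexample is the Pr\"ufer group $\Z/p^{\infty}$ over $\Z_p$: one has $R/\fm \otimes \Z/p^{\infty} = 0$ so $L_0(\Z/p^{\infty}) = 0$, yet $L_1(\Z/p^{\infty}) \cong \Z_p \neq 0$. So even granting $L_0 Q = 0$, you cannot deduce $L_1 Q = 0$, and the injectivity of $L_0 I \to L_0 N$ is not established.

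The paper avoids both problems by a change of viewpoint: after proving surjectivity of $L_0 f$ exactly as you do (right exactness plus $L_0 Q = 0$), it sets $K' = \ker(L_0 f)$ rather than $\ker f$. Then $0 \to K' \to L_0 M \to L_0 N \to 0$ is short exact, and the crucial observation is that $L_0 N$ is itself flat by \myref{flatchar}, so $\mathrm{Tor}_1^R(R/\fm, L_0 N) = 0$ and tensoring with $R/\fm$ stays exact on the left. Combined with \myref{lprop}(2), this gives $R/\fm \otimes K' = 0$ cleanly. Finally $K'$ is $L$-complete (kernel in an abelian subcategory), so $K' = L_0 K' = 0$ by \myref{lprop}(3). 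No higher $L_s$ are needed, and the problematic $\ker f$ never appears.
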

\begin{proof}
We first prove surjectivity. To this end, consider the exact sequence $M \xrightarrow{f} N \to C \to 0$. Since $L_0$ is right exact, $L_0M \xrightarrow{L_0f} L_0N \to L_0C \to 0$ is also exact. Reducing mod $\fm$ yields the exact sequence 
\[ R/\fm \otimes M \xrightarrow{R/\fm \otimes f} R/\fm \otimes N \to R/\fm \otimes C \to 0. \] 
So, by assumption, $R/\fm \otimes C =0$, which implies $L_0C=0$ by \myref{lprop}. The claim follows.  

Injectivity is proved similarly: Let $K$ be the kernel of $L_0f$; using the first part of the lemma, we get an exact sequence
\[
0 \to K \to L_0M \to L_0N \to 0.
\]
Since $N$ is flat, so is $L_0N$ by \myref{flatchar} and hence $\mathrm{Tor}_1^R(L_0N, R/\fm)=0$. Therefore, tensoring with $R/\fm$ yields the exact sequence
\[
0 \to R/\fm \otimes K \to R/\fm \otimes M \to R/\fm \otimes N \to 0
\]
and we get $R/\fm \otimes K = 0$ by assumption. Since $K$ is $L$-complete, \myref{lprop} implies $K = 0$.
\end{proof}

\subsection{The category of $L$-complete modules} The category of $L$-complete $E_*$-modules plays a pivotal role in the algebraic theory of power operations for Morava $E$-theory, since it is the natural target of completed $E$-homology; see \myref{homotknlocal}. As it turns out this category has many good and some exotic properties, which we review in this section. 

For simplicity, let $R$ be a regular noetherian ring with a fixed maximal ideal $\fm$, usually assumed to be $\fm$-complete. 

\begin{note}
Let $\Modhat_{R}$ denote the full subcategory of $\Mod_{R}$ consisting of $L$-complete modules, with the natural embedding $\iota\colon  \Modhat_{R} \hookrightarrow \Mod_{R}$.
\end{note}

The embedding $\iota$ has left adjoint $L_0 \colon \Mod_{R} \to \Modhat_{R}$, which is also its left-inverse, i.e., $L_0 \circ \iota = \id$. It follows from this and the properties (\myref{lprop}) of the completion functor $L_0$ that $\Modhat_{R}$ is bicomplete: while limits are computed in $\Mod_{R}$ via $\iota$, colimits in $\Modhat_{R}$, denoted by $\lcolim$, are constructed as $L_0\colim$, the latter colimit being taken in $\Mod_{R}$.

Unlike the category of $\fm$-complete modules, $\Modhat_{R}$ is an abelian subcategory of $\Mod_{R}$ closed under extensions and $\mathrm{Ext}_{R}^{*}$, see \cite[A.6]{moravak}. In particular, $\mathrm{Hom}_{R}(M,N)$ is $L$-complete for $L$-complete modules $M$ and $N$, as are all extensions between $M$ and $N$.   Using this and defining the $L$-completed tensor product $ - \hat{\otimes} - = L_0(- \otimes -)$, it is easy to verify that $\Modhat_{R}$ is closed symmetric monoidal under $\hat{\otimes}$. Furthermore, \myref{flatchar} implies that $\Modhat_{R}$ has enough projectives, although no non-zero injective objects. To see the latter claim, let $I \in \Modhat_R$ be injective. Applying $\Hom(-,I)$ to the monomorphism
\[ 0 \to R \xrightarrow{x} R\]
for any $x \in \fm$ shows that $\fm I = I$ and hence $I=0$ by \myref{lprop}(3). In summary:

\begin{thm}\mylabel{lcompletecat}
The category of $L$-complete modules $\Modhat_{R}$ is a bicomplete closed symmetric monoidal category with enough projectives. The adjunction
\[ L_0\colon  \Mod_{R} \rightleftarrows \Modhat_{R} \colon \iota \]
exhibits the category of $L$-complete modules as a full abelian subcategory of all $R$-modules, closed under limits and extensions. 
\end{thm}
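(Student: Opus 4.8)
The plan is to establish each assertion by transporting the corresponding structure on $\Mod_R$ across the reflection $L_0 \dashv \iota$, using the basic properties of the derived completion functors recorded in \myref{lprop} (and, for the statements about flat and projective objects, \myref{flatchar}). None of the steps is deep; the proof is largely an organized assembly of facts already available to us.

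First I would record that $\iota$ is fully faithful with left adjoint $L_0$ and $L_0\iota \cong \id$, which is immediate from idempotence of $L_0$ (\myref{lprop}(1)) and the universal property of $\eta_M \colon M \to L_0 M$. Being a reflective subcategory, $\Modhat_R$ is then automatically closed under all limits formed in $\Mod_R$ (so products and kernels of $L$-complete modules are again $L$-complete), while colimits in $\Modhat_R$ are computed by applying $L_0$ to the $\Mod_R$-colimit, since a left adjoint preserves colimits and $L_0\iota \cong \id$; this yields bicompleteness. For the abelian structure, I would note that the $\Mod_R$-cokernel of a map $f\colon M \to N$ of $L$-complete modules is itself $L$-complete: applying the right-exact functor $L_0$ to $M \to N \to \coker f \to 0$ and using $L_0 M = M$, $L_0 N = N$ identifies $L_0 \coker f$ with $\coker f$. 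Hence both kernel and cokernel in $\Modhat_R$ are computed in $\Mod_R$, the embedding $\iota$ is exact, and image coincides with coimage because this already holds in the abelian category $\Mod_R$; closure under extensions, and more generally under $\Ext_R^{*}$, is \cite[A.6]{moravak}.

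For the closed symmetric monoidal structure I would take the unit to be $R$ (which is $L$-complete by our standing assumption that $R$ is $\fm$-complete), the internal hom to be the ordinary $\Hom_R(M,N)$, which is $L$-complete whenever $N$ is by \cite[A.6]{moravak}, and the tensor product to be $M \mathbin{\hat{\otimes}} N := L_0(M \otimes_R N)$. The crux is that $L_0$ is strong monoidal: the canonical map $M \otimes_R N \to L_0 M \otimes_R L_0 N$ becomes an isomorphism after applying $L_0$. This reduces to \myref{lprop}: the map is an isomorphism after $R/\fm \otimes -$ (using \myref{lprop}(2) on each factor), so its cokernel $C$ satisfies $R/\fm \otimes C = 0$ and hence $L_0 C = 0$ by \myref{lprop}(3), giving surjectivity of $L_0(M\otimes_R N) \to L_0(L_0 M \otimes_R L_0 N)$, and the analogous control of the kernel (or a direct comparison as in \myref{detection}) gives injectivity. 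Granting strong monoidality of $L_0$, the associativity, unit and symmetry constraints for $\hat{\otimes}$ and the hom-tensor adjunction $\Hom_R(M \mathbin{\hat{\otimes}} N, P) \cong \Hom_R(M, \Hom_R(N,P))$ for $P$ $L$-complete descend formally from $(\Mod_R, \otimes_R)$; alternatively one may cite the treatments in \cite{moravak} or \cite{hoveynotes}. I expect this verification of the strong monoidality of $L_0$ to be the only step requiring genuine (if routine) work rather than a citation.

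Finally, for projectives and injectives: the completed free modules $L_0(\bigoplus_S R)$ are projective in $\Modhat_R$, since $\Hom_{\Modhat_R}\!\big(L_0(\bigoplus_S R),-\big) \cong \Hom_{\Mod_R}(\bigoplus_S R, \iota(-)) \cong (-)^S$ is exact on short exact sequences of $\Modhat_R$ (which are short exact already in $\Mod_R$, by the above), and every $L$-complete $M$ receives an epimorphism from such a module by applying the right-exact $L_0$ to a free presentation $\bigoplus_S R \twoheadrightarrow M$ in $\Mod_R$; see also \myref{flatchar}. For the absence of nonzero injectives I would follow the argument already indicated: choosing a non-zero-divisor $x \in \fm$ and applying $\Hom_{\Modhat_R}(-,I)$ to the monomorphism $R \xrightarrow{\,x\,} R$ shows that multiplication by $x$ on $\Hom_R(R,I) = I$ is surjective, so $\fm I = I$, whence $R/\fm \otimes I = 0$ and therefore $I = L_0 I = 0$ by \myref{lprop}(3).
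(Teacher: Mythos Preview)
Your proposal is correct and mirrors the paper's own treatment almost exactly: the paper also derives bicompleteness from the reflection $L_0 \dashv \iota$, cites \cite[A.6]{moravak} for the abelian-subcategory and extension-closure claims and for $\Hom_R(M,N)$ being $L$-complete, defines $\hat{\otimes} = L_0(- \otimes -)$, invokes \myref{flatchar} for enough projectives, and gives the same $x \in \fm$ argument for the absence of nonzero injectives. The one place where your write-up is slightly looser than you acknowledge is the injectivity half of the strong-monoidality claim $L_0(M \otimes N) \cong L_0(L_0 M \otimes L_0 N)$: neither ``analogous control of the kernel'' nor \myref{detection} applies as stated (the target is not flat), so the clean route is the Yoneda argument using the closed structure, namely $\Hom(L_0(L_0 M \otimes N),P) \cong \Hom(L_0 M,\Hom(N,P)) \cong \Hom(M,\Hom(N,P)) \cong \Hom(L_0(M \otimes N),P)$ for $P$ $L$-complete, which is implicit in the paper's ``it is easy to verify''.
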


\begin{rmk}\mylabel{abelianapprox}
In fact, in the language of \cite{approxsubcats}, $\Modhat_{R}$ is the \Def{best reflective abelian approximation} to the category of $\fm$-complete $R$-modules.
\end{rmk}

As a consequence, we note that $\iota$ preserves reflexive coequalizers; this will become important in the proof of our main theorem. To show this, recall that the coequalizer of a pair $(f,g)\colon  A \rightrightarrows B$ in an abelian category can be computed as the cokernel of $f-g$. Conversely, every cokernel diagram $A \xrightarrow{f} B \to C \to 0$ can be written as a coequalizer diagram $(f+id_B, id_B)\colon  A \oplus B \rightrightarrows B \to C \to 0$, which is reflexive\footnote{In fact, every coequalizer diagram in a category with direct sums is equivalent to a reflexive one, by simply adding an extra copy of the codomain to the domain and using the obvious morphisms; see \myref{ReflexCoeq}.} with the natural inclusion $s\colon  B \to  A \oplus B$ as section. Since $\Modhat_{R}$ is an abelian subcategory of $\Mod_{R}$, $\iota$ preserves cokernels and thus reflexive coequalizers.  

As mentioned above, colimits of complete modules in $\Mod_{R}$ are not necessarily complete and thus need to be completed. But $L_0$ is not exact, whence neither are direct sums\footnote{For a natural example, see \cite[1.3]{filteredcolim}.} and filtered colimits in $\Modhat_{R}$, so $\Modhat_{R}$ is not a Grothendieck abelian category. However, Hovey \cite[1.1]{filteredcolim} shows that for $\cD$ a discrete or filtered diagram, the $s$-th left derived functor of $\lcolim_{\cD}$ is given by $L_s \colim_{\cD}$. Moreover, if $\cD$ is discrete, then $L_h \colim_{\cD} = 0$.

\begin{rmk}\mylabel{small} 
Another noticeable feature of $\Modhat_{R}$ is its lack of small objects: Since $R$ itself is $L$-complete, so are all finite free modules, but these are not small in $\Modhat_{R}$. Indeed, if $R^{\oplus r}$ was small  for some finite $r>0$, the Freyd-Mitchell theorem would produce an equivalence $\Modhat_{R} \simeq \Mod_{\mathrm{End}(R^{\oplus r})^{\mathrm{op}}} \simeq \Mod_{R}$, since $R^{\oplus r} \in \Modhat_{R}$ is a projective generator. 
\end{rmk}

This remark should be compared with the following result that informally speaking states that $R \in \Modhat_R$, while not being small, is not too large either. 

\begin{prop}\mylabel{ksmall}
The category $\Modhat_R$ is $\kappa$-presentable for $\kappa$ any regular cardinal larger than $\aleph_0$.
\end{prop}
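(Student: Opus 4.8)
The plan is to realise $\Modhat_R$ as a full reflective subcategory of the locally presentable category $\Mod_R$ which, although certainly not closed under all filtered colimits, \emph{is} closed under $\aleph_1$-filtered colimits, and then to invoke the standard fact that such an accessible reflective localisation is again locally presentable. The only non-formal input is that $L$-completion commutes with $\aleph_1$-filtered colimits; since the failure of $L_0$ to preserve filtered colimits is precisely the phenomenon measured in \cite{filteredcolim}, this cannot be argued softly, and the key is Greenlees--May duality.

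First I would recall from \myref{localhom} the natural isomorphism $L_0 \cong \Ext_R^{h}\!\left(H^h_\fm(R),-\right)$, and observe that the local cohomology module $T := H^h_\fm(R) \cong R/\fm^\infty$ is a countable filtered colimit of finitely generated $R$-modules, hence an $\aleph_1$-presentable object of $\Mod_R$. Since $R$ is regular local noetherian, every $R$-module has projective dimension $\le h$; building a projective resolution of $T$ degree by degree, and using that over a noetherian ring a submodule of a countably generated module is again countably generated, one obtains a projective resolution $0 \to P_h \to \cdots \to P_0 \to T \to 0$ in which each $P_j$ is a countably generated — hence $\aleph_1$-presentable — projective $R$-module. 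Therefore every functor $\Ext^j_R(T,-) = H^j\!\left(\Hom_R(P_\bullet,-)\right)$ commutes with $\aleph_1$-filtered colimits; in particular $L_0$ does.

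It follows that $\Modhat_R$ is closed under $\aleph_1$-filtered colimits in $\Mod_R$: if $M = \colim_i M_i$ is such a colimit of $L$-complete modules, then $L_0 M \cong \colim_i L_0 M_i = \colim_i M_i = M$, and by naturality of the coaugmentation this identification carries $\eta_M$ to $\colim_i \eta_{M_i}$, a filtered colimit of isomorphisms, so $\eta_M$ is an isomorphism. (By \myref{lprop} one also gets $L_s M = \colim_i L_s M_i = 0$ for $s \ge 1$, though this is not needed.) A fortiori $\Modhat_R$ is closed under $\kappa$-filtered colimits for every regular cardinal $\kappa \ge \aleph_1$.

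Finally, $\Mod_R$ is locally $\kappa$-presentable for every regular $\kappa$, and $\iota\colon \Modhat_R \hookrightarrow \Mod_R$ is a full reflective subcategory, with reflector $L_0$, closed under $\kappa$-filtered colimits whenever $\kappa > \aleph_0$. By the standard criterion for accessibility of a reflective localisation of a locally presentable category (see e.g.\ \cite{htt}, whose $\infty$-categorical statement specialises), $\Modhat_R$ is locally $\kappa$-presentable for every regular $\kappa > \aleph_0$, which is the claim. I expect the main obstacle to lie in the second paragraph — the $\Ext$-computation making $L_0$ continuous at level $\aleph_1$; the value $\kappa = \aleph_0$ genuinely cannot be included, since e.g.\ $\Q_p = \colim\!\left(\Z_p \xrightarrow{p} \Z_p \xrightarrow{p} \cdots\right)$ is a countable filtered colimit of $L$-complete modules that is not $L$-complete, in accordance with the absence of small objects in $\Modhat_R$ recorded in \myref{small}.
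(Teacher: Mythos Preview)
Your argument is correct and reaches the same conclusion as the paper, but by a genuinely different route. Both proofs hinge on the single non-formal fact that $L_0$ preserves $\kappa$-filtered colimits for any uncountable regular $\kappa$; the difference lies entirely in how this commutation is established.

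The paper uses Rezk's analytic model $L_0 M = M\llbracket x_1,\ldots,x_h\rrbracket/(x_i - u_i)$ from \myref{rezkmodel}: since this is built from a countable product followed by a cokernel, and $\kappa$-filtered colimits commute with countable limits in $\Mod_R$ once $\kappa > \aleph_0$, the commutation is immediate. You instead invoke Greenlees--May duality $L_0 \cong \Ext^h_R(H^h_{\fm}(R),-)$ and argue that $H^h_{\fm}(R)$, being countably generated over the noetherian ring $R$ of global dimension $h$, admits a length-$h$ projective resolution by countably generated projectives, whence each $\Ext^j_R(H^h_{\fm}(R),-)$ is $\aleph_1$-accessible. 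Your approach has the virtue of not depending on the power-series description, and your concluding step --- the standard criterion that a reflective subcategory of a locally $\kappa$-presentable category closed under $\kappa$-filtered colimits is itself locally $\kappa$-presentable --- is arguably cleaner than the paper's direct verification that $R$ is $\kappa$-small in $\Modhat_R$, which as stated still needs the observation that $R$ generates. The paper's argument, on the other hand, is shorter once the analytic model is available and makes the countability completely explicit.
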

\begin{proof}
Let $\kappa$ be a regular cardinal larger than $\aleph_0$, then it suffices to show that $R$ is $\kappa$-small. To this end, we use Rezk's construction of the $L$-completion of $M \in \Mod_R$,
\[ L_0M = M \llbracket x_1,\ldots,x_h \rrbracket / (x_1-u_1,\ldots,x_h-u_h), \]
as in \myref{rezkmodel}. Since $\kappa$-filtered colimits commute with limits of size $\kappa' < \kappa$ in $\mathrm{Set}$, the same holds in $\Mod_R$. In particular, countable products commute with $\kappa$-filtered colimits, hence $L_0$ preserves $\kappa$-filtered colimits. Therefore, for a $\kappa$-filtered colimit diagram $M \colon I \to \Modhat_R$, we get
\begin{eqnarray*}
\Hom_{\Modhat_R}(R, \lcolim\ M_i) & = & \Hom_{R}(R, L_0\colim\ M_i) \\
& = & \Hom_R(R, \colim\ M_i) \\
& = & \colim\ \Hom_R(R, M_i) \\
& = & \colim\ \Hom_{\Modhat_R}(R,M_i).
\end{eqnarray*}
This gives the claim. 
\end{proof}

\begin{rmk}
Using \myref{ksmall} and \cite[9.3(1)]{analyticcompletion}, one can show that Rezk's model structure on the category $\Chhat$ of (unbounded) chain complexes of $L$-complete $R$-modules coincides with the one created from the Quillen model structure on $\Ch$ through $\iota$. Consequently, this model structure is cofibrantly generated and monoidal. The associated derived category of $L$-complete modules has been studied independently in recent work of Valenzuela \cite{val}.
\end{rmk}

\subsection{Flat modules}\label{ss:flat}

Let $\Modflat_{R}$ be the full subcategory of $\Mod_{R}$ on the flat $R$-modules. By Lazard's \myref{lazardalg}, $M \in \Mod_{R}$ is flat if and only if the comma category $(\Modff_{R})_{/M}$ is filtered, which implies that $M$ can be written canonically as a filtered colimit of finite free modules. We will use this fact frequently throughout this document. 

While flat modules are not necessarily complete, as for example $R^{\oplus \omega}$, there is the following characterization of flat and $L$-complete $R$-modules. 

\begin{prop}\mylabel{flatchar}
The following properties of an $L$-complete $R$-module $M$ are equivalent:
\begin{enumerate}
 \item $M$ is projective in $\Modhat_{R}$.
 \item $M$ is pro-free, i.e., it is of the form $L_0F$ for some free $F \in \Mod_{R}$.
 \item $M$ is flat as an object in $\Mod_{R}$.
\end{enumerate}
Furthermore, if $N \in \Modflat_{R}$, then $L_0N$ is pro-free and hence flat as an $R$-module. 
\end{prop}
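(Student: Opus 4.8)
The plan is to prove the three equivalences in a cycle, say $(1)\Rightarrow(2)\Rightarrow(3)\Rightarrow(1)$, and then deduce the final assertion about $L_0N$.

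\medskip

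\emph{Approach.} For $(1)\Rightarrow(2)$: since $\Modhat_R$ has enough projectives (\myref{lcompletecat}), every object receives a surjection from some $L_0F$ with $F$ free in $\Mod_R$; indeed the free-complete objects $L_0F$ are projective in $\Modhat_R$ because $\Hom_{\Modhat_R}(L_0F,-)\cong\Hom_R(F,\iota(-))$ by adjunction and $F$ is projective in $\Mod_R$, and cokernels in $\Modhat_R$ agree with those in $\Mod_R$. So if $M$ is projective in $\Modhat_R$, choose a surjection $L_0F\surj M$; projectivity splits it, so $M$ is a retract in $\Modhat_R$ of $L_0F$; one then checks a retract of a pro-free module is again pro-free (a retract of $L_0F$ corresponds, after applying $R/\fm\ot-$ and using \myref{lprop}(2), to a retract of the $\F$-vector space $F/\fm$, which is free; lift a basis and use \myref{detection} to conclude the induced map $L_0(\text{free})\to M$ is an isomorphism). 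For $(2)\Rightarrow(3)$: a free $R$-module $F$ is flat, and $\fm$-adic completion of a free module $F$ is $\lim_k F/\fm^k F$; one must show $L_0F$ is flat over $R$. Here I would use \myref{ses}: since $F$ is free, $\mathrm{Tor}^R_{s}(R/\fm^k,F)=0$ for $s\ge1$ and all $k$, so $L_sF=0$ for $s\ge1$ and $L_0F=\lim_k F/\fm^kF=F^\wedge_\fm$; then flatness of $F^\wedge_\fm$ follows from the standard fact that the completion of a free module over a noetherian ring is flat (e.g.\ it is a filtered colimit — after a little work via Lazard \myref{lazardalg} — or one checks $\mathrm{Tor}^R_1(F^\wedge_\fm,R/J)=0$ for all finitely generated $J$ using exactness of completion on finitely generated modules and a limit argument). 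For $(3)\Rightarrow(1)$: given $M$ flat and $L$-complete, write $M=\colim_J F_j$ as a filtered colimit of finite free modules by Lazard \myref{lazardalg}; then $M=L_0M=\lcolim_J F_j$ (using that $M$ is already complete), and since each $F_j$ is projective in $\Modhat_R$ — being finite free, hence a retract of $L_0(\text{free})=F_j$ itself — and more to the point, to show $M$ is projective one lifts along a surjection $N\surj P$ in $\Modhat_R$: reduce mod $\fm$, lift the resulting map of $R/\fm$-vector spaces, and promote the lift to $\Modhat_R$ using \myref{detection} and right-exactness of $L_0$; alternatively invoke that $M$ pro-free (which we now know from the cycle) directly gives projectivity.

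\medskip

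\emph{Final assertion.} For the last sentence: if $N\in\Modflat_R$, write $N=\colim_J F_j$ with $F_j$ finite free (Lazard). Then $L_0N=\lcolim_J F_j$, and by Hovey's result that $L_s\colim_{\cD}=0$ for $s>0$ on filtered $\cD$ (quoted just before \myref{small}) we get $L_0N=\colim_J F_j$-with-no-higher-obstruction; more simply, $L_0N$ is $L$-complete by construction and a filtered colimit of the $F_j$ after $L_0$; one shows $L_0N$ is flat over $R$ because flatness is detected by vanishing of $\mathrm{Tor}^R_1(-,R/\fm)$ and by \myref{detection}-type reasoning $R/\fm\ot L_0N\cong R/\fm\ot N$ which is flat, so $L_0N$ satisfies condition (3), hence by the equivalence (3)$\Rightarrow$(2) it is pro-free. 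Then flatness of a pro-free module is exactly (2)$\Rightarrow$(3).

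\medskip

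\emph{Main obstacle.} The delicate point is the implication $(2)\Rightarrow(3)$, i.e.\ showing that the $\fm$-adic (equivalently $L_0$-) completion of an \emph{arbitrary} free module $F=R^{\op\Lambda}$ is flat over $R$ — the Artin–Rees argument only directly gives exactness of completion on finitely generated modules, so one must be careful passing to infinite rank. I expect to handle this either by the filtered-colimit characterization (writing $L_0F$ as a suitable filtered colimit via Lazard after checking condition (2) of \myref{lazardalg}) or by the Tor-vanishing criterion together with \myref{ses}, commuting $\mathrm{Tor}^R_1(-,R/J)$ past the limit using that $R$ is noetherian and $R/J$ is finitely presented. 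The retract-closure steps ($(1)\Rightarrow(2)$ and the passage to the final claim) are routine once \myref{detection} is available.
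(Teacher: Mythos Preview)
Your outline diverges from the paper's proof at several points, and two of those divergences are genuine gaps rather than alternative routes.

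\textbf{On $(2)\Rightarrow(3)$.} You correctly flag this as the delicate step, but your two suggested approaches (Lazard condition~(2), or commuting $\mathrm{Tor}$ past an inverse limit) are not obviously workable for free modules of infinite rank, and you do not carry either one out. The paper's argument is different and much shorter: by \cite[A.13]{moravak} the canonical map $L_0(R^{\oplus I}) \to \prod_I R$ exhibits $L_0(R^{\oplus I})$ as a \emph{retract} of $\prod_I R$; since $R$ is noetherian, products of flat $R$-modules are flat, and retracts of flat modules are flat. You should use this instead.

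\textbf{On closing the cycle.} Your $(3)\Rightarrow(1)$ argument does not work. Writing a flat $M$ as a filtered colimit of finite free $F_j$ and then passing to $\lcolim$ does not help, since completed filtered colimits of projectives in $\Modhat_R$ need not be projective (this is exactly the failure of exactness of $\lcolim$). Your ``reduce mod $\fm$ and lift'' idea produces a lift $M/\fm M \to N/\fm N$ over $R/\fm$, but there is no mechanism to promote this to a map $M \to N$; \myref{detection} tests whether a \emph{given} map is an isomorphism, it does not construct maps. The paper avoids $(3)\Rightarrow(1)$ entirely and instead proves $(3)\Rightarrow(2)$: if $M$ is flat then $\mathrm{Tor}^R_1(R/\fm,M)=0$, and \cite[A.9]{moravak} states that among $L$-complete modules this vanishing characterizes pro-free modules. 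The equivalence $(1)\Leftrightarrow(2)$ is then cited wholesale from \cite[A.9]{moravak}.

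\textbf{On the final assertion.} This is where the real content lies, and your sketch is incorrect. First, you misquote the Hovey result: he identifies the derived functors of $\lcolim$ with $L_s\colim$, he does \emph{not} assert $L_s\colim=0$ for filtered diagrams. Second, flatness of an arbitrary $R$-module is not detected by $\mathrm{Tor}^R_1(-,R/\fm)$, and the isomorphism $R/\fm\otimes L_0N \cong R/\fm\otimes N$ from \myref{lprop}(2) is a statement about $\mathrm{Tor}_0$, not $\mathrm{Tor}_1$. What one actually needs is $\mathrm{Tor}^R_1(R/\fm,L_0N)=0$, which then feeds into \cite[A.9]{moravak} to give pro-freeness. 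The paper proves this vanishing by Hovey's argument: take a finite free resolution $F_\bullet\to R/\fm$, tensor with $N$ (still exact since $N$ is flat), and show that $H_s L_0(F_\bullet\otimes N)=0$ for $s>0$ by an induction on the syzygies $K_i=\mathrm{im}(F_i\to F_{i-1})$, using that $K_0\otimes N=R/\fm\otimes N$ is $\fm$-torsion hence already $L$-complete, so $L_s(K_0\otimes N)=0$ for $s>0$, and then climbing the long exact sequences for $L_\bullet$ applied to $0\to K_i\otimes N\to F_{i-1}\otimes N\to K_{i-1}\otimes N\to 0$. This argument is not difficult once you see it, but it is not the argument you wrote.
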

\begin{proof}
The equivalence of (1) and (2) is \cite[A.9]{moravak}. Now assume (2) and let $M = R^{\oplus I}$ be a free module. By \cite[A.13]{moravak}, the canonical map $L_0(R^{\oplus I}) \to \prod_I R$ exhibits $L_0M$ as a retract of $ \prod_I R$. Because $R$ is noetherian, products of flat modules are flat, which in particular applies to $\prod_I R$, so (3) follows. Conversely, assume $M$ is flat, then $\mathrm{Tor}^{R}_s(R/\fm, M)=0$ for all $s>0$. Applying \cite[A.9]{moravak} once more gives (2).

The last assertion is due to Hovey and can be found in his unpublished notes \cite[1.2]{hoveynotes}; for completeness, we sketch the argument. By \cite[A.9]{moravak}, it suffices to show that $\mathrm{Tor}^R_1(R/\fm,L_0N) =0$ for all $s>0$. To this end, let $F_{\bullet} \to R/\fm \to 0$ be a resolution of $R/\fm$ by finite free modules. Using \cite[A.4]{moravak}, the claim then reduces to $H_sL_0(F_{\bullet} \otimes N) = 0$.
Denoting the image of $F_i$ in $F_{i-1}$ by $K_i$, flatness of $N$ yields short exact sequences
\[ 0 \to K_i \otimes N \to F_i \otimes N \to K_{i-1} \otimes N \to 0. \]
Therefore, again using flatness of $N$, we get for $s >0$ and all $i$
\[ L_{s+1}(K_{i-1} \otimes N) \cong L_s(K_i \otimes N)\]
as well as an exact sequence
\[ 0 \to L_1(K_{i-1}\otimes N) \to L_0(K_i \otimes  N) \to L_0(F_{i-1} \otimes N) \to L_0(K_{i-1} \otimes N) \to 0.\]
Note that $K_0 \ot N = R/\fm \ot N$ is $\fm$-complete, hence $L$-complete, and thus satisfies $L_s(K_0 \otimes N) = 0$ for $s>0$. It follows inductively that $L_s(K_i \otimes N) = 0$ holds for all $s>0$ and all $i$, yielding the short exact sequence
\[ 0 \to L_0(K_i \otimes N) \to L_0(F_{i-1} \otimes N) \to L_0(K_{i-1} \otimes N) \to 0,\]
hence $H_sL_0(F_{\bullet} \otimes N) = 0$ for $s>0$.
\end{proof}

 An $L$-complete module $M$ is said to be \Def{$L$-flat} if it is a flat object in $\Modhat_R$. In contrast to the previous result, there are very few $L$-flat modules.

\begin{prop}
If $R$ has dimension 1, then $M \in \Modhat_R$ is $L$-flat if and only if it is pro-free. If $\dim R \ge 2$, then being $L$-flat is equivalent to being finite free. 
\end{prop}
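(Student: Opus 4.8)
The plan is to prove the two equivalences in three stages: that $L$-flat implies pro-free in all dimensions; that pro-free implies $L$-flat when $\dim R=1$; and that for $\dim R\ge 2$ the only $L$-flat pro-free modules are the finite free ones (the easy converse, that finite free modules are $L$-flat, being handled along the way).

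For the first stage I would use the Koszul complex on a regular system of parameters $u_1,\dots,u_h$: it is a free resolution of $R/\fm$ whose terms are finite free, hence $L$-complete and projective in $\Modhat_R$, and since $\Modhat_R$ is an abelian subcategory closed under kernels it is also a projective resolution of $R/\fm$ there. Because $F\hat{\otimes}M=F\otimes_R M$ for a finite free $F$ and $L$-complete $M$, this yields $\mathrm{Tor}_1^{\Modhat_R}(R/\fm,M)=\mathrm{Tor}_1^R(R/\fm,M)$, so if $M$ is $L$-flat then $\mathrm{Tor}_1^R(R/\fm,M)=0$, whence $M$ is flat over $R$ and therefore pro-free by \myref{flatchar}. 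The same formula $-\hat{\otimes}R^k=(-)^{\oplus k}$ on $\Modhat_R$ shows that finite free modules are $L$-flat.

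Next I would reduce $L$-flatness of a pro-free $M$ to a concrete vanishing statement. Building a projective resolution $P_\bullet\to C$ of any $C\in\Modhat_R$ by pro-free modules, chosen one step at a time so that each surjection is honestly surjective in $\Mod_R$ (apply $L_0$ to a choice of generators, using right exactness), gives an honest flat resolution of $C$ over $R$; since $M$ is flat, $P_\bullet\otimes_R M$ is then a flat resolution of $C\otimes_R M$, and applying $L_0$ identifies $\mathrm{Tor}_1^{\Modhat_R}(C,M)\cong L_1(C\otimes_R M)$. So $M$ is $L$-flat iff $L_1(C\otimes_R M)=0$ for all $C\in\Modhat_R$. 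Now \myref{lprop}(2) gives $M/\fm^n M\cong\bigoplus_I R/\fm^n$ (with $I$ a basis of $M/\fm M$), a free $R/\fm^n$-module, and since $\mathrm{Tor}_\bullet^R(R/\fm^n,C)$ is killed by $\fm^n$ one gets $\mathrm{Tor}_\bullet^R(R/\fm^n,C\otimes_R M)\cong\bigoplus_I\mathrm{Tor}_\bullet^R(R/\fm^n,C)$ compatibly with the transition maps. Feeding this into \myref{ses} exhibits $L_1(C\otimes_R M)$ as an extension of $\varprojlim_n\bigoplus_I\mathrm{Tor}_1^R(R/\fm^n,C)$ by $\varprojlim^1_n\bigoplus_I\mathrm{Tor}_2^R(R/\fm^n,C)$; the first term vanishes since it embeds in $\prod_I\varprojlim_n\mathrm{Tor}_1^R(R/\fm^n,C)$ and $\varprojlim_n\mathrm{Tor}_1^R(R/\fm^n,C)$ is a quotient of $L_1 C=0$ (\myref{lprop}(1)).

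When $\dim R=1$ one has $\mathrm{Tor}_2^R=0$, so $L_1(C\otimes_R M)=0$ for every $C$ and every pro-free $M$, which combined with the first stage settles the case $h=1$. When $h\ge 2$ and $M$ has infinite rank, the task is to exhibit a $C\in\Modhat_R$ with $\varprojlim^1_n\bigoplus_{\N}\mathrm{Tor}_2^R(R/\fm^n,C)\ne 0$, forcing $M$ to fail $L$-flatness so that an $L$-flat pro-free module is finite free. The natural candidate is $C=\prod_{j\ge 1}R/\fm^j$, which is $\fm$-complete, hence lies in $\Modhat_R$, and is not finitely generated; since $R/\fm^j$ is finitely presented, $\mathrm{Tor}_2^R(R/\fm^n,C)\cong\prod_j\mathrm{Tor}_2^R(R/\fm^n,R/\fm^j)$. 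For each fixed $j$ this is a tower of finite-length modules that is pro-zero, but only after a number of steps growing with $j$, so the product tower over $n$ has strictly shrinking transition images with trivial intersection: it is still semistable ($\varprojlim^1=0$, as $C$ is $L$-complete) but not Mittag--Leffler. I expect the main obstacle to be precisely this final point: doing the bookkeeping to show that $\varprojlim^1$ of countably many copies of such a non-Mittag--Leffler tower is genuinely nonzero — the classical failure of $\varprojlim^1$ to commute with countable direct sums, which is exactly the pathology of infinite completed direct sums in $\Modhat_R$ that is present for $h\ge 2$ and, through the vanishing of $\mathrm{Tor}_2$, absent for $h=1$.
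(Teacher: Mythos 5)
Your first two stages are correct and largely parallel the paper's own argument: the implication ($L$-flat $\Rightarrow$ pro-free) is obtained there by the same computation $\Torhat_1(M,R/\fm)=\mathrm{Tor}_1^R(M,R/\fm)=0$ followed by the criterion behind \myref{flatchar} (your Koszul-resolution justification of the agreement of the two Tor's is a legitimate way to see it). Your reduction of $L$-flatness of a pro-free $M=L_0(\bigoplus_I R)$ to the vanishing of $L_1(C\otimes_R M)\cong\varprojlim^1_n\bigoplus_I\mathrm{Tor}_2^R(R/\fm^n,C)$, using \myref{lprop} and \myref{ses}, is sound and gives a clean, self-contained proof of the $\dim R=1$ case via $\mathrm{Tor}_2^R=0$; the paper instead quotes exactness of coproducts in $\Modhat_R$ in dimension $1$, which comes to the same thing.

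The genuine gap is exactly where you flag it: for $\dim R\ge 2$ you never actually exhibit a $C\in\Modhat_R$ with $\varprojlim^1_n\bigoplus_{\mathbb{N}}\mathrm{Tor}_2^R(R/\fm^n,C)\ne 0$, and this non-vanishing is the entire mathematical content of that half of the proposition. Two separate facts are asserted without proof for your candidate $C=\prod_j R/\fm^j$. First, that the tower $B_n=\prod_j\mathrm{Tor}_2^R(R/\fm^n,R/\fm^j)$ fails Mittag--Leffler: this requires showing that the number of steps needed to kill the pro-zero tower $\{\mathrm{Tor}_2^R(R/\fm^n,R/\fm^j)\}_n$ genuinely grows with $j$ when $h\ge 2$, a concrete statement about the syzygies of $\fm^n$ that you have not verified. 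Second, that failure of Mittag--Leffler forces $\varprojlim^1_n\bigoplus_{\mathbb{N}}B_n\ne 0$: this is true (it is Emmanouil's characterization of the Mittag--Leffler condition for towers of abelian groups), but it must be invoked or proved. For comparison, the paper uses a different witness: it applies $M\hat{\otimes}-$ to the explicit monomorphism $\prod_{k\ge1}(u_1^k,\ldots,u_h^k)\hookrightarrow(\bigoplus_{k\ge1}R)^{\wedge}_{\fm}$ and reduces the failure of injectivity for infinite $I$ to the non-vanishing of an explicit $L_1$, which it in turn defers to an argument in the literature. So some explicit computation of this kind is unavoidable, and your proposal stops just short of it; as written, the $\dim R\ge 2$ case is not established.
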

\begin{proof}
Let $\Torhat_1(M,-)$ be the first left derived functor of $M \hat{\otimes} -$; on finitely generated modules, this functor agrees with the usual Tor functor. Since $M$ is $L$-flat and $R/\fm$ is finitely generated, we obtain
\[ 0 = \Torhat_1^R(M,R/\fm) = \mathrm{Tor}_1^R(M,R/\fm),\]
so $M$ is pro-free by \cite[A.9]{moravak}. If $\dim R =1$, the converse holds since coproducts are exact on $\Modhat_R$ in this case. 

If $\dim R \ge 2$, consider the monomorphism of $L$-complete modules 
\[f\colon K=\prod_{k\ge 1}(u_1^k,\ldots,u_h^k) \hookrightarrow (\bigoplus_{k \ge 1} R)^{\wedge}_{\fm} = N.\]
If $M$ is an $L$-flat module, it is pro-free by the above, so we can write $M=L_0(\oplus_I R)$ for some indexing set $I$. Applying $M \hat{\otimes} -$ to $f$ gives the morphism $L_0(\oplus_I K) \to L_0(\oplus_I N)$ which we will show to be injective if and only if $I$ is finite. Finiteness is clearly sufficient. For the converse it is enough to consider the case of countable infinite $I$, because retracts of flat objects are flat. It is then easy to check that the argument given in \cite[Appendix B]{completehopfalgebroids} generalizes to $\dim R \ge 2$, giving that $L_1(N/K) \ne 0$. Therefore, $M=L_0(\oplus_I R)$ cannot be $L$-flat for infinite $I$. 
\end{proof}

\begin{rmk}
Consequently, projective objects are generally not flat in $\Modhat_{R}$, which implies that the derived functors of $\hat{\otimes}$ are not balanced; this issue is discussed further in \cite[\S 1.5]{hoveynotes}. 
\nc
\end{rmk}

In fact, the last part of \myref{flatchar} shows that $\iota L_0$ restricts to an endofunctor of $\Modflat_{R}$. Informally speaking, passage to $L$-complete modules has the effect of identifying the free $R$-modules with the flat ones. 

\begin{cor}\mylabel{flatcomp}
When restricted to $\Modflat_{R}$, $L_0$ coincides with $\fm$-adic completion, and $L_s=0$ for $s>0$.
\end{cor}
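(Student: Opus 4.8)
The plan is to read the statement straight off the short exact sequence of \myref{ses}, which expresses each derived completion functor $L_s$ in terms of a $\varprojlim$ and a $\varprojlim^1$ of the $\mathrm{Tor}$-groups $\mathrm{Tor}_\bullet^R(R/\fm^k, N)$. Everything collapses because flatness kills the higher $\mathrm{Tor}$-terms.

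First I would record that, for $N$ flat, $\mathrm{Tor}_s^R(R/\fm^k, N) = 0$ for all $k \ge 1$ and all $s > 0$, while $\mathrm{Tor}_0^R(R/\fm^k, N) = N/\fm^k N$. Plugging $M = N$ into \myref{ses}, for $s > 0$ the sequence
\[ 0 \to {\varprojlim_k}^1 \mathrm{Tor}_{s+1}^R(R/\fm^k, N) \to L_s N \to \varprojlim_k \mathrm{Tor}_s^R(R/\fm^k, N) \to 0 \]
has both outer terms zero (as $s+1 > s > 0$), so $L_s N = 0$. For $s = 0$ it reads
\[ 0 \to {\varprojlim_k}^1 \mathrm{Tor}_1^R(R/\fm^k, N) \to L_0 N \to \varprojlim_k \bigl( N/\fm^k N \bigr) \to 0, \]
and the left-hand term is the $\varprojlim^1$ of a tower of zero groups, hence vanishes; thus the map $L_0 N \to \varprojlim_k N/\fm^k N = N_{\fm}^{\wedge}$ is an isomorphism, which is precisely the claim that $L_0$ agrees with $\fm$-adic completion on $\Modflat_R$.

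There is no real obstacle here: the corollary is a purely formal consequence of \myref{ses} once one notes that flat modules have vanishing higher $\mathrm{Tor}$. The only point to be slightly careful about is that the isomorphism $L_0 N \cong N_{\fm}^{\wedge}$ obtained this way is the canonical comparison map $L_0 N \to N_{\fm}^{\wedge}$ entering the definition of $L$-completeness, which follows from the naturality built into \myref{ses}. Alternatively, the vanishing of $L_s N$ for $s > 0$ could be derived from \myref{flatchar} (since $L_0 N$ is pro-free and hence $L_s$-acyclic by \myref{lprop}(1)), but routing the whole argument through \myref{ses} is the most direct route and handles the $s = 0$ identification simultaneously.
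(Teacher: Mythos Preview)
Your proof is correct and is exactly the argument the paper intends: the paper's own proof just says ``This is an immediate consequence of \myref{ses},'' and you have spelled out that immediate consequence in full.
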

\begin{proof}
This is an immediate consequence of \myref{ses}.
\end{proof}

The situation is summarized in the following commutative diagram, in which the superscript $\mathrm{proj}$ indicates the full subcategory of projective objects:

\[\xymatrix{
\Modff_{R} \ar@{^{(}->}[r] \ar[d]_{\sim} & \mathrm{Ind}\ \Modff_{R} \ar@{=}[r]  & \Modflat_{R} \ar@{^{(}->}[r] \ar@<-.75ex>[d]_{L_0=(-)_{\fm}^{\wedge}}  & \Mod_{R} \ar@<-.75ex>[d]_{L_0} \\
\Modhat_{R}^{\mathrm{ff}}  \ar@{^{(}->}[rr] & & \Modhat_{R}^{\mathrm{proj}} \ar@{^{(}->}[r] \ar@<-.75ex>[u]_{\iota} & \Modhat_{R}. \ar@<-.75ex>[u]_{\iota}}
\]

\section{Representation theory}\label{appendix2}

In this appendix, we prove a representation-theoretic integral analogue of \myref{nilp} at height $h=1$, one of the key ingredients to our main theorem. We include this curious result not only because it provided some motivation for the general proof, but also because it might shed some additional light on the inner workings of the algebraic approximation functors $\T_n$.

To this end, note that the multiplication by $p$ map can be lifted to homotopy theory as
\[ S^0\xrightarrow{\Delta} \bigvee_{i=1}^p S^0 \xrightarrow{\nabla} S^0 \]
where $\Delta$ is the diagonal and $\nabla$ is the fold map. Applying $K(E\Sigma_{m+} \wedge_{\Sigma_m} -)$ to this diagram yields the map
\[ t(m,p)\colon  \rep(\Sigma_m) \xrightarrow{\mathrm{Res}} \bigoplus_{i_1+ \cdots + i_p=m} \rep(\Sigma_{i_1} \times \cdots \times \Sigma_{i_m}) \xrightarrow{\mathrm{Ind}} \rep(\Sigma_m) \]
which is an integral version of $\T_m(p)\colon  K(B\Sigma_m) \to K(B\Sigma_m)$. Here, $\mathrm{Res}$ and $\mathrm{Ind}$ denote the direct sums of restriction and induction for the corresponding subgroups, respectively. 

\begin{rmk}
We will keep the notation from before, but emphasize that the parameter $p$ does not have to be a prime in this section. Therefore, \myref{mainreptheory} is a generalization of \myref{nilp}.
\end{rmk}

\subsection{A few examples}

Before we discuss the general result in the next section, we give three explicit examples. 

\begin{example}
For all integers $p$, the map $t(2,p)\colon  \rep(\Sigma_2) \to \rep(\Sigma_2)$ is nilpotent mod $p$.
\end{example}
\begin{proof}
Note that the cases $p>2$ or $p=1$ are straightforward, so let us consider $p=2$. 

Recall that $\rep(\Sigma_2) = \Z[\sigma]/(\sigma^2-1)$, where $\sigma$ the sign representation; also, if $\rho$ denotes the regular representation, $\rho = 1 + \sigma$. Now $\mathrm{Ind}$ on the three factors is either the identity or induction from the trivial group, i.e., $\rho \otimes -$. After these recollections, we are now ready to compute the composite $\mathrm{Ind} \circ \mathrm{Res}$ on the basis elements $1,\sigma$ of the representation ring:
\[ \mathrm{Ind} \circ \mathrm{Res}(1) = \mathrm{Ind}(1_{\Sigma_2} + 1_{\Sigma_1} + 1_{\Sigma_2}) = 1 + \rho + 1 = 1 + 1 + \sigma +1 = 3 +\sigma \]
and
\[ \mathrm{Ind} \circ \mathrm{Res}(\sigma) = \mathrm{Ind}(\sigma + 1_{\Sigma_1} + \sigma) = \sigma + \rho + \sigma = \sigma + 1 + \sigma + \sigma = 1 + 3\sigma. \]
Therefore, the matrix corresponding to $t(2,2)$ in terms of the basis $(1,\sigma)$ is 
\[ t(2,2) = \begin{pmatrix}
 3 & 1 \\
 1 & 3
\end{pmatrix}\] 
with characteristic polynomial $\chi_{t(2,2)}(x)=(x-2)(x-4) \equiv_2 x^2$. Thus $t(2,2)^2 \equiv 0 \mod 2$.
\end{proof}

\begin{example}
Likewise, using the basis of $\rep(\Sigma_3)$ given by the irreducible representations $1, \sigma, V$ with $V$ the standard representation, one computes that $t(3,3)\colon  \rep(\Sigma_3) \to \rep(\Sigma_3)$ is given by the matrix
\[ t(3,3) = \begin{pmatrix} 10 & 1 & 8 \\ 
1 & 10 & 8 \\
8 & 8 & 19\end{pmatrix}.\]
Therefore, $\chi_{t(3,3)}(x)=(x-27)(x-9)(x-3) \equiv_3 x^3$, so $t(3,3)$ is nilpotent mod $3$. The remaining cases of $t(3,p)$ for $p \ne 3$ are easy. 
\end{example}

Finally, to convince the reader that our proof below works for positive integers $p$ that are not prime, we give one more example. 

\begin{example}
The representation ring $\rep(\Sigma_4)$ has a basis of irreducible representations $(1, \sigma, T, V, W)$, where $V$ is the standard representation, $W = V \otimes \sigma$, and $T$ is the irreducible 2-dimensional representation coming from the standard representation of $\Sigma_3$ via the exact sequence 
\[0 \to \Z/2 \times \Z/2 \to \Sigma_4 \to \Sigma_3 \to 0.\]
With respect to this basis, 
\[t(4,4)= 
\begin{pmatrix}
35 & 1 & 20 & 45 & 15 \\
1 & 35 & 20 & 15 & 45 \\
20 & 20 & 56 & 60 & 60 \\
45 & 15 & 60 & 115 & 81 \\
15 & 45 & 60 & 81 & 115
\end{pmatrix}\]
which has characteristic polynomial 
\[\chi_{t(4,4)}(x)=(x-4^4)(x-4^3)(x-4^2)(x-4^2)(x-4).\]
Consequently, $t(4,4)^5 \equiv 0 \mod 4$.
\end{example}

\subsection{The general result}

In this section we give a proof of the following claim, which generalizes \myref{nilp} in the height 1 case.

\begin{prop}\mylabel{mainreptheory}
For any pair of positive integers $m$ and $p$, the map
\[ t = t(m,p) \colon  \rep(\Sigma_m) \xrightarrow{\mathrm{Res}} \bigoplus_{i_1+\cdots+i_p=m} \rep(\Sigma_{i_1} \times \cdots \times \Sigma_{i_p}) \xrightarrow{\mathrm{Ind}} \rep(\Sigma_m)\]
is nilpotent mod $p$. 
\end{prop}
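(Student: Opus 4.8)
\emph{Proof proposal.} The plan is to recognize that, despite its two-step description, the operator $t(m,p)$ is just multiplication by a single virtual character of $\Sigma_m$ in the representation ring $\rep(\Sigma_m)$; its characteristic polynomial, and hence its nilpotency mod $p$, then falls out of a fixed-point count. The first step is this reduction to a multiplication operator: for a subgroup $H \leq G$ and a $G$-representation $\phi$, the projection formula $\mathrm{Ind}_H^G\bigl(\mathrm{Res}_H^G(\phi)\bigr) = \phi \cdot \mathrm{Ind}_H^G(\mathbf{1}_H)$ holds (Frobenius reciprocity), with $\mathbf{1}_H$ the trivial character, and extends $\Z$-linearly to representation rings. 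Applying it to each Young subgroup $\Sigma_{\mathbf{i}} := \Sigma_{i_1}\times\cdots\times\Sigma_{i_p}$ of $\Sigma_m$, indexed by compositions $\mathbf{i} = (i_1,\dots,i_p)$ with $\sum_j i_j = m$, and summing gives $t(\phi) = \phi \cdot \pi$ with $\pi := \sum_{\mathbf{i}} \mathrm{Ind}_{\Sigma_{\mathbf{i}}}^{\Sigma_m}(\mathbf{1}_{\Sigma_{\mathbf{i}}})$. I would then identify $\pi$ with the permutation character of $\Sigma_m$ acting by coordinate permutations on the set $\{1,\dots,p\}^m$ of functions $\{1,\dots,m\}\to\{1,\dots,p\}$: the orbits of this action are indexed by the fibre-size compositions $\mathbf{i}$ (with $\Sigma_0$ trivial), and the stabilizer of a function with fibre sizes $\mathbf{i}$ is precisely $\Sigma_{\mathbf{i}}$, so the permutation module decomposes as $\bigoplus_{\mathbf{i}} \mathrm{Ind}_{\Sigma_{\mathbf{i}}}^{\Sigma_m}(\mathbf{1})$.

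Next I would compute $\pi$ as a class function: a function $\{1,\dots,m\}\to\{1,\dots,p\}$ is fixed by $g\in\Sigma_m$ exactly when it is constant on each cycle of $g$, so $\pi(g) = p^{c(g)}$, where $c(g)$ is the number of cycles of $g$ (fixed points included). Passing to $\rep(\Sigma_m)\otimes_{\Z}\C$, which is the $\C$-algebra of complex class functions on $\Sigma_m$ and hence, via evaluation at conjugacy-class representatives, isomorphic to $\C^{P(m)}$ with $P(m)$ the number of partitions of $m$ (also the number of conjugacy classes of $\Sigma_m$ and the rank of $\rep(\Sigma_m)$), the operator $t = (\pi\cdot-)$ becomes diagonal, with eigenvalue $p^{\ell(\lambda)}$ on the coordinate indexed by the cycle type $\lambda\vdash m$, where $\ell(\lambda)$ denotes the number of parts of $\lambda$. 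Since the characteristic polynomial of the integer matrix representing $t$ is unchanged by extension of scalars, it equals
\[
\chi_{t}(x) = \prod_{\lambda\vdash m}\bigl(x - p^{\ell(\lambda)}\bigr).
\]

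To finish, note that $\ell(\lambda)\geq 1$ for every partition of $m\geq 1$, so by the Cayley--Hamilton theorem $\prod_{\lambda\vdash m}\bigl(t - p^{\ell(\lambda)}\,\mathrm{id}\bigr) = 0$, and reducing this identity modulo $p$ collapses every factor to $t$, giving $t^{P(m)}\equiv 0\pmod p$. This proves the proposition, with the explicit nilpotency bound $P(m) = \mathrm{rank}_{\Z}\rep(\Sigma_m)$, which also matches the three worked examples above. I do not expect a serious obstacle here: the genuine content is the observation that $t$ is a multiplication operator, after which the argument is just the projection formula, a one-line fixed-point count, and Cayley--Hamilton; the only bookkeeping needing care is matching the sum over compositions $\mathbf{i}$ with the orbit decomposition of $\{1,\dots,p\}^m$.
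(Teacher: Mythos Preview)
Your argument is correct and reaches the same endpoint as the paper's: the operator $t(m,p)$ is diagonal on class functions with eigenvalue $p^{\ell(\lambda)}$ on the conjugacy class of cycle type $\lambda$, so its characteristic polynomial is $\prod_{\lambda\vdash m}(x-p^{\ell(\lambda)})$ and Cayley--Hamilton finishes.

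The route, however, is genuinely different. The paper embeds $\rep(\Sigma_m)$ into the lattice $\Lambda$ of integer-valued class functions and computes $\mathrm{Ind}_H\circ\mathrm{Res}_H(\delta_c)$ directly from the explicit induction formula, then establishes $m_c=p^l$ by a hand-built bijection between contributing pairs $(H,s)$ and labelings of the cycles of $c$ by $\{1,\dots,p\}$. You instead invoke the projection formula at the outset to collapse $t$ to multiplication by the single class $\pi=\sum_{\mathbf{i}}\mathrm{Ind}_{\Sigma_{\mathbf{i}}}^{\Sigma_m}(\mathbf{1})$, recognize $\pi$ as the permutation character of $\Sigma_m$ on $\{1,\dots,p\}^m$, and read off $\pi(g)=p^{c(g)}$ by a one-line fixed-point count. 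Your approach is more conceptual and shorter: the projection formula replaces the paper's explicit manipulation of cosets, and the permutation-character identification makes the eigenvalue computation immediate, whereas the paper's bijection is effectively rederiving both of these facts simultaneously. The paper's approach has the minor advantage of being entirely self-contained (no appeal to Frobenius reciprocity), and its alternative proof via the free $\lambda$-ring offers yet a third perspective closer to the symmetric-function viewpoint. All three land on the same characteristic polynomial.
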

\begin{proof}
Let $\Lambda$ be the free $\Z$-module on the basis given by characteristic functions on the conjugacy classes of $\Sigma_m$. Since all characters of symmetric groups are integer-valued, we get an injection 
\[ \rep(\Sigma_m) \hookrightarrow \Lambda \]
which sends a representation to its character. We can therefore extend the map $t$ to an endomorphism of $\Q \otimes \Lambda$ which stabilizes the lattices $\Lambda$ and $\rep(\Sigma_m)$ inside of it. It follows that the characteristic polynomial $\chi_t$ of $t$ can be computed mod $p$ with respect to either $\Lambda$ or $\rep(\Sigma_m)$, since both are of maximal rank in $\Q \otimes \Lambda$. In other words, it is enough to show that $\chi_t$ restricted to the lattice $\Lambda$ is $\chi_t(x) \equiv x^{p(m)} \mod p$, where $p(m)$ denotes the number of partitions of $m$. 

To this end, let $\delta_c$ be the characteristic function on the conjugacy class $c$ of $\Sigma_m$, determined by a specific cycle class $C_c$ with type $k=k(c)=(k_1,\ldots,k_l)$. Fix an ordered $p$-tuple $i_1,\ldots,i_p$ with $i_1+\cdots+i_p=m$, let $H=\Sigma_{i_1} \times \cdots \times \Sigma_{i_p}$ denote the corresponding subgroup of $\Sigma_m$, and consider $\mathrm{Ind}_H^{\Sigma_m} \circ \mathrm{Res}_H^{\Sigma_m}$. In general, the formula for inducing a class function $f$ from a subgroup $K \subset G$ is $\mathrm{Ind}_{K}^{G}(f)(g) = \sum_{s \in K \backslash G} f_0(s^{-1}gs)$ where is $f_0(x) = f(x)$ for $x \in K$ and $0$ otherwise. Specializing to our situation we see that
\[ \mathrm{Ind}_H^{\Sigma_m} \circ \mathrm{Res}_H^{\Sigma_m}(\delta_c) = m_c(H) \cdot\delta_c \]
where $m_c(H)$ is some non-negative integer. It follows from this description that 
\[ \sum_{i_1+\cdots+i_p=m} \mathrm{Ind}^{\Sigma_m}_{\Sigma_{i_1} \times \cdots \times \Sigma_{i_p}} \circ \mathrm{Res}^{\Sigma_m}_{\Sigma_{i_1} \times \cdots \times \Sigma_{i_p}}(\delta_c) = \sum_{i_1+\cdots+i_p=m} m_c(\Sigma_{i_1} \times \cdots \times \Sigma_{i_p}) \cdot\delta_c \]
so the matrix of $t$ on $\Lambda$ with respect to the basis given by characteristic functions on the conjugacy classes is diagonal with eigenvalues $m_c = \sum_{i_1+\cdots+i_p=m} m_c(\Sigma_{i_1} \times \cdots \times \Sigma_{i_p})$. We claim that, if the cycle type of $c$ is $k=(k_1,\ldots,k_l)$, then $m_c = p^l$. The proposition follows immediately.

So let us prove this claim. In fact, we will identify $m_c$ with the number of functions from a set $K_c$ with $l$ elements to $\{1,\ldots,p \}$. To this end, note that it is enough to evaluate the above formula on a fixed element $g \in c= c_g$. If $H= \Sigma_{i_1} \times \cdots \times \Sigma_{i_p}$, then any element $s \in H \backslash \Sigma_m$ has $\mathrm{Res}_H^{\Sigma_m}(\delta_c)(s^{-1}gs) \ne 0$ if and only if it permutes the cycles in the cycle representation of $c_g$ such that they are compatible with the cycle structure of $H= \Sigma_{i_1} \times \cdots \times \Sigma_{i_p}$. Therefore, we can label the cycles in the cycle representation of $c_g$ by the numbers $j \in \{1,\ldots,p\}$ corresponding to which factor $\Sigma_{i_j}$ of $H$ they belong to after conjugation by $s$. This induces a bijection between pairs $(H,s \in H \backslash \Sigma_m)$ that contribute to $\sum_{i_1+\cdots+i_p=m} m_c(\Sigma_{i_1} \times \cdots \times \Sigma_{i_p})$, i.e., such that 
\[\mathrm{Res}_H^{\Sigma_m}(\delta_c)(s^{-1}gs) \ne 0,\]
and all possible labelings of the $l$ cycles in $c_g$ by numbers $\{1,\ldots,p\}$, i.e., functions from $K_c$ to $\{1,\ldots,p \}$: Surjectivity is clear, while injectivity uses the fact that the ordered tuple $(i_1,\ldots,i_p)$ is determined by the labeling, and the coset representative $s$ can be recovered from the given permutation of the cycles in the cycle representation of $c$, up to conjugation in $H=\Sigma_{i_1} \times \cdots \times \Sigma_{i_p}$. This completes the proof.
\end{proof}

\begin{rmk}
In fact, the argument shows more, namely that the eigenvalues of $t=t(m,p)$, thought of as an endomorphism of $\rep(\Sigma_m)$ or $\Lambda$, are precisely all the $p^{l_c}$ where $c$ runs through the conjugacy classes of $\Sigma_m$ and $l_c$ denotes the length of the cycle representation of $c$. 
\end{rmk}

\begin{proof}[Alternative proof of \myref{mainreptheory}]
We sketch another argument, due to Charles Rezk, which is closer to the perspective in Section \ref{height1}. Define $A_m=\Hom(\rep(\Sigma_m),\Z)$, then 
\[ \bigoplus_m A_m = \Lambda [x],\]
the free $\lambda$-ring on one generator $x$ of degree 1; note that the map 
\[t(p)=\bigoplus_m t(m,p)\colon  \Lambda[x] \to \Lambda[x]\]  
is given by $x \mapsto px$. Let $x_k = \psi_k(x)$, where $\psi_k$'s are the Adams operations, and consider the subring $A = \Z[x_1,x_2,\ldots]\subset \Lambda[x]$ with $x_k$ of degree $k$. Since $A \otimes \Q \cong \Lambda[x] \otimes \Q$, we can compute the eigenvalues of $t(p)$ in terms of the monomial basis given by the $x_k$:
\[\begin{array} {lcl} 
t(p)(x_{k_1}\cdots x_{k_n}) & = & \psi_{k_1}(px)\cdots \psi_{k_n}(px) \\ 
& = & p^n \psi_{k_1}(x)\cdots \psi_{k_n}(x) \\
& = & p^nx_{k_1}\cdots x_{k_n}.  
\end{array}\]
So the monomial basis in the $x_k$'s provides a basis of eigenvectors for $t(p)$, with the eigenvalues as above. 
\end{proof}

\bibliographystyle{alpha}
\bibliography{CompletedPowerOps_Bibliography}

\end{document}